\documentclass[11pt,a4paper]{article}
\usepackage{authblk}
\usepackage{bm}
\usepackage[margin=1in]{geometry}
\usepackage{fancyhdr}
\usepackage{booktabs}
\usepackage{amsmath,amsthm,amssymb,amsfonts,mathrsfs,amscd}
\usepackage{enumerate}
\usepackage[T1]{fontenc}
\usepackage{lmodern}
\usepackage{microtype}
\usepackage[nocompress]{cite}
\usepackage[hidelinks]{hyperref}
\hypersetup{
  pdftitle={Optimal weighted sampling of positive and normal operator orbits},
  pdfauthor={Jian Wu},
  pdfkeywords={Dynamical sampling, operator orbit frames, positive operators, condition numbers}
}
\newtheorem{thm}{Theorem}[section]
\newtheorem{prop}[thm]{Proposition}
\newtheorem{lem}[thm]{Lemma}
\newtheorem{cor}[thm]{Corollary}
\theoremstyle{definition}

\newtheorem{example}[thm]{Example}
\newtheorem{remark}[thm]{Remark}
\newenvironment{abs}{\par\noindent\textbf{Abstract.}\enspace\normalfont\ignorespaces}{\par}
\DeclareMathOperator{\dist}{dist}
\begin{document}

\title{Optimal weighted sampling of positive and normal operator orbits}
\author{Jian Wu\thanks{Corresponding author: \texttt{xingxingwu2022@163.com}}}
\affil{}
\renewcommand*{\Affilfont}{\small\itshape}
\renewcommand\Authands{ and }
\date{}
\maketitle

\begin{abs}
We study weighted sampling of operator orbit frames in infinite
dimensional separable Hilbert spaces \(H\), keeping the operator and
generating vectors fixed.
Let $N_\mu(R)$ count the sampling times in $[0,R]$.
For a single orbit of a positive operator, let $m_A(R)$ count the
eigenvalues in $(0,e^{-1/R}]$.
Every real sampling design that preserves the frame property must satisfy
the lower bound $N_\mu(R)\ge m_A(R)-O_\mu(1)$.
Conversely, for every $\eta>0$, we construct an integer sampling design
that preserves the frame property and satisfies
$N_\mu(R)\le(1+\eta)m_A(R)+O_\eta(1)$.
Thus the coefficient $1$ is optimal.
Under additional spectral conditions, we attain
$N_\mu(R)=m_A(R)+O(1)$ and characterize the weights that preserve
the frame property on the constructed sets.

For finitely or countably many orbits of a positive operator,
let $P_j$, $j\ge1$, be the spectral projection corresponding to
$2^{-j-1}<-\log\lambda\le2^{-j}$ and define
$K_A(R)=\#\{j\ge1:2^j\le R,\ P_j\ne0\}$.
Every real sampling design that preserves the frame property satisfies
$N_\mu(R)\ge c_\mu K_A(R)-C_\mu$ for $R\ge2$,
with constants $c_\mu>0$ and $C_\mu\ge0$.
Conversely, we construct an integer sampling design that preserves
the frame property and satisfies $N_\mu(R)\le L K_A(R)+L$
for $R\ge2$, where $L$ depends only on the bounds of the given orbit frame.

For normal operators of norm one, we prove that the smallest frame
condition number is attained among integer sampling designs that
preserve the frame property and satisfy a prescribed counting bound,
whenever this class is nonempty.
Finally, we construct one normal operator with two families of
generating vectors for which the optimal growth orders of
$N_\mu(R)$ are $\log R$ and $R$, respectively, for both real
and integer sampling.

\medskip
\noindent\textbf{Keywords.}
Dynamical sampling; operator orbit frames; positive operators;
normal operators; weighted sampling; condition numbers.

\smallskip
\noindent\textbf{Mathematics Subject Classification (2020).}
42C15; 47B15; 47A10; 93B07.
\end{abs}

\section{Introduction}
\label{sec:introduction}
All Hilbert spaces are complex and separable, and inner products are linear
in the first variable. Unless otherwise stated, $H$ is an infinite
dimensional Hilbert space. Let $\mathcal B(H)$ denote the bounded linear
operators on $H$, and write
$\mathbb N=\{1,2,\ldots\}$ and $\mathbb N_0=\{0,1,2,\ldots\}$. Let $I$ be a finite or countably infinite index set.
A family $(g_i)_{i\in I}\subset H$ is a frame for
$H$ if there are constants $0<a\le b<\infty$ such that
$a\|x\|^2\le\sum_{i\in I}|\langle x,g_i\rangle|^2\le b\|x\|^2$
for every $x\in H$. A family satisfying the upper inequality is called a Bessel family.
A Riesz basis is the image of an orthonormal basis under a bounded invertible operator. The frame inequalities ensure stable recovery
from the coefficients $(\langle x,g_i\rangle)_{i\in I}$;
see \cite{Christensen2016}.
If $A\in\mathcal B(H)$ describes evolution in discrete time,
measurements of an initial state $x$ have the form
$\langle A^nx,f_j\rangle=\langle x,(A^*)^nf_j\rangle$,
where $n\in\mathbb N_0$, $J$ is a finite or countably infinite
index set, and $(f_j)_{j\in J}\subset H$ is a family of
generating vectors. Stable recovery is therefore equivalent
to the frame property of $\{(A^*)^nf_j:n\in\mathbb N_0,\ j\in J\}$.
This is the basic connection between dynamical sampling and
operator orbit frames.

Dynamical sampling uses observations at several times to recover
information that may be unavailable at a single time. Early work
showed that, under suitable assumptions, taking measurements at
additional times allows recovery of the initial state from fewer
spatial sampling locations
\cite{AldroubiDavisKrishtal2013,AldroubiDavisKrishtal2015}.
The frame formulation was developed in \cite{ACMT2017}, and spectral
characterizations of frame and Bessel properties for normal operator
orbits were obtained in \cite{AldroubiEtAl2017,Philipp2017}.
Frames generated by finitely many orbits were studied through spectral
decompositions and model spaces in
\cite{CabrelliMolterPaternostroPhilipp2020,CabrelliMolterSuarez2021}.
The question of which frames can be represented as a single orbit
of a bounded operator was studied in the context of dynamical sampling
in
\cite{ChristensenHasannasabRashidi2018,ChristensenHasannasabPhilipp2020}.
In particular, for a single orbit of a diagonal operator, the frame
property is characterized by a Carleson interpolation condition on
the eigenvalues and estimates
for the coordinates of the generating vector \cite{ACMT2017}.
For Carleson's characterization of interpolating sequences for bounded
holomorphic functions in the unit disk, see \cite{Carleson1958}.
For an overview of dynamical sampling, see
\cite{DynamicalSamplingSurvey2026}.

Once an orbit forms a frame, it is natural to ask which sampling times
can be omitted while preserving the frame property.
This amounts to asking whether the initial state can still be
recovered stably from measurements at fewer time instants and,
if so, how slowly the number of retained sampling times can grow
as time increases.
This is a natural refinement of the initial state recovery problem,
one of the central problems in dynamical sampling
\cite[Sections~1--2]{DynamicalSamplingSurvey2026}.
For a frame generated by a single orbit of a positive injective
operator, Christensen et al.\ proved that sampling on
$N\mathbb N_0$ preserves the frame property for every integer
$N\ge1$ \cite[Theorem~2.1]{ChristensenEtAl2024}.
Krishtal and Miller extended this result to sampling at times
$Nk+j_k$, $k\in\mathbb N_0$, where $j_k\in[0,N)$ may vary
with $k$ \cite{KrishtalMiller2026}.
For diagonal operators whose eigenvalues are separated in the
pseudohyperbolic metric and lie in a Stolz domain and an angular
sector $|\arg z|\le c<\pi$,
\cite{KrishtalMashreghiMiller2026} shows that the associated
Carleson frames remain frames when sampled at times
$rk+j_k$, $k\in\mathbb N_0$, where $r>0$, $rc<\pi$,
and $j_k\in[0,r)$.

Density conditions provide another way to describe sampling sets
that preserve the frame property.
Krishtal and Miller introduced block diagonal Carleson frames,
allowing uniformly bounded Jordan blocks
\cite{KrishtalMillerBDC2026}.
For a sampling set $T\subset[0,\infty)$, write
$N_T(R)=\#(T\cap[0,R])$.
For these frames with point spectrum in $(0,1)$,
\cite[Theorem~1.3]{KrishtalMillerBDC2026} characterizes
unweighted sampling whenever the natural density
$L=\lim_{R\to\infty}N_T(R)/R$ exists: the sampled orbit is a
frame if and only if $0<L<\infty$. Wu \cite{WuUniversalNonuniform2026} studied common unweighted
sampling sets that preserve all frames generated by finitely
or countably many orbits of positive operators.
The conditions $0\in T$ and $N_T(R)\asymp R$ are necessary
but not sufficient for this universal preservation:
a set satisfying both conditions may fail to preserve the frame
property even for a single positive diagonal orbit.
For finitely generated dynamical frames associated with invertible
operators under suitable spectral assumptions,
\cite{KrishtalMashreghiMillerSectorial2026} gives sufficient
density conditions for sampling fractional operator orbits.
The number of required observations is also a central issue in
other sampling problems.
Landau's density conditions \cite{Landau1967} give necessary
bounds in classical sampling and interpolation.
Exponential frames with densities arbitrarily close to Landau's
necessary lower bound were constructed in
\cite{BownikVanVelthoven2025}.
In finite dimensions, one can select a small subset of frame vectors
and assign positive weights so that the resulting frame bounds
are close to those of the full frame
\cite{BSS2012,deCarliSilvaHarveySato2015}.
Matrix product approximation by row sampling was studied in
\cite{CohenNelsonWoodruff2016}, and random and deterministic
methods for subsampling finite frames were developed in
\cite{BartelSchaferUllrich2023}.
Weighted frames and the choice of weights were studied in
\cite{BalazsAntoineGrybos2010,BalazsCorsoStoeva2026,KutyniokOkoudjouPhilippTuley2013},
and the choice of nonnegative weights to minimize the frame
condition number was studied in \cite{CasazzaChen2017}.

These results motivate two questions for a given operator orbit
frame: how sparse can the sampling times be while preserving the
frame property, and how small can the condition number of the sampled
frame be under prescribed upper bounds on the number of sampling
times in $[0,R]$ for every $R\ge0$? Sparser sampling reduces the number of observation times needed
for stable recovery of the initial state, while a smaller frame
condition number improves the numerical conditioning of the
reconstruction.
For a fixed frame generated by a single orbit of a positive operator
on an infinite dimensional Hilbert space, unweighted sampling
cannot preserve the frame property if $N_T(R)=o(R)$, as follows
from \cite[Theorem~4.1]{KrishtalMillerBDC2026}. Common weighted integer designs with $O(\log R)$ sampling times
were constructed in \cite{WuUniversalWeighted2026} for positive
systems with a prescribed bound on the condition number of the
given orbit frame.
We allow positive weights and keep both the operator and the
generating vectors fixed.
The sampling times and weights may depend on this fixed frame.
In this setting, our weighted constructions for frames generated
by a single orbit of a positive operator permit integer sampling
with only $O(\log R)$ sampling times in $[0,R]$ as $R\to\infty$;
for suitable spectra, even $o(\log R)$ growth is possible.
Our aim is to determine how slowly the counting function can grow
while preserving the frame property, and how this growth depends
on the spectrum and the generating vectors.
We also study when the resulting counting bounds can be attained
and when the infimum of the frame condition number is attained
under a prescribed pointwise upper bound on the counting function.

To state the problem, suppose that
$\{(A^*)^nf_j:n\in\mathbb N_0,\ j\in J\}$ is a frame
with bounds $0<a\le b<\infty$.
Define the observation operator $C:H\to\ell^2(J)$ by
$Cx=(\langle x,f_j\rangle)_{j\in J}$.
The frame inequalities then take the form
$a\|x\|^2\le\sum_{n\ge0}\|CA^nx\|^2\le b\|x\|^2$
for every $x\in H$.
These inequalities ensure stable recovery of $x$ from
$(CA^nx)_{n\ge0}$ and express exact observability in infinite
time for the discrete time system
\cite[Sections~2.1--2.3]{DiazMedriMolterControl2021}.
In this paper, we call such a pair $(A,C)$ an observable system.

A sampling design is a positive atomic measure
$\mu=\sum_{t\in T}w_t\delta_t$, where $w_t>0$,
$\delta_t$ is the unit point mass at $t$, and
$T\subset[0,\infty)$ is locally finite, meaning that $T\cap[0,R]$
is finite for every $R\ge0$. Its counting function is
$N_\mu(R)=\#(T\cap[0,R])$.
All generating vectors are sampled at each sampling time with
the same weight. The sampled frame operator is
$S_\mu=\sum_{t\in T}w_t(A^t)^*C^*CA^t$.
We call $\mu$ admissible if $S_\mu$ is bounded and has a bounded
inverse, or equivalently if
$\{\sqrt{w_t}(A^t)^*f_j:t\in T,\ j\in J\}$ is a frame.
Integer designs satisfy $T\subset\mathbb N_0$;
real designs may use any nonnegative sampling times.
The definition of real powers for normal operators and the
convergence of these operator sums are specified in
Section~\ref{sec:framework}.
Write $S=\sum_{n\ge0}(A^n)^*C^*CA^n$ for the frame
operator and $\kappa(S_\mu)=\|S_\mu\|\|S_\mu^{-1}\|$ for
the sampled frame condition number. This is the ratio of the
optimal upper frame bound to the optimal lower frame bound.

Our first main result concerns a single orbit of a positive
operator $A$ on $H$. If $\{A^nf:n\in\mathbb N_0\}$ is a frame
for $H$, then the nonzero eigenvalues of $A$ can be written as
$\lambda_j=e^{-s_j}$, $j\ge1$, with $\lambda_j\uparrow1$. Define
$m_A(R)=\#\{j\ge1:s_j\ge R^{-1}\}$ for $R>0$.
Thus $m_A(R)$ counts the eigenvalues in $(0,e^{-1/R}]$.
Theorem~\ref{thm:positive-count} shows that every admissible real
sampling design $\mu$ must satisfy the lower bound
$N_\mu(R)\ge m_A(R)-O_\mu(1)$.
Conversely, for every $\eta>0$, there is an admissible integer
design $\mu_\eta$ with
$N_{\mu_\eta}(R)\le(1+\eta)m_A(R)+O_\eta(1)$.
The construction uses $w_n=n$ at every positive sampling time
$n$, and $w_0=1$ if zero is a sampling time.
Thus the coefficient $1$ in the lower bound is sharp.
For both real and integer sampling, the infimum of
$\limsup_{R\to\infty}N_\mu(R)/\log R$ over admissible designs
equals $\limsup_{R\to\infty}m_A(R)/\log R$; see
Corollary~\ref{cor:positive-coefficient}.
The proof combines spectral estimates with finite dimensional
sparsification. The counting bounds also apply when
$m_A(R)=o(\log R)$.

We also study the stronger equality
$N_\mu(R)=m_A(R)+O(1)$, which we call the critical count.
Under any of the spectral conditions in
Theorems~\ref{thm:critical-regular-spectra} and
\ref{thm:positive-separated-critical}, we construct both real
and integer designs with this count.
The sampled frames have finite excess, meaning that finitely
many deletions leave a Riesz basis.
On the constructed sets, positive weights preserve the frame
exactly when $w_t\asymp t$ outside a finite set
(Corollary~\ref{cor:critical-orbit-weights}); here $u_t\asymp v_t$
means $cv_t\le u_t\le Cv_t$ for constants $c,C>0$.
The arguments use Schoenberg's characterization of totally positive
functions \cite{Schoenberg1951}, results on zeros of Zak transforms
\cite{Kloos2015}, and interpolation in shift-invariant spaces
\cite{deDiosPontGrochenigLiehrShafkulovskaTaylor2026}.
They also use Fredholm theory \cite{BottcherSilbermann2006}
and the theory of frame excess \cite{BalanCasazzaHeilLandau2003}.

For frames generated by finitely or countably many orbits of a
positive operator, the optimal counting order is determined by
the counting function of the nonempty spectral bands.
For $j\ge1$, let $P_j$ be the spectral projection for
$2^{-j-1}<-\log\lambda\le2^{-j}$, and put
$K_A(R)=\#\{j\ge1:2^j\le R,\ P_j\ne0\}$.
Each nonempty band is counted once, regardless of the dimension
of its spectral subspace. Theorem~\ref{thm:band-count} gives
$N_\mu(R)\ge c_\mu K_A(R)-C_\mu$ for every admissible real
design, and constructs an admissible integer design with
$N_\mu(R)\le L K_A(R)+L$, for $R\ge2$.
The constants $c_\mu>0$ and $C_\mu\ge0$ may depend on the
design, whereas $L$ depends only on the bounds of the given orbit frame.
These conclusions also hold for infinite dimensional spectral
subspaces. If the generating vectors span a space of finite
dimension $r$, we further prove
$rN_\mu(R)\ge n_A(R)-O_\mu(1)$, where $n_A(R)$ counts
the eigenvalues in $(0,e^{-1/R}]$ with multiplicity
(Proposition~\ref{prop:finite-channel-count}).
For an observable system $(A,C)$ with $A$ normal,
finitely many sampling times suffice exactly when $\|A\|<1$
(Theorem~\ref{thm:finite-times}).

Our second main question is whether there is a sampling design
with the smallest frame condition number under a prescribed bound
on the counting function.
Let $A$ be normal with $\|A\|=1$. The norm condition is automatic
when finitely many orbits form a frame for $H$.
Fix a set of allowed times $T_0\subset\mathbb N_0$ and a function
$F:[0,\infty)\to[0,\infty)$.
If there is an admissible design supported on $T_0$ with
$N_\mu(R)\le F(R)$ for every $R\ge0$, then one such design
has the smallest condition number
(Theorem~\ref{thm:counting-condition-attainment}).
Both the sampling set $T\subset T_0$ and the positive weights
may vary, subject to the prescribed counting bound.
For a single orbit of a positive operator,
Corollary~\ref{cor:optimal-counting-envelope} gives attainment
results under the bound $N_\mu(R)\le c\,m_A(R)+K$ for every
$R\ge R_0$, where $K\ge0$ and $R_0\ge2$ are fixed.
A finite dimensional example shows that the infimum of the frame
condition number need not be attained when $\|A\|<1$.

For prescribed sampling sets, we obtain more explicit conditioning
results. For a fixed Riesz basis $(u_n)_{n\ge1}$, the infimum of the
frame condition number over all positive weights $(a_n)_{n\ge1}$
for which $(\sqrt{a_n}u_n)_{n\ge1}$ is a frame is attained
(Proposition~\ref{prop:optimal-riesz-weights}).
Fix $h>0$ and an orthonormal basis $(e_j)_{j\ge1}$, and consider
the orbit generated by $f=\sum_{j\ge1}e^{-jh/2}e_j$ under the positive
diagonal operator defined by $Ae_j=e^{-e^{-jh}}e_j$.
For a fixed $\xi\in\mathbb R$, we compute the minimum frame
condition number among admissible designs whose supports differ
from the set $\{e^{\xi+kh}:k\ge1\}$ by finitely
many points, whenever this class is nonempty.
The minimum is attained by a design in this class with
$w_t=t$ for every sampling time $t$
(Theorem~\ref{thm:geometric-optimal-weights}).
Let $\Gamma_{\mathrm{geom}}(h)$ denote the minimum frame condition
number over all admissible designs described above, allowing
$\xi$ to vary. Then $\Gamma_{\mathrm{geom}}(h)\sim\tfrac18 e^{\pi^2/h}$
as $h\downarrow0$
(Corollary~\ref{cor:geometric-phase-conditioning}).
The proofs use Edrei's product representation for generating
functions of doubly infinite totally positive sequences
\cite{Edrei1953} and the Poisson summation formula.

For general normal operators, the angular distribution of the
spectrum also affects sampling. Suppose there exist $M\in\mathbb N$ and $L_0>0$ such that,
in each radial band $2^{-j-1}<-\log|z|\le2^{-j}$, $j\ge1$,
the spectral arguments lie in at most $M$ arcs of length
at most $L_0 2^{-j}$.
For every $0<\varepsilon<1$, Theorem~\ref{thm:normal-angular}
constructs an integer design with
$(1-\varepsilon)S\le S_\mu\le(1+\varepsilon)S$ and
$N_\mu(R)\le L K_{|A|}(R)+L$ for $R\ge2$.
Here $|A|=(A^*A)^{1/2}$ and $K_{|A|}$ counts its nonempty
spectral bands as above.
For a prescribed spectral set satisfying this condition and
prescribed frame bounds, a common design preserves the
frame property for all corresponding generating families
(Corollary~\ref{cor:uniform-spectral-designs}).

For a fixed normal operator, the optimal counting order can
depend on the generating vectors.
Proposition~\ref{prop:ring-observation} gives a normal operator
with two countably infinite generating families. For the first,
there is an admissible integer design with
$N_\mu(R)=\log_2R+O(1)$. For the second, every admissible real
design satisfies $N_\mu(R)\ge c_\mu R$ for some $c_\mu>0$ and
all sufficiently large $R$.
The optimal counting growth is logarithmic for the first family,
meaning $N_\mu(R)\asymp\log R$, and linear for the second,
meaning $N_\mu(R)\asymp R$, as $R\to\infty$.
Both conclusions hold for real and integer sampling.
For the second family, Section~\ref{sec:ring-conditioning} estimates
the infimum of the upper sampling density
$\limsup_{R\to\infty}N_\mu(R)/R$ under a prescribed upper bound
on the frame condition number.

The paper is organized as follows.
Section~\ref{sec:framework} gives the observation formulation
and the basic spectral estimates.
Section~\ref{sec:positive-core} proves the counting bounds and
constructs critical sampling sets for single orbits of positive operators.
Section~\ref{sec:multiple} treats finitely and countably many
orbits, including sampling at finitely many times and unweighted sampling.
Section~\ref{sec:conditioning} studies optimal condition numbers.
Section~\ref{sec:normal} treats normal operators and the role
of the generating vectors.
Section~\ref{sec:questions} discusses common sampling sets
for orbits of positive and normal operators.

\section{Operator orbits and sampling}
\label{sec:framework}

We first establish the Stein identity and a formula for the contribution
of all observations from a given integer time onward
(Lemma~\ref{lem:stein}).
For positive operators, we then derive operator norm estimates
on the subspaces corresponding to the spectral bands.
These estimates are used to construct weighted sampling designs for
single and multiple orbits and to replace real sampling times by
integers while preserving the frame property.

Let $Y$ be a Hilbert space, called the observation space, and let
$A\in\mathcal B(H)$ and $C\in\mathcal B(H,Y)$.
We call $(A,C)$ an observable system if there are constants
$0<a\le b<\infty$ such that
\[
 a\|x\|^2\le \sum_{n\ge0}\|CA^nx\|^2\le b\|x\|^2,
 \qquad x\in H.
\]
Set $B=C^*C$ and define the frame operator by
$S=\sum_{n\ge0}(A^n)^*BA^n$. Sums of positive operators are understood
in the strong operator topology whenever their partial sums are
uniformly bounded in norm.
Define the analysis operator $\mathcal O:H\to\ell^2(\mathbb N_0;Y)$
by $\mathcal O x=(CA^nx)_{n\ge0}$. Then $S=\mathcal O^*\mathcal O$.
If $(e_j)_{j\in J}$ is an orthonormal basis of $Y$, put $f_j=C^*e_j$.
Then $\langle CA^nx,e_j\rangle=\langle x,(A^*)^nf_j\rangle$,
so $(A,C)$ is observable if and only if
$\{(A^*)^nf_j:n\ge0,\ j\in J\}$ is a frame.
For a positive operator, these vectors are $A^nf_j$.
A single orbit corresponds to $Y=\mathbb C$, and a finite family
of orbits to $\dim Y<\infty$. The family has $|J|=\dim Y$
generating vectors. When this number is finite,
$\operatorname{rank}C=\dim\operatorname{span}\{f_j:j\in J\}\le|J|$,
with equality precisely when the generating vectors are linearly
independent.

For a normal operator $A$, let $E_A$ denote its spectral measure.
Real powers of $A$ are defined by the spectral calculus using
$\arg z\in(-\pi,\pi]$. We set $0^t=0$ for $t>0$ and $A^0=I$.
A sampling design is a positive atomic measure
$\mu=\sum_{t\in T}w_t\delta_t$ with locally finite support
$T\subset[0,\infty)$ and weights $0<w_t<\infty$ for every $t\in T$.
The weighted analysis operator is
$\mathcal O_\mu x=(\sqrt{w_t}CA^tx)_{t\in T}$. When bounded,
it maps $H$ to $\ell^2(T;Y)$ and satisfies
$S_\mu=\mathcal O_\mu^*\mathcal O_\mu$.
The corresponding sampled vectors are
$\sqrt{w_t}(A^t)^*f_j$, with $t\in T$ and $j\in J$.
The sampled frame operator and counting function are
\[
 S_\mu=\sum_{t\in T}w_t(A^t)^*BA^t,
 \qquad N_\mu(R)=\#(T\cap[0,R]).
\]
The design is admissible if $\alpha I\le S_\mu\le\beta I$ for some
$0<\alpha\le\beta<\infty$. Since $N_\mu$ depends only on the support $T$,
we also denote it by $N_T$.
An integer sampling design has $T\subset\mathbb N_0$.
For a positive invertible operator $Q$, define its condition number by
$\kappa(Q)=\|Q\|\|Q^{-1}\|$. If $Q$ is a frame operator, its
optimal lower and upper bounds are $\|Q^{-1}\|^{-1}$ and $\|Q\|$.

\begin{lem}\label{lem:stein}
Let $(A,C)$ be an observable system. Then $B=S-A^*SA$ and
$\sum_{n\ge N}(A^n)^*BA^n=(A^N)^*SA^N$ for every $N\in\mathbb N_0$.
Moreover, $A^Nx\to0$ for every $x\in H$. If $A$ is normal, then
$\|A\|\le1$ and its spectral projection on the unit circle is zero.
\end{lem}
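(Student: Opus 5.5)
The argument works directly from the definition of $S$ as a strong operator limit of partial sums $S_M=\sum_{n=0}^{M}(A^n)^*BA^n$. These partial sums are increasing and bounded above by $bI$, so $S_M\to S$ strongly.

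\emph{Tail identity.} Fix $N\in\mathbb N_0$. For each $x\in H$ we have
\[
\Bigl\langle \sum_{n=N}^{N+M}(A^n)^*BA^n x,x\Bigr\rangle=\langle S_M A^Nx,A^Nx\rangle .
\]
As $M\to\infty$, the right side tends to $\langle SA^Nx,A^Nx\rangle$. The tail partial sums are increasing and bounded, so they converge strongly, and polarization then gives $\sum_{n\ge N}(A^n)^*BA^n=(A^N)^*SA^N$.

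\emph{Stein identity.} Take $N=1$. The full sum equals $B$ plus the tail from $n=1$, so $S=B+A^*SA$, that is, $B=S-A^*SA$.

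\emph{Decay of $A^Nx$.} The lower frame bound gives $S\ge aI$. Combining this with the tail identity,
\[
a\|A^Nx\|^2\le\langle SA^Nx,A^Nx\rangle=\sum_{n\ge N}\|CA^nx\|^2 .
\]
The right side is the tail of a convergent series, because $\sum_{n\ge0}\|CA^nx\|^2\le b\|x\|^2$. Hence it tends to $0$, and so $A^Nx\to0$.

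\emph{Normal case.} Let $A$ be normal. Then $\|A^N\|=\|A\|^N$, since the spectral radius equals the norm for normal operators. By the uniform boundedness principle, strong convergence $A^Nx\to0$ forces $\sup_N\|A^N\|<\infty$, which gives $\|A\|\le1$. Now let $E=E_A(\mathbb T)$ be the spectral projection on the unit circle and take $x\in\operatorname{ran}E$. By the spectral theorem,
\[
\|A^Nx\|^2=\int_{\mathbb T}|z|^{2N}\,d\langle E_A x,x\rangle=\|x\|^2 ,
\]
so $\|A^Nx\|$ is constant in $N$. Since $A^Nx\to0$, this forces $x=0$, and therefore $E=0$.

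\emph{Main obstacle.} The only delicate point is the tail identity: the sum has to be reindexed while keeping strong convergence under control. Working with quadratic forms of the monotone, bounded partial sums and then polarizing handles this cleanly. Everything else in the proof is immediate.
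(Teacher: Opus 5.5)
Your proof is correct and follows essentially the paper's argument: the tail identity and Stein identity come from the definition of $S$, decay comes from $a\|A^Nx\|^2\le\sum_{n\ge N}\|CA^nx\|^2\to0$, and the normal case uses norm equals spectral radius together with the spectral theorem on $E_A(\mathbb T)$. The only minor difference is that you obtain $\sup_N\|A^N\|<\infty$ from the uniform boundedness principle, whereas the paper reads off $\|A^N\|^2\le b/a$ directly from the same inequality via $\sum_{n\ge N}\|CA^nx\|^2\le b\|x\|^2$, which makes the uniform boundedness principle unnecessary.
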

\begin{proof}
The definition of $S$ gives the first two identities. Since $S\ge aI$,
$a\|A^Nx\|^2\le\sum_{n\ge N}\|CA^nx\|^2\to0$.
The same inequalities give $\|A^N\|^2\le b/a$, so the spectral
radius is at most one. For a normal operator, the norm equals the
spectral radius. The strong convergence $A^N\to0$ then implies
that the spectral projection on the unit circle is zero.
\end{proof}

Adding finitely many sampling times changes the counting function
by at most a constant. The following lemma shows that a bounded
operator with closed range and finite dimensional kernel can be
made bounded below by adding finitely many integer observations.

\begin{lem}\label{lem:finite-completion}
Let $(A,C)$ be an observable system, let $Z$ be a Hilbert space,
and let $W:H\to Z$ be a bounded operator with closed range
and finite dimensional kernel.
There is a finite set $F\subset\mathbb N_0$ such that
$x\mapsto(Wx,(CA^nx)_{n\in F})$ is bounded below.
\end{lem}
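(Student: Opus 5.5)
The plan is to use a standard compactness argument. Set $K=\ker W$; it is finite dimensional. Since $W$ has closed range, it is bounded below on $K^\perp$: there is $c>0$ with $\|Wx\|\ge c\|x\|$ for $x\in K^\perp$. The goal is a finite $F\subset\mathbb N_0$ such that the operator $V_Fx=(CA^nx)_{n\in F}$ is injective on $K$. Since $K$ is finite dimensional, injectivity on $K$ gives $\|V_Fk\|\ge\delta\|k\|$ for all $k\in K$ and some $\delta>0$.

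First step: choosing $F$. For each $N$ put $K_N=\{k\in K: CA^nk=0 \text{ for } 0\le n\le N\}$. These are decreasing subspaces of the finite dimensional space $K$, so they stabilize at some $K_{N_0}$. Any $k$ in every $K_N$ satisfies $\sum_{n\ge0}\|CA^nk\|^2=0$, so observability ($a\|k\|^2\le\sum\|CA^nk\|^2$) forces $k=0$. Hence $K_{N_0}=\{0\}$, and I take $F=\{0,1,\dots,N_0\}$.

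Second step: combining the two pieces. Write $x=k+y$ with $k\in K$ and $y\in K^\perp$. Then $Wx=Wy$, so $\|Wx\|\ge c\|y\|$. Also $\|V_Fx\|\ge\|V_Fk\|-\|V_F\|\|y\|\ge\delta\|k\|-\|V_F\|\|y\|$. Put $M=\|V_F\|$, which is finite because $F$ is finite. Then
\[
\|Wx\|+\|V_Fx\|\ge c\|y\|+\max\bigl(0,\delta\|k\|-M\|y\|\bigr).
\]
To finish, split into cases. If $M\|y\|\le\tfrac{\delta}{2}\|k\|$, the right side is at least $\tfrac{\delta}{2}\|k\|$, which together with $c\|y\|$ dominates a multiple of $\|x\|$. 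Otherwise $\|k\|<\tfrac{2M}{\delta}\|y\|$, and $c\|y\|$ alone controls $\|x\|\le\|k\|+\|y\|$. In both cases the map $x\mapsto(Wx,V_Fx)$ is bounded below. This is equivalent to the statement, since $\|(Wx,V_Fx)\|^2=\|Wx\|^2+\|V_Fx\|^2$.

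I do not expect a genuine obstacle. The only point needing care is the stabilization argument, which uses finite dimensionality of $\ker W$ so that finitely many times suffice. An alternative is a direct contradiction argument with a sequence $x_m$ of unit vectors. Note that Lemma~\ref{lem:stein} is not needed here; only the lower observability inequality is used.
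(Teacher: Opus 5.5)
Your proof is correct and follows the same strategy as the paper: decompose along $\ker W\oplus(\ker W)^\perp$, choose finitely many integer times whose observations are injective, hence bounded below, on the finite dimensional kernel, and combine this with the lower bound of $W$ on $(\ker W)^\perp$ that comes from the closed range. The differences are minor. You obtain the finite set by stabilization of the decreasing subspaces $K_N$, whereas the paper uses uniform convergence of the partial sums on the unit sphere of $\ker W$. You also combine the two pieces by an explicit two-case estimate, which gives a concrete lower constant, where the paper argues by contradiction with a convergent subsequence.
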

\begin{proof}
Set $E=\ker W$. The analysis operator $\mathcal O$ is bounded below on $E$.
The partial sums of its squared norm converge uniformly on the unit
sphere of $E$, since $E$ is finite dimensional. Thus finitely many
integer observations define an operator $V$ that is bounded below
on $E$. If the conclusion were false,
there would be unit vectors $x_k=u_k+v_k$, $k\ge1$, with $u_k\in E$ and
$v_k\in E^\perp$, such that $Wx_k\to0$ and $Vx_k\to0$.
Closedness of the range of $W$ gives $v_k\to0$.
Passing to a subsequence, we obtain a unit vector $u\in E$ with
$u_k\to u$ and $Vu=0$. This contradicts the lower bound for $V$ on $E$.
\end{proof}

For a positive contraction $A$, let $P_0$ be its spectral projection on
$[0,e^{-1/2})$, and, for $j\ge1$, let $P_j$ correspond to
$2^{-j-1}<-\log\lambda\le2^{-j}$.
The endpoint $e^{-1/2}$ belongs to the band for $P_1$.
Write $q_j=j\log2$ for $j\ge0$ and
$\mathcal J=\{0\}\cup\{j\ge1:P_j\ne0\}$. For $u\in\mathbb R$ and a nonempty set $E\subset\mathbb R$, write
$\operatorname{dist}(u,E)=\inf_{v\in E}|u-v|$.

\begin{lem}\label{lem:log-localization}
Let $(A,C)$ be an observable system with $A$ positive.
For $u\ge0$, set $F(u)=e^{u/2}CA^{e^u}$.
There are absolute constants $L,c>0$ such that the following hold.
\begin{enumerate}[(i)]
\item For every $j\ge0$ and $u\ge0$,
$\|F(u)P_j\|+\|F'(u)P_j\|
\le L\sqrt b\,e^{-c|u-q_j|}$.

\item For every $x\in H$,
$\int_0^\infty\|F'(u)x\|^2\,du\le L^2b\|x\|^2$.

\item For $0<h\le1$ and $D\ge0$, set
$\mathcal K_{h,D}=\{k\in\mathbb N_0:
\operatorname{dist}(kh,\{q_j:j\in\mathcal J\})>D\}$.
Then, for every $x\in H$,
\[
 h\sum_{k\in\mathcal K_{h,D}}\|F(kh)x\|^2
 \le L^2b\,e^{-cD}\|x\|^2.
\]
\end{enumerate}
\end{lem}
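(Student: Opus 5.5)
The plan is to derive (i) from a single estimate for $\|CA^tP_j\|$ at real times $t\ge0$, obtained from the Stein identity of Lemma~\ref{lem:stein}. Parts (ii) and (iii) then follow from (i) by a Schur-type summation over the bands, since $H=\bigoplus_{j\ge0}P_jH$.

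\emph{Step 1 (pointwise band estimate).} Fix $y\in H$ and put $g(\tau)=\|S^{1/2}A^\tau y\|^2$ for real $\tau\ge0$. Since $A$ is positive, $A^{n+\tau}=A^nA^\tau$ for integers $n\ge0$, so $g(\tau)=\sum_{n\ge0}\|CA^{n+\tau}y\|^2$. Hence
\[
\|CA^ty\|^2=g(t)-g(t+1)\le\sup_{t\le\tau\le t+1}|g'(\tau)|.
\]
Differentiating, $g'(\tau)=2\operatorname{Re}\langle SA^\tau y,(\log A)A^\tau y\rangle$. The spectral calculus is legitimate on each $P_jH$, and also on $P_0H$ for $\tau>0$, where $\lambda^\tau\log\lambda$ is bounded. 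This gives $|g'(\tau)|\le2b\,\|A^\tau y\|\,\|(\log A)A^\tau y\|$.

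For $y=P_jx$ with $j\ge1$ we have $\|A^\tau P_j\|\le e^{-\tau2^{-j-1}}$ and $\|\log A\,P_j\|\le2^{-j}$. Therefore
\[
\|CA^tP_j\|^2\le 2b\,2^{-j}e^{-t2^{-j}},
\qquad
\|F(u)P_j\|^2\le 2b\,e^{u-q_j}\exp\bigl(-e^{u-q_j}\bigr).
\]
The right-hand side is at most $2b\,e^{-|u-q_j|}$ for $u\le q_j$, and it is superexponentially small for $u>q_j$. This proves the $F$-part of (i) with absolute $L,c$.

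For $j=0$, the band $P_0$ has $s=-\log\lambda>1/2$, possibly unbounded. Here I bound $s\,e^{-2\tau s}\le C e^{-\tau/2}$ for $\tau\ge t=e^u\ge1$. This yields $\|F(0\text{-band})\|\lesssim\sqrt b\,e^{u/2}e^{-e^u/4}\le L\sqrt b\,e^{-cu}$.

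\emph{Step 2 (derivative).} We have
\[
F'(u)=\tfrac12F(u)+e^{3u/2}\,C(\log A)A^{e^u}.
\]
For the second term I apply Step 1 with $y$ replaced by $(\log A)A^{\epsilon}P_jx$, splitting a small power if needed for $j=0$. This produces an extra factor $2^{-j}e^{u}=e^{u-q_j}$, respectively a factor $\lesssim e^u e^{-e^u/8}$ when $j=0$. Such a factor is absorbed into the exponential or superexponential decay at the cost of shrinking $c$ and enlarging $L$.

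\emph{Step 3 ((ii) and (iii)).} Write $F'(u)x=\sum_{j\in\mathcal J}F'(u)P_jx$. By (i) and Cauchy--Schwarz with weights $e^{-c|u-q_j|}$,
\[
\|F'(u)x\|^2\le L^2b\Bigl(\sum_j e^{-c|u-q_j|}\Bigr)\Bigl(\sum_j e^{-c|u-q_j|}\|P_jx\|^2\Bigr).
\]
The first factor is bounded uniformly in $u$, because $q_j=j\log2$ is uniformly spaced. Integrating in $u$ gives $\int e^{-c|u-q_j|}du=2/c$, and $\sum_j\|P_jx\|^2=\|x\|^2$, which proves (ii).

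For (iii), the same inequality at $u=kh$ is summed over $k\in\mathcal K_{h,D}$. Only $j\in\mathcal J$ occur, since $P_j=0$ otherwise, and for these $j$ every such $k$ has $|kh-q_j|>D$. I split $e^{-c|kh-q_j|}=e^{-c|kh-q_j|/2}e^{-c|kh-q_j|/2}$ in the second factor. Then
\[
h\sum_{k:\,|kh-q_j|>D}e^{-c|kh-q_j|}\le C e^{-cD/2}
\]
uniformly in $0<h\le1$, by comparing the Riemann sum with the integral. After renaming constants, this yields (iii).

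The main obstacle is Step 1, namely obtaining decay with the right scale from only the Bessel bound at integer times. The difference identity $g(t)-g(t+1)$, combined with the mean value bound through $\log A$, is the device I would rely on. The unbounded band $P_0$ needs separate care.
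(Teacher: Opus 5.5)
Your proposal is correct and follows essentially the paper's route. The paper obtains the same band estimate $\|CA^tP_j\|^2\le2b\,2^{-j}e^{-t2^{-j}}$ from the Stein identity by compressing $B=S-A^*SA$ to $P_jH$ to get $\|CP_j\|^2\le2b\|I-A_j\|$, which amounts to the same thing as your mean-value argument on $g(t)-g(t+1)$; it then derives (ii) and (iii) by a Schur-type summation over the dyadically spaced centers $q_j$, for which your weighted Cauchy--Schwarz is a slightly more direct variant of its bilinear kernel estimate. The identity $x=\sum_jP_jx$ that you use relies on $E_A(\{1\})=0$ from Lemma~\ref{lem:stein}, which the paper invokes explicitly through truncation and Fatou's lemma.
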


\begin{proof}
\emph{(i)}
For $j\ge0$, write $A_j=A|_{P_jH}$, $B_{jj}=P_jB|_{P_jH}$ and
$S_{jj}=P_jS|_{P_jH}$. Compressing the identity
$B=S-A^*SA$ from Lemma~\ref{lem:stein} gives, for $j\ge1$,
$B_{jj}=S_{jj}-A_jS_{jj}A_j$ and hence
$\|B_{jj}\|\le2b\|I-A_j\|\le2b2^{-j}$.
Thus $\|CP_j\|\le\sqrt{2b2^{-j}}$.
For $j\ge1$, set $X_j=-\log A_j$ and $t=e^u$.
The estimate
\[
 \sqrt t\,\|CP_j e^{-tX_j}\|
 \le\sqrt{2b}\,(t2^{-j})^{1/2}e^{-t2^{-j-1}}
\]
and the identity
$F'(u)P_j=\sqrt t\,CP_j(\tfrac12-tX_j)e^{-tX_j}$
give absolute constants $L_0,c_0>0$ such that
\begin{equation}\label{eq:log-localization-band}
 \|F(u)P_j\|+\|F'(u)P_j\|
 \le L_0\sqrt b\,e^{-c_0|u-q_j|}
 \qquad j\ge1,\quad u\ge0.
\end{equation}
On $P_0H$, use $\|CP_0\|\le\sqrt b$,
$\|A^tP_0\|\le e^{-t/2}$ and
$\sup_{s\ge1/2}ts e^{-ts}\le L_2e^{-t/4}$ for $t\ge1$.
Both $F(u)$ and $F'(u)$ vanish on $\ker A$.
Since $q_0=0$, these estimates show that
\eqref{eq:log-localization-band} also holds for $j=0$,
after increasing $L_0$ if necessary.

\emph{(ii)}
Let $\nu$ be either Lebesgue measure on $[0,\infty)$ or
$h\sum_{k\ge0}\delta_{kh}$ with $0<h\le1$.
For $\rho>0$, direct integration or summation of a geometric
series gives
$\int_0^\infty e^{-\rho|u-q_i|/2}\,d\nu(u)\le2+4/\rho$.
Since $|u-q_i|+|u-q_j|\ge|q_i-q_j|$ and
$|q_i-q_j|=|i-j|\log2$, it follows that
\begin{equation}\label{eq:log-localization-kernel}
\begin{aligned}
 &\int_0^\infty
 e^{-\rho(|u-q_i|+|u-q_j|)}\,d\nu(u)
 \le e^{-\rho|q_i-q_j|/2}
 \int_0^\infty e^{-\rho|u-q_i|/2}\,d\nu(u)
 \le K e^{-\eta|i-j|}
\end{aligned}
\end{equation}
for $\rho\in\{c_0,c_0/2\}$, where
$K=2+8/c_0$ and $\eta=c_0\log2/4$.

For $\gamma>0$, set
$M_\gamma=\sum_{\ell\in\mathbb Z}e^{-\gamma|\ell|}<\infty$.
Using $2a_i a_j\le a_i^2+a_j^2$, we obtain
\begin{equation}\label{eq:log-localization-sequence}
\begin{aligned}
 \sum_{i,j\ge0}e^{-\gamma|i-j|}a_i a_j
 \le \frac12\sum_{i,j\ge0}
 e^{-\gamma|i-j|}(a_i^2+a_j^2)
 \le M_\gamma\sum_{j\ge0}a_j^2
\end{aligned}
\end{equation}
for every finitely supported nonnegative sequence $(a_j)_{j\ge0}$.

Write $x_j=P_jx$ and first suppose that only finitely many $x_j$
are nonzero. Applying \eqref{eq:log-localization-band},
then \eqref{eq:log-localization-kernel} with $\rho=c_0$ and
Lebesgue measure, and finally \eqref{eq:log-localization-sequence}
with $\gamma=\eta$ and $a_j=\|x_j\|$, gives
\[
\begin{aligned}
 \int_0^\infty\|F'(u)x\|^2\,du
 &\le L_0^2b\sum_{i,j\ge0}\|x_i\|\,\|x_j\|
       \int_0^\infty
       e^{-c_0(|u-q_i|+|u-q_j|)}\,du\\
 &\le L_0^2bK\sum_{i,j\ge0}
       e^{-\eta|i-j|}\|x_i\|\,\|x_j\|\\
 &\le L_0^2bKM_\eta\|x\|^2.
\end{aligned}
\]

\emph{(iii)}
Fix $0<h\le1$ and $D\ge0$.
If $k\in\mathcal K_{h,D}$ and $i,j\in\mathcal J$, then
$|kh-q_i|>D$ and $|kh-q_j|>D$. Hence
\begin{equation}\label{eq:log-localization-tail-factor}
 e^{-c_0(|kh-q_i|+|kh-q_j|)}
 \le e^{-c_0D}
      e^{-\frac{c_0}{2}(|kh-q_i|+|kh-q_j|)}.
\end{equation}
For $x$ with finitely many nonzero spectral components,
we again write $x_j=P_jx$.
Since $x_j=0$ for $j\notin\mathcal J$,
\eqref{eq:log-localization-band} and
\eqref{eq:log-localization-tail-factor} give
\[
\begin{aligned}
 h\sum_{k\in\mathcal K_{h,D}}\|F(kh)x\|^2
 &\le L_0^2b\sum_{i,j\in\mathcal J}\|x_i\|\,\|x_j\|\,
       h\sum_{k\in\mathcal K_{h,D}}
       e^{-c_0(|kh-q_i|+|kh-q_j|)}\\
 &\le L_0^2b e^{-c_0D}
       \sum_{i,j\in\mathcal J}\|x_i\|\,\|x_j\|\,
       h\sum_{k\ge0}
       e^{-\frac{c_0}{2}(|kh-q_i|+|kh-q_j|)}\\
 &\le L_0^2bK e^{-c_0D}
       \sum_{i,j\in\mathcal J}
       e^{-\eta|i-j|}\|x_i\|\,\|x_j\|\\
 &\le L_0^2bKM_\eta e^{-c_0D}\|x\|^2.
\end{aligned}
\]
The third inequality uses \eqref{eq:log-localization-kernel}
with $\rho=c_0/2$ and
$\nu=h\sum_{k\ge0}\delta_{kh}$.
The last inequality follows from
\eqref{eq:log-localization-sequence} with
$\gamma=\eta$ and $a_j=\|x_j\|$.

By Lemma~\ref{lem:stein}, $E_A(\{1\})=0$.
Thus $x^{(N)}=\sum_{j=0}^N P_jx\to x$ for every $x\in H$.
Applying the estimates obtained in (ii) and (iii) to $x^{(N)}$
and then using Fatou's lemma extends both bounds to every $x\in H$.
Taking $L=L_0\sqrt{KM_\eta}$ and $c=c_0$ proves (i)--(iii).
\end{proof}

\begin{prop}\label{prop:reference-grid}
Let $(A,C)$ be an observable system with $A$ positive.
There exist $0<h\le1$, $D>0$, $M\ge2$ and $N\in\mathbb N_0$,
depending only on $a,b$, such that, writing
$t_k=e^{kh}$ and
$\mathcal I_{h,D}=\mathbb N_0\setminus\mathcal K_{h,D}$,
the following hold.
\begin{enumerate}[(i)]
\item The real sampling design
\(
 \mu=\delta_0+\sum_{k\in\mathcal I_{h,D}}ht_k\delta_{t_k}
\)
is admissible.

\item The integer sampling design
\(
 \mu_{\mathbb Z}
 =\sum_{n=0}^{N}\delta_n+
 \sum_{\substack{k\in\mathcal I_{h,D}\\t_k\ge M}}
 ht_k\delta_{\lfloor t_k\rfloor}
\)
is admissible.
Here $\lfloor t\rfloor$ denotes the greatest integer not exceeding $t$.
\end{enumerate}
\end{prop}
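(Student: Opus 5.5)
Since the design samples the continuous-time orbit on a logarithmic grid, the natural route is to compare $S_\mu$ with the continuous frame operator $\int_0^\infty F(u)^*F(u)\,du$, where $F(u)=e^{u/2}CA^{e^u}$ as in Lemma~\ref{lem:log-localization}, and with $S$ itself.

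\textbf{Step 1 (continuous comparison).} With $t=e^u$ we have $\|F(u)x\|^2\,du=\|CA^tx\|^2\,dt$, so $\int_0^\infty\|F(u)x\|^2du=\int_1^\infty\|CA^tx\|^2dt$. To compare with the discrete sum $\sum_{n\ge1}\|CA^nx\|^2$, write $CA^nx-CA^tx$ as an integral of $\frac{d}{ds}CA^sx$ for $t\in[n,n+1]$. The derivative is controlled through $F'$ and part (ii) of Lemma~\ref{lem:log-localization}. This should give
\[
 \Bigl|\Bigl(\sum_{n\ge1}\|CA^nx\|^2\Bigr)^{1/2}-\Bigl(\int_1^\infty\|CA^tx\|^2dt\Bigr)^{1/2}\Bigr|\le L'\sqrt b\|x\|
\]
for an absolute $L'$. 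That is too crude for lower bounds. A better choice is to start instead from $\|CA^nx\|^2$ versus $\|CA^{n+s}x\|^2$, using the fact that $s\mapsto\|CA^sx\|^2$ is nonincreasing. That monotonicity holds because $\|CA^sx\|^2=\langle BA^sx,A^sx\rangle$... but it is not obviously monotone. So I would instead use a concrete lower bound: $\|CA^{n}x\|^2\le$ an average over $[n-1,n]$ plus an error, and conclude $\int_0^\infty\|CA^tx\|^2dt\ge$ a constant (depending on $a,b$) times $a\|x\|^2$ minus the contribution of $P_0$. The $P_0$ part and the first few times are handled by the term $\delta_0$ (respectively $\sum_{n\le N}\delta_n$), as in Lemma~\ref{lem:finite-completion}.

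\textbf{Step 2 (discretization in $u$).} Next compare $\int_0^\infty\|F(u)x\|^2du$ with the Riemann sum $h\sum_{k\ge0}\|F(kh)x\|^2$. The standard estimate
\[
 \bigl|\|F(u)x\|-\|F(kh)x\|\bigr|\le\int_{kh}^{(k+1)h}\|F'(v)x\|\,dv
\]
together with Cauchy--Schwarz and Lemma~\ref{lem:log-localization}(ii) shows that the two square-rooted quantities differ by at most $L\sqrt{b}\,h\|x\|$. Since $h\|F(kh)x\|^2=ht_k\|CA^{t_k}x\|^2$, this Riemann sum is exactly the real design over all $k$. Restricting to $\mathcal I_{h,D}$ costs at most $L^2be^{-cD}\|x\|^2$ by part (iii). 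Choosing $D$ large and $h$ small relative to $a/b$ therefore yields the lower bound. The upper bound follows from the same estimates.

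\textbf{Step 3 (integer rounding).} Replace $t_k$ by $\lfloor t_k\rfloor$. The relative shift is $\le1/t_k\le1/M$, so in $u$ it amounts to a shift by $\le\log(t_k/\lfloor t_k\rfloor)\le 2/M$. Bounding $\|F(\log\lfloor t_k\rfloor)x-F(kh)x\|$ by an integral of $\|F'\|$ over a short interval, and summing, gives an error $\lesssim L\sqrt b\,(2/(Mh))^{1/2}\|x\|$. The weight mismatch ($ht_k$ versus the natural $h\lfloor t_k\rfloor$) contributes only a factor $1+O(1/M)$. Distinct $k$ may collide to the same integer, but that only adds weights and keeps the upper bound controlled, since at most $O(1/h)$ values of $k$ collide when $t_k\ge M$. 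Finally the terms with $t_k<M$ are replaced by $\sum_{n\le N}\delta_n$ with $N\ge M$.

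\textbf{Main obstacle.} The main obstacle is the lower bound in Step 1, namely connecting the discrete orbit frame bound $a$ to the continuous integral with constants depending only on $a,b$. I would first try interpolating $A^t=A^nA^{t-n}$ and using $\|A^{s}\|\le1$ for the pointwise comparison $\|CA^{n+1}x\|\le\|CA^{t}x\|+\text{derivative error}$.
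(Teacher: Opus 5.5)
There is a genuine gap, and it is the one you flagged yourself. The proposal never proves a lower bound $\|Cx\|^2+\int_1^\infty\|CA^tx\|^2\,dt\ge\gamma^2\|x\|^2$ with $\gamma>0$ depending only on $a,b$. Without it, Steps 2 and 3 only perturb a quantity you have not controlled, and the routes you suggest do not supply it.

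\emph{The derivative comparison.} Comparing $CA^nx$ with $CA^tx$ through $\frac{d}{ds}CA^sx$ leaves, as you noticed, an additive error of order $\sqrt b\,\|x\|$. The trouble is at the start of the orbit: there $[n,n+1]$ corresponds to a $u$-interval of length up to $\log2$, so Lemma~\ref{lem:log-localization}(ii) gives no smallness. An error of that size cannot be absorbed into $a$.

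\emph{Handling the early part by $\delta_0$.} This does not work either, for three reasons. Design (i) contains no integer times $1,\dots,N$. The single observation $\|Cx\|^2$ need not be bounded below on $P_0H$ (for a single orbit, $C$ has rank one). And you cannot obtain a lower bound by subtracting ``the contribution of $P_0$'': $B=C^*C$ does not commute with $A$, so the quadratic form has cross terms between spectral subspaces.

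\textbf{The missing idea.} Sum the Stein identity of Lemma~\ref{lem:stein} instead of comparing pointwise. Since $\sum_{n\ge0}A^{s+n}BA^{s+n}=A^sSA^s$, integrating over $s\in[1,2]$ gives $\int_1^\infty A^tBA^t\,dt=\int_1^2A^tSA^t\,dt$. Hence $Q(x)=\|Cx\|^2+\int_1^2\langle SA^tx,A^tx\rangle\,dt$ is at most $2b\|x\|^2$ and at least $\|Cx\|^2+a\int_1^2\|A^tx\|^2\,dt$. For a unit vector, $a\le\langle Sx,x\rangle=\|Cx\|^2+\langle SAx,Ax\rangle$ forces either $\|Cx\|^2\ge a/2$ or $\|Ax\|^2\ge a/(2b)$. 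In the second case, $\int_1^2\|A^tx\|^2\,dt\ge\|A^2x\|^2\ge\|Ax\|^4$ because $A$ is a positive contraction. This yields $\gamma=a^{3/2}/(2b)$. Your remark about $A^t=A^nA^{t-n}$ points in this direction, but the frame inequality has to be applied to the vectors $A^sx$, not used for a pointwise comparison.

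Once $\gamma$ is available, your Step 2 is exactly the paper's argument. Your rounding estimate through the global bound on $F'$ is a valid substitute for the paper's band-by-band Schur test.

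\textbf{A second, smaller gap.} The last sentence of Step 3 does not hold as stated. Adding the integer times $0,\dots,N$ does not by itself control the real observations $CA^{t_k}x$ with $1\le t_k<M$, because $s\mapsto\|CA^sx\|$ is not monotone. The paper approximates $\lambda^{t_k}$ uniformly on $[0,1]$ by polynomials $p_k(\lambda)=\sum_{n\le N}d_{kn}\lambda^n$, so that $\|C(A^{t_k}-p_k(A))x\|$ is small. It then uses $\|Cp_k(A)x\|^2\le(\sum_n|d_{kn}|^2)\sum_{n\le N}\|CA^nx\|^2$. Here $N$ is dictated by the required approximation accuracy, not by $M$.
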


\begin{proof}
\emph{(i)}
Summing the Stein identity from Lemma~\ref{lem:stein} and using
$A^n\to0$ strongly gives
$\int_1^\infty A^tBA^t\,dt=\int_1^2A^tSA^t\,dt$
as quadratic forms.
Set
$Q(x)=\|Cx\|^2+\int_0^\infty\|F(u)x\|^2\,du$.
The substitution $t=e^u$ therefore gives
\[
 Q(x)=\|Cx\|^2+
 \int_1^2\langle SA^tx,A^tx\rangle\,dt
 \le2b\|x\|^2.
\]
For a unit vector $x$, either $\|Cx\|^2\ge a/2$, or
the Stein identity gives $\|Ax\|^2\ge a/(2b)$.
In the latter case, Cauchy--Schwarz yields
$\|A^2x\|^2\ge\|Ax\|^4\ge a^2/(4b^2)$.
Since $A$ is a positive contraction,
$\int_1^2\|A^tx\|^2\,dt\ge\|A^2x\|^2$.
Thus, setting $\gamma=a^{3/2}/(2b)$, we obtain
$\gamma^2\|x\|^2\le Q(x)\le2b\|x\|^2$ for every $x\in H$.

For $k\ge0$, replace $F(u)$ on $[kh,(k+1)h)$ by $F(kh)$.
The fundamental theorem of calculus, Cauchy--Schwarz and
Lemma~\ref{lem:log-localization}(ii) give
\[
 \sum_{k\ge0}\int_{kh}^{(k+1)h}
 \|(F(u)-F(kh))x\|^2\,du
 \le h^2\int_0^\infty\|F'(u)x\|^2\,du
 \le h^2L^2b\|x\|^2.
\]
Choose $0<h\le1$ so that $hL\sqrt b\le\gamma/4$.
The bounds for $Q(x)$ and the discretization error,
together with the triangle inequality, give
\[
 \frac{3\gamma}{4}\|x\|
 \le
 \left(\|Cx\|^2+h\sum_{k\ge0}\|F(kh)x\|^2\right)^{1/2}
 \le\left(\sqrt{2b}+\frac{\gamma}{4}\right)\|x\|.
\]
Next choose $D>0$ so that
$L\sqrt b\,e^{-cD/2}\le\gamma/4$.
Lemma~\ref{lem:log-localization}(iii) then gives
\[
 \left(
 h\sum_{k\in\mathcal K_{h,D}}\|F(kh)x\|^2
 \right)^{1/2}
 \le \frac{\gamma}{4}\|x\|.
\]
Removing these observations and applying the triangle inequality
once more yields
\begin{equation}\label{eq:reference-grid-real-bounds}
 \frac{\gamma}{2}\|x\|
 \le\|\mathcal O_\mu x\|
 \le\left(\sqrt{2b}+\frac{\gamma}{4}\right)\|x\|.
\end{equation}
This proves (i).

\emph{(ii)}
Set $n_k=\lfloor t_k\rfloor$.
We first estimate the error from replacing $t_k$ by $n_k$
when $t_k\ge M\ge2$.
For $j\ge1$, write $X_j=-\log(A|_{P_jH})$.
Since $n_k\ge t_k/2$, integration over $[n_k,t_k]$ gives,
with $y=t_k2^{-j}$,
\[
 \bigl\|\sqrt{ht_k}\,C(A^{t_k}-A^{n_k})P_j\bigr\|
 \le\sqrt{ht_k}
 \int_{n_k}^{t_k}\|CP_jX_je^{-rX_j}\|\,dr
 \le\frac{\sqrt{2hb}}{t_k}\,y^{3/2}e^{-y/4}.
\]
The function $y^{3/2}e^{-y/4}$ is bounded by an exponential
in $-|\log y|$.
For $j=0$, use $\|CP_0\|\le\sqrt b$ and
$\sup_{s\ge1/2}s e^{-rs}\le L_3r^{-1}e^{-r/4}$ for $r\ge1$,
where $L_3>0$ is an absolute constant.
These estimates give absolute constants $L_1,c_1>0$ such that
\[
 \bigl\|\sqrt{ht_k}\,C(A^{t_k}-A^{n_k})P_j\bigr\|
 \le\frac{L_1\sqrt{hb}}{M}
 e^{-c_1|kh-q_j|}
 \qquad j\ge0,\quad t_k\ge M.
\]

Since $q_j=j\log2$, geometric series estimates give absolute
constants $K_1,K_2>0$ such that
\[
 \sup_{k\ge0}\sum_{j\ge0}e^{-c_1|kh-q_j|}\le K_1,
 \qquad
 \sup_{j\ge0}h\sum_{k\ge0}e^{-c_1|kh-q_j|}\le K_2.
\]
Combining these bounds with the spectral band estimate for the
rounding error and applying the Schur test gives an absolute
constant $L_2>0$ such that
\begin{equation}\label{eq:reference-grid-rounding-error}
 \sum_{\substack{k\in\mathcal I_{h,D}\\t_k\ge M}}
 ht_k\|C(A^{t_k}-A^{n_k})x\|^2
 \le\frac{L_2^2b}{M^2}\|x\|^2.
\end{equation}
Choose $M\ge2$ so that
$L_2\sqrt b/M\le\gamma/8$ and $M(e^h-1)\ge1$.
By \eqref{eq:reference-grid-real-bounds} and
\eqref{eq:reference-grid-rounding-error}, rounding these times
leaves a lower norm bound of at least $3\gamma/8$.

It remains to replace the positive sampling times below $M$.
Let $\mathcal E_M=\{k\in\mathbb N_0:t_k<M\}$.
This set is finite.
For each $k\in\mathcal E_M$, choose a polynomial $p_k$ such that
$b\sum_{k\in\mathcal E_M}ht_k
\sup_{0\le\lambda\le1}|\lambda^{t_k}-p_k(\lambda)|^2
\le\gamma^2/64$.
Such polynomials exist by uniform polynomial approximation on
$[0,1]$.
By the spectral theorem and $\|C\|\le\sqrt b$, these polynomial
approximations satisfy
$\sum_{k\in\mathcal I_{h,D}\cap\mathcal E_M}
ht_k\|C(A^{t_k}-p_k(A))x\|^2
\le(\gamma^2/64)\|x\|^2$.
Consequently, replacing the remaining real observations by
$\sqrt{ht_k}\,Cp_k(A)$ leaves a lower norm bound of at least
$\gamma/4$.

Choose $N$ so that all the polynomials have degree at most $N$,
and write $p_k(z)=\sum_{n=0}^{N}d_{kn}z^n$.
Set
$\Lambda=1+\sum_{k\in\mathcal E_M}ht_k
\sum_{n=0}^{N}|d_{kn}|^2$.
Cauchy--Schwarz gives
\[
 \|Cx\|^2+
 \sum_{k\in\mathcal I_{h,D}\cap\mathcal E_M}
 ht_k\|Cp_k(A)x\|^2
 \le\Lambda\sum_{n=0}^{N}\|CA^nx\|^2.
\]
Since $\Lambda\ge1$, the lower bound just obtained implies
\[
 \frac{\gamma^2}{16}\|x\|^2
 \le \|Cx\|^2+
 \sum_{k\in\mathcal I_{h,D}\cap\mathcal E_M}
 ht_k\|Cp_k(A)x\|^2
 +\sum_{\substack{k\in\mathcal I_{h,D}\\t_k\ge M}}
 ht_k\|CA^{n_k}x\|^2
 \le\Lambda\langle S_{\mu_{\mathbb Z}}x,x\rangle.
\]
Thus $S_{\mu_{\mathbb Z}}$ has a positive lower bound.
Its upper bound follows from
$\sum_{n=0}^{N}\|CA^nx\|^2\le b\|x\|^2$,
\eqref{eq:reference-grid-real-bounds} and
\eqref{eq:reference-grid-rounding-error}.
Hence $\mu_{\mathbb Z}$ is admissible.

The choice of the polynomials, and therefore of $N$, depends only
on $a,b$. The condition $M(e^h-1)\ge1$ ensures that the rounded
times with $t_k\ge M$ are distinct. If any of them coincides with
an integer in $\{0,\ldots,N\}$, their weights are added.
\end{proof}
\section{Single orbits of positive operators}
\label{sec:positive-core}

We first consider a single orbit of a positive operator.
We relate the counting function to the distribution of the
eigenvalues. When the spectral counting function grows logarithmically,
we determine the optimal coefficient of $\log R$ in the sampling count.
Under additional spectral assumptions, we construct sampling sets whose
counting functions differ from the spectral counting function by at most
a constant. We then characterize the positive weights that preserve
the frame on these sets.

Write $\mathbb D=\{z\in\mathbb C:|z|<1\}$ and define the
pseudohyperbolic distance by
$\rho(z,w)=|(z-w)/(1-\overline w z)|$ for $z,w\in\mathbb D$.
Assume that $I$ is nonempty.
A sequence of distinct points $(\lambda_k)_{k\in I}\subset\mathbb D$
is called uniformly separated, or a Carleson sequence, if
$\inf_{k\in I}\prod_{\ell\in I\setminus\{k\}}
\rho(\lambda_k,\lambda_\ell)>0$;
see \cite{Carleson1958}.
The empty product is taken to be one.
A real sequence $(x_k)_{k\in I}$ is uniformly separated in the
Euclidean distance if $|x_k-x_\ell|\ge c$ for some $c>0$ and all
$k,\ell\in I$ with $k\ne\ell$.
For an increasing sequence $(x_j)_{j\ge1}$, this is equivalent to
$\inf_{j\ge1}(x_{j+1}-x_j)>0$.

Let $(\lambda_k)_{k\in I}\subset(0,1)$ be a Carleson sequence,
and set $x_k=\log(1/(-\log\lambda_k))$ for $k\in I$.
The product condition gives $0<\delta<1$ such that
$\rho(\lambda_k,\lambda_\ell)\ge\delta$ for all $k,\ell\in I$
with $k\ne\ell$.
For $0<s<u$, we have
$\rho(e^{-s},e^{-u})
=\frac{\sinh((u-s)/2)}{\sinh((u+s)/2)}
\le\frac{u-s}{u+s}$.
It follows that
$|x_k-x_\ell|\ge\log((1+\delta)/(1-\delta))>0$
for all $k,\ell\in I$ with $k\ne\ell$.
Thus $(x_k)_{k\in I}$ is uniformly separated in the Euclidean distance.

Throughout this section, let $(A,C)$ be an observable system with
$A$ positive and $Cx=\langle x,f\rangle$ for some $f\in H$.
In the infinite dimensional case,
\cite[Theorem~5.7]{AldroubiEtAl2017} gives $\dim\ker A\le1$.
We now restrict $A$ and $C$ to $(\ker A)^\perp$ and replace $f$
by its orthogonal projection onto this subspace.
We retain the notation $H,A,C,f$ for the resulting space,
operators, and vector.
There is an orthonormal basis $(e_j)_{j\ge1}$ of $H$ such that
\[
 Ae_j=e^{-s_j}e_j,\qquad j\ge1,\qquad
 s_1>s_2>\cdots>0,\qquad s_j\longrightarrow0
 \quad\text{as }j\to\infty.
\]
The positive eigenvalues are indexed in increasing order,
without loss of generality.
Write the generating vector as $f=\sum_{j\ge1}f_je_j$, with
$f_j=b_j\sqrt{s_j}$, and set $x_j=\log(1/s_j)$ for $j\ge1$.
Since $1-e^{-2s_j}\asymp s_j$, the coefficient bounds and the Carleson
condition in the same theorem give constants $b_-,b_+,\delta>0$ such that
$b_-\le |b_j|\le b_+$ and $x_{j+1}-x_j\ge\delta$ for every $j\ge1$.
By multiplying each vector in the orthonormal basis $(e_j)_{j\ge1}$
by a scalar of modulus one, we may assume $b_j>0$.

Set
\[
 m_A(R)=\#\{j\ge1:s_j\ge R^{-1}\},\qquad
 \delta_A=\limsup_{R\to\infty}\frac{m_A(R)}{\log R}.
\]
The spectral counting function \(m_A(R)\) counts the indices \(j\)
with \(x_j\le\log R\). Since \(x_{j+1}-x_j\ge\delta\),
\(m_A(R)=O(\log R)\). We call
\(\limsup_{R\to\infty}N_\mu(R)/\log R\) the logarithmic counting
coefficient. The critical count condition $N_\mu(R)=m_A(R)+O(1)$
is stronger than attaining the logarithmic counting
coefficient $\delta_A$.

A sampling design that is admissible after omitting the zero eigenspace
and finitely many initial positive eigenvalues can be completed to a
frame on the full space by adding finitely many integer observations.
These finite deletions and additions change the spectral and sampling
counting functions by at most a constant.
The completion argument is given in the proof of
Theorem~\ref{thm:positive-count}.

\subsection{Spectral counting and the optimal logarithmic coefficient}

We first derive a lower bound for the counting function in terms
of $m_A$. We then construct weighted integer sampling designs and
show that $\delta_A$ is the infimum of the logarithmic counting
coefficient for both real and integer sampling.

\begin{lem}\label{lem:positive-lower}
Let $\mu=\sum_{t\in T}w_t\delta_t$ be a sampling design such that
$\{\sqrt{w_t}A^tf:t\in T\}$ is a frame with bounds
$\alpha,\beta>0$.
There exists $K\in\mathbb N_0$ such that
$N_\mu(R)\ge m_A(R)-K$ for every $R>0$.
\end{lem}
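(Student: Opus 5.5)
The plan is a dimension-counting argument. I would restrict the frame to a finite dimensional spectral subspace on which all observations after time $R$ are negligible, and then use the Carleson separation of the $x_j$ to absorb the loss of a constant factor in $R$ into the additive constant $K$.

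\textbf{Step 1 (weight control).} I will first use the upper frame bound to bound the weights. Testing the frame inequality on $e_j$ gives $w_t e^{-2ts_j}b_j^2 s_j\le\beta$ for every $t\in T$ and $j\ge1$. I will also test on normalized sums $\sum_{j\in F}e_j$ over finite sets $F$. The aim is a block bound of the form $\sum_{t\in T\cap[\tau,2\tau]}w_t\le C\beta\,\tau$ for $\tau\ge1$, with $C$ depending only on $b_\pm,\delta$. When there is an eigenvalue with $s_j\asymp1/\tau$, this follows immediately. When there is a gap in the $x_j$ near $\log\tau$, I will instead use the nearest $s_j$ below $1/\tau$ and accept a worse constant; alternatively, I will state the bound only at the scales that actually matter below, namely $\tau\ge\Lambda/s_j$.

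\textbf{Step 2 (tail estimate).} Put $V_r=\operatorname{span}\{e_j:s_j\ge r^{-1}\}$, so that $\dim V_r=m_A(r)$. For $x=\sum_{s_j\ge1/r}c_je_j$ and $t\ge\Lambda r$, I have
\[
|\langle A^tx,f\rangle|\le b_+\sum_{s_j\ge 1/r}|c_j|\sqrt{s_j}\,e^{-ts_j}.
\]
Combining this with the dyadic weight bounds from Step 1, and using the separation $x_{j+1}-x_j\ge\delta$ so that the $\sqrt{s_j}$ factors sum geometrically, I expect a Schur-type estimate
\[
\sum_{t\in T,\ t>\Lambda r}w_t|\langle A^tx,f\rangle|^2\le C'\beta\,e^{-c\Lambda}\|x\|^2 .
\]
I then fix $\Lambda$ so large that the right-hand side is at most $\tfrac{\alpha}{2}\|x\|^2$. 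This step is the main obstacle, because the weights are arbitrary and the only control on them comes from $\beta$. If the dyadic argument in Step 1 turns out to be awkward, I would try first to bound the tail quadratic form by a sum of the upper frame bound applied to $A^{\tau}x$, taken over dyadic blocks.

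\textbf{Step 3 (dimension count).} Set $R'=R/\Lambda$ and consider the linear map $V_{R'}\to\mathbb C^{N_\mu(R)}$ given by $x\mapsto(\langle A^tx,f\rangle)_{t\in T\cap[0,R]}$. If this map had a nonzero kernel vector $x$, then the frame sum for $x$ would equal the tail from Step 2, which is at most $\tfrac{\alpha}{2}\|x\|^2<\alpha\|x\|^2$, contradicting the lower frame bound. Hence the map is injective, and $N_\mu(R)\ge m_A(R/\Lambda)$. Finally, the separation $x_{j+1}-x_j\ge\delta$ shows that the number of $j$ with $\log R-\log\Lambda<x_j\le\log R$ is at most $\log\Lambda/\delta+1$. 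Therefore $m_A(R)-m_A(R/\Lambda)\le K$ with $K=\lceil\log\Lambda/\delta\rceil+1$, which gives $N_\mu(R)\ge m_A(R)-K$ for every $R>0$.
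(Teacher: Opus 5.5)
Your Step~2 is false in the generality of the lemma, and the fallback you mention in Step~1 does not rescue it. The hypotheses give only $x_{j+1}-x_j\ge\delta$. The gaps of $(x_j)$ may be unbounded, and then a kernel vector in $V_{R/\Lambda}$ can receive energy of order one from times beyond $R$.

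Here is an explicit example. Take $b_j=1$ and $x_{j+1}-x_j=2G_j$ with $G_j\uparrow\infty$ and $G_1$ large. Sample at $t_k=G_k/(2s_k)$ with weights $w_{t_k}=e^{G_k}/s_k$. The analysis matrix has entries $e^{G_k/2}\sqrt{s_j/s_k}\,e^{-(G_k/2)s_j/s_k}$. These equal $1$ for $j=k$, are at most $e^{-G_k/2}\sqrt{s_j/s_{k+1}}$ for $j>k$, and are superexponentially small for $j<k$. By the Schur test the design therefore gives a Riesz basis. Yet:
\begin{enumerate}[(i)]
\item $w_{t_k}/t_k=2e^{G_k}/G_k$ is unbounded, so the dyadic weight bound of Step~1 fails.
\item For $R=2\Lambda/s_k$ with $G_k>4\Lambda$ one gets $N_\mu(R)=k-1<k=m_A(R/\Lambda)$. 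So your intermediate claim $N_\mu(R)\ge m_A(R/\Lambda)$ fails for every fixed $\Lambda$.
\item Equivalently, for $x=e_k\in V_{R/\Lambda}$ the tail beyond $R$ contains the term at $t_k$, which equals $1$.
\end{enumerate}
No use of $\beta$ can exclude this. The heavy weight at $t_k$ is felt only by $e_k$, at size $O(1)$, and by eigenvectors on the far side of the gap.

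The missing idea is to measure the discarded directions from the actual eigenvalue $s_m$, where $m=m_A(R)$, rather than from the scale $1/R$. Discard exactly $K$ indices, and control the tail by testing the upper frame bound on the single vector $e_m$. Since $s_mR\ge1$, for $j\le m-K$, $t>R$ and $r=s_j/s_m\ge e^{\delta(m-j)}$ one has $|f_j|^2e^{-2s_jt}\le(b_+/b_-)^2\,re^{-2(r-1)}\,|f_m|^2e^{-2s_mt}$. Also $\sum_{t>R}w_t|f_m|^2e^{-2s_mt}\le\beta$. Summing over $j$ bounds the trace of the tail compressed to $E=\operatorname{span}\{e_1,\dots,e_{m-K}\}$ by $\beta(b_+/b_-)^2\sum_{\ell\ge K}e^{\delta\ell}\exp(-2(e^{\delta\ell}-1))$. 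This is below $\alpha$ for large $K$. Your Step~3 dimension count on $E$ then yields $N_\mu(R)\ge m_A(R)-K$ directly. This is the paper's argument. It needs no control of individual or dyadic weights, which is why it survives the example above.
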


\begin{proof}
Set \(m=m_A(R)\). It suffices to consider \(m>K\). The sampled upper frame bound gives
\(\sum_{\substack{t\in T\\t>R}}w_t|f_m|^2e^{-2s_mt}\le\beta\).
For \(1\le j\le m-K\), let \(r=s_j/s_m\). Since \(s_mR\ge1\) and
\(r\ge e^{\delta(m-j)}\),
\[
 \frac{|f_j|^2e^{-2s_jt}}{|f_m|^2e^{-2s_mt}}
 \le\frac{b_+^2}{b_-^2}r e^{-2(r-1)},\qquad t>R.
\]
The function \(r\mapsto r e^{-2(r-1)}\) is decreasing for \(r\ge1\).
Consequently, the part of the sampled frame operator contributed by times
larger than \(R\), compressed to
\(E=\operatorname{span}\{e_1,\ldots,e_{m-K}\}\), has trace at most
\[
 \beta\frac{b_+^2}{b_-^2}
 \sum_{\ell\ge K}e^{\delta\ell}
       \exp\bigl(-2(e^{\delta\ell}-1)\bigr).
\]
Choose $K$ so that this bound is smaller than $\alpha$.
Since the compressed operator is positive, its operator norm
is also smaller than $\alpha$.
If $N_\mu(R)<m-K=\dim E$, the scalar observation functionals
at the times in $T\cap[0,R]$ have a nontrivial common kernel in $E$.
Choose a unit vector $x$ in this kernel.
Its observations at times $t\le R$ vanish, while its sampled
energy at times $t>R$ is smaller than $\alpha$.
This contradicts the lower frame inequality.
\end{proof}

Together with the upper frame bound, the lower bound on the vector
norms in the next lemma will be used to obtain a uniform upper bound
on the coefficients obtained when applying
Lemma~\ref{lem:bss-range}.

\begin{lem}\label{lem:positive-grid}
There exist $a,D,C,c>0$ such that, with
$\mathcal J=\{k\in\mathbb Z:
\operatorname{dist}(ka,\{x_j:j\ge1\})\le D\}$,
the vectors $\psi_k=\sqrt{ae^{ka}}\,A^{e^{ka}}f$,
$k\in\mathcal J$, form a frame for $H$ and satisfy
$\inf_{k\in\mathcal J}\|\psi_k\|>0$.
Its analysis operator $R:H\to\ell^2(\mathcal J)$ has matrix entries
$R_{kj}=\langle e_j,\psi_k\rangle$ satisfying
$|R_{kj}|\le Ce^{-c|ka-x_j|}$
for every $k\in\mathcal J$ and $j\ge1$.
\end{lem}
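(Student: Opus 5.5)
This is essentially Proposition~\ref{prop:reference-grid}(i), transplanted to the single-orbit, eigenbasis setting: a logarithmic grid, restricted to points near the spectral log-positions. I would follow that argument, adapted to the eigenbasis coordinates $x_j=\log(1/s_j)$ instead of the dyadic bands.

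\textbf{Step 1: matrix entries.} Compute them directly. With $t=e^{ka}$,
\[
 R_{kj}=\langle e_j,\psi_k\rangle=\sqrt{a t}\,f_je^{-s_jt}=\sqrt a\,b_j\,(ts_j)^{1/2}e^{-ts_j}.
\]
Since $ts_j=e^{ka-x_j}$ and $y^{1/2}e^{-y}\le C_0e^{-c_0|\log y|}$ (take $c_0=1/2$), this gives $|R_{kj}|\le \sqrt a\,b_+C_0e^{-c_0|ka-x_j|}$. The bound holds for all $k\in\mathbb Z$, not only for $k\in\mathcal J$.

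\textbf{Step 2: the full grid is a frame.} The continuous analogue is the identity
\[
 \int_{\mathbb R}e^u|\langle x,A^{e^u}f\rangle|^2du=\int_0^\infty|\langle x,A^tf\rangle|^2dt .
\]
Its quadratic form is $\sum_{i,j}\overline{x_i}x_j f_i\overline{f_j}/(s_i+s_j)$. I would compare it with the orbit frame $\sum_n|\langle x,A^nf\rangle|^2$, whose matrix is $f_i\overline{f_j}/(1-e^{-s_i-s_j})$. The difference kernel is $f_i\overline{f_j}\,\phi(s_i+s_j)$ with $\phi$ bounded, i.e.\ rank-one-like. Rather than do this, I would invoke Proposition~\ref{prop:reference-grid}(i)'s proof pattern directly. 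There $Q(x)=\|Cx\|^2+\int_0^\infty\|F(u)x\|^2du$ is shown comparable to $\|x\|^2$, with $u$ ranging over $[0,\infty)$ only.

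Here I extend to $u\in\mathbb R$. For $t\in(0,1)$, $\|C A^t x\|^2\le\|Cx\|^2$-type bounds are too crude, so instead I note the following. The integral over $u<0$ is $\int_0^1|\langle x,A^tf\rangle|^2dt$, which is bounded above by $\|f\|^2\|x\|^2$. It is also bounded below by $\int_0^1\|\ldots\|$ at $t$ near $1$, since $\|CA^tx\|$ is continuous. Together with $\int_1^2\langle SA^tx,A^tx\rangle dt$ this gives a two-sided bound $\gamma^2\|x\|^2\le\int_{\mathbb R}\|F(u)x\|^2du\le b'\|x\|^2$. Including all $u<0$ actually replaces the $\|Cx\|^2$ term, because $\int_{-\infty}^0\|F(u)x\|^2du\ge\int_{-1}^0\ldots\gtrsim\|CA^{e^{-1}}x\|^2$, which combines with the Stein argument as before.

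Then discretize as in Proposition~\ref{prop:reference-grid}. Lemma~\ref{lem:log-localization}(ii) bounds $\int\|F'\|^2$; on the negative half-line the analogous bound follows from the explicit entries $\partial_u R$, which satisfy the same exponential localization in $|u-x_j|$. A Schur test with the Euclidean separation $x_{j+1}-x_j\ge\delta$ (which replaces the dyadic spacing $q_j$) gives $\int_{\mathbb R}\|F'(u)x\|^2du\le L^2\|x\|^2$. Choosing $a$ small makes $(a\sum_{k\in\mathbb Z}\|F(ka)x\|^2)^{1/2}$ comparable to $\|x\|$.

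\textbf{Step 3: truncate to $\mathcal J$.} Removing $k\notin\mathcal J$ costs at most $Le^{-cD}\|x\|$, by the Schur estimate of Lemma~\ref{lem:log-localization}(iii), again using the separation of $(x_j)$ and the kernel bound from Step 1 with $c_0/2$. Take $D$ large. The upper bound follows from the same Schur test.

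\textbf{Step 4: lower bound on norms.} For $k\in\mathcal J$, pick $j$ with $|ka-x_j|\le D$. Then
\[
 \|\psi_k\|\ge|R_{kj}|=\sqrt a\,b_j(ts_j)^{1/2}e^{-ts_j}\ge\sqrt a\,b_-\min_{|v|\le D}e^{v/2}e^{-e^v}>0 .
\]

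The main obstacle is Step 2, specifically getting the lower frame bound for the continuous log-integral over all of $\mathbb R$. I would handle it exactly as in Proposition~\ref{prop:reference-grid}(i), keeping a $\delta_0$-type term if needed. If negative $k$ cause trouble, I would note that $ka\ge$ some bound anyway for $k\in\mathcal J$, since $x_j\ge x_1$; so only finitely many negative $k$ occur, and their treatment is routine.
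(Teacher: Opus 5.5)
\textbf{Gap in Step~2.} Steps~1, 3, 4 and the discretization in Step~2 match the paper's proof. The one nontrivial point of the lemma, however, is not established: the lower bound for the continuous log-time integral. Continuity of $t\mapsto\|CA^tx\|$ gives nothing uniform in $x$.

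The inequality $\int_{-1}^0\|F(u)x\|^2\,du\gtrsim\|CA^{e^{-1}}x\|^2$ is false. Suppose a measure on $[e^{-1},1]$ annihilates every $e^{-s_jt}$. Its Laplace transform is entire and vanishes on $\{s_j\}$, which accumulates at $0$, so the transform vanishes identically and the measure is zero. Hence the functions $t\mapsto\langle x,A^tf\rangle$ with finitely supported $x$ (recall $f_j\ne0$) are dense in $C([e^{-1},1])$. Such a function can therefore be close to $1$ at $t=e^{-1}$ while having arbitrarily small $L^2(e^{-1},1)$ norm.

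There is a further problem with invoking the dichotomy from Proposition~\ref{prop:reference-grid}. It compares $\|Cy\|$ and $\|Ay\|$ for the same vector $y$. Transporting it to $y=A^{e^{-1}}x$ needs a lower bound for $\|A^{e^{-1}}x\|$ in terms of $\|x\|$, which you never invoke. Your fallback of keeping a $\delta_0$-type term would prove a different statement: the family in the lemma has no vector at time zero, and $\mathcal J$ only retains times $e^{ka}\ge e^{x_1-D}$.

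\textbf{The fix.} The missing observation is that $A\ge e^{-s_1}I$ after the restriction to $(\ker A)^\perp$ made at the start of Section~\ref{sec:positive-core}. Let $\alpha_0$ be a lower frame bound of $(A^nf)_{n\ge0}$. Summing the Stein identity of Lemma~\ref{lem:stein}, as in the proof of Proposition~\ref{prop:reference-grid}, gives
\[
 \int_{\mathbb R}\|F(u)x\|^2\,du
 =\int_0^\infty|\langle x,A^tf\rangle|^2\,dt
 =\int_0^1\langle SA^rx,A^rx\rangle\,dr
 \ge\alpha_0\int_0^1\|A^rx\|^2\,dr
 \ge\alpha_0e^{-2s_1}\|x\|^2 .
\]
The half-line $u\ge0$ alone already gives $\int_1^2\langle SA^rx,A^rx\rangle\,dr\ge\alpha_0e^{-4s_1}\|x\|^2$, so negative $u$ are not even needed here.

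With this line inserted, your proof is complete, and it is a more elementary route than the paper's. The paper shows that the Gram operator, with entries $f_i\overline{f_j}/(s_i+s_j)$, differs from $S$ by a Hilbert--Schmidt operator and is therefore Fredholm. It then obtains injectivity from uniqueness of the Laplace transform of $\sum_jv_jf_j\delta_{s_j}$. That argument gives invertibility only qualitatively. Your Stein route rests on the exact identity $\int_0^\infty (A^t)^*C^*CA^t\,dt=\int_0^1A^rSA^r\,dr$, which gives an explicit lower bound and hence an explicit admissible grid step. Note also that the comparison with $S$ you started and abandoned would have required the injectivity step as well.
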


\begin{proof}
Define $g(u)=\exp(u/2-e^u)$ and, for
$v=\sum_{j\ge1}v_je_j\in H$, put
$Fv(y)=\sum_{j\ge1}b_jv_jg(y-x_j)$.
Both $g$ and $g'$ satisfy a bound of the form $Ce^{-c|u|}$.
Since the points $x_j$ are uniformly separated and
$b_-\le b_j\le b_+$, the Schur test shows that the maps
$v\mapsto Fv$ and $v\mapsto(Fv)'$ are bounded from $H$
to $L^2(\mathbb R)$.

The operators $Q=F^*F$ and $S$ have the following matrix entries
in the basis $(e_j)_{j\ge1}$:
\[
 Q_{ij}=\frac{f_i\overline{f_j}}{s_i+s_j},
 \qquad
 S_{ij}=\frac{f_i\overline{f_j}}{1-e^{-(s_i+s_j)}},
 \qquad i,j\ge1.
\]
The function $u\mapsto(1-e^{-u})^{-1}-u^{-1}$ extends continuously
to $u=0$ and is bounded on $[0,2s_1]$.
Thus $|(S-Q)_{ij}|\le C|f_i||f_j|$ for all $i,j\ge1$. Since $\sum_{j\ge1}|f_j|^2<\infty$,
the operator $S-Q$ is Hilbert--Schmidt.
As $S$ is invertible, $Q$ is Fredholm.
Here Fredholm means that the operator has closed range and
finite dimensional kernel and cokernel.

Since $Q$ is self-adjoint, it remains to show that its kernel
is trivial. Suppose that $Qv=0$. Then $Fv=0$ in $L^2(\mathbb R)$.
The identity
$Fv(\log t)=\sqrt t\sum_{j\ge1}v_jf_je^{-s_jt}$
and continuity give
$\sum_{j\ge1}v_jf_je^{-s_jt}=0$ for every $t>0$.
By Cauchy--Schwarz,
$\sum_{j\ge1}|v_jf_j|\le\|v\|\|f\|<\infty$.
Thus $\nu=\sum_{j\ge1}v_jf_j\delta_{s_j}$ is a finite complex
measure on $[0,s_1]$, whose Laplace transform vanishes for $t>0$.
Since $\nu$ has compact support, differentiation under the
integral at zero gives
$\int_{[0,s_1]}s^n\,d\nu(s)=0$ for every $n\in\mathbb N_0$.
Polynomials are dense in $C([0,s_1])$, so $\nu=0$.
As the points $s_j$ are distinct, this implies
$v_jf_j=0$ for every $j\ge1$.
Since $f_j\ne0$ for every $j\ge1$, we obtain $v=0$.
Thus $Q$ is positive and boundedly invertible, and $F$ is bounded below.

For $k\in\mathbb Z$ and $y\in[ka,(k+1)a)$, replace $Fv(y)$
by $Fv(ka)$. The fundamental theorem of calculus and
Cauchy--Schwarz give
\[
 \sum_{k\in\mathbb Z}\int_{ka}^{(k+1)a}
 |Fv(ka)-Fv(y)|^2\,dy
 \le a^2\|(Fv)'\|_2^2.
\]
Since $F$ is bounded below and $v\mapsto(Fv)'$ is bounded,
the discrete analysis operator
$v\mapsto(\sqrt a\,Fv(ka))_{k\in\mathbb Z}$
is bounded and bounded below when $a>0$ is sufficiently small.
Its matrix entries are $\sqrt a\,b_jg(ka-x_j)$.

Fix $L\ge0$. Let $E=(E_{kj})$ agree with the matrix of the
discrete analysis operator on any chosen collection of entries
satisfying $|ka-x_j|>L$, and be zero elsewhere.
Since $|g(u)|\le C_0e^{-c_g|u|}$ for every $u\in\mathbb R$,
each chosen entry satisfies
\[
 |E_{kj}|
 =\sqrt a\,b_j|g(ka-x_j)|
 \le \sqrt a\,b_+C_0e^{-c_g|ka-x_j|}
 \le Ce^{-cL}e^{-c|ka-x_j|},
\]
where $c=c_g/2$ and $C=\sqrt a\,b_+C_0$.
The same bound holds for the zero entries.
Since both $a\mathbb Z$ and $\{x_j:j\ge1\}$ are uniformly separated,
we have
$\sup_{k\in\mathbb Z}\sum_{j\ge1}e^{-c|ka-x_j|}<\infty$
and
$\sup_{j\ge1}\sum_{k\in\mathbb Z}e^{-c|ka-x_j|}<\infty$.
After increasing $C$ if necessary, the Schur test gives
\[
 \|E\|
 \le
 \left(
 \sup_{k\in\mathbb Z}\sum_{j\ge1}|E_{kj}|
 \right)^{1/2}
 \left(
 \sup_{j\ge1}\sum_{k\in\mathbb Z}|E_{kj}|
 \right)^{1/2}
 \le Ce^{-cL}.
\]
Thus deleting any collection of entries with $|ka-x_j|>L$
changes the discrete analysis operator by at most $Ce^{-cL}$
in operator norm.

Deleting all rows at distance greater than $D$ from
$\{x_j:j\ge1\}$ therefore changes the operator by an arbitrarily
small amount in norm when $D$ is sufficiently large.
The remaining operator $R:H\to\ell^2(\mathcal J)$ is consequently
bounded and bounded below.
Since
$R_{kj}=\sqrt a\,b_jg(ka-x_j)=\langle e_j,\psi_k\rangle$,
the vectors $(\psi_k)_{k\in\mathcal J}$ form a frame for $H$,
and
$|R_{kj}|=\sqrt a\,b_j|g(ka-x_j)|
\le \sqrt a\,b_+C_0e^{-c_g|ka-x_j|}
\le Ce^{-c|ka-x_j|}$.

For every $k\in\mathcal J$, there is a point $x_j$ with
$|ka-x_j|\le D$. Hence
$\|\psi_k\|\ge |R_{kj}|
\ge\sqrt a\,b_-\min_{|u|\le D}g(u)=:c_0>0$.
Finally, $k\in\mathcal J$ implies $ka\ge x_1-D$,
so the retained sampling times are bounded away from zero
and form a locally finite set.
\end{proof}

\begin{lem}\label{lem:bss-range}
Let $v_1,\ldots,v_q$ be real vectors spanning a space of dimension $r$,
and put $L=\sum_{i=1}^q v_i v_i^*$.
Here $v_i v_i^*$ denotes the operator
$x\mapsto\langle x,v_i\rangle v_i$.
For every $\tau>1$, there are nonnegative numbers
$\theta_i$, $1\le i\le q$, at most $\lceil\tau r\rceil$
of which are nonzero, such that
\[
 L\le\sum_{i=1}^q\theta_i v_i v_i^*\le\kappa_\tau L,
 \qquad
 \kappa_\tau=
 \left(\frac{\sqrt\tau+1}{\sqrt\tau-1}\right)^2.
\]
\end{lem}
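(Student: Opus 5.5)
This is the Batson--Spielman--Srivastava spectral sparsification theorem, restated on the range of $L$. The plan is to reduce to the isotropic case and then run the barrier potential argument.

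\emph{Reduction.} Let $V=\operatorname{span}\{v_i\}$, a real space of dimension $r$. Then $L$ is positive definite on $V$ and vanishes on $V^\perp$, and every $v_iv_i^*$ vanishes on $V^\perp$ as well. So it suffices to prove the inequalities on $V$. Set $u_i=L^{-1/2}v_i$, with $L^{-1/2}$ taken on $V$. Then $\sum_i u_iu_i^*=I_V$. Conjugating by $L^{1/2}$, the claim becomes: there are weights $\theta_i\ge0$, at most $\lceil\tau r\rceil$ of them nonzero, with
\[
 I_V\le\sum_i\theta_iu_iu_i^*\le\kappa_\tau I_V .
\]
Vectors with $u_i=0$ can be discarded at the outset.

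\emph{Barrier argument.} Set $d=\tau$ and run $m=\lceil dr\rceil$ steps, keeping $A_k=\sum_i\theta_i^{(k)}u_iu_i^*$ with $A_0=0$. We maintain barriers $\ell_k<\lambda_{\min}(A_k)$ and $u_k>\lambda_{\max}(A_k)$, and the potentials
\[
 \Phi^{u}(A)=\operatorname{tr}(uI-A)^{-1},\qquad \Phi_{\ell}(A)=\operatorname{tr}(A-\ell I)^{-1}.
\]
Use the standard choices $\delta_U=(\sqrt d+1)/(\sqrt d-1)$, $\delta_L=1$, $\epsilon_U=\sqrt d/(\sqrt d+1)$, $\epsilon_L=1/\sqrt d$, $u_0=r/\epsilon_U$ and $\ell_0=-r/\epsilon_L$. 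The potentials start at $\epsilon_U$ and $\epsilon_L$.

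At each step, one vector $u_i$ with weight $1/t$ is added, and the barriers shift by $\delta_U$ and $\delta_L$. The key lemma (Sherman--Morrison) gives the following. Adding $tuu^*$ keeps $\lambda_{\max}<u+\delta_U$ and does not increase the upper potential provided $1/t\ge U_A(u)$, where
\[
 U_A(u)=\frac{u^*(u'I-A)^{-2}u}{\Phi^{u}(A)-\Phi^{u'}(A)}+u^*(u'I-A)^{-1}u,\qquad u'=u+\delta_U .
\]
Symmetrically, it keeps $\lambda_{\min}>\ell+\delta_L$ and does not increase the lower potential provided $1/t\le L_A(\ell)$, with the analogous expression $L_A(\ell)$. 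Summing over $i$ and using $\sum_iu_iu_i^*=I_V$ gives
\[
 \sum_i U_A(u_i)\le \frac1{\delta_U}+\epsilon_U,\qquad \sum_iL_A(u_i)\ge\frac1{\delta_L}-\epsilon_L .
\]
The second bound uses convexity of $x\mapsto(x-\ell)^{-1}$. The choice of parameters makes $1/\delta_U+\epsilon_U\le1/\delta_L-\epsilon_L$. Hence some $i$ has $U_A(u_i)\le L_A(u_i)$, and a valid $t$ exists. Repeated indices simply accumulate weight, so at most $m$ of the $\theta_i$ are nonzero.

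\emph{Final ratio.} After $m$ steps,
\[
 \lambda_{\max}<u_0+m\delta_U,\qquad \lambda_{\min}>\ell_0+m\delta_L .
\]
Substituting the parameters, $\frac{u_0+m\delta_U}{\ell_0+m\delta_L}\le\kappa_\tau$ once $m\ge dr$. This is the computation in BSS: with $m=dr$ the ratio is $\frac{d+2\sqrt d+1}{d-2\sqrt d+1}$, which equals $\kappa_\tau$. Taking $m=\lceil dr\rceil$ only increases the denominator faster relative to the numerator, so the bound still holds. Finally, rescale all $\theta_i$ by $1/\lambda_{\min}$ to get the lower bound $I_V$ while keeping the upper bound $\kappa_\tau I_V$.

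The main obstacle is the averaging step: verifying the two summed inequalities and the parameter relation carefully, including the boundary case where $\tau r$ is not an integer. I would cite \cite{BSS2012} for the barrier lemmas, since the paper already references that work, and verify only the rank-$r$ reduction and the final ratio computation explicitly.
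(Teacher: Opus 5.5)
Your reduction is the paper's: restrict to $E=\operatorname{span}\{v_i\}$, whiten by $L_E^{-1/2}$ so that $\sum_i u_iu_i^*=I_E$, and conjugate back, using that everything vanishes on $E^\perp$. The paper then applies \cite[Theorem~3.1]{BSS2012} directly on $E$ and stops. You instead re-run the barrier argument, and there one of your constants is wrong, which breaks a step you state as true.

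With your $\epsilon_U=\sqrt d/(\sqrt d+1)$ and $\delta_U=(\sqrt d+1)/(\sqrt d-1)$, you get $\frac1{\delta_U}+\epsilon_U=\frac{2\sqrt d-1}{\sqrt d+1}$. On the other side, $\frac1{\delta_L}-\epsilon_L=\frac{\sqrt d-1}{\sqrt d}$. The relation you need, $\frac1{\delta_U}+\epsilon_U\le\frac1{\delta_L}-\epsilon_L$, is equivalent to $d-\sqrt d+1\le0$, which never holds. For example, $d=4$ gives $1$ versus $\tfrac12$. So the averaging step does not produce an index with $U_A(u_i)\le L_A(u_i)$, and the iteration can stall.

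Your final ratio also does not equal $\kappa_\tau$ with these constants. For $d=4$, $(u_0+dr\,\delta_U)/(\ell_0+dr\,\delta_L)=27/4$, not $9$. A bound better than BSS's should have signalled the problem.

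The constant BSS actually use is $\epsilon_U=\frac{\sqrt d-1}{d+\sqrt d}$, that is, $u_0=r\,\frac{d+\sqrt d}{\sqrt d-1}$. With it, the parameter relation holds with equality, and at $m=dr$ the ratio is exactly $\bigl(\frac{\sqrt d+1}{\sqrt d-1}\bigr)^2$. After this correction your sketch is valid. Your remark that $\frac{u_0+m\delta_U}{\ell_0+m\delta_L}$ decreases in $m$ (because $\delta_U\ell_0<\delta_Lu_0$) correctly justifies the count $\lceil\tau r\rceil$ when $\tau r\notin\mathbb N$.

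You also omit the paper's last step. All Hilbert spaces in the paper are complex, and the lemma is later applied to real matrices acting on complex $\ell^2$. You therefore need to note that for a real symmetric $M$ and $z=x+\mathrm{i}y$ with $x,y$ real, $\langle Mz,z\rangle=\langle Mx,x\rangle+\langle My,y\rangle$. This transfers the inequalities to complex vectors with the same $\theta_i$.

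Since you intend to cite \cite{BSS2012} anyway, the simplest route is the paper's: apply its Theorem~3.1 to $(u_i)$ on $E$ as a black box rather than rebuilding the potential-function argument.
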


\begin{proof}
If $r=0$, take $\theta_i=0$ for every $i$.
Suppose that $r\ge1$, and set
$E=\operatorname{span}_{\mathbb R}\{v_1,\ldots,v_q\}$
and $L_E=L|_E$.
For every nonzero $x\in E$,
$\langle L_Ex,x\rangle
=\sum_{i=1}^q|\langle x,v_i\rangle|^2>0$,
because the vectors $v_i$ span $E$.
Thus $L_E$ is positive and invertible.

Put $u_i=L_E^{-1/2}v_i$ for $1\le i\le q$.
Then
\(
 \sum_{i=1}^q u_i u_i^*
 =L_E^{-1/2}L_E L_E^{-1/2}
 =I_E,
\)
where $I_E$ is the identity operator on $E$.
Applying the Batson--Spielman--Srivastava theorem
\cite[Theorem~3.1]{BSS2012} on the $r$ dimensional space $E$
gives nonnegative coefficients $\theta_i$, at most
$\lceil\tau r\rceil$ of which are nonzero, such that
\(
 I_E\le\sum_{i=1}^q\theta_i u_i u_i^*
 \le\kappa_\tau I_E.
\)
Since $L_E^{1/2}u_i=v_i$, conjugating these inequalities
by $L_E^{1/2}$ gives
\[
 L_E
 \le
 L_E^{1/2}\left(\sum_{i=1}^q\theta_i u_i u_i^*\right)L_E^{1/2}
 =
 \left.\left(\sum_{i=1}^q\theta_i v_i v_i^*\right)\right|_E
 \le\kappa_\tau L_E.
\]
Both $L$ and $\sum_{i=1}^q\theta_i v_i v_i^*$ vanish on $E^\perp$,
so the stated inequalities hold on the full real space.

Finally, for any real symmetric matrix $M$ and
$z=x+\mathrm{i}y$ with real vectors $x,y$, we have
$\langle Mz,z\rangle
=\langle Mx,x\rangle+\langle My,y\rangle$.
Thus the inequalities also hold for complex vectors
with the same coefficients.
\end{proof}

\begin{lem}
\label{lem:positive-rounding}
Suppose that a sampled orbit of a positive operator is a frame with
sampling set \(T\subset\{e^{ka}:k\in\mathbb Z\}\) and weights
\(w_t\le Cat\) for some \(a,C>0\).
Keeping the same index set \(T\) after rounding, define
\(R,\widetilde R:H\to\ell^2(T)\) by
\(Rx=(\sqrt{w_t}\langle x,A^tf\rangle)_{t\in T}\) and
\(\widetilde Rx=(\sqrt{w_t}\langle x,A^{\lceil t\rceil}f\rangle)_{t\in T}\).
Then \(\widetilde R-R\) is Hilbert--Schmidt. The rounded family can be completed
to a frame by adding finitely many observations at nonnegative integer times.
\end{lem}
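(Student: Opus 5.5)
We compute the Hilbert--Schmidt norm of $\widetilde R-R$ in the eigenbasis $(e_j)_{j\ge1}$ of Section~\ref{sec:positive-core}. The entries are
\[
 (\widetilde R-R)_{tj}=\sqrt{w_t}\,f_j\bigl(e^{-s_j\lceil t\rceil}-e^{-s_jt}\bigr),
 \qquad t\in T,\ j\ge1,
\]
and it suffices to show that $\sum_{t\in T}\sum_{j\ge1}|(\widetilde R-R)_{tj}|^2<\infty$.

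Write $\tau=\lceil t\rceil-t\in[0,1)$. Since $s\mapsto e^{-s}$ has derivative bounded by $1$ in absolute value on $[0,\infty)$, we have
\[
 |e^{-s_j\lceil t\rceil}-e^{-s_jt}|\le e^{-s_jt}\min(1,s_j\tau)\le e^{-s_jt}\min(1,s_j).
\]
Combining this with $|f_j|^2=b_j^2s_j\le b_+^2s_j$ and $w_t\le Cat$, each term satisfies
\[
 |(\widetilde R-R)_{tj}|^2\le Cab_+^2\,t\,s_j^{3}e^{-2s_jt}
\]
once $s_j\le1$. The finitely many indices with $s_j>1$ contribute at most $Cab_+^2\sum_t t\,s_j e^{-2s_jt}$ each, which is finite because $T\subset\{e^{ka}\}$ is geometric. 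Set $y=s_jt$, so that $t\,s_j^3e^{-2s_jt}=t^{-2}y^3e^{-2y}$.

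For fixed $j$ we sum over $t=e^{ka}$, so that $\log t$ runs over a lattice of spacing $a$. Using $t^{-2}=s_j^2y^{-2}$ and a change of variables $u=\log t$,
\[
 \sum_{t\in T}t\,s_j^3e^{-2s_jt}
 \le s_j^2\sum_{k\in\mathbb Z}\phi(ka+\log s_j),
 \qquad \phi(u)=e^{u}e^{-2e^{u}} .
\]
The function $\phi$ decays exponentially in $|u|$, so the lattice sum is bounded uniformly in $j$ by a constant depending only on $a$. Hence the $j$th column contributes $O(s_j^2)$.

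Since $x_j=\log(1/s_j)$ and $x_{j+1}-x_j\ge\delta$, we get $s_j\le s_1e^{-\delta(j-1)}$. Therefore $\sum_j s_j^2<\infty$, and $\widetilde R-R$ is Hilbert--Schmidt. One can even discard the factor $\min(1,s_j)$ partly and still have $\sum_j s_j<\infty$; the bookkeeping of $\min(1,s_j\tau)$ is the only step needing care.

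\emph{Completion.} The operator $R$ is bounded below because the sampled family is a frame, and $\widetilde R-R$ is compact. Thus $\widetilde R$ is bounded, and $\widetilde R^*\widetilde R$ is a compact perturbation of the invertible operator $R^*R$, hence Fredholm of index $0$. It follows that $\widetilde R$ has closed range and finite dimensional kernel. Lemma~\ref{lem:finite-completion}, applied with $W=\widetilde R$ and $Z=\ell^2(T)$, then yields a finite set $F\subset\mathbb N_0$ such that $x\mapsto(\widetilde Rx,(\langle x,A^nf\rangle)_{n\in F})$ is bounded below. This operator is also bounded, so the rounded family together with these finitely many integer observations is a frame.

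If distinct $t$ round to the same integer, we keep them as separate indices, as the statement allows; merging them by adding weights gives the same frame operator.

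The main obstacle is the uniform bound on the lattice sums, which needs the exponential decay of $\phi$ in both directions. The lower side, $u\to-\infty$, relies on the factor $t\,s_j^{3}$ rather than on $e^{-2y}$, and this is why the estimate $w_t\le Cat$ and the extra factor $s_j^2$ are both used.
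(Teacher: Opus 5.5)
Your proposal is correct and follows essentially the same route as the paper: the entrywise bound $|e^{-s_j\lceil t\rceil}-e^{-s_jt}|\le s_je^{-s_jt}$, then a lattice sum over $\log t\in a\mathbb Z$, then the compact-perturbation argument combined with Lemma~\ref{lem:finite-completion}. Your $\phi$ is exactly $g^2$ for $g(u)=\exp(u/2-e^u)$, which reproduces the paper's identity $\sum_k ae^{ka}e^{-2s_je^{ka}}=(a/s_j)\sum_k g(ka-x_j)^2$; the only cosmetic difference is that you sum the columns via $|f_j|^2\le b_+^2s_j$ and the separation $x_{j+1}-x_j\ge\delta$, whereas the paper bounds the total by $C_a\sum_j|f_j|^2s_j\le C_as_1\|f\|^2$, which needs only $f\in H$.
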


\begin{proof}
For \(t\in T\), write \(n=\lceil t\rceil\). For every \(j\ge1\), we have
\(|e^{-s_jn}-e^{-s_jt}|\le s_j e^{-s_jt}\).
The squared Hilbert--Schmidt norm of \(\widetilde R-R\) is at most
\[
 C\sum_{j\ge1} |f_j|^2s_j^2
     \sum_{k\in\mathbb Z} a e^{ka}e^{-2s_je^{ka}}
 \le C_a\sum_{j\ge1} |f_j|^2s_j<\infty.
\]
For every $y\in\mathbb R$ and $\ell\in\mathbb N_0$,
at most two points of $a\mathbb Z$ have distance from $y$
in $[\ell a,(\ell+1)a)$.
Since $|g(u)|\le C_0e^{-c_g|u|}$, we obtain
\[
 \sup_{y\in\mathbb R}\sum_{k\in\mathbb Z}g(ka-y)^2
 \le 2C_0^2\sum_{\ell\ge0}e^{-2c_g a\ell}
 =\frac{2C_0^2}{1-e^{-2c_g a}}<\infty.
\]
Together with the identity
$\sum_{k\in\mathbb Z}ae^{ka}e^{-2s_je^{ka}}
=(a/s_j)\sum_{k\in\mathbb Z}g(ka-x_j)^2$,
this proves the second inequality. Moreover,
$\sum_{j\ge1}|f_j|^2s_j
\le s_1\sum_{j\ge1}|f_j|^2
=s_1\|f\|^2<\infty$.

A compact perturbation of an operator that is bounded below has closed range and
finite dimensional kernel. Lemma~\ref{lem:finite-completion} therefore
gives a frame after adding finitely many observations at nonnegative integer times.
If several times round to the same integer, we add their weights.
\end{proof}

\begin{thm}
\label{thm:positive-count}
Let \(\mu=\sum_{t\in T}w_t\delta_t\) be a sampling design.
If \(\{\sqrt{w_t}A^tf:t\in T\}\) is a frame, then
\(N_\mu(R)\ge m_A(R)-O_\mu(1)\).
Conversely, for every \(\eta>0\), there is such a frame at integer times
with \(N_\mu(R)\le(1+\eta)m_A(R)+O_\eta(1)\), with weights
\(w_n=n\) at positive sampling times and \(w_0=1\) if \(0\in T\).
\end{thm}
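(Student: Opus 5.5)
The lower bound is Lemma~\ref{lem:positive-lower}, applied on $(\ker A)^\perp$. The restriction of a frame for $H$ to the reduced space is still a frame: project onto $(\ker A)^\perp$, and note that sampling vectors at $t>0$ already lie there. Only the $t=0$ vector can change, and it changes by its projection. The spectral counting function is unchanged, since it only counts positive eigenvalues. This gives $N_\mu(R)\ge m_A(R)-K$.

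For the converse, fix $\eta>0$ and start from the frame $(\psi_k)_{k\in\mathcal J}$ of Lemma~\ref{lem:positive-grid}. Its sampling times are $e^{ka}$, its weights are $ae^{ka}$, and its matrix entries decay like $Ce^{-c|ka-x_j|}$. I would partition the line into consecutive blocks $I_\ell=[\ell\Lambda,(\ell+1)\Lambda)$ of length $\Lambda$. Here $\Lambda$ is large, depending on $\eta$ and the separation $\delta$, so that the number $r_\ell$ of points $x_j$ in $I_\ell$ is large in an averaged sense. In each block I would apply Lemma~\ref{lem:bss-range} to the rows $\psi_k$ with $ka\in I_\ell$, restricted to coordinates $e_j$ with $x_j$ in a slightly enlarged window. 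The entries $\sqrt a\,b_jg(ka-x_j)$ are real, so the lemma applies. With $\tau=1+\eta/2$, the lemma gives a reweighting supported on at most $\lceil\tau r_\ell\rceil$ rows that is spectrally equivalent to the block's frame operator, restricted to the span of the relevant columns.

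The block cross-interactions and the truncation to windows are errors controlled by the Schur test and the exponential decay. Making them small requires widening the windows by a margin $D'$, which increases the effective local dimension $r_\ell$ by a boundary amount $O(D'/\delta)$ per block. Choosing $\Lambda\gg D'/(\eta\delta)$ makes this a fraction $\eta/2$ of the count in dense blocks. Blocks with $r_\ell=0$ contribute no rows. Here $\inf\|\psi_k\|>0$ together with the upper frame bound keeps the BSS coefficients $\theta_k$ uniformly bounded, so the new weights satisfy $\theta_k ae^{ka}\le C'e^{ka}$. Summing the counts gives $N(R)\le(1+\eta)m_A(R)+O_\eta(1)$, since the number of selected times in $[0,R]$ is bounded by the sum of $\lceil\tau r_\ell\rceil$ over blocks up to $\log R$.

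Next I would round the selected times up to integers using Lemma~\ref{lem:positive-rounding}, which requires $w_t\le Cat$. By that lemma the rounding perturbation is Hilbert--Schmidt, and finitely many integer observations complete the family to a frame. These added times change the count by $O(1)$. I would then restore the finitely many removed initial eigenvalues and $\ker A$ by adding finitely many integer times via Lemma~\ref{lem:finite-completion}.

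The last step is to replace the weights by $w_n=n$ and $w_0=1$. The current weights satisfy $c\,n\le w_n\le C n$ for large $n$. The lower bound on the weights needs the BSS coefficients of the selected rows to be bounded below. I would arrange this by discarding any rows with tiny $\theta$, whose total contribution is small, or by absorbing them into the error budget. Once $w_n\asymp n$ holds outside a finite set, changing the weights to $n$ preserves the frame bounds up to constants. The finitely many exceptional times can be handled by completion again.

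I expect the main obstacle to be the localization of BSS: making the per-block sparsifications glue into a global two-sided spectral inequality while the boundary overhead stays at most $\eta m_A$. A related difficulty is sparse blocks, where $\lceil\tau r_\ell\rceil$ exceeds $(1+\eta)r_\ell$. I would handle these by merging consecutive blocks until each has enough points, or by charging the ceiling loss to a long empty stretch, which then adds only $O_\eta(1)$ overall.
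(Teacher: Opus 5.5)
Your proposal follows essentially the paper's proof: the frame of Lemma~\ref{lem:positive-grid}, blockwise application of Lemma~\ref{lem:bss-range} to rows truncated to an enlarged window, a uniform upper bound $\theta_k\le Q$ obtained from the lower bound on truncated row norms, Schur-test control of the truncation, rounding by Lemma~\ref{lem:positive-rounding}, and finite completion. The paper takes each block to contain exactly $M$ consecutive points $x_j$, which is your merging fix and is genuinely needed, since fixed-length blocks lose a constant factor for sparse spectra; also, contrary to your remark, a block with $r_\ell=0$ can still contain rows near its ends.

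Two of the difficulties you anticipated do not arise. First, every row belongs to exactly one block, so the blockwise BSS inequalities simply add and there are no cross terms to control. Second, no lower bound on $\theta_k$ is needed. The upper bound $\theta_k\le Q$ already shows that the selected vectors $\psi_k$ with their original weights $ae^{ka}$ form a frame, so the weights $w_t=t$ come for free.
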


\begin{proof}
The lower bound follows from Lemma~\ref{lem:positive-lower}.
We now prove the upper bound.
Let $R$ be the analysis operator of the frame in
Lemma~\ref{lem:positive-grid}.
Choose $\gamma>0$ such that
$\|Rx\|\ge\gamma\|x\|$ for every $x\in H$.
Let $c_0>0$ be the constant obtained in the proof of
Lemma~\ref{lem:positive-grid}.
For every $k\in\mathcal J$, there is an index $j(k)\ge1$ such that
$|ka-x_{j(k)}|\le D$ and $|R_{k,j(k)}|\ge c_0$.
In particular, $\|\psi_k\|\ge c_0$.
Fix $1<\tau<1+\eta$. We will first choose a truncation distance
\(L\) and then the number \(M\) of spectral points in each block.

Partition the logarithmic time axis into intervals, each containing
exactly $M$ successive points of the sequence $(x_j)_{j\ge1}$.
More precisely, put
\[
 d_\ell=\tfrac12(x_{\ell M}+x_{\ell M+1}),\qquad \ell\ge1,\qquad
 I_1=(-\infty,d_1),\qquad
 I_\ell=[d_{\ell-1},d_\ell),\qquad \ell\ge2.
\]
For each \(\ell\ge1\), let
\(J_\ell=\{j\ge1:\operatorname{dist}(x_j,I_\ell)\le L\}\).
Separation gives \(|J_\ell|\le M+2\lceil L/\delta\rceil+2\).
For each row of \(R\) whose logarithmic time lies in \(I_\ell\), retain
only the coordinates in \(J_\ell\). Denote the resulting matrix by
\(\widetilde R\).
An omitted entry has \(|ka-x_j|>L\), and the proof of
Lemma~\ref{lem:positive-grid} gives \(\|R-\widetilde R\|\le\varepsilon_L\),
where \(\varepsilon_L\to0\) as \(L\to\infty\), uniformly in \(M\).
If \(L\ge D\), every retained row still has norm at
least \(c_0\), because its closest spectral point belongs to \(J_\ell\).
Each block has finitely many rows, since its retained grid points lie
within distance \(D\) of finitely many spectral points.

Since $b_j>0$, the matrix $R$ and its restricted rows have real entries.
We therefore apply Lemma~\ref{lem:bss-range} to the restricted rows
in each block.
This does not require a positive lower eigenvalue bound for the block.
Let $(\theta_k)_{k\in\mathcal J}$ denote the resulting coefficients.
Form the matrix $\widetilde U$ by multiplying row $k$ of
$\widetilde R$ by $\sqrt{\theta_k}$ and deleting the rows
for which $\theta_k=0$.

For each $x\in H$, sum the block inequalities over the first $N$
blocks and let $N\to\infty$.
Monotone convergence of the nonnegative sums gives
\(
 \|\widetilde R x\|^2
 \le \sum_{k\in\mathcal J}
       \theta_k|(\widetilde R x)_k|^2
 \le \kappa_\tau\|\widetilde R x\|^2.
\)
The upper bound shows that $\widetilde U$ defines a bounded
analysis operator, and these inequalities give
$\widetilde R^*\widetilde R
\le\widetilde U^*\widetilde U
\le\kappa_\tau\widetilde R^*\widetilde R$.

Assume $L\ge D$ and $\varepsilon_L\le1$.
For every $k\in\mathcal J$ with $\theta_k>0$, we have
\[
 \theta_k c_0^2
 \le \theta_k\sum_{j\ge1}|\widetilde R_{kj}|^2
 \le \|\widetilde U\|^2
 \le \kappa_\tau\|\widetilde R\|^2
 \le \kappa_\tau(\|R\|+1)^2.
\]
Consequently,
$0<\theta_k\le Q_\tau:=\kappa_\tau(\|R\|+1)^2/c_0^2$.

Let $\mathcal K=\{k\in\mathcal J:\theta_k>0\}$.
Restore the omitted coordinates in the rows of $\widetilde U$,
retaining the factors $\sqrt{\theta_k}$, to obtain
$Uv=(\sqrt{\theta_k}\langle v,\psi_k\rangle)_{k\in\mathcal K}$.
The coefficient bound gives
$\|U-\widetilde U\|\le\sqrt{Q_\tau}\,\varepsilon_L$.
Hence
$\|Uv\|\ge[\gamma-(1+\sqrt{Q_\tau})\varepsilon_L]\|v\|$.

Since $\varepsilon_L\to0$ and $Q_\tau$ is independent of $L$ and $M$,
choose $L\ge D$ so that $\varepsilon_L\le1$ and
$d:=\gamma-(1+\sqrt{Q_\tau})\varepsilon_L>0$.
Since $\theta_k\le Q_\tau$ and $R$ is bounded,
\[
 \frac{d^2}{Q_\tau}\|v\|^2
 \le \sum_{k\in\mathcal K}|\langle v,\psi_k\rangle|^2
 \le \|R\|^2\|v\|^2.
\]
Thus $(\psi_k)_{k\in\mathcal K}$ is a frame.
Since $a^{-1/2}\psi_k=\sqrt{e^{ka}}\,A^{e^{ka}}f$,
multiplying each vector by $a^{-1/2}$ gives a frame on
the same sampling set with weights
$w_{e^{ka}}=e^{ka}$, $k\in\mathcal K$.

Since $x_{j+1}-x_j\ge\delta$, each restricted block has at most
$M+2\lceil L/\delta\rceil+2$ nonzero columns.
Lemma~\ref{lem:bss-range} therefore selects at most
$K_M:=\tau(M+2\lceil L/\delta\rceil+2)+1$
sampling times in each block.
Choose $M$ so large that $K_M\le(1+\eta)M$.
If $\log R\in I_\ell$, all $(\ell-1)M$ spectral points in
earlier intervals lie below $\log R$. Hence
\(
 N_\mu(R)\le \ell K_M
 \le(1+\eta)m_A(R)+K_M.
\)
This proves the estimate for real sampling times.
By Lemma~\ref{lem:positive-rounding}, rounding the times and
adding finitely many observations at nonnegative integer times
gives an integer sampling frame.
Since $\lceil t\rceil\le R$ implies $t\le R$, rounding does not
increase the counting function.
Finite completion adds only finitely many sampling times.
Their contribution is absorbed into $O_\eta(1)$,
so the resulting integer design satisfies the stated upper bound.

We now justify restoring any spectral subspaces omitted earlier.
Fix $m\in\mathbb N$ and set
$E_m=\operatorname{span}\{e_1,\ldots,e_m\}$ and $H_m=E_m^\perp$.
Suppose that $\nu=\sum_{t\in T}w_t\delta_t$ is admissible for the
system restricted to $H_m$, with upper frame bound $\beta$.
For $1\le i\le m$, we have $s_i>s_{m+1}$, so
\[
 \sum_{t\in T}w_t|f_i|^2e^{-2s_it}
 \le\frac{|f_i|^2}{|f_{m+1}|^2}
       \sum_{t\in T}w_t|f_{m+1}|^2e^{-2s_{m+1}t}
 \le\beta\frac{|f_i|^2}{|f_{m+1}|^2}<\infty.
\]
The preceding estimate gives boundedness of the weighted analysis
operator on the finite dimensional space $E_m$.
Its restriction to $H_m$ is bounded by admissibility.
Thus $W=\mathcal O_\nu:H\to\ell^2(T)$ is bounded.
Its restriction to $H_m$ is bounded below, so $W(H_m)$ is closed.
As $W(E_m)$ is finite dimensional,
$\operatorname{ran}W=W(H_m)+W(E_m)$ is closed.
The orthogonal projection onto $E_m$ is injective on $\ker W$,
because $W|_{H_m}$ is injective. Hence $\dim\ker W\le m$.
Lemma~\ref{lem:finite-completion} therefore gives a frame on $H$
after adding finitely many observations at nonnegative integer times.

The zero eigenspace removed earlier can be restored in the same way,
since it is finite dimensional and contributes only at time zero.
Restoring finitely many eigenvalues and adding finitely many sampling
times change the corresponding counting functions by at most a constant.
Thus both the real and integer constructions extend to the space
before restriction with the same upper counting bounds up to
an additive constant.

Let $v_n>0$ denote the weight of the completed integer design
at a sampling time $n$.
Since consecutive grid times are more than one apart for all
sufficiently large indices, only finitely many times coincide after
rounding. Outside a finite set, each sampling time \(n\) therefore has
weight \(v_n=t\), where \(n=\lceil t\rceil\), so
\(n-1<v_n\le n\). The ratios \(n/v_n\) at positive sampling times
are consequently bounded above and below by positive constants.
Comparison of the weighted frame inequalities shows that replacing
\(v_n\) by \(n\), and taking \(w_0=1\) if zero belongs to the sampling set,
preserves the frame property. The sampling set and its counting
function are unchanged.
\end{proof}

\begin{cor}\label{cor:positive-coefficient}
For both real and integer sampling, the infimum of
$\limsup_{R\to\infty}N_\mu(R)/\log R$
over all admissible designs $\mu$ equals $\delta_A$.
If $\delta_A=0$, this infimum is attained.
\end{cor}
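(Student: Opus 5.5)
The plan is to derive both inequalities directly from Theorem~\ref{thm:positive-count}, and then to handle the attainment claim for $\delta_A=0$ separately.

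\emph{Lower bound.} Fix any admissible real design $\mu$; integer designs are a special case. The first part of Theorem~\ref{thm:positive-count}, that is, Lemma~\ref{lem:positive-lower}, gives a constant $K$ with $N_\mu(R)\ge m_A(R)-K$ for all $R>0$. Dividing by $\log R$ and taking $\limsup_{R\to\infty}$, the term $K/\log R$ disappears. Hence
\[
\limsup_{R\to\infty}N_\mu(R)/\log R\ge\delta_A .
\]
So the infimum over real designs, and a fortiori over integer designs, is at least $\delta_A$.

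\emph{Upper bound.} For each $\eta>0$, Theorem~\ref{thm:positive-count} gives an admissible integer design $\mu_\eta$ with
\[
N_{\mu_\eta}(R)\le(1+\eta)m_A(R)+O_\eta(1).
\]
Dividing by $\log R$ yields $\limsup N_{\mu_\eta}(R)/\log R\le(1+\eta)\delta_A$. Here $\delta_A<\infty$, because $x_{j+1}-x_j\ge\delta$ gives $m_A(R)=O(\log R)$. Letting $\eta\downarrow0$ shows that the integer infimum is at most $\delta_A$. Since integer designs are real designs, the real infimum is also at most $\delta_A$. Both infima therefore equal $\delta_A$.

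\emph{Attainment when $\delta_A=0$.} Take any single design $\mu_\eta$, say with $\eta=1$. The bound above gives
\[
0\le\limsup_{R\to\infty}N_{\mu_1}(R)/\log R\le 2\delta_A=0,
\]
so this one admissible integer design, which is also a real design, attains the infimum $0$ in both classes. The point to check here is that $m_A(R)/\log R\to0$ is indeed what $\delta_A=0$ means. It is, since $\delta_A$ is a $\limsup$ of nonnegative quantities.

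I do not expect a real obstacle, because the whole argument reduces to Theorem~\ref{thm:positive-count}. The only subtlety is the additive constants, $O_\mu(1)$ in the lower bound and $O_\eta(1)$ in the upper bound. These are negligible relative to $\log R$ as $R\to\infty$, and since the infimum is taken over designs, a constant that depends on the design causes no difficulty.
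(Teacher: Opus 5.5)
Your proposal is correct and follows the paper's proof essentially verbatim: the lower bound comes from Lemma~\ref{lem:positive-lower}, the upper bound from Theorem~\ref{thm:positive-count} with $\eta\downarrow0$, and attainment for $\delta_A=0$ from any single design constructed there. Your remark that $\delta_A<\infty$, which follows from the separation $x_{j+1}-x_j\ge\delta$, is left implicit in the paper, but it is exactly what justifies passing from $(1+\eta)\delta_A$ to $\delta_A$.
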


\begin{proof}
The lower bound follows from Lemma~\ref{lem:positive-lower}. For the
upper bound, apply Theorem~\ref{thm:positive-count} and let
\(\eta\downarrow0\). If \(\delta_A=0\), each sampling design constructed
in that theorem has logarithmic counting coefficient zero.
\end{proof}

\subsection{Critical sampling sets}
\label{sec:critical}

When \(\delta_A>0\), Corollary~\ref{cor:positive-coefficient} gives the
value of the infimum but does not show that it is attained. We now give
three spectral conditions under which a sampling design attains the
critical count \(N_\mu(R)=m_A(R)+O(1)\). Each construction gives a frame
of finite excess, meaning that finitely many vectors can be removed
to obtain a Riesz basis; see Section~\ref{sec:finite-excess}.

For the geometric model in
\cite[Proposition~3.5]{WuUniversalWeighted2026},
real and integer designs attaining the critical count were
constructed in \cite[Theorem~B.2]{WuUniversalWeighted2026}.
We extend this conclusion to the spectral conditions below.

The proof uses total positivity. A function \(g:\mathbb R\to\mathbb R\)
is totally positive if \(\det(g(u_i-v_j))_{i,j=1}^m\ge0\) for every
\(m\ge1\) and all strictly increasing real sequences
\(u_1<\cdots<u_m\) and \(v_1<\cdots<v_m\).

\begin{lem}\label{lem:logarithmic-kernel}
Let $g(u)=\exp(u/2-e^u)$ and $D=\operatorname{diag}((b_j)_{j\ge1})$.
For a finite or countable family $(t_k)_{k\in I}$ of positive times, let $V$
be the matrix whose $k$th column is the coordinate vector of
$\sqrt{t_k}A^{t_k}f$ in the basis $(e_j)_{j\ge1}$.
Then $V=DG$, where $G_{jk}=g(\log t_k-x_j)$ for $j\ge1$ and $k\in I$.
The operator $D$ is boundedly invertible. The function $g$ is continuous,
integrable, and totally positive, and
$|g(u)|+|g'(u)|\le C\exp(-c|u|)$ for some $C,c>0$.
\end{lem}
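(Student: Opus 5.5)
The factorization is a direct coordinate computation, so I would dispose of it first. For $t>0$, $A^te_j=e^{-s_jt}e_j$ and $f=\sum_j f_je_j$ with $f_j=b_j\sqrt{s_j}$. Hence the $j$th coordinate of $\sqrt t\,A^tf$ is
\[
 b_j\sqrt{s_jt}\,e^{-s_jt}.
\]
Write $u=\log t-x_j$. Since $x_j=\log(1/s_j)$, we have $s_jt=e^{u}$, so $\sqrt{s_jt}\,e^{-s_jt}=\exp(u/2-e^u)=g(u)$. This gives $V_{jk}=b_jg(\log t_k-x_j)$, that is, $V=DG$. Bounded invertibility of $D$ follows from $b_-\le b_j\le b_+$ with $b_j>0$: the inverse is the diagonal operator with entries $1/b_j\le 1/b_-$.

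The analytic properties of $g$ are elementary. Continuity is clear. For $u\ge0$, $g(u)\le e^{u/2-e^u}\le e^{-u}$ up to a constant, since $e^u\ge 3u/2$ eventually. For $u\le0$, $g(u)\le e^{u/2}=e^{-|u|/2}$. Also $g'(u)=(\tfrac12-e^u)g(u)$, and the factor $\tfrac12-e^u$ is bounded by $1$ for $u\le0$ and absorbed by the super-exponential decay for $u\ge0$. So $|g|+|g'|\le Ce^{-c|u|}$ with $c=1/2$, and integrability follows. (In fact $\int g=\int_0^\infty s^{-1/2}e^{-s}\,ds=\sqrt\pi$ via $s=e^u$.)

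Total positivity is the main step, and I would prove it through the Laplace-transform structure rather than Schoenberg's theorem. Put $t_i=e^{u_i}$ and $\sigma_j=e^{-v_j}$. Then
\[
 g(u_i-v_j)=\sqrt{t_i\sigma_j}\,e^{-t_i\sigma_j}.
\]
Therefore
\[
 \det\bigl(g(u_i-v_j)\bigr)=\prod_i\sqrt{t_i}\,\prod_j\sqrt{\sigma_j}\,\det\bigl(e^{-t_i\sigma_j}\bigr).
\]
Here $t_1<\dots<t_m$ are increasing and $\sigma_1>\dots>\sigma_m$ are decreasing. The kernel $e^{xy}$ is strictly totally positive for increasing $x$ and increasing $y$; this is the classical generalized Vandermonde fact, provable by induction via Rolle's theorem on exponential sums with at most $m-1$ sign changes. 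Taking $x_i=t_i$ and $y_j=-\sigma_j$, the $y_j$ are increasing, so $\det(e^{-t_i\sigma_j})>0$ and every such minor is positive.

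The one point needing care is orientation: both the rows and the columns must be increasing. Reversing the order of the $v_j$ reverses the $\sigma_j$, and we have checked that the resulting order $y_j=-\sigma_j$ is the increasing one, so no sign $(-1)^{m(m-1)/2}$ appears. If one prefers to avoid the generalized Vandermonde fact, one can instead observe that $g$ is a Pólya frequency function. Its bilateral Laplace transform is $\int e^{-su}g(u)\,du=\Gamma(\tfrac12-s)$, and $1/\Gamma(\tfrac12-s)$ is a Laguerre–Pólya entire function, so Schoenberg's characterization \cite{Schoenberg1951} gives total positivity. I would present the direct determinant argument as the primary route.
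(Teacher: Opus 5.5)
Your proof is correct. For the factorization $V=DG$, the invertibility of $D$, and the decay and integrability of $g$, it matches the paper's argument, with explicit constants added (e.g.\ $c=1/2$ and $\int g=\Gamma(1/2)$).

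The genuine difference is in how you prove total positivity. The paper computes $\int_{\mathbb R}e^{-zu}g(u)\,du=\Gamma(1/2-z)$, writes the Weierstrass product of $\Gamma(1/2)/\Gamma(1/2-z)$, and applies Schoenberg's characterization of P\'olya frequency functions. You instead substitute $t_i=e^{u_i}$ and $\sigma_j=e^{-v_j}$, remove the positive row and column factors $\sqrt{t_i}$ and $\sqrt{\sigma_j}$, and reduce to $\det\bigl(e^{t_iy_j}\bigr)$ with $y_j=-\sigma_j$ increasing. Everything then rests on the classical total positivity of the kernel $e^{xy}$, and your orientation check is right.

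Your route is more elementary, and it even shows that every minor is strictly positive. The paper's route buys the product representation itself. The absence of a Gaussian factor is not part of the lemma's statement. However, the proof of Theorem~\ref{thm:critical-regular-spectra} cites this lemma for exactly that fact when it invokes Kloos's theorem on the zero of the Zak transform. So if you take your primary route, you should still record the Laplace transform and its product, as in your closing remark.

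One small repair is needed in your sketch of the exponential-kernel fact. Rolle's theorem alone shows only that a nonzero sum $\sum_{j=1}^m c_je^{y_jx}$ has at most $m-1$ real zeros, hence $\det\bigl(e^{x_iy_j}\bigr)\ne0$. To get the sign, note that this determinant is continuous and nonvanishing on the convex set of pairs of increasing $m$-tuples, so its sign is constant there. At $y_j=j$ it equals $\prod_i e^{x_i}\prod_{i<k}(e^{x_k}-e^{x_i})>0$. Alternatively, simply cite the classical result.
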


\begin{proof}
The bounds on $b_j$ imply that $D$ is boundedly invertible. The identity
$\sqrt{t_k}f_j e^{-s_jt_k}=b_jg(\log t_k-x_j)$ proves the matrix formula.

For $\operatorname{Re}z<1/2$, the substitution $v=e^u$ gives
\[
\int_{\mathbb R}e^{-zu}g(u)\,du=\Gamma(1/2-z).
\]
The normalized reciprocal of this transform has the product
\[
\frac{\Gamma(1/2)}{\Gamma(1/2-z)}
=e^{\psi(1/2)z}
\prod_{n=0}^{\infty}
\left(1-\frac{z}{n+1/2}\right)e^{z/(n+1/2)}.
\]
Here $\Gamma$ is the gamma function and $\psi=\Gamma'/\Gamma$.
The zeros of this product have summable squared reciprocals. Thus
Schoenberg's characterization \cite{Schoenberg1951} implies that $g$ is
totally positive. The product has no Gaussian
factor $e^{-\gamma z^2}$ with $\gamma>0$. The decay follows from the formula
for $g$ and the identity $g'(u)=(1/2-e^u)g(u)$.
\end{proof}

We identify \(\ell^2(\mathbb N)\) with the subspace of \(\ell^2(\mathbb Z)\)
whose coordinates vanish for \(j\le0\), and let \(P\) be the orthogonal
projection onto this subspace. For a bounded operator \(L\) on
\(\ell^2(\mathbb Z)\), its compression to \(\ell^2(\mathbb N)\) is
\(L_+=PL|_{\ell^2(\mathbb N)}\), acting on \(\ell^2(\mathbb N)\).
Its matrix is \((L_{jk})_{j,k\ge1}\).

\begin{lem}\label{lem:critical-compression}
The following statements hold.
\begin{enumerate}[(i)]
\item If a bounded invertible matrix $L$ on $\ell^2(\mathbb Z)$
satisfies $|L_{jk}|\le C e^{-c|j-k|}$ for some $C,c>0$
and all $j,k\in\mathbb Z$, then its compression $L_+$ is Fredholm.

\item Suppose that a matrix $E$ on $\ell^2(\mathbb N)$ satisfies
$|E_{jk}|\le C e^{-c|j-k|}$ for some $C,c>0$ and all $j,k\ge1$.
If $E_{j,j+d}\to0$ as $j\to\infty$ for every fixed integer $d$,
then $E$ is compact.
\end{enumerate}
\end{lem}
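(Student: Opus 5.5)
For (i), the plan is to exhibit $L_+$ as invertible modulo compacts using the block decomposition of $L$ with respect to $\ell^2(\mathbb Z)=\ell^2(\mathbb Z_{\le0})\oplus\ell^2(\mathbb N)$. Let $Q=I-P$. Write $L_-=QL|_{\operatorname{ran}Q}$ for the other diagonal block, and let $L_{+-}=PLQ$ and $L_{-+}=QLP$ be the off-diagonal blocks. The exponential decay makes the off-diagonal blocks Hilbert--Schmidt. Indeed,
\[
\sum_{j\ge1,\,k\le0}|L_{jk}|^2\le C^2\sum_{j\ge1,\,k\le0}e^{-2c(j-k)}<\infty,
\]
since the number of pairs with $j-k=m$ is $m$ and $\sum_m m e^{-2cm}<\infty$; the same bound holds for $L_{-+}$. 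Consequently $L$ equals the block diagonal operator $L_+\oplus L_-$ up to a compact operator.

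Since $L$ is invertible, $L_+\oplus L_-$ is Fredholm, being an invertible operator plus a compact one. A direct sum $X\oplus Y$ is Fredholm only if each summand is. The kernel of each summand sits inside the finite-dimensional kernel of the sum, and likewise for cokernels. Closedness of the range passes to $X$ because $\operatorname{ran}X=(\operatorname{ran}(X\oplus Y))\cap(\ell^2(\mathbb N)\oplus0)$ after identifying components. Alternatively, one can argue with Atkinson's theorem: $L^{-1}$ provides a left and right inverse of $L_+\oplus L_-$ modulo compacts, and compressing with $P$ yields $PL^{-1}P$ as a two-sided inverse of $L_+$ modulo compacts. Specifically,
\[
PL^{-1}P\,L_+=P-PL^{-1}QLP,
\]
and $QLP$ is compact, so this equals $P$ minus a compact on $\ell^2(\mathbb N)$; the product in the other order is symmetric. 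I would present this Atkinson version, since it avoids checking the closed-range claim for direct summands.

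For (ii), the plan is to approximate $E$ in norm by compact operators. For $K\in\mathbb N$, let $E^{(K)}$ keep only the diagonals $|j-k|\le K$. By the Schur test the tail satisfies
\[
\|E-E^{(K)}\|\le 2C\sum_{d>K}e^{-cd}\to0.
\]
Each $E^{(K)}$ is a finite sum of single diagonals $D_d$ with entries $E_{j,j+d}$. Such a diagonal is a weighted shift, namely a diagonal multiplication followed by a partial shift, whose weights tend to $0$. It is therefore compact, as a norm limit of its finite truncations: the truncation error equals the supremum of the weights beyond the cutoff. Norm limits of compact operators are compact, so $E$ is compact.

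I expect the main obstacle to be handling the Fredholm property of a compression correctly in (i). Everything else is routine Schur and Hilbert--Schmidt estimates, and the Atkinson identity above dispatches that step cleanly.
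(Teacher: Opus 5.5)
Your proposal is correct and follows the paper's proof essentially verbatim. In (i) the paper also shows that $PL(I-P)$ and $(I-P)LP$ are Hilbert--Schmidt, and it uses $PL^{-1}P$ as an inverse of $L_+$ modulo the compact errors $-PL(I-P)L^{-1}P$ and $-PL^{-1}(I-P)LP$. In (ii) it likewise truncates to a band using the Schur test and notes that each diagonal is a shift composed with a diagonal operator whose entries tend to zero.
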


\begin{proof}
For (i), both $PL(I-P)$ and $(I-P)LP$ are Hilbert--Schmidt, since the sum of
$e^{-2c|j-k|}$ over either pair of opposite half axes is finite.  On $\operatorname{ran}P$, multiplying $PLP$ by $PL^{-1}P$ in either
order gives the identity up to a compact error. These errors are
\(
-PL(I-P)L^{-1}P, -PL^{-1}(I-P)LP.
\)
Thus $L_+$ is Fredholm.

For (ii), let $E^{(q)}$ be obtained from $E$ by setting its entries
with $|j-k|>q$ to zero. The Schur test gives
$\|E-E^{(q)}\|\le C\sum_{|d|>q}e^{-c|d|}\to0$ as $q\to\infty$.  Within a
fixed band, each diagonal is a shift composed with a diagonal operator
whose entries tend to zero.  Each such operator is compact.  Taking
$q\to\infty$ proves the assertion.
\end{proof}

\begin{lem}\label{lem:finite-orbit-completion}
If a Bessel sequence $(\sqrt{v_k}A^{t_k}f)_{k\ge1}$, with $v_k>0$, has a
Fredholm synthesis operator, it can be completed to a frame of finite
excess by adding finitely many integer orbit vectors at new times.
\end{lem}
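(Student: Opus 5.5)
Write $\Phi:\ell^2(\mathbb N)\to H$ for the synthesis operator $c\mapsto\sum_{k\ge1}c_k\sqrt{v_k}A^{t_k}f$. By hypothesis $\Phi$ is bounded and Fredholm, so its range $\operatorname{ran}\Phi$ is closed with finite codimension, and $\ker\Phi$ is finite dimensional. The plan has three steps.

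\emph{Step 1: fill the missing directions.} Set $Z=(\operatorname{ran}\Phi)^\perp$; this is finite dimensional. The analysis operator $W=\Phi^*$ is bounded with closed range and kernel $\ker W=Z$. Lemma~\ref{lem:finite-completion} then gives a finite set $F\subset\mathbb N_0$ such that $x\mapsto(Wx,(CA^nx)_{n\in F})$ is bounded below. The added vectors are the finitely many orbit vectors $A^nf$, $n\in F$. Adding finitely many vectors keeps the family Bessel, so the enlarged family is a frame. To obtain new times, discard each $n\in F$ that already equals some $t_k$; the vector $A^nf$ is then already present up to the positive factor $\sqrt{v_k}$, so the lower bound is unaffected. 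Since the remaining added vectors are finitely many, we may keep a minimal subset of $F$. This works because a projection onto the finite-dimensional quotient $H/\operatorname{ran}\Phi$ needs only $\dim Z$ vectors.

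\emph{Step 2: finite excess.} The enlarged synthesis operator $\widetilde\Phi:\ell^2(\mathbb N)\oplus\mathbb C^{F'}\to H$ is onto. Its kernel is finite dimensional, because $\widetilde\Phi$ is a finite-rank extension of the Fredholm operator $\Phi$, so $\widetilde\Phi$ is itself Fredholm. A frame whose synthesis operator has finite dimensional kernel has finite excess equal to $\dim\ker\widetilde\Phi$. This is the standard characterisation of frames of finite excess \cite{BalanCasazzaHeilLandau2003}: for such a frame one can delete finitely many elements and obtain a Riesz basis, since deleting an element whose coefficient is nonzero in some kernel vector lowers the kernel dimension by one while preserving surjectivity.

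\emph{Main obstacle.} The hard part is making the deletion argument rigorous. I would argue directly rather than rely only on the citation. Take a nonzero $c\in\ker\widetilde\Phi$ and an index $i$ with $c_i\neq0$. The $i$th vector is then a finite-norm combination of the others, so the remaining family still spans $H$ onto. Its kernel is $\{d\in\ker\widetilde\Phi: d_i=0\}$, which has dimension exactly one less. Iterating $\dim\ker\widetilde\Phi$ times yields an injective, surjective bounded synthesis operator, that is, a Riesz basis.

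A secondary point is that the added times in Step 1 should be genuinely new integer times. This is handled by the observation above: a coinciding time only duplicates an existing vector up to a scalar, so it can be discarded without losing the lower bound.
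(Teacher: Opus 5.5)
Your proof is correct and follows the same route as the paper. You apply Lemma~\ref{lem:finite-completion} to $W=\Phi^*$, arrange that the added times avoid $\{t_k\}$, and show that the enlarged synthesis operator has finite-dimensional kernel. The differences are only local:
\begin{itemize}
\item You discard coinciding times as redundant. Their observations are bounded by $v_k^{-1}\|\Phi^*x\|^2$, so a lower bound survives, though with a smaller constant rather than ``unaffected''. The paper instead notes that observations at existing times vanish on $\ker\Phi^*$.
\item You obtain finiteness of $\ker\widetilde\Phi$ from Fredholm stability under finite-rank perturbation. The paper uses the explicit count $\dim\ker\widetilde\Phi\le\dim\ker\Phi+m$.
\item Your iterative deletion argument is a valid proof of Lemma~\ref{lem:finite-deletion-weights}, but it is not needed here. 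The paper defines excess as the dimension of the kernel of the synthesis operator.
\end{itemize}
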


\begin{proof}
Let $V:\ell^2(\mathbb N)\to H$ be the synthesis operator.
Since $V$ is Fredholm, $V^*$ has closed range and finite
dimensional kernel. Apply Lemma~\ref{lem:finite-completion}
to $W=V^*$ to obtain a frame by finite completion.

For $x\in\ker V^*$, we have
$(V^*x)_k=\sqrt{v_k}\langle x,A^{t_k}f\rangle=0$
for every $k\ge1$. Since $v_k>0$, the observations at existing
times vanish on $\ker V^*$.
In the finite completion argument, the added observations may
therefore be chosen at distinct times
$n_1,\ldots,n_m\in\mathbb N_0\setminus\{t_k:k\ge1\}$.

The completed frame has synthesis operator
$\widetilde V:\ell^2(\mathbb N)\oplus\mathbb C^m\to H$ given by
\(
 \widetilde V(c,d)
 =Vc+\sum_{\ell=1}^m d_\ell A^{n_\ell}f.
\)
Define $\pi:\ker\widetilde V\to\mathbb C^m$ by $\pi(c,d)=d$.
Then $\ker\pi=\ker V\oplus\{0\}$, and hence
\[
 \dim\ker\widetilde V
 =\dim\ker V+\dim\operatorname{ran}\pi
 \le\dim\ker V+m<\infty.
\]
Thus the completed frame has finite excess.
\end{proof}

The proof of the next theorem uses the Zak transform
$Z\phi(x,\omega)=\sum_{k\in\mathbb Z}\phi(x-k)e^{2\pi i k\omega}$.
For the functions used below, the exponential decay of $\phi$
ensures absolute and locally uniform convergence of this series
on $\mathbb R^2$.

\begin{thm}\label{thm:critical-regular-spectra}
Let $(A,C)$ be an observable system with $A$ positive and
$Cx=\langle x,f\rangle$. Write the nonzero eigenvalues as
$e^{-s_j}$, $j\ge1$, with $e^{-s_j}\uparrow1$, and set $x_j=\log(1/s_j)$.
Suppose that one of the following conditions holds for some $h>0$:
\begin{enumerate}[(i)]
\item $x_{j+1}-x_j\to h$ as $j\to\infty$;
\item $x_j=jh+\eta_j$ for $j\ge1$, and for some $a<b$ with $b-a<h$ one has
$a\leq\eta_j\leq b$ for all sufficiently large $j$.
\end{enumerate}
For both real and integer sampling times, there is a weighted orbit
frame of finite excess satisfying \(N_T(R)=m_A(R)+O(1)\) and
\(\lim_{R\to\infty}N_T(R)/\log R=1/h\).
Its weights equal the sampling times outside a finite set.
\end{thm}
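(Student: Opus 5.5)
The plan is to place exactly one sampling time in each spectral slot. We pick real times $t_j=e^{x_j+\xi_j}$ for $j\ge j_0$, with the shift $\xi_j$ chosen below, and weights $w_{t_j}=t_j$. By Lemma~\ref{lem:logarithmic-kernel}, the synthesis matrix of $(\sqrt{t_j}A^{t_j}f)_j$ is $V=DG$ with $G_{jk}=g(x_k+\xi_k-x_j)$, where $D$ is invertible. Each index $j$ with $x_j\le\log R$ contributes one time with $\log t_j\le\log R+O(1)$. Since the $\xi_j$ are bounded and the $x_j$ are separated, this gives $N_T(R)=m_A(R)+O(1)$. In case (i) we have $x_j/j\to h$, and in case (ii) we have $x_j=jh+O(1)$, so in both cases $m_A(R)\sim\log R/h$ and the limit equals $1/h$. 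It therefore suffices to show that the synthesis operator $V$ is Bessel and Fredholm. Lemma~\ref{lem:finite-orbit-completion} then completes the family to a frame of finite excess using finitely many integer times, and those extra weights lie in the allowed finite exceptional set. Boundedness follows from the Schur test as in Lemma~\ref{lem:positive-grid}, using the exponential decay of $g$ and the separation of $(x_j)$.

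For the Fredholm property we compare $G$ with a Toeplitz (Laurent) model.

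In case (ii), take $\xi_j=\xi$ constant and write $x_j=jh+\eta_j$. Then $G_{jk}=g((k-j)h+\xi+\eta_k-\eta_j)$. The model matrix is $L_{jk}=g((k-j)h+\xi)$ on $\ell^2(\mathbb Z)$. Its symbol $\sum_d g(dh+\xi)e^{2\pi i d\omega}$ is, up to normalization, the Zak transform of $g(h\,\cdot)$ at a fixed point. Kloos' result on zeros of Zak transforms of totally positive functions of finite type shows that this symbol has no zeros for $\xi$ in a suitable range, namely away from the half-period. Therefore $L$ is invertible, and Lemma~\ref{lem:critical-compression}(i) shows that $L_+$ is Fredholm. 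The perturbations $\eta_j$ are not small, however. The condition $b-a<h$ is exactly what is needed to control them. I would follow the interpolation/sampling theorem in shift-invariant spaces of totally positive generators from \cite{deDiosPontGrochenigLiehrShafkulovskaTaylor2026}. That theorem says the nodes $jh+\eta_j$ with $\eta_j\in[a,b]$, $b-a<h$, give a bi-infinite matrix $(g(x_k+\xi-x_j))_{j,k\in\mathbb Z}$ that is bounded and invertible, after extending $\eta_j$ for $j\le 0$ periodically or constantly. Using the total positivity of $g$ from Lemma~\ref{lem:logarithmic-kernel}, the choice of $\xi$ in this step will be made to avoid the Zak zero. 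Lemma~\ref{lem:critical-compression}(i) then applies to this extended matrix. Its compression differs from $G$ only in finitely many rows and columns, which is a finite-rank change, because the hypothesis holds only for large $j$.

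In case (i), set $\eta_j=x_j-jh$. Then $\eta_{j+1}-\eta_j\to0$, but $\eta_j$ itself may drift. I would instead compare $G$ with the matrix $\widetilde L_{jk}=g((k-j)h+\xi)$ directly. The difference satisfies $E_{jk}=g(x_k-x_j+\xi)-g((k-j)h+\xi)$. For fixed $d=k-j$ we have $x_{j+d}-x_j\to dh$, so $E_{j,j+d}\to0$ by continuity of $g$. The entries are also bounded by $Ce^{-c|j-k|}$ by separation. Lemma~\ref{lem:critical-compression}(ii) makes $E$ compact. Hence $G$ is a compact perturbation of the Fredholm operator $\widetilde L_+$ and is itself Fredholm.

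Integer times are obtained by rounding, $t_j\mapsto\lceil t_j\rceil$. By Lemma~\ref{lem:positive-rounding}, this changes the synthesis operator by a Hilbert–Schmidt perturbation. The Fredholm property survives, so the same completion gives an integer frame of finite excess with an unchanged counting bound. Replacing the weights $t_j$ by $\lceil t_j\rceil$ is a bounded, boundedly invertible diagonal rescaling, so the weights equal the sampling times outside a finite set.

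I expect the main obstacle to be the invertibility of the model Laurent operator, that is, showing the symbol has no zeros, together with the robustness statement in case (ii) under non-small perturbations $b-a<h$. This rests on the cited Zak-transform and shift-invariant sampling results for totally positive $g$ without Gaussian factor. The step that needs care is choosing $\xi$ to avoid the possible zero at the half-period.
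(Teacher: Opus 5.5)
Your treatment of case (i) matches the paper: sampling at $e^{x_k+\xi}$, comparing with the Toeplitz model and using Lemma~\ref{lem:critical-compression}. So do the reduction to a Fredholm synthesis operator followed by Lemma~\ref{lem:finite-orbit-completion}, and the counting argument. The gap is in case (ii). With $t_k=e^{x_k+\xi}$, your matrix is $G_{jk}=g\bigl((k-j)h+\xi+\eta_k-\eta_j\bigr)$, so the irregularity enters through both the row and the column index.

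The result of \cite{deDiosPontGrochenigLiehrShafkulovskaTaylor2026} is a sampling theorem for the shift-invariant space spanned by integer translates of a totally positive $\phi$. It gives invertibility of $\bigl(\phi(j+\delta_j-k)\bigr)_{j,k\in\mathbb Z}$, where the perturbation sits in one index only. If you write $\phi(u)=g(-hu)$, your matrix would need the generator translated to the non-lattice points $k+\eta_k/h$. The cited theorem says nothing about that situation, so the Fredholm property of $G$ is unsupported. Its limit operators are the matrices $g\bigl((k-j)h+\xi+\widetilde\eta_k-\widetilde\eta_j\bigr)$ over all limit patterns $\widetilde\eta\in[a,b]^{\mathbb Z}$, and nothing in your argument shows that one $\xi$ makes all of them invertible. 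Your description of the choice of $\xi$ as ``avoiding the Zak zero'' is also not the operative condition, even in the one-sided setting. The sharp theorem needs every $\delta_j$ within distance $<1/2$ of $\delta_0=\xi_0-1/2$, where $(\xi_0,1/2)$ is the zero of $Z\phi$ in $[0,1)^2$, and that requires a specific centering.

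The fix is to decouple the sampling grid from the spectrum by taking $t_k=e^{z+kh}$ for $k\ge1$. Then $G_{jk}=g(z+kh-x_j)=\phi(j+\delta_j-k)$ with $\delta_j=(\eta_j-z)/h$, which is exactly the one-sided form. Choose $z=(a+b)/2-h(\xi_0-1/2)$, replace the finitely many exceptional $\eta_j$ by $(a+b)/2$, and extend to $j\le0$ by the same value. This gives $\sup_j|\delta_j-\delta_0|\le(b-a)/(2h)<1/2$, and this is precisely where $b-a<h$ is used. The bi-infinite matrix is then invertible, its compression is Fredholm by Lemma~\ref{lem:critical-compression}(i), and $G$ differs from that compression only in finitely many rows. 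The counting still works, because boundedness of $\eta_j$ gives both $N_T(R)$ and $m_A(R)$ equal to $h^{-1}\log R+O(1)$.

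A minor point concerns rounding in case (i). Lemma~\ref{lem:positive-rounding} is stated for times on a geometric grid, and your times are not. Its Hilbert--Schmidt estimate only uses separation of the $\log t_k$, so you can adapt it. Alternatively, argue as the paper does: since $\log\lceil t_k\rceil-\log t_k\to0$, Lemma~\ref{lem:critical-compression}(ii) makes the rounding error compact.
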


\begin{proof}
We first choose real sampling times and prove that the corresponding
synthesis operator is Fredholm. We then round the times and add finitely
many observations at nonnegative integer times to obtain the required frames.

Under (i), apply the theorem on the unique zero of the Zak
transform to $u\mapsto g(hu)$ \cite{Kloos2015}. By
Lemma~\ref{lem:logarithmic-kernel}, this function is continuous,
integrable, and totally positive, and the reciprocal of its Laplace
transform has no Gaussian factor. There is a shift $z\in\mathbb R$ for which
$p_z(\theta)=\sum_{n\in\mathbb Z}g(z+nh)e^{in\theta}$ has no zero. Modulo $h$, only the shift corresponding to the unique Zak
zero is excluded. The series converges absolutely and uniformly, so
$p_z$ is continuous and bounded away from zero. Under the Fourier
transform, the matrix $L=(g(z+(k-j)h))_{j,k\in\mathbb Z}$ on $\ell^2(\mathbb Z)$
acts by multiplication by $p_z$. Hence $L$ is bounded and invertible.

Choose $t_k=e^{z+x_k}$ for $k\ge1$. There are constants $0<d_-\leq d_+<\infty$ such
that $d_-\leq x_{j+1}-x_j\leq d_+$ for all $j\ge1$. The entries of
$G=(g(z+x_k-x_j))_{j,k\ge1}$ and of the compression $L_+$ of $L$ therefore
decay exponentially with the distance from the diagonal, with uniform
constants. For each fixed $r\in\mathbb Z$, we have
$x_{j+r}-x_j\to rh$ and $G_{j,j+r}-(L_+)_{j,j+r}\to0$ as $j\to\infty$.
Lemma~\ref{lem:critical-compression} shows that $G-L_+$ is compact and
that $L_+$ is Fredholm.  Thus $G$, and then $V=DG$, are Fredholm.

Under (ii), use the reflected kernel $\phi(u)=g(-hu)$, which
is also continuous, integrable, and totally positive.  Write
$(\xi_0,1/2)$ for the unique zero of $Z\phi$ in $[0,1)^2$.
Set $\delta_0=\xi_0-1/2$, $m=(a+b)/2$, and $z=m-h\delta_0$.
Replace the finitely many exceptional values of $\eta_j$ by $m$, and
extend this modified sequence to $j\leq0$ by the same value.  Denote
the resulting sequence on $\mathbb Z$ by $\widetilde\eta_j$, and set
$\delta_j=(\widetilde\eta_j-z)/h$ for $j\in\mathbb Z$.
Then $\sup_{j\in\mathbb Z}|\delta_j-\delta_0|\leq(b-a)/(2h)<1/2$.
The sharp perturbation theorem for totally positive generators
\cite[Theorem~1.2]{deDiosPontGrochenigLiehrShafkulovskaTaylor2026}
implies that the matrix $Q_{jk}=\phi(j+\delta_j-k)$, indexed by
$j,k\in\mathbb Z$, is invertible on $\ell^2(\mathbb Z)$. The perturbations
$\delta_j$ are bounded, so $Q$ has exponential decay away from the
diagonal.  Its compression to positive indices is Fredholm by
Lemma~\ref{lem:critical-compression}.

Choose $t_k=e^{z+kh}$ for $k\ge1$. Except in finitely many rows, the matrix in
Lemma~\ref{lem:logarithmic-kernel} satisfies
$G_{jk}=g(z+kh-x_j)=\phi(j+\delta_j-k)=Q_{jk}$ for $j,k\ge1$.
Since $g$ decays exponentially, each row of $G$ belongs to
$\ell^2(\mathbb N)$.
Thus $G$ differs from the compression of $Q$ by a finite rank operator,
and $V=DG$ is Fredholm.

For either construction, put $n_k=\lceil t_k\rceil$ for $k\ge1$.
The gaps between consecutive $t_k$ tend to infinity, so only finitely
many of the $n_k$ are repeated. Also
$\rho_k:=\log n_k-\log t_k\to0$ as $k\to\infty$.
The synthesis matrix for weights $n_k$ is $V^{\mathrm{int}}=DG^{\mathrm{int}}$,
where $G^{\mathrm{int}}_{jk}=g(\log n_k-x_j)=g(\log t_k+\rho_k-x_j)$ for $j,k\ge1$.
In both cases, there are constants $c_1,C_1>0$ such that
$|\log t_k-x_j|\ge c_1|j-k|-C_1$ for all $j,k\ge1$.
Since $(\rho_k)_{k\ge1}$ is bounded, the mean value theorem
and the exponential decay of $g'$ give
$|G^{\mathrm{int}}_{jk}-G_{jk}|
\le C\rho_k e^{-c|j-k|}$
for some $C,c>0$ and all $j,k\ge1$.
Thus the entries satisfy a uniform exponential bound in
the distance from the diagonal and tend to zero along
each fixed diagonal.
Lemma~\ref{lem:critical-compression} shows that $G^{\mathrm{int}}-G$ is
compact. Since $D$ is bounded, $V^{\mathrm{int}}-V=D(G^{\mathrm{int}}-G)$
is also compact. Thus $V^{\mathrm{int}}$ is Fredholm; deleting finitely
many columns to remove repeated times preserves this property.
Lemma~\ref{lem:finite-orbit-completion} now completes
both the real and the integer families to frames of finite excess.

Under (i), the lower bound on the gaps of $(x_j)_{j\ge1}$ gives
$\#\{k\ge1:x_k\leq\log R-z\}=m_A(R)+O(1)$.
Also $x_j/j\to h$, which gives $m_A(R)/\log R\to1/h$.
Under (ii), boundedness of $\eta_j$ gives both
$m_A(R)=h^{-1}\log R+O(1)$ and the same estimate for the regular
logarithmic sampling set. Since $n_k=\lceil t_k\rceil$, counting by indices gives
\[
 \#\{k:n_k\le R\}
 =\#\{k:t_k\le R\}
 -\#\{k:\lfloor R\rfloor<t_k\le R\}.
\]
The last term is uniformly bounded because consecutive sampling
times eventually differ by more than one.
Since only finitely many repetitions occur, passing to distinct
rounded times and adding finitely many observations changes
the count by at most a constant.
This proves both counting estimates. The weights equal the sampling
times except at the finitely many added observations.
\end{proof}

\begin{thm}
\label{thm:positive-separated-critical}
Suppose that \(s_j/s_{j+1}\ge9\) for all \(j\ge1\). Set
\(t_j=(2s_j)^{-1}\) for \(j\ge1\) and \(\mu=\sum_{j\ge1}t_j\delta_{t_j}\).
Then \(\{\sqrt{t_j}A^{t_j}f\}_{j\ge1}\) is a Riesz basis with
\(N_\mu(R)=m_A(R)+O(1)\).
There is also an admissible integer sampling design $\nu$
such that $N_\nu(R)=m_A(R)+O(1)$ and the corresponding
sampled frame has finite excess.
\end{thm}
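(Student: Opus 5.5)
Use the factorization of Lemma~\ref{lem:logarithmic-kernel}: the synthesis matrix of $\{\sqrt{t_k}A^{t_k}f\}_{k\ge1}$ is $V=DG$ with $G_{jk}=g(\log t_k-x_j)$, where $g(u)=\exp(u/2-e^u)$ and $D$ is boundedly invertible. Since $\log t_k=\log(1/(2s_k))=x_k-\log 2$, we get $G_{jk}=g(x_k-x_j-\log2)$. The Riesz basis claim is therefore equivalent to $G$ being bounded and invertible on $\ell^2(\mathbb N)$. The hypothesis $s_j/s_{j+1}\ge9$ says $x_{j+1}-x_j\ge\log 9$. The diagonal entries are all equal to $g(-\log 2)=2^{-1/2}e^{-1/2}$. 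I will show that $G=g(-\log2)(I+E)$ with $\|E\|<1$.

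I bound the off-diagonal entries directly. For $k>j$ put $u=x_k-x_j-\log2\ge (k-j)\log9-\log2$. Then
\[
|G_{jk}|=e^{u/2}e^{-e^u}\le \tfrac{9^{k-j}}{2}\,e^{-(9^{k-j}/2)}.
\]
This is superexponentially small; for $k-j=1$ it is roughly $\sqrt{4.5}\,e^{-4.5}\approx0.024$. For $k<j$ put $u=-(x_j-x_k)-\log2\le -(j-k)\log 9-\log 2$. Then $|G_{jk}|\le e^{u/2}\le 2^{-1/2}3^{-(j-k)}$, since $g(u)\le e^{u/2}$.

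Dividing by $g(-\log2)=(2e)^{-1/2}$ gives the normalized bounds: $|E_{jk}|\le e^{1/2}3^{-(j-k)}$ for $j>k$, and a tiny quantity for $k>j$. The Schur test bounds $\|E\|$ by the maximum of the row sums and column sums, which here is at most
\[
e^{1/2}\sum_{d\ge1}3^{-d}+(\text{upper part})\approx 1.6487\cdot\tfrac12+0.03\approx0.85<1.
\]
Hence $I+E$ is invertible by a Neumann series, so $V$ is bounded and invertible, and the vectors $\sqrt{t_j}A^{t_j}f$ form a Riesz basis. The main obstacle is this numerical Schur estimate. The factor $9$ is presumably what makes it close, and I would verify the upper-triangular sum carefully, since the first term dominates.

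The count follows quickly. Since $t_j\le R$ iff $s_j\ge 1/(2R)$, we have $N_\mu(R)=m_A(2R)$. The separation gives $x_{j+1}-x_j\ge\log9>\log2$, so at most one $x_j$ lies in $(\log R,\log R+\log2]$. Thus $N_\mu(R)=m_A(R)+O(1)$.

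For the integer design, set $n_j=\lceil t_j\rceil$ with weights $t_j$. The argument of Lemma~\ref{lem:positive-rounding} applies since $w_t=t$, so the rounding perturbation is Hilbert--Schmidt. The rounded synthesis operator is then a compact perturbation of an invertible operator, hence Fredholm. Because $t_{j+1}/t_j\ge9$, only finitely many of the $n_j$ coincide. Merge those and delete finitely many columns, which keeps the operator Fredholm. Lemma~\ref{lem:finite-orbit-completion} then completes the family to a frame of finite excess by adding finitely many integer times. Rounding up does not increase the count, and the completion changes it by $O(1)$, so $N_\nu(R)=m_A(R)+O(1)$.
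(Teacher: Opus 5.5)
Your proposal is correct and follows essentially the paper's proof. The pieces match: the same Schur-test comparison with the identity (your synthesis matrix is the transpose of the paper's normalized analysis matrix $H_{jk}=h(s_k/s_j)$) and the same identity $N_\mu(R)=m_A(2R)$. The rounding step is also the same, via the Hilbert--Schmidt estimate of Lemma~\ref{lem:positive-rounding}, which only needs the separation of $\log t_j=x_j-\log 2$, followed by Lemma~\ref{lem:finite-orbit-completion}. As in the paper, the lower half of $N_\nu(R)=m_A(R)+O(1)$ is supplied by Lemma~\ref{lem:positive-lower}; your remark that rounding up does not increase the count gives only the upper half. One numerical slip: after normalization the first super-diagonal bound is $\sqrt{2e}\,\sqrt{9/2}\,e^{-9/2}=3e^{-4}\approx0.055$ (the paper's $h(9)$), not $0.03$, so the Schur bound is about $0.88$, which is still below $1$.
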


\begin{proof}
After multiplication of the columns by the invertible diagonal matrix
\(\operatorname{diag}((b_j^{-1})_{j\ge1})\), and multiplication by \(\sqrt{2e}\),
the analysis matrix has entries indexed by \(j,k\ge1\):
\[
 H_{jk}=h(s_k/s_j),\qquad h(r)=\sqrt r\,e^{(1-r)/2},\qquad H_{jj}=1.
\]
For \(k>j\), we have \(|H_{jk}|\le\sqrt e\,3^{-(k-j)}\).
For \(k<j\), monotonicity of \(h\) on \([1,\infty)\) gives
\(|H_{jk}|\le h(9^{j-k})\).
Both the row sums and the column sums of \(|H-I|\) are therefore bounded
by
\[
 \rho=\frac{\sqrt e}{2}+\sum_{\ell\ge1}h(9^\ell)<1.
\]
Indeed,
\(h(9^{\ell+1})/h(9^\ell)=3e^{-4\cdot9^\ell}\le3e^{-36}\).
Thus the series is at most \(3e^{-4}/(1-3e^{-36})\), and
\(\sqrt e/2+3e^{-4}/(1-3e^{-36})<1\). The Schur test gives
\(\|H-I\|\le\rho<1\), so the analysis operator is invertible.

The counting function is \(m_A(2R)\). The separation assumption gives
\(m_A(2R)-m_A(R)=O(1)\), proving the assertion.

To round these times, apply the Hilbert--Schmidt estimate from
Lemma~\ref{lem:positive-rounding} to \(\log t_j=x_j-\log2\). These
logarithmic times are separated, so the sums of translates of \(g^2\)
are uniformly bounded as required. If \(n_j=\lceil t_j\rceil\),
replacing the reference weight \(t_j\) by \(n_j\) is another compact
perturbation: the diagonal multipliers \(\sqrt{n_j/t_j}-1\) tend to zero.
The rounded analysis operator is a compact perturbation of an invertible
operator, hence Fredholm. Lemma~\ref{lem:finite-orbit-completion}
produces a frame with finite excess.
Since the logarithmic times are separated, only finitely many times can
round to the same integer. Combining their weights preserves finite
excess. Let $\nu$ denote the resulting integer sampling design. Since \(\lceil t_j\rceil\le R\) implies \(t_j\le R\), rounding
does not increase the count. Finite completion changes the counting function by at most a constant.
Together with Lemma~\ref{lem:positive-lower}, this gives
$N_\nu(R)=m_A(R)+O(1)$.
\end{proof}

The following construction uses the shift excluded in case~(i) of the
proof of Theorem~\ref{thm:critical-regular-spectra}.

\begin{prop}
Let $s_j=e^{-jh}$ for $j\ge1$ and some $h>0$, and suppose that the integer orbit
of the corresponding positive operator is a frame.
There is $z\in\mathbb R$ such that $T=\{e^{z+kh}:k\ge1\}$ satisfies
$N_T(R)=m_A(R)+O(1)$, but no positive weights give a sampling frame,
even after adding finitely many sampling times.
\end{prop}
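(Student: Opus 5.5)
With $s_j=e^{-jh}$ we have $x_j=\log(1/s_j)=jh$ exactly. The plan is to choose $z$ at the unique Zak zero of $u\mapsto g(hu)$, the shift excluded in case~(i) of the proof of Theorem~\ref{thm:critical-regular-spectra}. By \cite{Kloos2015} and Lemma~\ref{lem:logarithmic-kernel}, there is $z\in\mathbb R$ and $\theta_0$ with $p_z(\theta_0)=\sum_{n\in\mathbb Z}g(z+nh)e^{in\theta_0}=0$. For $T=\{e^{z+kh}:k\ge1\}$ we have $\log t_k=z+kh$. Hence $N_T(R)=\#\{k\ge1:kh\le\log R-z\}=h^{-1}\log R+O(1)$, while $m_A(R)=\#\{j:jh\le\log R\}=h^{-1}\log R+O(1)$. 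This gives the counting claim.

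For the failure of the frame property, fix positive weights $w_k$ at $t_k$, together with finitely many extra times $u_1,\dots,u_p$ carrying weights $v_i$. By Lemma~\ref{lem:logarithmic-kernel}, the synthesis matrix of the unweighted-normalized vectors $\sqrt{t_k}A^{t_k}f$ is $DG$ with $G_{jk}=g(z+(k-j)h)$. This is exactly the compression $L_+$ of the Laurent matrix $L$ with symbol $p_z$. The weighted analysis operator on $T$ is therefore $W^{1/2}G^{*}D$, where $W=\operatorname{diag}(w_k/t_k)$.

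The argument splits according to the weights. If the ratios $w_k/t_k$ are unbounded, the sampled family is not Bessel, because $\|\sqrt{t_k}A^{t_k}f\|\ge\sqrt{b_-}\,|g(z)|$ up to a nearby coordinate. More precisely, $\|\sqrt{t_k}A^{t_k}f\|$ is bounded below, since each column of $G$ contains the entries $g(z+nh)$ for $n\le k-1$. If the ratios are bounded, the lower frame bound would force $G^*$ to be bounded below on $H$, up to a finite-rank perturbation coming from the $p$ added rows. So it suffices to show that $L_+^*$ composed with any finite-rank perturbation is not bounded below. Equivalently, we must show that $L_+^*$ is not left semi-Fredholm, meaning that its range is not closed or its kernel is infinite dimensional.

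This is the main obstacle. A Toeplitz operator with continuous symbol vanishing at a point is not Fredholm, but we need the sharper statement that it is not even bounded below modulo finite rank. I would argue directly. Take trigonometric approximate kernel vectors $\varphi_N(\theta)=N^{-1/2}\sum_{n=0}^{N-1}e^{-in(\theta-\theta_0)}$, translated far to the right along $\mathbb N$ so that they lie in $\ell^2(\mathbb N)$ and are essentially orthogonal to any fixed finite-dimensional subspace. The goal is to show $\|L_+^*v_N\|\to0$ with $\|v_N\|=1$ and $v_N\rightharpoonup0$. Since $L$ has exponentially decaying entries, the compression error for vectors supported far from the boundary is exponentially small. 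Moreover, $\|L^*v_N\|^2=(2\pi)^{-1}\int|p_z|^2|\hat v_N|^2\to0$ because $p_z(\theta_0)=0$ and $\hat v_N$ concentrates at $\theta_0$, using continuity of $p_z$. The finitely many extra observations $\langle x,A^{u_i}f\rangle$ are fixed vectors. Since $x_N=D^{-1}$-adjusted versions of $v_N$ converge weakly to $0$, these observations also tend to $0$. Thus the lower frame inequality fails along the sequence $x_N$.

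I expect the delicate point to be the bookkeeping of $D$ and $W$. Because $D$ and $W$ are bounded and invertible in the bounded-weight case, they do not affect whether the operator is bounded below. What remains is justifying the Bessel dichotomy cleanly for unbounded weights, which I would do by testing the upper frame inequality on $x=e_k$-type vectors near $t_k$.
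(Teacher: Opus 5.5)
Your proposal is correct and follows essentially the same route as the paper: choose $z$ at the Zak zero (the paper places it at $\theta_0=\pi$), use unit modulated block vectors pushed to the right, which are weakly null with $\|L_+^*v_N\|\to0$, transfer them through $D$, use the diagonal entry $G_{kk}=g(z)$ to get $\|\psi_k\|\ge b_-g(z)$ and hence bounded $w_k/t_k$ under the Bessel bound, and dispose of the finitely many added observations by weak convergence. Your Fej\'er-kernel estimate is a clean substitute for the paper's band truncation, and since $L_+^*v=PL^*v$ for $v$ supported in $\mathbb N$ there is in fact no compression error at all; only note that $W$ need only be bounded, not invertible, since the ratios $w_k/t_k$ may tend to zero.
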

\begin{proof}
Choose the shift $z$ corresponding to the unique zero of the Zak
transform of $g(hu)$. Then the continuous symbol
$p(\theta)=\sum_{n\in\mathbb Z}g(z+nh)e^{in\theta}$
satisfies $p(\pi)=0$.
Let $L_+$ be the operator on $\ell^2(\mathbb N)$ with matrix entries
$g(z+(k-j)h)$, $j,k\ge1$.

For $N\ge1$, define $y_N\in\ell^2(\mathbb N)$ by
\[
 (y_N)_j=
 \begin{cases}
 N^{-1/2}(-1)^j, & N<j\le2N,\\
 0, & \text{otherwise}.
 \end{cases}
\]
Then $\|y_N\|=1$. For every $x\in\ell^2(\mathbb N)$,
$|\langle y_N,x\rangle|
\le(\sum_{j>N}|x_j|^2)^{1/2}\to0$,
so $y_N\rightharpoonup0$.

Write $a_n=g(z+nh)$ for $n\in\mathbb Z$.
Exponential decay gives
$M:=\sum_{n\in\mathbb Z}|a_n|<\infty$ and
$\varepsilon_q:=\sum_{|n|>q}|a_n|\to0$ as $q\to\infty$.
Let $L_{+,q}$ be obtained from $L_+$ by setting its entries
with $|j-k|>q$ to zero.
The Schur test gives
$\|L_+^*-L_{+,q}^*\|\le\varepsilon_q$.
Moreover, since $p(\pi)=0$,
$p_q(\pi):=\sum_{|n|\le q}(-1)^n a_n$
satisfies $|p_q(\pi)|\le\varepsilon_q$.

For $N>2q$ and $N+q<j\le2N-q$, we have
\[
 (L_{+,q}^*y_N)_j
 =\frac{(-1)^j}{\sqrt N}
   \sum_{|n|\le q}(-1)^n a_n
 =p_q(\pi)(y_N)_j.
\]
The vector $L_{+,q}^*y_N$ is supported on
$\{N+1-q,\ldots,2N+q\}$.
Outside the interior interval above, there are at most $4q$
coordinates, each bounded in modulus by $M/\sqrt N$.
Consequently,
$\|L_{+,q}^*y_N\|^2
\le |p_q(\pi)|^2+4qM^2/N$.
It follows that
$\|L_+^*y_N\|
\le \varepsilon_q+\|L_{+,q}^*y_N\|
\le 2\varepsilon_q+2M\sqrt{q/N}$.
For each fixed $q$, letting $N\to\infty$ gives
$\limsup_{N\to\infty}\|L_+^*y_N\|\le2\varepsilon_q$.
Letting $q\to\infty$, we obtain $L_+^*y_N\to0$.

Set $t_k=e^{z+kh}$ and $\psi_k=\sqrt{t_k}A^{t_k}f$ for $k\ge1$.
By Lemma~\ref{lem:logarithmic-kernel}, the synthesis operator $V$
of $(\psi_k)_{k\ge1}$ has matrix $DL_+$, where $D$ is boundedly
invertible.
Set $c_N=\|(D^*)^{-1}y_N\|_{\ell^2}$, and let $x_N\in H$
have coordinate sequence $c_N^{-1}(D^*)^{-1}y_N$ in the
basis $(e_j)_{j\ge1}$.
Since $\|D\|^{-1}\le c_N\le\|D^{-1}\|$ and
$y_N\rightharpoonup0$, we have $\|x_N\|=1$ and
$x_N\rightharpoonup0$.
Moreover,
$\|V^*x_N\|^2=c_N^{-2}\|L_+^*y_N\|^2\to0$.

The $k$th coordinate of $\psi_k$ equals $b_kg(z)$, so
$\|\psi_k\|\ge b_-g(z)>0$ for every $k\ge1$.
Suppose that positive weights $(w_k)_{k\ge1}$ give a Bessel
sequence $(\sqrt{w_k}A^{t_k}f)_{k\ge1}$ with bound $\beta$.
Then
$(w_k/t_k)b_-^2g(z)^2
\le\|\sqrt{w_k}A^{t_k}f\|^2\le\beta$,
and hence
$w_k/t_k\le C_\beta:=\beta/(b_-^2g(z)^2)$.
Consequently,
\[
 \sum_{k\ge1}w_k|\langle x_N,A^{t_k}f\rangle|^2
 \le C_\beta\sum_{k\ge1}t_k|\langle x_N,A^{t_k}f\rangle|^2
 =C_\beta\|V^*x_N\|^2\longrightarrow0.
\]
For any fixed finite family of additional observation vectors
$u_1,\ldots,u_m\in H$, weak convergence gives
$\sum_{\ell=1}^m|\langle x_N,u_\ell\rangle|^2\to0$.
Thus the combined family cannot satisfy a positive lower frame bound.

Finally, for all sufficiently large $R$,
$N_T(R)=\lfloor(\log R-z)/h\rfloor$ and
$m_A(R)=\lfloor\log R/h\rfloor$.
Hence $N_T(R)=m_A(R)+O(1)$.
\end{proof}

\subsection{Admissible weights and finite excess}
\label{sec:finite-excess}

We characterize the positive weights that preserve the frame
property for frames of finite excess.

Let $(u_n)_{n\ge1}$ be a frame in $H$.
Its analysis operator is
$U:H\to\ell^2(\mathbb N)$, $Ux=(\langle x,u_n\rangle)_{n\ge1}$.
The excess of the frame is $\dim\ker U^*$, or equivalently the
codimension of $\operatorname{ran}U$.
Standard results on excess and Riesz bases can be found in
\cite{BalanCasazzaHeilLandau2003,Christensen2016}.
All scalar weights considered below are finite and strictly positive.

\begin{lem}[{\cite[p.~94 and Lemma~4.1(c)]{BalanCasazzaHeilLandau2003}}]
\label{lem:finite-deletion-weights}
A frame of finite excess $r$ contains a Riesz basis obtained
by deleting exactly $r$ vectors.
\end{lem}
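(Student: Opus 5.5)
The plan is to argue directly with the synthesis operator instead of relying on the cited reference. Let $(u_n)_{n\ge1}$ be a frame with analysis operator $U$ and synthesis operator $T=U^*:\ell^2(\mathbb N)\to H$, $Tc=\sum_n c_nu_n$. The frame property makes $T$ bounded and surjective. Finite excess $r$ means $K:=\ker T$ has dimension $r$.

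The key step is to choose a set $F\subset\mathbb N$ of exactly $r$ indices such that the coordinate restriction $\pi_F:K\to\mathbb C^F$, $c\mapsto(c_n)_{n\in F}$, is bijective. The coordinate functionals $c\mapsto c_n$, restricted to $K$, separate the points of $K$, so together they span the dual space $K^*$. Since $\dim K^*=r$, we can extract $r$ of them forming a basis of $K^*$. Take $F$ to be their indices. Then $\pi_F$ is injective on $K$, and since $\dim K=|F|=r$, it is a linear isomorphism. This is the only place where finiteness of the excess is used, and I do not expect any real obstacle here; the case $r=0$ is trivial with $F=\emptyset$.

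Next, show that $T_F:=T|_{\ell^2(\mathbb N\setminus F)}$ is a bijection onto $H$.

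\emph{Injectivity.} If $c$ is supported off $F$ and $Tc=0$, then $c\in K$ and $\pi_Fc=0$, so $c=0$.

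\emph{Surjectivity.} First, $\ell^2(\mathbb N)=\ell^2(\mathbb N\setminus F)+K$: given $c$, pick $k\in K$ with $\pi_Fk=\pi_Fc$, which is possible since $\pi_F$ is onto. Then $c-k$ is supported off $F$. Applying $T$ and using $T(K)=0$ gives
\[
T_F\bigl(\ell^2(\mathbb N\setminus F)\bigr)=T\bigl(\ell^2(\mathbb N)\bigr)=H.
\]

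Finally, $T_F$ is a bounded bijection from $\ell^2(\mathbb N\setminus F)$ onto $H$, hence boundedly invertible by the open mapping theorem. It maps the standard orthonormal basis of $\ell^2(\mathbb N\setminus F)$ to $(u_n)_{n\notin F}$. So $(u_n)_{n\notin F}$ is the image of an orthonormal basis under a bounded invertible operator, which is exactly the definition of a Riesz basis given in the introduction. It is obtained by deleting exactly the $r$ vectors indexed by $F$.
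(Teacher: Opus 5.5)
Your proof is correct, and it takes a different route from the paper. The paper gives no argument for this lemma; it imports the result from Balan--Casazza--Heil--Landau, where it belongs to the standard theory of frames of finite excess (near-Riesz bases).

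You instead work directly from the paper's definition of excess as $\dim\ker U^*$. You choose $F$ so that $\pi_F|_K$ is bijective on $K=\ker T$. This is possible because the coordinate functionals separate the points of the $r$-dimensional space $K$, so they contain a basis of $K^*$. That reduces everything to showing that $T$ restricted to $\ell^2(\mathbb N\setminus F)$ is a bounded bijection onto $H$, and the open mapping theorem finishes.

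Your route gives a self-contained proof and an explicit description of which deletion sets work. The same two checks show that, for finite $F$, the family $(u_n)_{n\notin F}$ is a Riesz basis exactly when $\pi_F:K\to\mathbb C^F$ is bijective, which forces $|F|=r$.

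The citation is shorter and brings in companion facts that the paper uses right afterwards in Proposition~\ref{prop:finite-excess-weights}. These are that $\dim\ker U^*$ equals the maximal number of vectors removable while keeping completeness, and that a complete family obtained from a frame by finitely many deletions is again a frame. Both also follow from your viewpoint with little extra work: if $(u_n)_{n\notin G}$ is complete for a finite $G$, then $\pi_G|_K$ must be onto, since $K^\perp=\operatorname{ran}U$.
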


The following proposition follows from the preceding lemma
and standard properties of Riesz bases.

\begin{prop}\label{prop:finite-excess-weights}
Let $(u_n)_{n\geq1}$ be a frame of finite excess and let
$a_n>0$ for every $n\geq1$. Then $(\sqrt{a_n}u_n)_{n\geq1}$
is a frame if and only if
$0<\inf_{n\geq1}a_n\leq\sup_{n\geq1}a_n<\infty$.
\end{prop}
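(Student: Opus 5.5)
The plan is to prove the two directions separately, reducing everything to a Riesz basis via Lemma~\ref{lem:finite-deletion-weights}.

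\emph{Sufficiency.} Suppose $0<\alpha:=\inf_n a_n\le\beta:=\sup_n a_n<\infty$, and let $A_0,B_0$ be frame bounds of $(u_n)_{n\ge1}$. Then for every $x\in H$,
\[
 \alpha A_0\|x\|^2\le\alpha\sum_{n\ge1}|\langle x,u_n\rangle|^2
 \le\sum_{n\ge1}a_n|\langle x,u_n\rangle|^2
 \le\beta B_0\|x\|^2 .
\]
This direction does not use finite excess.

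\emph{Necessity, upper bound.} Assume $(\sqrt{a_n}u_n)_{n\ge1}$ is a frame with bounds $\alpha',\beta'$. Let $r$ be the excess of $(u_n)_{n\ge1}$. By Lemma~\ref{lem:finite-deletion-weights}, after deleting a finite set $F$ of $r$ indices, $(u_n)_{n\notin F}$ is a Riesz basis with bounds $0<c\le C$. In particular $\|u_n\|^2\ge c$ for $n\notin F$, and $\|u_n\|>0$ for $n\in F$ unless $u_n=0$. (A zero vector with arbitrary weight would break the claim, so I will note that frame vectors of a finite-excess frame may be assumed nonzero, or handle this degenerate case separately.) The Bessel bound gives $a_n\|u_n\|^4=a_n|\langle u_n,u_n\rangle|^2\le\beta'\|u_n\|^2$, so $a_n\le\beta'/\|u_n\|^2\le\beta'/c$ for $n\notin F$. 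The finitely many $n\in F$ have finite weights, so $\sup_n a_n<\infty$.

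\emph{Necessity, lower bound.} This is the main step. Let $(\tilde u_n)_{n\notin F}$ be the biorthogonal dual of the Riesz basis $(u_n)_{n\notin F}$, so $\|\tilde u_m\|^2\le 1/c$. Applying the lower frame bound to $x=\tilde u_m$ with $m\notin F$ gives
\[
 \alpha'\|\tilde u_m\|^2\le a_m+\sum_{n\in F}a_n|\langle\tilde u_m,u_n\rangle|^2 .
\]
The obstacle is the finite tail over $F$: we need it to be small for large $m$. Each $u_n$ with $n\in F$ lies in $H$ and therefore has an expansion $u_n=\sum_{k\notin F}\langle u_n,\tilde u_k\rangle u_k$ with square-summable coefficients. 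Hence $\langle\tilde u_m,u_n\rangle\to0$ as $m\to\infty$, for each of the finitely many $n\in F$. Moreover $\|\tilde u_m\|^2\ge 1/C$, since $1=\langle u_m,\tilde u_m\rangle\le\|u_m\|\,\|\tilde u_m\|$ and $\|u_m\|^2\le C$. Therefore $a_m\ge\alpha'/(2C)$ for all sufficiently large $m\notin F$. The remaining finitely many indices have positive weights, so $\inf_n a_n>0$.
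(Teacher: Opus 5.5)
Your proof is correct, but it takes a different route from the paper's. The paper deletes the finite set $F$ from the \emph{weighted} family as well. Since scalar multiples preserve closed spans, $(\sqrt{a_n}u_n)_{n\notin F}$ is complete and minimal. A complete family obtained from a frame by finitely many deletions is still a frame (the paper cites Balan--Casazza--Heil--Landau for this), so this family is a minimal frame and hence a Riesz basis. The bounds on $a_n=\|\sqrt{a_n}u_n\|^2/\|u_n\|^2$ then follow because the vectors of any Riesz basis have norms bounded above and away from zero.

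You instead use only the Riesz basis structure of the unweighted family $(u_n)_{n\notin F}$, and you test the weighted frame inequalities on explicit vectors. You take $x=u_n$ for the upper bound and the biorthogonal duals $x=\tilde u_m$ for the lower bound. The contribution of the finitely many deleted vectors then vanishes as $m\to\infty$, because $(\langle u_n,\tilde u_m\rangle)_{m}$ is square-summable.

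Your argument is more elementary and self-contained. It avoids both cited facts: that the frame property survives finite deletions which keep completeness, and that minimal (exact) frames are Riesz bases. It also gives a concrete eventual lower bound $a_m\ge\alpha'/(2C)$. The paper's argument is shorter given those facts, and it yields extra structural information: the weighted family also becomes a Riesz basis after deleting the same finite set.

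One small remark: your parenthetical worry about zero vectors is unnecessary. Any zero vectors must lie in the finite set $F$, and each $a_n$ is finite and strictly positive. Those indices therefore cannot affect the positivity of the infimum or the finiteness of the supremum, and your own argument already treats $F$ this way.
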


\begin{proof}
Sufficiency follows directly from the frame inequalities.
For necessity, suppose that $(\sqrt{a_n}u_n)_{n\ge1}$ is a frame.
By Lemma~\ref{lem:finite-deletion-weights}, there is a finite set
$F\subset\mathbb N$ such that $(u_n)_{n\in I}$ is a Riesz basis,
where $I=\mathbb N\setminus F$.

Put $v_n=\sqrt{a_n}u_n$ for $n\in I$.
Multiplication by nonzero scalars preserves the closed span of
every subfamily. Thus $(v_n)_{n\in I}$ is complete and minimal,
where minimality means that
$v_n\notin\overline{\operatorname{span}}\{v_m:m\in I\setminus\{n\}\}$
for every $n\in I$.
A complete family obtained by deleting finitely many vectors
from a frame remains a frame
\cite[p.~95]{BalanCasazzaHeilLandau2003}.
Hence $(v_n)_{n\in I}$ is a minimal frame and therefore a Riesz basis.

The vector norms in both Riesz bases are bounded above and
bounded away from zero. Thus there are constants
$c_0,C_0,c_1,C_1>0$ such that $c_0\le\|u_n\|^2\le C_0$ and
$c_1\le\|v_n\|^2\le C_1$ for every $n\in I$.
Consequently,
$c_1/C_0\le a_n=\|v_n\|^2/\|u_n\|^2\le C_1/c_0$
for every $n\in I$.
Since $F$ is finite and each $a_n$ is finite and strictly positive,
these bounds extend to all indices after adjusting the constants.
Therefore $0<\inf_{n\ge1}a_n\le\sup_{n\ge1}a_n<\infty$.
\end{proof}

To apply these facts to sampling, let
$T=\{t_n:n\ge1\}\subset[0,\infty)$ be locally finite and put
$q_n=A^{t_n}f$ for $n\ge1$.
Suppose that positive reference weights $(v_n)_{n\ge1}$ are
chosen so that $(u_n)_{n\ge1}$, where $u_n=\sqrt{v_n}q_n$,
is a frame of finite excess.

\begin{cor}\label{cor:critical-orbit-weights}
Suppose that $(\sqrt{v_n}A^{t_n}f)_{n\ge1}$ is a frame of finite excess.
Then $(\sqrt{w_n}A^{t_n}f)_{n\ge1}$ is a frame if and only if
$0<\inf_{n\ge1}(w_n/v_n)\le\sup_{n\ge1}(w_n/v_n)<\infty$.
\end{cor}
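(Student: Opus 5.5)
The plan is to reduce the statement directly to Proposition~\ref{prop:finite-excess-weights}. Write $u_n=\sqrt{v_n}A^{t_n}f$ for the given reference frame of finite excess, and put $a_n=w_n/v_n$. Since $v_n>0$ and $w_n>0$, the numbers $a_n$ are finite and strictly positive. Moreover,
\[
 \sqrt{a_n}\,u_n=\sqrt{w_n/v_n}\,\sqrt{v_n}\,A^{t_n}f=\sqrt{w_n}\,A^{t_n}f,
\]
so the family $(\sqrt{w_n}A^{t_n}f)_{n\ge1}$ is exactly $(\sqrt{a_n}u_n)_{n\ge1}$.

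Proposition~\ref{prop:finite-excess-weights} applied to the frame $(u_n)_{n\ge1}$ and the weights $(a_n)_{n\ge1}$ then says the following. The family $(\sqrt{a_n}u_n)_{n\ge1}$ is a frame if and only if $0<\inf_n a_n\le\sup_n a_n<\infty$. In terms of the original weights, this is precisely the condition $0<\inf_n(w_n/v_n)\le\sup_n(w_n/v_n)<\infty$ in the corollary.

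No step here is a real obstacle. The only points to check are that the indexing by $n$ matches, so that distinct indices correspond to the distinct times $t_n$ of the locally finite set $T$, and that the weights are strictly positive and finite. Both hold by the standing conventions stated before the corollary. All of the substantive work, namely the deletion of finitely many vectors to obtain a Riesz basis and the comparison of vector norms, is already contained in the proof of Proposition~\ref{prop:finite-excess-weights}.
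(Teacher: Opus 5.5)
Your proposal is correct and is exactly the paper's argument: apply Proposition~\ref{prop:finite-excess-weights} to $u_n=\sqrt{v_n}A^{t_n}f$ with $a_n=w_n/v_n$. The check that distinct indices give distinct times is not needed, since that proposition applies to any frame of finite excess regardless of how it is indexed.
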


\begin{proof}
Apply Proposition~\ref{prop:finite-excess-weights} with $a_n=w_n/v_n$.
\end{proof}

For the critical sampling frames constructed above, $v_n=t_n$
except at finitely many times. Thus the new weights preserve
the frame exactly when $w_n\asymp t_n$ outside that finite set.
Lemma~\ref{lem:finite-deletion-weights} also gives a Riesz basis
after finitely many deletions.
These deletions change the counting function by at most a constant,
so both the critical count condition and the logarithmic counting
coefficient are preserved.

\begin{example}
Suppose that $T$ carries a weighted orbit frame with bounds $a,b>0$ and
$N_T(R)=O(\log R)$, where $N_T(R)=\#(T\cap[0,R])$.
Assume also that $A^nf\ne0$ for every nonnegative integer $n$.
Choose a strictly increasing sequence $(r_j)_{j\geq1}$ of integers
outside $T$ such that
$r_j\geq\exp(j^2)$. This is possible because $T$ has at most logarithmic
growth.  Given $\eta>0$, put
$\varepsilon_j=\eta 2^{-j}/(1+\|A^{r_j}f\|^2)$.
The positive operator contributed by these additional observations has
norm at most $\eta$, since
$\sum_{j\geq1}\varepsilon_j\|A^{r_j}f\|^2\leq\eta$.
The enlarged family is therefore a frame with bounds $a,b+\eta$.
Its additional counting function is at most $\sqrt{\log R}$ for $R\geq e$,
and consequently
\[
\limsup_{R\to\infty}\frac{N_{T\cup\{r_j:j\geq1\}}(R)}{\log R}
=\limsup_{R\to\infty}\frac{N_T(R)}{\log R}.
\]
However, $\varepsilon_j/r_j\to0$ as $j\to\infty$.
If the design on $T$ attains the optimal logarithmic counting
coefficient, then the enlarged design also attains it, although
its weights are not bounded below by a positive multiple of
the sampling times.
Deleting all added vectors leaves the frame on $T$, so the
enlarged frame has infinite excess.
This explains why the construction lies outside the finite
excess setting of Corollary~\ref{cor:critical-orbit-weights}.
If a bound $\Gamma>b/a$ on the condition number after sampling is
prescribed, choosing $\eta<\Gamma a-b$ ensures that the enlarged frame
also satisfies this bound.
\end{example}
\section{Sampling of multiple operator orbits}
\label{sec:multiple}

We now turn to finite and countable families of operator orbits.
We first characterize when finitely many sampling times suffice.
For positive operators, we then show that the number of nonempty
spectral bands determines the optimal order of the sampling count. We also study common sampling designs for systems with a fixed spectral set and unweighted sampling with a compact observation operator.

\begin{thm}\label{thm:finite-times}
Let $(A,C)$ be an observable system with $A$ normal.
An admissible sampling design with finite support exists
if and only if $\|A\|<1$.
In this case, the sampling times may be chosen to be
nonnegative integers.
\end{thm}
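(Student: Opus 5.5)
Both implications rest on the Stein identity of Lemma~\ref{lem:stein}. We first show that a finitely supported admissible design forces $\|A\|<1$, and then that $\|A\|<1$ allows us to truncate the integer orbit.

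\emph{Necessity.} Let $\mu=\sum_{t\in T}w_t\delta_t$ be admissible with $T$ finite. Suppose for contradiction that $\|A\|=1$; by Lemma~\ref{lem:stein}, $\|A\|\le1$ always holds. Since $A$ is normal, $1\in\sigma(|A|)$. Lemma~\ref{lem:stein} also gives $E_A(\mathbb T)=0$, so the spectral measure charges the sets $U_\varepsilon=\{z:1-\varepsilon<|z|<1\}$ for every $\varepsilon>0$. Choose unit vectors $x_\varepsilon\in E_A(U_\varepsilon)H$. By the spectral calculus (with the convention for real powers), for each fixed $t\ge0$,
\[
\|A^tx_\varepsilon-U_tx_\varepsilon\|\le\sup_{1-\varepsilon<r<1}(1-r^t),
\]
where $U_t$ is the unitary part $e^{it\arg z}$. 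More directly, we use the Stein identity summed to time $N$:
\[
\sum_{n=0}^{N-1}\|CA^nx\|^2=\langle Sx,x\rangle-\langle SA^Nx,A^Nx\rangle.
\]
For $x=x_\varepsilon$, the right side is at most $b(1-\|A^Nx_\varepsilon\|^2)\le b(1-(1-\varepsilon)^{2N})$, since $\|A^Nx_\varepsilon\|\ge(1-\varepsilon)^N$ by normality. Hence the integer observations up to time $N$ have small energy on $x_\varepsilon$.

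For real times $t\in T$, we bound $\|CA^tx\|$ by comparing with nearby integer powers. Write $A^t=A^{\lfloor t\rfloor}A^{\{t\}}$. The obstacle is that $A^{\{t\}}$ is not close to a polynomial in $A$ when the argument varies. To handle this, we choose $x_\varepsilon$ supported in a small sector as well: partition $U_\varepsilon$ into finitely many arcs of small angular width and pick a nonzero arc. On such a set $A^{\{t\}}$ is approximated by a scalar multiple $e^{i\{t\}\theta_0}$ uniformly up to $O(\varepsilon+\text{width})$. Then
\[
\|CA^tx_\varepsilon\|\le\|CA^{\lfloor t\rfloor}x_\varepsilon\|+\sqrt b\,o(1)
\]
as $\varepsilon$ and the width tend to $0$, with $\sqrt b\ge\|C\|$. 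Summing over the finite set $T$ with weights $w_t$ gives $\langle S_\mu x_\varepsilon,x_\varepsilon\rangle\to0$, which contradicts the lower bound $\alpha>0$.

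\emph{Sufficiency.} Suppose $\|A\|=q<1$. The Stein identity gives
\[
\sum_{n\ge N}\|CA^nx\|^2=\langle SA^Nx,A^Nx\rangle\le bq^{2N}\|x\|^2.
\]
Choose $N$ with $bq^{2N}\le a/2$. Then $\sum_{n<N}\|CA^nx\|^2\ge(a/2)\|x\|^2$. With unit weights on $\{0,\dots,N-1\}$, this is an admissible integer design of finite support; its upper bound is $b$. The main work is the approximation step in the necessity part, localizing in both modulus and argument.
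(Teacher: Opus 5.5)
Your sufficiency argument is correct and is the paper's. The necessity argument, however, breaks at the claim that $\langle Sx,x\rangle-\langle SA^Nx,A^Nx\rangle\le b\bigl(1-\|A^Nx\|^2\bigr)$.

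The frame bounds only give $\langle Sx,x\rangle\le b$ and $\langle SA^Nx,A^Nx\rangle\ge a\|A^Nx\|^2$. Hence they give $b-a\|A^Nx\|^2$, which stays near $b-a$. Your inequality would require $S$ to commute with $A$, or $A^Nx$ to be nearly a unimodular multiple of $x$.

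In fact, localizing in modulus alone cannot make the integer energy small. Take the system of Section~\ref{sec:normal-model}, where $b=1$. The unit vector $x=m^{-1/2}(1,\ldots,1)$ in the block $\mathbb C^m$ lies in the spectral subspace of $|z|=e^{-1/m}$. Yet $\|Cx\|^2=d=1-e^{-2}$ for every $m=2^j$, while $1-\|Ax\|^2=1-e^{-2/m}\to0$. Your real-time step only reduces $\|CA^tx_\varepsilon\|$ to $\|CA^{\lfloor t\rfloor}x_\varepsilon\|$. So the conclusion $\langle S_\mu x_\varepsilon,x_\varepsilon\rangle\to0$ is not established as written.

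The sector localization you introduce afterwards is exactly what repairs this, but you must use it already in the integer step. Let $x$ be a unit vector in the spectral subspace of $\{re^{i\theta}:1-\varepsilon<r<1,\ |\theta-\theta_0|\le\delta\}$. Such a set has nonzero spectral projection for a suitable $\theta_0$, by $\|A\|=1$, normality and $E_A(\mathbb T)=0$. Then $\|A^Nx-e^{iN\theta_0}x\|\le N(\varepsilon+\delta)$, so $\langle SA^Nx,A^Nx\rangle$ is within $2bN(\varepsilon+\delta)$ of $\langle Sx,x\rangle$. The Stein identity then gives $\sum_{n<N}\|CA^nx\|^2\le2bN(\varepsilon+\delta)$, and your fractional-power estimate goes through.

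The paper avoids localization entirely. It applies the sampled lower bound to $A^nx$, commutes $A^t$ with $A^n$, and sums over $n\le N$. This gives $\alpha\sum_{n=0}^N\|A^nx\|^2\le b\bigl(\sum_{t\in T}w_t\|A^t\|^2\bigr)\|x\|^2$, which contradicts $\bigl\|\sum_{n=0}^N(A^n)^*A^n\bigr\|=N+1$ when $\|A\|=1$. This route also sidesteps any issue with the branch of $z^t$.
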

\begin{proof}
Let $a,b$ be frame bounds for $(A,C)$.
Suppose that an admissible sampling design has finite support $T$
and lower frame bound $\alpha>0$.
For every $x\in H$ and $N\in\mathbb N_0$, the frame bounds give
\[
 \alpha\sum_{n=0}^N\|A^nx\|^2
 \le \sum_{t\in T}w_t\sum_{n=0}^N\|CA^nA^tx\|^2
 \le b\left(\sum_{t\in T}w_t\|A^t\|^2\right)\|x\|^2.
\]
Thus the operators $\sum_{n=0}^N(A^n)^*A^n$ are uniformly bounded
in norm.
By Lemma~\ref{lem:stein}, $\|A\|\le1$.
If $\|A\|=1$, normality and the spectral theorem give
$\|\sum_{n=0}^N(A^n)^*A^n\|=N+1$,
a contradiction. Hence $\|A\|<1$.

Conversely, suppose that $\|A\|<1$.
Lemma~\ref{lem:stein} gives
$(a-b\|A\|^{2N+2})I\le\sum_{n=0}^N(A^n)^*BA^n\le bI$.
Choose $N$ large enough that $b\|A\|^{2N+2}<a$.
Then the sampling times $0,1,\ldots,N$, with all weights equal
to one, form an admissible sampling design.
\end{proof}

\begin{remark}
Suppose that $H$ is infinite dimensional.
If $C$ is compact and a sampling design has finite support, then
$S_\mu$ is a finite sum of compact operators and hence cannot
be bounded below.
Thus an admissible design with finite support requires $C$
to be noncompact.
In particular, finitely many operator orbits cannot form a frame
at finitely many sampling times.
If $(A,C)$ is observable with $A$ normal and
$\operatorname{rank}C<\infty$, then $\|A\|=1$ by
Theorem~\ref{thm:finite-times} and Lemma~\ref{lem:stein}.

Infinite rank alone is insufficient.
For example, on $\ell^2(\mathbb N)$, let
$Ae_j=\lambda_je_j$ with $0<\lambda_j\uparrow1$, and set
$C=(I-A^2)^{1/2}$.
Then $S=I$, but $C$ is compact and has infinite rank,
so no sampling design with finite support is admissible.
For $A=\tfrac12I$ and $C=I$, time zero alone suffices.
\end{remark}

If $\dim H=d<\infty$, a finite sampling set $T$ is admissible
if and only if the map $x\mapsto(CA^tx)_{t\in T}$ is injective.
Positive weights do not affect this condition.
Consequently, if $\operatorname{rank}C=r$, admissibility
requires $r\,\#T\ge d$.

For a single orbit, suppose that $A$ is positive with $d$
distinct strictly positive eigenvalues and that $(A^nf)_{n\ge0}$
is a frame for $H$.
Then any $d$ distinct nonnegative sampling times give a basis.
Indeed, after factoring out the nonzero coordinates of $f$
in an eigenbasis, the sampling matrix reduces to a real
exponential matrix.
A nonzero real linear combination of $d$ exponential functions
with distinct real exponents has at most $d-1$ distinct real zeros,
as follows by induction using Rolle's theorem.
The matrix is therefore invertible over both the real and
complex fields.
If zero is allowed among the eigenvalues, time zero must be included.

In finite dimensions, the frame operator can also be preserved
exactly with a number of sampling times bounded in terms of
the dimension.

\begin{prop}\label{prop:finite-dimensional-exact}
Let $(A,C)$ be an observable system with $\dim H=d<\infty$.
There exists an integer sampling design $\mu$ with at most
$d^2$ sampling times such that $S_\mu=S$.
\end{prop}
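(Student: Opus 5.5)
The plan is a Carathéodory argument applied to the convex cone generated by the integer observation operators. Set $B=C^*C$ and $Q_n=(A^n)^*BA^n$ for $n\in\mathbb N_0$. Each $Q_n$ is a positive semidefinite operator on the $d$ dimensional space $H$, so it lies in the real vector space $\mathcal H_d$ of self-adjoint operators on $H$. This space has real dimension $d^2$. Since $S=\sum_{n\ge0}Q_n$ and $S$ is invertible, it suffices to write $S$ as a positive combination $\sum_{n\in T}w_nQ_n$ with $T\subset\mathbb N_0$ and $\#T\le d^2$. Then $\mu=\sum_{n\in T}w_n\delta_n$ satisfies $S_\mu=S$, so $\mu$ is admissible.

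First I reduce to a finite sum. Because $\dim H<\infty$, Lemma~\ref{lem:stein} and the frame property give $A^N\to0$. For a matrix this means the spectral radius $r$ of $A$ is less than $1$. Hence $\|A^n\|\le Kr'^n$ for some $r<r'<1$, and the series converges absolutely in norm. Next, let $\mathcal C$ be the convex cone generated by $\{Q_n:n\in\mathbb N_0\}$. I claim that $S$ lies in the relative interior of the closed cone $\overline{\mathcal C}$, or at least in $\mathcal C$ itself. To get this without any closure issue, I use linear algebra directly. Let $V=\operatorname{span}_{\mathbb R}\{Q_n\}$. By Cayley--Hamilton, $A^n$ for $n\ge d$ is a linear combination of $I,A,\dots,A^{d-1}$. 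So every $Q_n$ is a real combination of $(A^i)^*BA^j+(A^j)^*BA^i$ with $0\le i,j<d$, and $V$ is finite dimensional with $\dim V\le d^2$.

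The key step is to find finitely many $n$ such that $S$ is a strictly positive combination of those $Q_n$. I do this by splitting $S=\sum_{n<N}Q_n+\sum_{n\ge N}Q_n$. The tail equals $(A^N)^*SA^N$ by Lemma~\ref{lem:stein}, and it lies in $V$. Choose $N_0$ so large that $Q_0,\dots,Q_{N_0}$ already span $V$. Choose a norm on $V$. The tail is small when $N$ is large, so it can be written as $\sum_{n\le N_0}c_nQ_n$ with $|c_n|\le\varepsilon$, using a fixed left inverse of the coordinate map. For $N>N_0$ we then get
\[
S=\sum_{n\le N_0}(1+c_n)Q_n+\sum_{N_0<n<N}Q_n ,
\]
and every coefficient is positive once $\varepsilon<1$. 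This expresses $S$ as a finite positive combination of the $Q_n$.

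Finally, I apply Carathéodory's theorem for cones in the real vector space $V$. Any element of a finitely generated cone is a nonnegative combination of at most $\dim V\le d^2$ of the generators; when some coefficients are zero, those times are discarded. This gives at most $d^2$ integer times $T$ with positive weights and $\sum_{n\in T}w_nQ_n=S$, which is $S_\mu=S$.

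The main obstacle is the step that makes the infinite sum finite. Absorbing the tail into positive coefficients requires a uniform spanning argument inside $V$. Justifying that the tail lies in $V$, and bounding its coordinates, is where care is needed. If that step proves awkward, I will instead use that $S$ lies in the interior of the cone relative to $V$: it dominates each $Q_n$ along every direction in which $V$ is nondegenerate. I will then invoke Carathéodory together with closedness of finitely generated cones.
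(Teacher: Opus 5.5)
Your argument is correct, but it takes a different route from the paper.

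\textbf{The paper's route.} The paper never reduces to a finite sum. It forms the atomic measure $\nu=\sum_n\operatorname{tr}Q_n\,\delta_{Q_n/\operatorname{tr}Q_n}$ on the $d^2$-dimensional real space of Hermitian matrices, notes that $\nu$ has finite first moment and $\int M\,d\nu(M)=S$, and invokes the Tchakaloff-type theorem of Bayer and Teichmann. That theorem directly yields at most $d^2$ atoms, i.e.\ indices $n$ with $Q_n\neq0$, together with positive weights reproducing $S$.

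\textbf{Your route.} You first show that $S$ is a strictly positive \emph{finite} combination of the $Q_n$. To do this, you write the norm-small tail $(A^N)^*SA^N$ in terms of a fixed finite spanning set $Q_0,\dots,Q_{N_0}$ of $V$, with coefficients of modulus less than one. This uses a linear right inverse of the surjection $c\mapsto\sum_{n\le N_0}c_nQ_n$; you call it a ``left inverse'', which is only a naming slip. You then apply the classical conic Carath\'eodory theorem in $V$.

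\textbf{Comparison.} Your argument is entirely elementary. It also gives the sharper bound $\dim_{\mathbb R}V$ and shows that $S$ lies in the relative interior of the cone. The paper's argument is shorter given the cited theorem, and it needs only $\operatorname{tr}S<\infty$.

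\textbf{Minor remarks.}
\begin{enumerate}[(i)]
\item The spectral-radius step is unnecessary: the tail of any convergent series of positive matrices tends to zero.
\item The Cayley--Hamilton sentence is not quite right for complex $A$. You would also need the terms $\mathrm{i}\bigl((A^i)^*BA^j-(A^j)^*BA^i\bigr)$. You never use it, though, since $\dim V\le d^2$ already follows from $V$ being a subspace of the Hermitian matrices.
\end{enumerate}
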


\begin{proof}
Let $\mathcal V$ be the real vector space of Hermitian
$d\times d$ matrices.
Set $Q_n=(A^n)^*BA^n$ and define
$\nu=\sum_{n\in\mathbb N_0:\,Q_n\ne0}
\operatorname{tr}Q_n\,\delta_{Q_n/\operatorname{tr}Q_n}$.
Then $\nu(\mathcal V)=\operatorname{tr}S<\infty$.
Since $\|Q_n\|\le\operatorname{tr}Q_n$, we have
$\int_{\mathcal V}\|M\|\,d\nu(M)\le\operatorname{tr}S$
and $\int_{\mathcal V}M\,d\nu(M)=S$.
As $\dim_{\mathbb R}\mathcal V=d^2$,
\cite[Corollary~1]{BayerTeichmann2006} gives
$S=\sum_{i=1}^m a_iQ_{n_i}/\operatorname{tr}Q_{n_i}$
with $1\le m\le d^2$,
where the indices $n_1,\ldots,n_m$ are distinct,
$Q_{n_i}\ne0$, and $a_i>0$.
Set $w_{n_i}=a_i/\operatorname{tr}Q_{n_i}$ and
$\mu=\sum_{i=1}^m w_{n_i}\delta_{n_i}$.
Then $S_\mu=\sum_{i=1}^m w_{n_i}Q_{n_i}=S$.
\end{proof}

\subsection{Spectral bands and sampling counts}
Use the spectral projections $P_j$ from
Lemma~\ref{lem:log-localization} and define
$K_A(R)=\#\{j\ge1:2^j\le R,\ P_j\ne0\}$ for $R\ge2$.
Here a band is called nonempty when its spectral projection $P_j$
is nonzero. Each such band contributes one to $K_A(R)$ if $2^j\le R$. Changing finitely many initial bands alters $K_A$ by a bounded amount. A different choice of band endpoints changes only the constants in the estimates below. Neither the spectral projections $P_j$ nor the observation operator $C$ is required to have finite rank.

\begin{lem}\label{lem:scalar-spectral-tests}
Let $(A,C)$ be an observable system with frame bounds $a,b$,
where $A$ is positive and $\|A\|=1$.
If $\alpha I\le S_\mu\le\beta I$ for some $0<\alpha\le\beta<\infty$,
then $\alpha/b\le(1-\lambda^2)\sum_{t\in T}w_t\lambda^{2t}\le\beta/a$
for every $\lambda\in\sigma(A)\cap(0,1)$.
\end{lem}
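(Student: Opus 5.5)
We test the frame inequalities on spectral subspaces concentrated near a fixed point $\lambda\in\sigma(A)\cap(0,1)$ and let the spectral window shrink. Fix $\lambda$ and $\varepsilon>0$ with $[\lambda-\varepsilon,\lambda+\varepsilon]\subset(0,1)$. Let $E_\varepsilon=E_A([\lambda-\varepsilon,\lambda+\varepsilon])$, which is nonzero because $\lambda$ lies in the spectrum. Compress the Stein identity $B=S-ASA$ from Lemma~\ref{lem:stein} to $E_\varepsilon H$.

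\textbf{Step 1: compare $B$ with $(1-\lambda^2)S$.} Since $A$ commutes with $E_\varepsilon$ and $\|(A-\lambda)E_\varepsilon\|\le\varepsilon$, we have $\|E_\varepsilon(ASA-\lambda^2S)E_\varepsilon\|\le 2\varepsilon b(1+\varepsilon)$. It follows that
\[
 E_\varepsilon BE_\varepsilon=(1-\lambda^2)E_\varepsilon SE_\varepsilon+O(\varepsilon b)
\]
in operator norm. Combined with $aI\le S\le bI$, this gives
\[
 \bigl((1-\lambda^2)a-O(\varepsilon)\bigr)E_\varepsilon\le E_\varepsilon BE_\varepsilon\le\bigl((1-\lambda^2)b+O(\varepsilon)\bigr)E_\varepsilon .
\]

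\textbf{Step 2: factor the sampled operator.} For a unit vector $x\in E_\varepsilon H$ we have
\[
 \langle S_\mu x,x\rangle=\sum_{t\in T}w_t\langle BA^tx,A^tx\rangle ,
\]
and $A^tx$ stays in $E_\varepsilon H$. By Step 1,
\[
 \langle S_\mu x,x\rangle\le\bigl((1-\lambda^2)b+O(\varepsilon)\bigr)\sum_{t\in T}w_t\|A^tx\|^2 ,
\]
and the analogous lower bound holds with $a$ in place of $b$. Moreover $\|A^tx\|^2$ lies between $(\lambda-\varepsilon)^{2t}$ and $(\lambda+\varepsilon)^{2t}$. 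Writing $\Phi(r)=\sum_{t\in T}w_tr^{2t}$, we obtain
\[
 \alpha\le\bigl((1-\lambda^2)b+O(\varepsilon)\bigr)\Phi(\lambda+\varepsilon),\qquad
 \beta\ge\bigl((1-\lambda^2)a-O(\varepsilon)\bigr)\Phi(\lambda-\varepsilon).
\]
The second inequality shows that $\Phi(\lambda-\varepsilon)<\infty$ for every $\lambda$ in the spectrum.

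\textbf{Step 3: pass to the limit $\varepsilon\downarrow0$.} This is the main obstacle: we must know that $\Phi$ is finite slightly above $\lambda$, and continuous at $\lambda$. Continuity from below follows from monotone convergence: $\Phi(\lambda-\varepsilon)\uparrow\Phi(\lambda)$. This gives $(1-\lambda^2)a\,\Phi(\lambda)\le\beta$, which is the upper inequality.

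For the lower inequality we need $\Phi(\lambda+\varepsilon)\to\Phi(\lambda)$. Since $\|A\|=1$ and the spectrum is closed, there are points $\lambda'\in\sigma(A)$ with $\lambda'>\lambda$ arbitrarily close to $1$. Applying the upper bound at such a $\lambda'$ gives $\Phi(\lambda')<\infty$. Choose $\lambda'>\lambda+\varepsilon$. Then $\sum_t w_t r^{2t}$ converges on $[0,\lambda']$, and dominated convergence yields $\Phi(\lambda+\varepsilon)\to\Phi(\lambda)$. Hence $\alpha\le(1-\lambda^2)b\,\Phi(\lambda)$.

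Rearranging the two limits gives
\[
 \alpha/b\le(1-\lambda^2)\sum_{t\in T}w_t\lambda^{2t}\le\beta/a .
\]
Here $\|A\|=1$ is used only to guarantee spectral points of $A$ arbitrarily close to $1$.
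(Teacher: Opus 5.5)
Your argument has the same structure as the paper's proof. You localize spectrally near $\lambda$ and use the Stein identity $B=S-ASA$ to replace $B$ by $(1-\lambda^2)S$ up to an $O(\varepsilon)$ error. You get the upper bound by monotone convergence, and the lower bound by dominated convergence using a spectral point $\lambda'\in(\lambda,1)$. The only difference is the localization device. You test on the whole window $E_A([\lambda-\varepsilon,\lambda+\varepsilon])H$ with uniform operator inequalities, whereas the paper uses approximate eigenvectors $x_k$ and a subsequence with $\langle Sx_k,x_k\rangle\to\sigma\in[a,b]$. Your version avoids the subsequence and is slightly more quantitative; the estimates in your Steps~1--2 check out.

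There is, however, one step whose justification is wrong, and it is the step that makes the lower bound work. You claim that $\|A\|=1$ and closedness of $\sigma(A)$ give points $\lambda'\in\sigma(A)$ with $\lambda<\lambda'<1$. They do not. For example, $A=I\oplus\tfrac12 I$ has norm one and spectrum $\{\tfrac12,1\}$, with no spectral points in $(\tfrac12,1)$. What you actually need is that $1$ is not an isolated point of $\sigma(A)$, and this comes from observability. Lemma~\ref{lem:stein} gives $E_A(\{1\})=0$, and an isolated point of the spectrum of a self-adjoint operator is an eigenvalue. Hence $1$, which lies in $\sigma(A)$ because $\|A\|=1$, must be an accumulation point of $\sigma(A)$, so $\sigma(A)\cap(\lambda,1)\neq\varnothing$. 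This is exactly the line the paper inserts before choosing $\gamma\in\sigma(A)\cap(r,1)$.

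With that citation, your Step~3 goes through. Your closing remark should be corrected accordingly: $\|A\|=1$ is used together with $E_A(\{1\})=0$, not by itself. (A minor point: the finiteness of $\Phi(\lambda-\varepsilon)$ in Step~2 holds only once $\varepsilon$ is small enough that $(1-\lambda^2)a-O(\varepsilon)>0$.)
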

\begin{proof}
Fix $\lambda\in\sigma(A)\cap(0,1)$ and choose $r\in(\lambda,1)$.
The spectral theorem gives unit vectors $x_k\in E_A([0,r])H$, $k\ge1$,
such that $\|(A-\lambda I)x_k\|\to0$ as $k\to\infty$.
After passing to a subsequence, assume that
$\langle Sx_k,x_k\rangle\to\sigma\in[a,b]$ as $k\to\infty$.
The Stein identity and continuous functional calculus give
$\|CA^t x_k\|^2\to\lambda^{2t}(1-\lambda^2)\sigma$
as $k\to\infty$ for every $t\ge0$.
For each finite $F\subset T$, the sampled upper frame bound therefore gives
$(1-\lambda^2)\sigma\sum_{t\in F}w_t\lambda^{2t}\le\beta$.
Taking the supremum over all finite $F\subset T$ and using $\sigma\ge a$
proves $(1-\lambda^2)\sum_{t\in T}w_t\lambda^{2t}\le\beta/a$.

Since $\|A\|=1$ and $E_A(\{1\})=0$ by Lemma~\ref{lem:stein},
$1$ is an accumulation point of $\sigma(A)$.
Choose $\gamma\in\sigma(A)\cap(r,1)$.
The upper bound just proved, applied at $\gamma$, gives
$\sum_{t\in T}w_t\gamma^{2t}<\infty$.
For every $t\in T$ and $k\ge1$, we have
$\|CA^t x_k\|^2\le\|C\|^2r^{2t}\le\|C\|^2\gamma^{2t}$.
Dominated convergence therefore yields
$\lim_{k\to\infty}\langle S_\mu x_k,x_k\rangle
=(1-\lambda^2)\sigma\sum_{t\in T}w_t\lambda^{2t}$.
The lower frame bound and $\sigma\le b$ give the required lower estimate.
\end{proof}

\begin{thm}\label{thm:band-count}
Let $(A,C)$ be an observable system with frame bounds $a,b$,
where $A$ is positive.
For every admissible sampling design $\mu$, there exist
$c_\mu>0$ and $C_\mu\ge0$ such that
$N_\mu(R)\ge c_\mu K_A(R)-C_\mu$ for all $R\ge2$.
There is an admissible integer sampling design $\mu$ with
$N_\mu(R)\le L K_A(R)+L$ for all $R\ge2$,
where $L>0$ depends only on $a,b$.
\end{thm}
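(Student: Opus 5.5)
The theorem has a lower bound and an upper construction, and I treat them separately. In both parts I first dispose of the degenerate case $\|A\|<1$: then $P_j=0$ for all large $j$, so $K_A$ is bounded and both bounds are trivial. The upper bound in that case comes from Theorem~\ref{thm:finite-times}. So assume $\|A\|=1$.

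\emph{Lower bound.} I would use Lemma~\ref{lem:scalar-spectral-tests}. For each nonempty band $j$ with $2^j\le R$, pick $\lambda_j=e^{-s}\in\sigma(A)$ with $2^{-j-1}<s\le2^{-j}$. Then
\[
(1-\lambda_j^2)\sum_{t\in T}w_t\lambda_j^{2t}\ge\alpha/b,
\]
and $1-\lambda_j^2\asymp 2^{-j}$. Take the $\lambda_j$ attached to the whole bands $j$, not to the index ordering.

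The plan is to show that band $j$ needs its own sampling time in a window $[c2^j,C2^j]$ with $C/c$ a fixed constant. Then each such window contains a point of $T$, and since windows for bands at distance greater than $\log_2(C/c)$ are disjoint, $N_\mu(R)\gtrsim K_A(R)/\log_2(C/c)-O(1)$.

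For the window claim I would use the upper bound $\beta/a$ at a neighbouring spectral point to control the tails.
\begin{enumerate}[(a)]
\item The contribution of times $t\ge C2^j$ is at most $2^{-j}\sum_{t\ge C2^j}w_te^{-t2^{-j}}$. I bound this using the upper estimate at $\lambda_j$ itself: the factor $e^{-t2^{-j}}$ versus $e^{-2t s_j}$ leaves an exponentially small gain $e^{-cC}$ on the window.
\item The contribution of times $t\le c2^j$ is harder. The upper test at $\lambda_j$ gives no smallness there. Instead I use the upper test at a point $\lambda'$ from a much later nonempty band, or close to $1$, which exists since $1$ is an accumulation point of $\sigma(A)$. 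That test bounds $\sum_{t\le c2^j}w_t$ by roughly $(\beta/a)\,2^{j'}$ for $j'\ge\log_2 (c2^j)$. Choosing $j'$ so that $2^{j'}\approx c2^j$, this sum is at most $O(c2^j)$, so its contribution $2^{-j}\sum w_t\le O(c)$ is small for small $c$.
\end{enumerate}

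The gap in (b) is that a spectral point near $2^{-j'}$ may not exist. I would fix this by applying the upper test at the spectral point $\lambda'$ nearest to $e^{-1/(c2^j)}$ from above. The weights then satisfy $\sum_{t\le\tau}w_t\lesssim\tau$ only at scales $\tau$ where spectrum exists, and I would argue this suffices because the blocks between empty bands are controlled by the nearest nonempty band on each side. This step, making the small-time contribution uniformly small using only scales where spectrum exists, is the main obstacle. Alternatively I would replace the window argument by a dimension/kernel argument as in Lemma~\ref{lem:positive-lower}, choosing approximate eigenvectors from distinct bands instead of eigenvectors.

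\emph{Upper bound.} Use Proposition~\ref{prop:reference-grid}(ii). Its integer design $\mu_{\mathbb Z}$ has at most $N+1$ times below $M$, and otherwise only times $\lfloor e^{kh}\rfloor$ with $k\in\mathcal I_{h,D}$, that is, $kh$ within $D$ of some $q_j$ with $j\in\mathcal J$. Each $j\in\mathcal J$ contributes at most $2D/h+1$ indices $k$. A time $\lfloor e^{kh}\rfloor\le R$ requires $q_j\le\log R+D$, hence $2^j\le e^DR$, and the number of such $j$ is at most $K_A(R)+D/\log2+2$. Since $h,D,M,N$ depend only on $a,b$, this gives $N_\mu(R)\le LK_A(R)+L$ with $L=L(a,b)$.
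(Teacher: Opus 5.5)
Your upper bound follows the paper's argument and works, apart from two small points noted at the end. The lower bound has a genuine gap, and it goes beyond the obstacle you flag in (b): the window claim itself is false.

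Take $A$ diagonal with one eigenvalue $e^{-2^{-j_k}}$ in each band $j_k$, where the gaps $g_k=j_k-j_{k-1}$ tend to infinity. Take $C=(I-A^2)^{1/2}$, so that $S=I$ and admissibility means exactly that $(1-e^{-2s})\sum_t w_te^{-2st}$ is bounded above and below at $s=2^{-j_k}$. Sample at $t_k=2^{j_k-g_k/2}$ with weight $2^{j_k}$. This design is admissible:
\begin{itemize}
\item $t_k$ gives band $j_k$ a contribution between $e^{-2}$ and $2$;
\item the earlier times contribute at most $2\sum_{u\ge1}2^{-u}$;
\item the later times contribute at most $\sum_{u\ge1}2^{u+1}e^{-2^{u/2+1}}$.
\end{itemize}
Yet $t_k/2^{j_k}=2^{-g_k/2}\to0$ and $t_{k+1}/2^{j_k}=2^{g_{k+1}/2}\to\infty$. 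So for large $k$, no window $[c2^{j_k},C2^{j_k}]$ with fixed $c,C$ contains a sampling time. When the neighbouring nonempty bands are far away, a band can be served from far below or from far above.

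In (a) the comparison also runs the wrong way. Since $2s_j>2^{-j}$, you have $e^{-2ts_j}\le e^{-t2^{-j}}$, so the upper test at $\lambda_j$ gives no gain. A test at a smaller rate $s'$ introduces the factor $s_j/s'$, which is unbounded for the same reason.

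Your fallback fails too. The kernel argument of Lemma~\ref{lem:positive-lower} relies on the observations at each time having finite rank. In the example above $C$ and $A$ are injective, so a single sampling time already has trivial kernel.

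What is missing is to count by mass rather than by location, which is what the paper does.
\begin{enumerate}
\item Choose one spectral point in each band counted by $K_A(R)$. Keep one residue class of band indices modulo $3$, so that the decay rates satisfy $s_i/s_{i+1}>4$ and $Rs_p>1/2$, with $p\ge K_A(R)/3$.
\item Sum the lower scalar tests of Lemma~\ref{lem:scalar-spectral-tests}: $pl\le\sum_i s_i\sum_t w_te^{-2s_it}$.
\item Geometric separation gives $\sum_is_ie^{-2s_it}\le L_0\max_is_ie^{-2s_it}$. The upper test at the maximizing rate bounds a single atom by $M$. Hence each time $t\le R$ contributes at most $L_0M$ in total, wherever it lies relative to the bands.
\item The times $t>R$ contribute at most $M\sum_i(s_i/s_p)e^{1-s_i/s_p}\le L_1M$, by the upper test at $s_p$.
\end{enumerate}
This gives $pl\le L_0MN_\mu(R)+L_1M$, which is the required lower bound.

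Two small points on the upper bound. First, $\lfloor e^{kh}\rfloor\le R$ only gives $kh<\log(R+1)$; the paper absorbs this via $N_\mu(R)\le N_\nu(R+1)+C_0$ and $K_A(R+1)\le K_A(R)+1$. Second, do not treat $\|A\|<1$ via Theorem~\ref{thm:finite-times}: the number of times it uses depends on $\|A\|$, not only on $a,b$. The count from Proposition~\ref{prop:reference-grid} never uses $\|A\|=1$, so it covers that case with $L=L(a,b)$.
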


\begin{proof}
Let $\alpha,\beta$ be the sampled frame bounds for an admissible design $\mu$.
If $\|A\|<1$, then $K_A$ is bounded, so the lower bound holds
with $c_\mu=1$ and $C_\mu=\sup_{R\ge2}K_A(R)$. We may therefore assume
$\|A\|=1$ when proving that bound.
Write $s=-\log\lambda$. Lemma~\ref{lem:scalar-spectral-tests} implies
$l\le s\sum_{t\in T}w_te^{-2st}\le M$
whenever $0<s\le\tfrac12$ and $e^{-s}\in\sigma(A)$,
for some constants $0<l\le M<\infty$.
Fix $R\ge2$ with $K_A(R)>0$; the lower bound is trivial when $K_A(R)=0$.
Choose one spectral point from each band counted by $K_A(R)$.
Among the band indices, one residue class modulo three contains at
least $K_A(R)/3$ of these points. Denote their decay rates by
$s_1>\cdots>s_p$. They satisfy $s_i/s_{i+1}>4$ for
$1\le i<p$ and $Rs_p>1/2$.

For a finite sequence of positive numbers with successive ratio at
least four, there is a numerical constant $L_0$ such that
$\sum_{i=1}^p s_i e^{-2s_it}
\le L_0\max_{1\le i\le p}s_i e^{-2s_it}$
for all $t\ge0$.
For $t>0$, let $i_0=\min\{i:s_it\le1\}$ when this set is nonempty.
Then
\[
 \sum_{s_it\le1}s_ie^{-2s_it}
 \le\sum_{i=i_0}^p s_i
 \le\frac43s_{i_0}
 \le\frac{4e^2}{3}\max_{1\le i\le p}s_ie^{-2s_it}.
\]
If the set is empty, the sum is zero.
For $s_it>1$, successive terms, ordered by increasing $s_i$,
decrease by a factor at most $4e^{-6}$.
The case $t=0$ is an ordinary geometric sum.
Each individual atom satisfies $w_ts_i e^{-2s_it}\le M$ for
$t\in T$ and $1\le i\le p$.
Consequently,
$\sum_{i=1}^p\sum_{t\in T:\,t\le R}w_ts_ie^{-2s_it}
\le L_0MN_\mu(R)$.

To bound the contribution from $t>R$, apply the scalar upper bound
at the smallest of these decay rates, $s_p$:
\[
 s_i\sum_{\substack{t\in T\\t>R}}w_te^{-2s_it}
 \le M\frac{s_i}{s_p}e^{-2(s_i-s_p)R}
 \le M\frac{s_i}{s_p}e^{-(s_i/s_p-1)}
 \qquad 1\le i\le p.
\]
Since $s_i/s_p\ge4^{p-i}$ and $u\mapsto ue^{1-u}$ is decreasing
on $[1,\infty)$,
\[
 \sum_{i=1}^p\sum_{\substack{t\in T\\t>R}}w_ts_ie^{-2s_it}
 \le M\sum_{i=1}^p\frac{s_i}{s_p}e^{1-s_i/s_p}
 \le M\sum_{k=0}^{p-1}4^ke^{1-4^k}
 \le L_1M,
\]
where $L_1=\sum_{k=0}^\infty4^ke^{1-4^k}<\infty$.
Summing the scalar lower bounds yields
$\frac{l}{3}K_A(R)\le pl\le L_0MN_\mu(R)+L_1M$,
and hence
$N_\mu(R)\ge\frac{l}{3L_0M}K_A(R)-\frac{L_1}{L_0}$.

For the upper bound, let $\nu$ be the real sampling design in
Proposition~\ref{prop:reference-grid}, with parameters $h,D$. A retained grid point $kh\le\log R$, $k\ge0$, lies
within distance $D$ of some $q_j\le\log R+D$, $j\in\mathcal J$.
Each interval $[q_j-D,q_j+D]$ contains at most $2D/h+2$ grid points.
Since $q_j=j\log2$, there are at most
$K_A(R)+\lfloor D/\log2\rfloor+2$ such indices $j$, including $j=0$.
Counting also the sampling time zero gives
\[
 N_\nu(R)\le1+(2D/h+2)
 \bigl(K_A(R)+\lfloor D/\log2\rfloor+2\bigr)
 \qquad R\ge2.
\]
The integer construction in that proposition adds at most $C_0$
initial sampling times.
The resulting integer design $\mu$ satisfies
$N_\mu(R)\le N_\nu(R+1)+C_0$ for $R\ge2$.
Since $K_A(R+1)\le K_A(R)+1$, the required upper bound follows.
\end{proof}

Let $(A,C)$ be an observable system with $A$ positive and
$\operatorname{rank}C=r<\infty$.
An orthonormal basis of $\operatorname{ran}C$ gives a representation
by $r$ generating vectors.
By \cite[Theorem~2.3]{CabrelliMolterPaternostroPhilipp2020},
$A$ has an orthonormal basis of eigenvectors, every eigenspace has
dimension at most $r$, and the distinct positive eigenvalues form
a union of at most $r$ Carleson sequences.
When $r>1$, the union need not itself be a Carleson sequence.

For each constituent sequence, the logarithmic parameters defined in
Section~\ref{sec:positive-core} are uniformly separated in the Euclidean distance.
Each spectral band corresponds to an interval of length $\log2$
in these parameters, so each sequence contributes a uniformly
bounded number of eigenvalues to the band.
Since there are at most $r$ sequences and each eigenspace has
dimension at most $r$, we obtain $\sup_{j\ge1}\dim P_jH<\infty$.
These dimensions need not be bounded by $r$, since a band may
contain several distinct eigenvalues.

Define the eigenvalue counting function with multiplicity by
$n_A(R)=\operatorname{rank}E_A((0,e^{-1/R}])$.
There are constants $M,D>0$ such that
$K_A(R)-D\le n_A(R)\le M K_A(R)+D$ for all $R\ge2$.
Thus Theorem~\ref{thm:band-count} determines the optimal order
of the sampling count in terms of $n_A$.

\begin{prop}
\label{prop:finite-channel-count}
Let $(A,C)$ be an observable system with $A$ positive and
$r=\operatorname{rank}C<\infty$. Every admissible real design
satisfies $rN_\mu(R)\ge n_A(R)-K_\mu$ for all $R>0$, with a
constant $K_\mu\ge0$.
\end{prop}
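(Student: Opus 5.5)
Follow the proof of Lemma~\ref{lem:positive-lower}, with dimension counting adapted to $r$ channels. By \cite[Theorem~2.3]{CabrelliMolterPaternostroPhilipp2020}, $A$ (restricted to $(\ker A)^\perp$, which changes counts by a constant) has an orthonormal eigenbasis. Its distinct eigenvalues $e^{-s}$ form a union of at most $r$ Carleson sequences, and each eigenspace has dimension at most $r$. Let $E_R=E_A((0,e^{-1/R}])H$, so $\dim E_R=n_A(R)$. Fix an integer $K$ to be chosen, and let $E_R'\subset E_R$ be the spectral subspace for decay rates $s\ge\sigma_R$. Here $\sigma_R$ is chosen so that $E_R\ominus E_R'$ has dimension at most $K$, and every eigenvalue in $E_R'$ has $s/s_{\min}$ large, where $s_{\min}\ge 1/R$ is the smallest rate in $E_R$. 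If $rN_\mu(R)<\dim E_R'$, the $r\,N_\mu(R)$ scalar functionals $x\mapsto\langle CA^tx,e_i\rangle$, $t\in T\cap[0,R]$, $1\le i\le r$, have a common nonzero kernel vector $x\in E_R'$. The lower frame bound then forces $\sum_{t>R}w_t\|CA^tx\|^2\ge\alpha$. It therefore suffices to show that the tail operator $\sum_{t>R}w_tA^tBA^t$, compressed to $E_R'$, has trace less than $\alpha$. That yields $rN_\mu(R)\ge n_A(R)-K$.

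For the trace estimate, use Lemma~\ref{lem:scalar-spectral-tests} rather than explicit coordinates of $f$. The diagonal entries of $B$ on an eigenvector $e$ with $Ae=e^{-s}e$ satisfy $\langle Be,e\rangle=\|Ce\|^2\le 2bs$, by the Stein identity $B=S-ASA$ of Lemma~\ref{lem:stein}. Hence the trace of the compressed tail is at most $2b\sum_{e}s\sum_{t>R}w_te^{-2st}$, summed over the eigenbasis of $E_R'$. Apply the scalar upper bound $(1-\lambda^2)\sum_tw_t\lambda^{2t}\le\beta/a$ at $\lambda=e^{-s_{\min}}$; if $\|A\|<1$ the statement is trivial since $n_A$ is bounded. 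As in Lemma~\ref{lem:positive-lower}, this gives
\[
 s\sum_{t>R}w_te^{-2st}\le M\,\frac{s}{s_{\min}}\,e^{-2(s-s_{\min})R}\le M\,\rho\,e^{-(\rho-1)},\qquad \rho=s/s_{\min},
\]
using $Rs_{\min}\ge 1/2$, which is enough up to constants.

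The key combinatorial input is the count of eigenvectors with $\rho$ in each dyadic range. Each of the at most $r$ Carleson sequences contributes a bounded number of distinct eigenvalues to each band, since the logarithmic parameters are separated. Each eigenspace has dimension at most $r$. So the number of basis vectors with $\rho\in[2^k,2^{k+1})$ is at most a constant $M_1$ independent of $R$ and $k$. Hence the contribution with $\rho\ge 2^{k_0}$ is at most $2bM M_1\sum_{k\ge k_0}2^{k+1}e^{1-2^k}$, which is below $\alpha$ for $k_0$ large. The eigenvectors of $E_R$ with $\rho<2^{k_0}$ number at most $K:=k_0M_1$; these define $E_R\ominus E_R'$.

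The main obstacle is the bookkeeping that makes $K$ independent of $R$. Unlike the scalar case, rates may coincide or cluster within a band, so one cannot index them by a single separated sequence. I would handle this through the uniform band-dimension bound $\sup_j\dim P_jH<\infty$ noted before the proposition, and by working with traces, which absorb multiplicity. The remaining steps (the $\ker A$ restriction and the case $\|A\|<1$) only change constants.
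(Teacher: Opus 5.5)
Your proposal is correct and follows the paper's proof. Both arguments pick a kernel vector of the at most $rN_\mu(R)$ scalar observations at times in $[0,R]$, inside the span of all but $K$ eigenvectors of $E_A((0,e^{-1/R}])$. Both then show that the compressed tail $\sum_{t>R}w_t(A^t)^*BA^t$ has trace below $\alpha$, using the uniform bound on how many eigenvalues, counted with multiplicity, have decay rates in a fixed dyadic range.

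The differences are cosmetic. The paper compares each eigenvector with the one of smallest rate $s_m$, using $a_0s_j\le\|Ce_j\|^2\le b_0s_j$ and the sampled upper bound applied directly to $e_m$, and it does the bookkeeping through $s_j/s_{j+q}\ge2$. This avoids Lemma~\ref{lem:scalar-spectral-tests} and its hypothesis $\|A\|=1$. Your separate case $\|A\|<1$ is fine, but you should justify it by noting that, with $\operatorname{rank}C<\infty$, it forces $\dim H<\infty$ (Theorem~\ref{thm:finite-times}).
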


\begin{proof}
If $\dim H<\infty$, take $K_\mu=\dim H$.
Assume now that $H$ is infinite dimensional.
The zero eigenspace does not contribute to $n_A$.
On $(\ker A)^\perp$, choose an orthonormal eigenbasis $(e_j)_{j\ge1}$
such that $Ae_j=e^{-s_j}e_j$, where
$s_1\ge s_2\ge\cdots>0$ and $s_j\to0$.
Eigenvalues are counted with multiplicity.
The separation and multiplicity bounds give an integer $q\ge1$ such that
$\sup_{v\in\mathbb R}\#\{j:\log(1/s_j)\in[v,v+\log2]\}\le q$.
Consequently, $s_j/s_{j+q}\ge2$ for every $j\ge1$.
The frame inequalities for $(A,C)$ give
$a(1-e^{-2s_j})\le\|Ce_j\|^2\le b(1-e^{-2s_j})$.
Since $0<s_j\le s_1<\infty$, there are constants $a_0,b_0>0$ such that
$a_0s_j\le\|Ce_j\|^2\le b_0s_j$ for every $j\ge1$.

Let $\alpha,\beta$ be sampled frame bounds, and let $K\ge1$
be an integer to be chosen independently of $R$.
For $R>0$, put $m=n_A(R)=\#\{j\ge1:s_j\ge R^{-1}\}$.
If $m\le K$, then $rN_\mu(R)\ge m-K$ is immediate.
Assume $m>K$.
For $1\le j\le m-K$, set $u=s_j/s_m$.
Since $s_mR\ge1$, the sampled upper frame bound gives
\[
 \sum_{\substack{t\in T\\t>R}}w_t\|CA^te_j\|^2
 \le
 \frac{\|Ce_j\|^2}{\|Ce_m\|^2}
 e^{-2(s_j-s_m)R}
 \sum_{\substack{t\in T\\t>R}}w_t\|CA^te_m\|^2
 \le \beta\frac{b_0}{a_0}u e^{-2(u-1)}.
\]
Let $E=\operatorname{span}\{e_1,\ldots,e_{m-K}\}$,
let $P_E$ be the orthogonal projection onto $E$, and set
$S_{\mu,>R}=\sum_{t\in T\cap(R,\infty)}w_t(A^t)^*C^*CA^t$.
Iteration of $s_j/s_{j+q}\ge2$ gives
$s_j/s_m\ge2^{\lfloor(m-j)/q\rfloor}$ for $1\le j\le m$.
Since $u\mapsto ue^{-2(u-1)}$ decreases on $[1,\infty)$,
\[
 \operatorname{tr}(P_ES_{\mu,>R}P_E)
 =\sum_{j=1}^{m-K}
   \sum_{\substack{t\in T\\t>R}}w_t\|CA^te_j\|^2
 \le\beta\frac{b_0}{a_0}
 \sum_{\ell=K}^\infty
 2^{\lfloor\ell/q\rfloor}
 \exp\bigl[-2(2^{\lfloor\ell/q\rfloor}-1)\bigr].
\]
The series converges.
Choose $K$ so that the right side is smaller than $\alpha$.

Define $V_Rx=(CA^tx)_{t\in T\cap[0,R]}$.
Then $\operatorname{rank}V_R\le rN_\mu(R)$.
If $rN_\mu(R)<m-K$, then
\[
 \dim\ker(V_R|_E)
 =\dim E-\operatorname{rank}(V_R|_E)
 \ge m-K-rN_\mu(R)>0.
\]
Choose a unit vector $x\in E\cap\ker V_R$.
Then $\alpha\le\langle S_\mu x,x\rangle
=\langle S_{\mu,>R}x,x\rangle
\le\operatorname{tr}(P_ES_{\mu,>R}P_E)<\alpha$,
a contradiction.
Hence $rN_\mu(R)\ge m-K=n_A(R)-K$, and $K_\mu=K$
gives the assertion.
\end{proof}

The coefficient $1/r$ in
$N_\mu(R)\ge n_A(R)/r-K_\mu/r$ cannot be increased in a bound
valid for all observable systems $(A,C)$ with $A$ positive and
$\operatorname{rank}C=r$. For some fixed systems, a larger
coefficient is possible.

\begin{example}
Let $A_0$ be a positive operator on an infinite dimensional Hilbert
space $H_0$ such that $(A_0^nf_0)_{n\ge0}$ is a frame. Set $H=H_0^{\oplus r}$ and $A=A_0^{\oplus r}$.
Take generating vectors $f_1,\ldots,f_r$, where $f_i$ has
$i$th component $f_0$ and all other components zero.
Then $n_A(R)=r m_{A_0}(R)$, and a design is admissible
exactly when it is admissible for $(A_0,f_0)$. By Theorem~\ref{thm:positive-count}, for every $\eta>0$ there is
an admissible integer sampling design $\mu$, using the same times
and weights in each summand, such that
$N_\mu(R)\le(1+\eta)n_A(R)/r+O_\eta(1)$.

In contrast, let $A=A_0\oplus\lambda_*I$ on
$H_0\oplus\mathbb C^{r-1}$, where $0<\lambda_*<1$ and $r\ge2$,
and set $C(x,z)=(\langle x,f_0\rangle,z)$.
The corresponding integer orbits form a frame and $\operatorname{rank}C=r$.
For all sufficiently large $R$, one has
$n_A(R)=m_{A_0}(R)+r-1$. Restriction to $H_0$ shows that every
admissible design satisfies
$N_\mu(R)\ge n_A(R)-O_\mu(1)$.
An upper bound with coefficient $(1+\eta)/r<1$ is therefore
impossible for this fixed orbit frame.
\end{example}

For infinitely many generating vectors, the spectral restrictions above can
fail. Let $H=Y=L^2((0,1),d\lambda)$, $(Ax)(\lambda)=\lambda x(\lambda)$,
and $C=(I-A^2)^{1/2}$. An orthonormal basis $(e_j)_{j\ge1}$ of $H$
gives generating vectors $f_j=Ce_j$ for $j\ge1$.
The geometric series gives $S=I$.
The positive operator $A$ has no eigenvalues, and
$\dim P_jH=\infty$ for every $j\ge1$.
Theorem~\ref{thm:band-count} still applies, but
$\operatorname{rank}E_A((0,e^{-1/R}])=\infty$ for every $R>0$,
so the formula defining $n_A$ no longer gives a finite counting function.

\begin{remark}
For an observable system $(A,C)$ with $A$ normal and $SA=AS$,
Lemma~\ref{lem:stein} gives $B=(I-|A|^2)S$.
Whenever $S_\mu$ is bounded, spectral calculus gives
\[
 S^{-1/2}S_\mu S^{-1/2}
 =\sum_{t\in T}w_t(I-|A|^2)|A|^{2t}.
\]
Put $\varphi_\mu(\rho)=(1-\rho^2)\sum_{t\in T}w_t\rho^{2t}$
for $0\le\rho<1$, and let $E_{|A|}$ be the spectral measure of $|A|$.
Thus $\mu$ is admissible if and only if there are constants
$0<c\le d<\infty$ such that $c\le\varphi_\mu(\rho)\le d$
for almost every $\rho\in[0,1)$ with respect to $E_{|A|}$.

In the multiplication example above, take $T=\mathbb N_0$,
$w_{2n}=1$, and $w_{2n+1}=2^{-n-1}$ for $n\in\mathbb N_0$.
Then $\tfrac12\le\varphi_\mu(\rho)\le1$ for $0\le\rho<1$.
The unit weights $v_t=1$ for $t\in\mathbb N_0$ also give a frame,
but $\inf_{t\in\mathbb N_0}w_t/v_t=0$.
Thus the weight comparison in Proposition~\ref{prop:finite-excess-weights}
can fail without the finite excess assumption.
\end{remark}

\subsection{Common sampling designs for a fixed positive spectral set}
\label{sec:common-positive}
We show that sampling designs for a single canonical orbit
characterize common sampling designs for all observable positive
systems whose spectral measures are concentrated on the same
Carleson set.

Let $(\lambda_j)_{j\ge1}$ be a sequence in $(0,1)$ satisfying
the Carleson condition.
After reindexing, we may assume that $\lambda_1<\lambda_2<\cdots$.
Put $\Lambda=\{\lambda_j:j\ge1\}$ and
$s_j=-\log\lambda_j$ for $j\ge1$.
Then $s_1>s_2>\cdots>0$ and $s_j\to0$.
Since $1-e^{-s}\sim s$ as $s\downarrow0$,
\cite[Proposition~1.2(i)--(ii)]{ChristensenEtAl2024} shows that
the Carleson condition is equivalent to
$\inf_{j\ge1}\log(s_j/s_{j+1})>0$.

Fix an orthonormal basis $(e_j)_{j\ge1}$ of $H$ and define
$A_\Lambda e_j=\lambda_je_j$ for $j\ge1$,
$f_\Lambda=\sum_{j\ge1}\sqrt{s_j}\,e_j$, and
$C_\Lambda:H\to\mathbb C$ by
$C_\Lambda x=\langle x,f_\Lambda\rangle$ for $x\in H$.
The separation condition gives $\sum_{j\ge1}s_j<\infty$,
so $f_\Lambda\in H$.
Set $q_\Lambda=(1-e^{-2s_1})/s_1$.
Since $0<q_\Lambda\le(1-e^{-2s_j})/s_j\le2$ for every $j\ge1$,
the frame characterization in \cite{ACMT2017} shows that
$(A_\Lambda^nf_\Lambda)_{n\in\mathbb N_0}$ is a frame for $H$.
We call this family the canonical orbit associated with $\Lambda$.

Let $\mathcal C(\Lambda)$ consist of observable systems $(A,C)$ with
$A$ positive and $E_A(\mathbb R\setminus\Lambda)=0$.
The eigenspaces and the separable observation space $Y$ may have finite
or infinite dimension, and $E_A(\{e^{-s_j}\})$ may be zero for some $j\ge1$.
The spectral condition permits $1\in\sigma(A)$ but implies $E_A(\{1\})=0$.
A sampling design is common to $\mathcal C(\Lambda)$ if the same times
and weights are admissible for every system in this class,
with sampled frame bounds that may depend on the system.

\begin{thm}
\label{thm:scalar-universal-spectrum}
A sampling design $\mu$ is common to $\mathcal C(\Lambda)$ if and
only if it is admissible for the canonical orbit.
If $0<\alpha_0\le\beta_0<\infty$ are sampled frame bounds for this orbit,
then $a q_\Lambda\alpha_0 I\le S_\mu\le2b\beta_0 I$ for every
$(A,C)\in\mathcal C(\Lambda)$ with frame bounds $a,b$.
\end{thm}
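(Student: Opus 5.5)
The plan is to reduce every system in $\mathcal C(\Lambda)$ to a Schur (Hadamard) product of a scalar kernel indexed by $\Lambda$ with a positive block operator. The canonical orbit then controls the scalar kernel.

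\textbf{Necessity.} This direction is immediate. The canonical orbit $(A_\Lambda,C_\Lambda)$ is itself observable, with $A_\Lambda$ positive and spectral measure on $\Lambda$, so it belongs to $\mathcal C(\Lambda)$. A common design is therefore admissible for it.

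\textbf{Sufficiency, block decomposition.} Fix $(A,C)\in\mathcal C(\Lambda)$ with frame bounds $a,b$, and put $E_j=E_A(\{\lambda_j\})$. Since $E_A(\mathbb R\setminus\Lambda)=0$ and $\Lambda$ is countable, we have $\sum_jE_j=I$ strongly, and $A=\sum_j\lambda_jE_j$. Compressing the Stein identity $B=S-ASA$ of Lemma~\ref{lem:stein} gives
\[
 E_iBE_j=(1-\lambda_i\lambda_j)E_iSE_j .
\]
Hence, at least on vectors with finitely many nonzero components $E_jx$,
\[
 \langle S_\mu x,x\rangle
 =\sum_{i,j}\Phi_\mu(i,j)\,\langle E_iSE_jx,x\rangle,
 \qquad
 \Phi_\mu(i,j)=(1-\lambda_i\lambda_j)\sum_{t\in T}w_t\lambda_i^t\lambda_j^t,
\]
while $\langle Sx,x\rangle$ is the same expression with $\Phi\equiv1$.

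\textbf{Comparison of kernels.} The canonical orbit has exactly this structure with one-dimensional blocks and $f_\Lambda=\sum_j\sqrt{s_j}e_j$. Its matrices are therefore
\[
 S^\Lambda=\bigl(\sqrt{s_is_j}/(1-\lambda_i\lambda_j)\bigr)_{i,j},
 \qquad
 S^\Lambda_\mu=\bigl(\sqrt{s_is_j}\,\Phi_\mu(i,j)/(1-\lambda_i\lambda_j)\bigr)_{i,j}.
\]
The hypothesis $\alpha_0\le S^\Lambda_\mu\le\beta_0$, together with the canonical bounds $q_\Lambda\le S^\Lambda\le2$, gives, after conjugating by $\operatorname{diag}(s_j^{-1/2})$, inequalities in the order of positive semidefinite kernels on $\Lambda\times\Lambda$. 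Writing $K_1(i,j)=(1-\lambda_i\lambda_j)^{-1}$, these compare $\Phi_\mu/(1-\lambda_i\lambda_j)$ with multiples of $K_1$. Concretely, they yield that
\[
 \Phi_\mu(i,j)/(1-\lambda_i\lambda_j)-c\,K_1(i,j)
 \quad\text{and}\quad
 d\,K_1(i,j)-\Phi_\mu(i,j)/(1-\lambda_i\lambda_j)
\]
are positive semidefinite kernels, with $c,d$ depending only on $\alpha_0,\beta_0,q_\Lambda$. I would then move from this comparison to the block operators. The block matrix $(E_iBE_j)_{i,j}$ is positive, since it is $B$ written in blocks. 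Moreover, $S_\mu$ and $S$ are the Schur products of $(E_iBE_j)$ with $\Phi_\mu/(1-\lambda\lambda)$ and with $K_1$ respectively.

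\textbf{Schur product step.} I would use the block Schur product theorem in the following form. If $k$ is a positive semidefinite scalar kernel, write a finite truncation as $k=\sum_r v_rv_r^*$. Then
\[
 k\circ X=\sum_r\operatorname{diag}(v_r)\,X\,\operatorname{diag}(v_r)^*\ge0
\]
for every positive block operator $X$. Applying this on finitely many eigenspaces yields $cS\le S_\mu\le dS$ there. Combining with $aI\le S\le bI$ and letting the number of eigenspaces tend to infinity gives the bounds on all of $H$. For the lower bound this uses monotone convergence of the sampled quadratic forms; for the upper bound it uses Fatou's lemma, as in Lemma~\ref{lem:log-localization}.

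\textbf{Main obstacle.} The hard part is obtaining exactly the stated constants $aq_\Lambda\alpha_0$ and $2b\beta_0$. The crude route above loses a factor: it gives roughly $a\alpha_0/2$ for the lower bound. To fix this, I would try to pair the lower canonical bound $q_\Lambda$ with the upper sampled bound and vice versa in a better way. Alternatively, I would compare $S_\mu$ directly with $(1-|A|^2)$-weighted diagonal terms, using
\[
 a(1-\lambda_j^2)E_j\le E_jBE_j\le b(1-\lambda_j^2)E_j
\]
together with $q_\Lambda s_j\le1-\lambda_j^2\le2s_j$. If the factorization through $\operatorname{diag}(\sqrt{s_j})$ does not produce exactly these constants, I would accept slightly worse absolute constants in the plan. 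The equivalence itself does not depend on the exact values.
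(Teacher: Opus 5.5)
Your necessity direction, the compressed Stein identity $E_iBE_j=(1-\lambda_i\lambda_j)E_iSE_j$, and the block Schur product mechanism are sound, and they prove the qualitative equivalence. The gap is the quantitative assertion, which is part of the statement.

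Write $k_\mu(i,j)=\Phi_\mu(i,j)/(1-\lambda_i\lambda_j)=\sum_{t\in T}w_t\lambda_i^t\lambda_j^t$. Your main route compares $k_\mu$ with $K_1$, that is, $S^\Lambda_\mu$ with the full canonical frame operator $S^\Lambda$. This unavoidably brings in the canonical orbit's own frame bounds.

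Your claim $q_\Lambda\le S^\Lambda\le2$ is false. The numbers $q_\Lambda$ and $2$ bound the scalars $(1-e^{-2s_j})/s_j$, not the operator $S^\Lambda$. For instance, $\langle S^\Lambda e_1,e_1\rangle=s_1/(1-e^{-2s_1})>2$ once $s_1\ge2$. For $s_j=e^{-jh}$, both $\|S^\Lambda\|$ and $\|(S^\Lambda)^{-1}\|$ tend to infinity as $h\downarrow0$, while $q_\Lambda\to1-e^{-2}$.

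What your argument actually yields is $S_\mu\ge a\alpha_0\|S^\Lambda\|^{-1}I$ and $S_\mu\le b\beta_0\|(S^\Lambda)^{-1}\|I$. These constants depend on the Carleson separation of $\Lambda$ and can be far worse than $aq_\Lambda\alpha_0$ and $2b\beta_0$. Accepting worse constants does not prove the theorem as stated.

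The missing step is the one you only gesture at under \emph{Main obstacle}, and it gives the constants exactly. Compare $k_\mu$ with the diagonal kernel rather than with $K_1$. Conjugating $\alpha_0I\le S^\Lambda_\mu\le\beta_0I$ by $\operatorname{diag}(s_j^{-1/2})$ on vectors supported in a finite set $F$ says precisely that $k_\mu-\alpha_0\operatorname{diag}(s_j^{-1})$ and $\beta_0\operatorname{diag}(s_j^{-1})-k_\mu$ are positive semidefinite on $F\times F$.

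Your Schur product with $(E_iBE_j)_{i,j\in F}$ then gives $\alpha_0\sum_{j\in F}s_j^{-1}E_jBE_j\le S_\mu\le\beta_0\sum_{j\in F}s_j^{-1}E_jBE_j$ on $\bigoplus_{j\in F}E_jH$. Only diagonal blocks remain. Now use $E_jBE_j=(1-\lambda_j^2)E_jSE_j$, $aE_j\le E_jSE_j\le bE_j$ and $q_\Lambda s_j\le1-\lambda_j^2\le2s_j$ to get exactly $aq_\Lambda\alpha_0$ and $2b\beta_0$.

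This is the paper's proof in kernel language. The paper factors $\mathcal O_\mu=(H_\mu\otimes I_Y)Z$, where $Zx=(CE_jx/\sqrt{s_j})_{j\ge1}$ and $H_\mu$ is the canonical sampled analysis matrix $(\sqrt{w_ts_j}\,e^{-ts_j})$. Since $Z$ involves only diagonal blocks, $aq_\Lambda\|x\|^2\le\|Zx\|^2\le2b\|x\|^2$. Meanwhile $H_\mu\otimes I_Y$ has bounds $\alpha_0,\beta_0$ on all of $\ell^2(\mathbb N;Y)$.

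One smaller point concerns the passage to infinitely many eigenspaces. The form $\langle S_\mu x_F,x_F\rangle$ is not monotone in $F$. Obtain the upper bound first by Fatou, so that $S_\mu$ is bounded. The lower bound then follows by continuity of the bounded form.
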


\begin{proof}
Suppose that $\mu$ is admissible for the canonical orbit with
sampled frame bounds $\alpha_0,\beta_0$.
Fix $(A,C)\in\mathcal C(\Lambda)$ with frame bounds $a,b$.
For $j\ge1$, put $P_j=E_A(\{e^{-s_j}\})$.
These projections are mutually orthogonal and $\sum_{j\ge1}P_j=I$
in the strong operator topology.
By Lemma~\ref{lem:stein},
$P_jC^*CP_j=(1-e^{-2s_j})P_jSP_j$ for every $j\ge1$.
Thus the map
\[
 Z:H\longrightarrow\ell^2(\mathbb N;Y),\qquad
 Zx=\left(\frac{CP_jx}{\sqrt{s_j}}\right)_{j\ge1},\qquad x\in H,
\]
satisfies $a q_\Lambda\|x\|^2\le\|Zx\|^2\le2b\|x\|^2$ for every $x\in H$,
using $aI\le S\le bI$ and
$q_\Lambda\le(1-e^{-2s_j})/s_j\le2$ for every $j\ge1$.

In the orthonormal basis defining the canonical orbit, its sampled
analysis operator is represented by
$H_\mu:\ell^2(\mathbb N)\to\ell^2(T)$, with entries
$(H_\mu)_{t,j}=\sqrt{w_t s_j}\,e^{-ts_j}$ for $t\in T$ and $j\ge1$.
Expanding in an orthonormal basis of $Y$ gives
$\alpha_0\|y\|^2\le\|(H_\mu\otimes I_Y)y\|^2\le\beta_0\|y\|^2$
for every $y\in\ell^2(\mathbb N;Y)$.
For $x\in H$ and $t\in T$, spectral expansion gives
\[
 \bigl((H_\mu\otimes I_Y)Zx\bigr)_t
 =\sqrt{w_t}\sum_{j\ge1}e^{-ts_j}CP_jx
 =\sqrt{w_t}CA^tx.
\]
The series converges in $Y$, since
$\sum_{j\ge1}e^{-ts_j}P_jx=A^tx$ in $H$ and $C$ is bounded.
Thus the sampled analysis operator is the bounded operator
$\mathcal O_\mu=(H_\mu\otimes I_Y)Z$.
The bounds for the two factors give
$a q_\Lambda\alpha_0\|x\|^2\le\|\mathcal O_\mu x\|^2
\le2b\beta_0\|x\|^2$ for every $x\in H$.
The reverse implication holds because
$(A_\Lambda,C_\Lambda)\in\mathcal C(\Lambda)$.
\end{proof}

\begin{cor}
Let $(A,C)$ be an observable system with $A$ positive and
$r=\operatorname{rank}C<\infty$.
Assume that all but finitely many eigenspaces have dimension $r$.
Then every admissible real design satisfies
$N_\mu(R)\ge n_A(R)/r-O_\mu(1)$ as $R\to\infty$.
For every $\eta>0$, there is an admissible integer design with
$N_\mu(R)\le(1+\eta)n_A(R)/r+O_\eta(1)$ as $R\to\infty$.
\end{cor}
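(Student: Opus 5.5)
The lower bound is immediate from Proposition~\ref{prop:finite-channel-count}. It gives $rN_\mu(R)\ge n_A(R)-K_\mu$, and dividing by $r$ yields $N_\mu(R)\ge n_A(R)/r-O_\mu(1)$. The content of the corollary is therefore the upper bound. The plan is to reduce it to the single-orbit result, Theorem~\ref{thm:positive-count}, by combining the common-design principle of Theorem~\ref{thm:scalar-universal-spectrum} with an orthogonal splitting of $H$ into $r$ copies of one spectral model.

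First fix the structure. By \cite[Theorem~2.3]{CabrelliMolterPaternostroPhilipp2020}, $A$ is diagonalizable, its eigenspaces have dimension at most $r$, and its distinct positive eigenvalues form a union of at most $r$ Carleson sequences. Let $F$ be the finite set of eigenvalues whose eigenspaces have dimension different from $r$, together with $0$. Set $H_F=E_A(F)H$, which is finite dimensional, and $H'=H_F^\perp$. On $H'$ every eigenspace has dimension exactly $r$. We then need to show that the set $\Lambda'$ of distinct eigenvalues of $A|_{H'}$ is itself a Carleson sequence. I would argue from the trace of the Stein identity on a band. Since $\operatorname{rank}C=r$, the operator $P_jC^*CP_j$ has trace at most $r\|C\|^2$ restricted to rank-$r$ pieces. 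Comparing with $\|Ce\|^2\asymp s$ for eigenvectors, as in the proof of Proposition~\ref{prop:finite-channel-count}, a cluster of several distinct eigenvalues with full multiplicity $r$ in the same short logarithmic window forces $\operatorname{rank}(P C^*C P)>r$. The alternative is a near-degeneracy that contradicts the lower frame bound via a Lemma~\ref{lem:positive-lower}-type test vector. I expect this step to be the main obstacle. What I would try first is the following: if two distinct eigenvalues $e^{-s}$, $e^{-s'}$ with $s'/s$ close to $1$ both have $r$-dimensional eigenspaces, then $C$ restricted to the $2r$-dimensional sum has a kernel of dimension at least $r$ in the direction of the differences. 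Such a vector $x$ then satisfies $\|CA^tx\|^2\lesssim |s-s'|^2t^2 e^{-2st}\cdot s$, and its total observed energy is $O((s'/s-1)^2)$, which contradicts $S\ge aI$ unless $\log(s/s')\ge\delta_0>0$. This gives separation of $\Lambda'$.

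Granting this, choose orthonormal bases of each eigenspace of $A|_{H'}$ and split $H'=\bigoplus_{i=1}^r H_i$, where $H_i$ contains the $i$th basis vector of each eigenspace. Each $A|_{H_i}$ is unitarily equivalent to $A_{\Lambda'}$. Hence $(A|_{H'},C|_{H'})$ belongs to $\mathcal C(\Lambda')$, and it remains observable by compression of the frame bounds, since $S$ commutes with $A$ by the identity $P_jC^*CP_j=(1-e^{-2s_j})P_jSP_j$. Apply Theorem~\ref{thm:positive-count} to the canonical orbit $(A_{\Lambda'},f_{\Lambda'})$ to get, for each $\eta>0$, an admissible integer design $\mu$ with $N_\mu(R)\le(1+\eta)m_{A_{\Lambda'}}(R)+O_\eta(1)$. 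By Theorem~\ref{thm:scalar-universal-spectrum}, $\mu$ is admissible for $(A|_{H'},C|_{H'})$. Moreover $m_{A_{\Lambda'}}(R)=(n_A(R)-O(1))/r$, because each eigenvalue in $\Lambda'$ has multiplicity $r$ and $F$ is finite.

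Finally, restore $H_F$. As in the end of the proof of Theorem~\ref{thm:positive-count}, the sampled analysis operator $W$ on $H$ is bounded: on $H_F$ each finite-dimensional eigenspace with eigenvalue $e^{-s}$, $s>0$, is dominated by a nearby band that is already controlled, and the kernel of $A$ contributes only at time $0$. The operator $W$ is bounded below on $H'$, and the cross terms are harmless because $W(H_F)$ is finite dimensional. Hence $W$ has closed range and finite-dimensional kernel. Lemma~\ref{lem:finite-completion} then adds finitely many integer times, which changes $N_\mu$ by $O(1)$ and yields $N_\mu(R)\le(1+\eta)n_A(R)/r+O_\eta(1)$.
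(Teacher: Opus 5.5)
Your argument follows the paper's route. The lower bound comes from Proposition~\ref{prop:finite-channel-count}. For the upper bound, you establish the Carleson property of the distinct positive eigenvalues, apply Theorem~\ref{thm:positive-count} to the canonical orbit, transfer the design by Theorem~\ref{thm:scalar-universal-spectrum}, and finish with Lemma~\ref{lem:finite-completion}.

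The real difference is the step you flag as the main obstacle. The paper does not prove it; it cites \cite[Corollary~2.9]{CabrelliMolterPaternostroPhilipp2020}. Your concrete two-eigenvalue argument is a sound replacement, and the trace heuristic before it can be dropped. Let $s>s'$ be decay rates whose eigenspaces have total dimension exceeding $r=\operatorname{rank}C$. Then their sum contains some $x=u+u'\ne0$ with $Cu=-Cu'$, and the following hold:
\begin{itemize}
\item $CA^nx=(e^{-sn}-e^{-s'n})Cu$;
\item $\|Cu\|^2=(1-e^{-2s})\langle Su,u\rangle\le2bs\|u\|^2$;
\item $|e^{-sn}-e^{-s'n}|\le(s-s')\,n\,e^{-s'n}$.
\end{itemize}
The lower frame bound then forces $a\le C\,(s/s')(s/s'-1)^2$, with $C$ depending on $b$ and the largest decay rate. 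Note that the decay factor is $e^{-2s't}$, the slower rate, not $e^{-2st}$ as you wrote. For increasing positive eigenvalues, $\inf_j\log(s_j/s_{j+1})>0$ is equivalent to the Carleson condition, so pairwise separation suffices. Your version is self-contained and shows exactly where the multiplicity hypothesis enters; the citation is shorter.

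Three points need correcting.
\begin{itemize}
\item $S$ does not commute with $A$ in general. The compressed Stein identity only describes the diagonal blocks $P_jSP_j$. You do not need commutation: $H'$ reduces $A$, so the frame inequalities for $(A,C)$, restricted to $x\in H'$, already make $(A|_{H'},C|_{H'})$ observable with the same bounds.
\item Discarding the eigenspaces of multiplicity different from $r$, and splitting $H'$ into $r$ copies, is unnecessary, because $\mathcal C(\Lambda)$ places no restriction on multiplicities. The paper keeps all distinct positive eigenvalues in $\Lambda_A$, removes only $\ker A$, and uses $n_A(R)=r\,m_{\Lambda_A}(R)+O(1)$. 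Your restoration of the larger space $H_F$ is nonetheless correct: each exceptional rate $s>0$ is dominated by rates in $\Lambda'$ tending to $0$.
\item When $\dim H<\infty$, neither the setup of Section~\ref{sec:positive-core} nor Theorem~\ref{thm:positive-count} applies. That case must be handled separately, as the paper does, using $\|A\|<1$ and Theorem~\ref{thm:finite-times}.
\end{itemize}
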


\begin{proof}
The lower bound follows from Proposition~\ref{prop:finite-channel-count}.
Fix $\eta>0$.
If $\dim H<\infty$, then $\|A\|<1$ by Lemma~\ref{lem:stein},
and Theorem~\ref{thm:finite-times} gives an admissible design at
finitely many integer times, proving the upper bound.

Assume that $H$ is infinite dimensional.
An orthonormal basis of $\operatorname{ran}C$ represents the system
by $r$ generating vectors. By
\cite[Corollary~2.9]{CabrelliMolterPaternostroPhilipp2020},
the distinct positive eigenvalues form a Carleson sequence.
Let $\Lambda_A$ be the set of these eigenvalues and put
$H_+=(\ker A)^\perp$.
Their logarithmic parameters are uniformly separated, as shown in
Section~\ref{sec:positive-core}.
The canonical orbit associated with $\Lambda_A$ has spectral counting
function $n_A(R)/r+O(1)$ as $R\to\infty$, because only finitely many
positive eigenvalues have multiplicity different from $r$.
Theorem~\ref{thm:positive-count} gives an integer design $\nu$ with the
required upper counting bound. By
Theorem~\ref{thm:scalar-universal-spectrum}, it is admissible for
$(A|_{H_+},C|_{H_+})$.

The space $\ker A$ has dimension at most $r$, and its observations vanish
at every positive time. Thus the sampled analysis operator extends
boundedly to $H$.
Its range is the sum of the closed range from $H_+$ and the finite
dimensional image of $\ker A$, so it is closed.
Its kernel is finite dimensional because its restriction to $H_+$
is injective and $H_+$ has finite codimension.
Lemma~\ref{lem:finite-completion} gives an admissible design $\mu$
by adding finitely many integer observations to $\nu$.
Since $N_\mu(R)\le N_\nu(R)+O(1)$ as $R\to\infty$, the upper bound follows.
\end{proof}

Put $m_\Lambda(R)=\#\{j\ge1:s_j\ge R^{-1}\}$ and
$\delta_\Lambda=\limsup_{R\to\infty}m_\Lambda(R)/\log R$.

\begin{cor}
Every common design for $\mathcal C(\Lambda)$ satisfies
$N_\mu(R)\ge m_\Lambda(R)-O_\mu(1)$. For every \(\eta>0\),
there is a common integer design such that
\(
 N_\mu(R)\le(1+\eta)m_\Lambda(R)+O_\eta(1).
\)
The optimal logarithmic counting coefficient is $\delta_\Lambda$
for both real and integer times. It is attained if $\delta_\Lambda=0$ or if
the hypotheses of Theorem~\ref{thm:critical-regular-spectra} or
Theorem~\ref{thm:positive-separated-critical} hold.
\end{cor}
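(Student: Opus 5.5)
The plan is to reduce every claim to the canonical orbit through Theorem~\ref{thm:scalar-universal-spectrum}. Its first assertion says that $\mu$ is common to $\mathcal C(\Lambda)$ if and only if $\mu$ is admissible for $(A_\Lambda,C_\Lambda)$. Hence every statement about common designs becomes a statement about designs for the single orbit $(A_\Lambda^nf_\Lambda)_{n\ge0}$.

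This orbit satisfies the standing assumptions of Section~\ref{sec:positive-core}. The operator $A_\Lambda$ is positive and injective with eigenvalues $e^{-s_j}$. The generating vector has coordinates $\sqrt{s_j}$, so $b_j=1$. The Carleson condition gives $\inf_j(x_{j+1}-x_j)>0$ for $x_j=\log(1/s_j)$. Moreover $m_{A_\Lambda}(R)=m_\Lambda(R)$.

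The steps then go as follows.

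\begin{enumerate}[(i)]
\item \emph{Lower bound.} A common design is in particular admissible for the canonical orbit. Lemma~\ref{lem:positive-lower}, equivalently the first part of Theorem~\ref{thm:positive-count}, then gives $N_\mu(R)\ge m_\Lambda(R)-O_\mu(1)$.

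\item \emph{Upper bound.} Fix $\eta>0$. Theorem~\ref{thm:positive-count} applied to the canonical orbit produces an admissible integer design with $N_\mu(R)\le(1+\eta)m_\Lambda(R)+O_\eta(1)$. By Theorem~\ref{thm:scalar-universal-spectrum}, this design is common to $\mathcal C(\Lambda)$.

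\item \emph{Optimal coefficient.} Dividing by $\log R$ and arguing exactly as in Corollary~\ref{cor:positive-coefficient} identifies the infimum of the logarithmic counting coefficient over common designs as $\delta_\Lambda$. This holds for both real and integer times: real common designs are bounded below by the same lemma, and integer designs achieve $(1+\eta)\delta_\Lambda$ for every $\eta$. If $\delta_\Lambda=0$, any one of these designs attains the infimum.
\end{enumerate}

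\emph{Attainment under the critical hypotheses.} Suppose the spectral hypotheses of Theorem~\ref{thm:critical-regular-spectra} or Theorem~\ref{thm:positive-separated-critical} hold. These are conditions only on $(s_j)$, so they hold for the canonical orbit. Those theorems give real and integer admissible designs with $N_\mu(R)=m_\Lambda(R)+O(1)$, and these are common by the equivalence above.

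It remains to check that the critical count attains the coefficient $\delta_\Lambda$. Since $N_\mu(R)=m_\Lambda(R)+O(1)$, we get
\[
\limsup_{R\to\infty}\frac{N_\mu(R)}{\log R}=\limsup_{R\to\infty}\frac{m_\Lambda(R)}{\log R}=\delta_\Lambda .
\]

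The one point needing care is the single-orbit setup. In Section~\ref{sec:positive-core} the kernel was removed and the eigenvalues were reindexed increasingly. For the canonical orbit $\ker A_\Lambda=0$ and the ordering is already arranged, so no completion step is needed.

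Beyond that there is no real obstacle. The statement is a direct transfer of the single-orbit results through the equivalence in Theorem~\ref{thm:scalar-universal-spectrum}.
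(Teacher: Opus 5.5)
Your proposal is correct and follows essentially the same route as the paper. You reduce to the canonical orbit via Theorem~\ref{thm:scalar-universal-spectrum}, take the lower bound from Lemma~\ref{lem:positive-lower} and the upper bound from Theorem~\ref{thm:positive-count}, and get attainment from the critical-count theorems, with all rounding and finite completion carried out on the canonical orbit (where the relevant kernels are finite dimensional) before the designs are transferred to $\mathcal C(\Lambda)$, a point the paper states explicitly.
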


\begin{proof}
Every common design $\mu$ is admissible for the canonical orbit.
For every $\eta>0$, Theorem~\ref{thm:positive-count} also gives
an integer design $\mu_\eta$ admissible for that orbit, which is
common by Theorem~\ref{thm:scalar-universal-spectrum}.
Thus
\[
 N_\mu(R)\ge m_\Lambda(R)-O_\mu(1),\qquad
 N_{\mu_\eta}(R)\le(1+\eta)m_\Lambda(R)+O_\eta(1).
\]
Consequently,
\[
 \limsup_{R\to\infty}\frac{N_\mu(R)}{\log R}\ge\delta_\Lambda,
 \qquad
 \limsup_{R\to\infty}\frac{N_{\mu_\eta}(R)}{\log R}
 \le(1+\eta)\delta_\Lambda.
\]
Letting $\eta\downarrow0$ proves that the optimal coefficient
is $\delta_\Lambda$ for both real and integer times.

If $\delta_\Lambda=0$, the design $\mu_1$ attains this coefficient.
Under either of the stated spectral hypotheses, the corresponding
theorem gives real and integer designs $\mu_*$ on the canonical
orbit satisfying
$N_{\mu_*}(R)=m_\Lambda(R)+O(1)$ and
$\limsup_{R\to\infty}N_{\mu_*}(R)/\log R=\delta_\Lambda$.
These designs are common by
Theorem~\ref{thm:scalar-universal-spectrum}.

For integer times, rounding and any necessary finite completion
are performed on the canonical orbit before applying the factorization
$\mathcal O_\mu=(H_\mu\otimes I_Y)Z$.
Thus finite completion is used only where the relevant kernels
are finite dimensional.
\end{proof}

\begin{prop}
Suppose that $s_j/s_{j+1}\ge9$ for all sufficiently large $j$.
Then the following hold.
\begin{enumerate}[(i)]
\item There exist common real and integer sampling designs for
$\mathcal C(\Lambda)$ satisfying
$N_\mu(R)=m_\Lambda(R)+O(1)$ as $R\to\infty$.

\item These designs may be chosen so that $w_t=t$ outside a finite set
and the sampled families on the canonical orbit have finite excess.

\item If $s_j/s_{j+1}\ge9$ for every $j\ge1$, the real design may be chosen
as $\mu=\sum_{j\ge1}t_j\delta_{t_j}$, where $t_j=(2s_j)^{-1}$.
\end{enumerate}
\end{prop}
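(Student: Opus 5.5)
The plan is to reduce everything to the canonical orbit $(A_\Lambda,C_\Lambda)$ and then transfer to all of $\mathcal C(\Lambda)$ using Theorem~\ref{thm:scalar-universal-spectrum}. That theorem says a design is common to $\mathcal C(\Lambda)$ exactly when it is admissible for the canonical orbit. So it is enough to build real and integer designs for the canonical orbit with the stated counting, weights and finite excess. In the canonical orbit the coefficients are $f_j=\sqrt{s_j}$, so $b_j=1$, and the spectral counting function is $m_\Lambda$. Hence Section~\ref{sec:positive-core} applies verbatim.

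I would prove (iii) first. If $s_j/s_{j+1}\ge9$ for every $j$, Theorem~\ref{thm:positive-separated-critical} applied to $(A_\Lambda,f_\Lambda)$ shows that $\{\sqrt{t_j}A_\Lambda^{t_j}f_\Lambda\}$ with $t_j=(2s_j)^{-1}$ is a Riesz basis, so it has excess zero. It also gives $N_\mu(R)=m_\Lambda(R)+O(1)$, and the weights are $w_t=t$. The same theorem supplies an integer design $\nu$ with finite excess and critical count. Its weights are $\lceil t_j\rceil$, equal to the sampling time, except where times merged or were added by finite completion; both happen only finitely often. Theorem~\ref{thm:scalar-universal-spectrum} then makes both designs common.

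For (i) and (ii) under the eventual hypothesis, suppose $s_j/s_{j+1}\ge9$ for $j\ge j_0$. I would split $H=E\oplus H'$ with $E=\operatorname{span}\{e_1,\ldots,e_{j_0-1}\}$. On $H'$ the restricted canonical orbit is again a canonical orbit for the tail set $\{\lambda_j:j\ge j_0\}$, and there the ratio condition holds for all indices. The real design $t_j=(2s_j)^{-1}$, $j\ge j_0$, with weights $t_j$, is therefore a Riesz basis for $H'$. To extend to $H$, I would repeat the restoration argument at the end of the proof of Theorem~\ref{thm:positive-count}. The analysis operator $W$ on $H$ is bounded: on $E$ this follows by comparing with $s_{j_0}$. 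Its range is closed and $\dim\ker W\le j_0-1$. Lemma~\ref{lem:finite-completion} then adds finitely many integer times, which may be chosen outside the existing set as in Lemma~\ref{lem:finite-orbit-completion}. The synthesis operator is Fredholm, being a finite-rank extension of $W^*$, so Lemma~\ref{lem:finite-orbit-completion} gives finite excess. The count changes by $O(1)$.

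For the integer design, I would round $t_j\mapsto\lceil t_j\rceil$ exactly as in the proof of Theorem~\ref{thm:positive-separated-critical}. The rounding error is Hilbert--Schmidt by Lemma~\ref{lem:positive-rounding}, and the weight change $\sqrt{n_j/t_j}-1\to0$ is compact. The resulting operator is Fredholm, so the same finite completion applies. The lower bound $N\ge m_\Lambda-O(1)$ comes from Lemma~\ref{lem:positive-lower}, and the upper bound holds because rounding does not increase counts.

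The main obstacle I expect is making sure that finite excess and Fredholmness survive the passage from $H'$ to $H$. I would handle it by writing the synthesis operator on $H$ as the $H'$-part plus a finite-rank $E$-component, which is a finite-rank perturbation of a Fredholm operator. The Riesz basis property of the tail rests on the Schur-test bound from Theorem~\ref{thm:positive-separated-critical}, and that bound uses only the ratios within the tail.
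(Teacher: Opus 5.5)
Your proposal is correct and follows essentially the paper's proof: restrict the canonical orbit to the tail $\overline{\operatorname{span}}\{e_j:j\ge j_0\}$, apply Theorem~\ref{thm:positive-separated-critical} there, write the full synthesis operator as the Fredholm tail part plus a bounded operator of rank at most $j_0-1$, complete with Lemma~\ref{lem:finite-orbit-completion}, and transfer to $\mathcal C(\Lambda)$ by Theorem~\ref{thm:scalar-universal-spectrum}. The differences are cosmetic: the paper takes both the real and the integer tail designs directly from Theorem~\ref{thm:positive-separated-critical} rather than re-rounding on the full space, and it omits your intermediate restoration step via Lemma~\ref{lem:finite-completion}, which becomes redundant once $V$ is known to be Fredholm.
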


\begin{proof}
\emph{(i)--(ii).}
Choose $J\ge1$ such that $s_j/s_{j+1}\ge9$ for all $j\ge J$, and put
$H_J=\overline{\operatorname{span}}\{e_j:j\ge J\}$.
Apply Theorem~\ref{thm:positive-separated-critical} to the canonical
orbit restricted to $H_J$.
For either resulting design $\nu=\sum_{t\in T}v_t\delta_t$,
the synthesis operator $V_J:\ell^2(T)\to H_J$ is Fredholm,
and $v_t=t$ outside a finite set.

The sampled upper frame bound gives
$\sum_{t\in T}v_t e^{-2s_Jt}<\infty$.
For $1\le j<J$, we have
\[
 \sum_{t\in T}v_t s_j e^{-2s_jt}
 \le s_j\sum_{t\in T}v_t e^{-2s_Jt}<\infty.
\]
Regard $V_J$ as an operator into $H$.
The synthesis operator $V$ of the full sampled family therefore satisfies
\(
 V=V_J+F, \operatorname{rank}F\le J-1,
\)
where $F$ is bounded.
Since $\operatorname{codim}_H H_J=J-1$, the operator $V_J$
remains Fredholm.
Hence $V$ is Fredholm by stability under finite rank perturbations;
see \cite[Chapter~1]{BottcherSilbermann2006}.

Lemma~\ref{lem:finite-orbit-completion} gives a completed design $\mu$
whose sampled family on the canonical orbit is a frame of finite excess.
Only finitely many integer times are added, so $w_t=t$ outside a finite set.
Moreover,
\[
 N_\mu(R)=N_\nu(R)+O(1)
 =\#\{j\ge J:s_j\ge R^{-1}\}+O(1)
 =m_\Lambda(R)+O(1)
 \qquad R\to\infty.
\]
Theorem~\ref{thm:scalar-universal-spectrum} makes $\mu$ common to
$\mathcal C(\Lambda)$.

\emph{(iii).}
Taking $J=1$, Theorem~\ref{thm:positive-separated-critical} gives
$\mu=\sum_{j\ge1}t_j\delta_{t_j}$ with $t_j=(2s_j)^{-1}$.
Its sampled family on the canonical orbit is a Riesz basis,
so no finite completion is needed.
Theorem~\ref{thm:scalar-universal-spectrum} makes this design
common to $\mathcal C(\Lambda)$.
\end{proof}

\begin{remark}
Let $\nu=\sum_{t\in T}v_t\delta_t$ be a common design for
$\mathcal C(\Lambda)$, and suppose that its sampled family
on the canonical orbit has finite excess.
By Theorem~\ref{thm:scalar-universal-spectrum} and
Corollary~\ref{cor:critical-orbit-weights}, positive weights
$w_t$, $t\in T$, give a common design
$\mu=\sum_{t\in T}w_t\delta_t$ if and only if
\[
 0<\inf_{t\in T}\frac{w_t}{v_t}
 \le\sup_{t\in T}\frac{w_t}{v_t}<\infty.
\]
For either design constructed above, $v_t=t$ outside a finite set,
so this condition is equivalent to $w_t\asymp t$ outside that set.
\end{remark}

\subsection{Unweighted sampling with a compact observation operator}
\label{sec:unweighted}

For Carleson frames with positive spectrum, Christensen et al.\
proved that sampling on $N\mathbb N_0$ preserves the frame
property for every integer $N\ge1$. They also
conjectured that choosing one integer time from each interval
$[Nk,N(k+1))$, $k\in\mathbb N_0$, preserves the frame property
for every fixed integer $N\ge1$
\cite[Theorem~2.1 and the discussion after Theorem~2.5]{ChristensenEtAl2024}.
Krishtal and Miller proved this conjecture and extended it to real
sampling times \cite[Theorem~2.3 and Corollary~2.4]{KrishtalMiller2026}.
The BDC theorem for operators with positive spectrum
\cite[Theorem~1.3]{KrishtalMillerBDC2026} extends this conclusion
to sampling sets with positive finite natural density, including
sets whose gaps are unbounded, and also allows nonnormal block
diagonal generators.
For invertible systems with finitely many generators, bounded
temporal perturbations are treated under suitable spectral
assumptions in
\cite[Theorem~4.1]{KrishtalMashreghiMillerSectorial2026}.

We study bounded additive perturbations of the sampling times
$Nk$, $k\in\mathbb N_0$, in an observation operator formulation
for observable systems $(A,C)$ with $A$ positive and $C$ compact.
Finite dimensional observation spaces correspond to finitely many
generators, while infinite dimensional observation spaces allow
countably many observation coordinates.
When $A$ is injective, for each fixed system $(A,C)$, integer
$N\ge1$, and $h\ge0$, we obtain frame bounds uniform over all
sequences $(t_k)_{k\in\mathbb N_0}$ satisfying
$t_k\ge0$ and $|t_k-Nk|\le h$. When $A$ is not injective, the sampled observations satisfy
frame inequalities if and only if the sampling sequence
contains zero.

Throughout this subsection, let $(A,C)$ be an observable system on
$\mathcal H$ with observation space $\mathcal Y$, positive $A$,
and frame bounds $a_0,b_0$. By Lemma~\ref{lem:stein},
$A$ is a contraction. The space $\mathcal Y$ may be infinite dimensional.

For each integer $N\ge1$, define
$U_N:\mathcal H\to\ell^2(\mathbb N_0;\mathcal Y)$ by
$U_Nx=(CA^{Nk}x)_{k\in\mathbb N_0}$ for $x\in\mathcal H$.

\begin{lem}
\label{lem:arithmetic-sampling}
For each integer $N\ge1$, there exists $c_N>0$ such that
$c_N\|x\|\le\|U_Nx\|\le\sqrt{b_0}\|x\|$
for every $x\in\mathcal H$.
\end{lem}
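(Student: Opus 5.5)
The upper bound is immediate. Since $\{Nk:k\in\mathbb N_0\}\subset\mathbb N_0$, we have $\|U_Nx\|^2\le\sum_{n\ge0}\|CA^nx\|^2\le b_0\|x\|^2$. For the lower bound I would not look for an explicit constant. Instead I would show that $U_N$ is injective and that $U_N^*U_N$ is bounded below modulo a compact operator. Together these imply that $U_N$ is bounded below: a compact perturbation of a bounded-below operator has closed range and finite dimensional kernel, and an injective operator with closed range is bounded below.

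\emph{Injectivity.} Suppose $CA^{Nk}x=0$ for all $k\ge0$. Fix $y\in\mathcal Y$. By the spectral theorem, $\langle CA^tx,y\rangle=\int_{[0,1]}\lambda^t\,d\nu(\lambda)$, where $\nu=\langle E_A(\cdot)x,C^*y\rangle$ is a finite complex measure. Let $\tilde\nu$ be the pushforward of $\nu$ under $\lambda\mapsto\lambda^N$. Then all moments $\int\sigma^k\,d\tilde\nu(\sigma)$, $k\in\mathbb N_0$, vanish. Since polynomials are dense in $C([0,1])$, this gives $\tilde\nu=0$. The map $\lambda\mapsto\lambda^N$ is injective on $[0,1]$, so $\nu=0$. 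Hence $CA^nx=0$ for every $n\in\mathbb N_0$, and observability gives $x=0$.

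\emph{Lower bound modulo compacts.} I would compare the integer observations at times $Nk+r$, $0\le r<N$, with the observation at $Nk$, separately for large and small $k$. For $n=Nk+r$ with $Nk\ge M$, I would estimate $\|C(A^{n}-A^{Nk})x\|$ band by band, exactly as in the rounding step of Proposition~\ref{prop:reference-grid}(ii), with $\lfloor t_k\rfloor$ replaced by $Nk$ and the time shift bounded by $N$. On the band $P_j$, integrating $CP_jX_jA^{\rho}$ over $\rho\in[Nk,Nk+r]$ gives a bound of order $\sqrt b_0\,N\,2^{-j}e^{-Nk2^{-j-1}}2^{-j/2}$. Summing over $k$ and $r$ with the Schur test, as in \eqref{eq:reference-grid-rounding-error}, should yield
\[
 \sum_{r<N}\ \sum_{k:\,Nk\ge M}\|C(A^{Nk+r}-A^{Nk})x\|^2\le \varepsilon(M)\|x\|^2,
 \qquad \varepsilon(M)\to0\ \text{as}\ M\to\infty,
\]
with $\varepsilon(M)$ of order $b_0N^{3}/M$. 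Since $\|u\|^2\le 2\|v\|^2+2\|u-v\|^2$, this leads to
\[
 a_0\|x\|^2\le\sum_{n\ge0}\|CA^nx\|^2
 \le 2N\|U_Nx\|^2+2\varepsilon(M)\|x\|^2+\sum_{n<M+N}\|CA^nx\|^2 .
\]
I would choose $M$ with $2\varepsilon(M)\le a_0/2$. The last sum is $\langle Kx,x\rangle$ with $K=\sum_{n<M+N}A^nC^*CA^n$. This operator is compact, because $C$ is compact; this is where the standing compactness hypothesis of the subsection is used. Thus $2NU_N^*U_N\ge\tfrac{a_0}{2}I-K$. The standard argument then applies: if $\|U_Nx_m\|\to0$ for unit vectors $x_m$, pass to a weakly convergent subsequence. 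Since $K$ is compact, $\langle Kx_m,x_m\rangle$ converges to $\langle Kx,x\rangle$ for the weak limit $x$, which forces $x\neq0$. On the other hand $U_Nx_m\to0$ implies $U_Nx=0$, contradicting injectivity.

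\emph{Main obstacle.} I expect the delicate step to be the band estimate for the time shift. Proposition~\ref{prop:reference-grid} proves it for the logarithmic grid $t_k=e^{kh}$, whereas here the grid $Nk$ is arithmetic. The Schur sums $\sup_j\sum_k$ now involve roughly $2^j$ terms per band, so the weight normalisation has to be redone. I would try first the direct estimate $\sum_{k\ge M/N}2^{-3j}N^2e^{-Nk2^{-j}}\lesssim N2^{-2j}\min(1,2^{j}/M)$ per band, combined with off-diagonal decay between bands coming from Lemma~\ref{lem:log-localization}(i).
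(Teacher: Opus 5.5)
Your upper bound, your injectivity argument and your tail estimate are correct, but the lower bound depends on a hypothesis the lemma does not have. Compactness of $C$ is not a standing assumption of this subsection. The standing hypotheses are only that $(A,C)$ is observable with $A$ positive and frame bounds $a_0,b_0$. Compactness is imposed explicitly where it is needed, namely in Lemma~\ref{lem:compact-timing-error}(ii)--(iii) and Theorem~\ref{thm:compact-observation-jitter}, and the example closing the subsection uses a noncompact $C$. For noncompact $C$, the operator $K=\sum_{n<M+N}A^nC^*CA^n$ need not be compact. Then $\langle Kx_m,x_m\rangle$ need not converge to $\langle Kx,x\rangle$ along a weakly convergent sequence, and injectivity together with $2NU_N^*U_N\ge\frac{a_0}{2}I-K$ gives no lower bound at all (take $K\ge\frac{a_0}{2}I$). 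As written, you prove the lemma only for compact $C$. That is the only case used later, in Theorem~\ref{thm:compact-observation-jitter}, but the statement is more general. The paper obtains it by citing \cite[Proposition~5.3]{WuUniversalNonuniform2026}.

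The missing idea is a quantitative comparison that avoids a head term altogether. The map $\lambda\mapsto\lambda^N$ is a homeomorphism of $[0,1]$. Hence for each $0\le r<N$ and $\varepsilon>0$ there is a polynomial $q_r(\sigma)=\sum_{i=0}^{D}d_{r,i}\sigma^i$ with $\sup_{0\le\lambda\le1}|\lambda^r-q_r(\lambda^N)|\le\varepsilon$, so $\|A^r-q_r(A^N)\|\le\varepsilon$. Write
$CA^{Nk+r}x=\sum_{i=0}^{D}d_{r,i}CA^{N(k+i)}x+CA^{Nk}(A^r-q_r(A^N))x$.
Since $(CA^{Nk})_{k}$ is a subfamily of the full orbit, the error terms have total energy over $k$ at most $b_0\varepsilon^2\|x\|^2$. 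Cauchy--Schwarz then gives
\[
 a_0\|x\|^2\le\sum_{r=0}^{N-1}\sum_{k\ge0}\|CA^{Nk+r}x\|^2
 \le2(D+1)\Bigl(\sum_{r,i}|d_{r,i}|^2\Bigr)\|U_Nx\|^2+2Nb_0\varepsilon^2\|x\|^2 .
\]
Choosing $\varepsilon^2=a_0/(4Nb_0)$ yields $c_N$ depending only on $N,a_0,b_0$, with no compactness and no separate injectivity step.

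Your ``main obstacle'' also disappears with Lemma~\ref{lem:stein}. Indeed, $\sum_{Nk\ge M}\|CA^{Nk}(I-A^r)x\|^2\le\sum_{n\ge M}\|CA^n(I-A^r)x\|^2\le b_0\|A^{M}(I-A^r)x\|^2\le b_0r^2(M+1)^{-2}\|x\|^2$. So no band-by-band Schur test is needed.
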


\begin{proof}
The lower bound follows from
\cite[Proposition~5.3]{WuUniversalNonuniform2026}.
The upper bound follows from
$N\mathbb N_0\subset\mathbb N_0$.
\end{proof}

For an integer $N\ge1$ and $h\ge0$, let $\mathcal T_{N,h}$ be
the set of sequences $t=(t_k)_{k\in\mathbb N_0}$ satisfying
$t_k\ge0$ and $|t_k-Nk|\le h$ for every $k\in\mathbb N_0$.
Repeated times are allowed, and each occurrence contributes
separately to the observation sum.

\begin{lem}
\label{lem:compact-timing-error}
Fix an integer $N\ge1$ and $h\ge0$.
For each $t\in\mathcal T_{N,h}$, define
$E_t:\mathcal H\to\ell^2(\mathbb N_0;\mathcal Y)$ by
$E_tx=(C(A^{t_k}-A^{Nk})x)_{k\in\mathbb N_0}$
for $x\in\mathcal H$.
Then the following hold.
\begin{enumerate}[(i)]
\item The family $(E_t)_{t\in\mathcal T_{N,h}}$ is uniformly bounded.
\item If $C$ is compact, each $E_t$ is compact.
\item If $C$ is compact and $A$ is injective, the set
$\{E_t:t\in\mathcal T_{N,h}\}$ is compact in operator norm.
\end{enumerate}
\end{lem}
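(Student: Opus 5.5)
The plan is to control everything through the spectral calculus of $A$ together with the Stein identity of Lemma~\ref{lem:stein}, and to split each time difference into a spectral part near $1$ and a part away from $1$. Throughout we may assume $N\ge1$ fixed, and we write $t_k=Nk+\tau_k$ with $|\tau_k|\le h$ and $t_k\ge0$.

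\emph{(i) Uniform boundedness.} We bound $\|E_tx\|\le\|U^{(t)}x\|+\|U_Nx\|$, where $U^{(t)}x=(CA^{t_k}x)_k$. Lemma~\ref{lem:arithmetic-sampling} already controls $U_N$. For $U^{(t)}$, the sampling set is relatively separated: each unit interval $[n,n+1)$ contains at most $\lceil(2h+1)/N\rceil+1$ of the times $t_k$. For $t\in[n,n+1]$ we have $A^{t}=A^{t-n}A^n$, and $A^{t-n}$ is a positive contraction. This does not directly give $\|CA^tx\|\le\|CA^nx\|$. Instead we use the quadratic form of Proposition~\ref{prop:reference-grid}(i): the function $u\mapsto\|CA^ux\|^2$ is controlled by the local integral plus its derivative. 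Concretely, the Stein identity gives $\int_1^\infty\|CA^ux\|^2du=\int_1^2\langle SA^ux,A^ux\rangle du\le b_0\|x\|^2$. Similarly,
\[
 \int_1^\infty\bigl|\tfrac{d}{du}\|CA^ux\|^2\bigr|\,du
 \le2\int_1^\infty\|CA^ux\|\,\|C\log(A)A^ux\|\,du,
\]
and $\int_1^\infty\|C\log(A)A^ux\|^2du$ is bounded by $b_0\sup_{\lambda}(\log\lambda)^2\lambda^2$ times $\|x\|^2$, using the same Stein trick with $x$ replaced by $(\log A)A^{1/2}x$. A Sobolev-type inequality $\sup_{[n,n+1]}\phi\le\int_n^{n+1}(\phi+|\phi'|)$ then bounds $\sum_k\|CA^{t_k}x\|^2$ by a constant depending only on $N,h,b_0$. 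The finitely many times in $[0,1]$ are handled using $\|C\|\le\sqrt{b_0}$.

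\emph{(ii) Compactness of each $E_t$.} Truncate to the coordinates $k\le K$; each truncation is a finite sum of operators $C(A^{t_k}-A^{Nk})$, which are compact since $C$ is. For the tail, the mean value theorem gives $\|C(A^{t_k}-A^{Nk})x\|\le h\sup_{|r|\le h}\|C\log(A)A^{Nk+r}x\|$. Split $x$ with $E_A([0,1-\epsilon])$ and $E_A((1-\epsilon,1])$. On the first piece the tail sum is at most $C_h(1-\epsilon)^{2NK}\|x\|^2$. On the second piece, the factor $|\log\lambda|\le2\epsilon$ gives, by the same relatively separated argument as in (i), a tail bound $\lesssim\epsilon^2\|x\|^2$ uniformly in $K$. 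Choosing $\epsilon$ and then $K$ makes the tail small, so $E_t$ is a norm limit of compact operators.

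\emph{(iii) Norm compactness of the family.} We show that every sequence $t^{(m)}$ in $\mathcal T_{N,h}$ has a subsequence with $E_{t^{(m)}}$ converging in norm. By a diagonal argument we pass to a subsequence with $\tau^{(m)}_k\to\tau_k$ for every $k$, so that the limit sequence lies in $\mathcal T_{N,h}$. The tail estimates in (ii) are uniform in $t$, so it suffices to treat finitely many coordinates. This reduces to showing that $r\mapsto CA^{r}$ is norm continuous on $[0,\infty)$; this is where compactness and injectivity are used, and I expect it to be the main obstacle. Continuity at $r>0$ follows since $\|A^r-A^{r'}\|\to0$ for $r,r'$ bounded away from $0$, because $\sup_\lambda|\lambda^r-\lambda^{r'}|\to0$. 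At $r=0$, $A^{r}\to I$ strongly but not in norm. Here compactness of $C$ helps: $\|C(A^r-I)\|=\|(A^r-I)C^*\|$, and strong convergence composed with a compact operator converges in norm. For this we need $A^r\to I$ strongly, which uses $E_A(\{0\})=0$, that is, injectivity; otherwise $A^r\to I-E_A(\{0\})$ and continuity fails exactly at times hitting $0$. With this continuity in hand, each fixed coordinate converges in norm, which completes the argument.
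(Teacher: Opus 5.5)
Your proof is correct. Part (iii) is essentially the paper's argument: a diagonal argument gives coordinatewise compactness of $\mathcal T_{N,h}$, the tail control is uniform in $t$, and norm continuity of $s\mapsto CA^s$ comes from $\|C(A^s-A^{s_0})\|=\|(A^s-A^{s_0})C^*\|$ together with strong continuity, which at $s_0=0$ needs $E_A(\{0\})=0$. Your remark that $A^s$ is already norm continuous on $(0,\infty)$, so that compactness of $C$ is needed only at $s=0$, is a small refinement.

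For (i) and (ii) you take a genuinely different and longer route. The paper never bounds $\sum_k\|CA^{t_k}x\|^2$ separately. It works with the difference directly through the scalar estimate $|\lambda^{t_k}-\lambda^{Nk}|\le\lambda^{Nk-h}(1-\lambda^{2h})$ for $Nk>h$. This gives $\sum_{k\ge K}|\lambda^{t_k}-\lambda^{Nk}|^2\le q_K(\lambda)=\lambda^{2(NK-h)}(1-\lambda^{2h})^2/(1-\lambda^{2N})$. The vanishing of $1-\lambda^{2h}$ at $\lambda=1$ compensates the divergence of the geometric series, so $\eta_K=\sup_{[0,1]}q_K\to0$. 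The spectral theorem then yields the uniform tail bound $\|C\|^2\eta_K\|x\|^2$ in one line, with no Stein identity and no spectral splitting.

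You instead control the divergence near $\lambda=1$ through the upper frame bound $b_0$. The Stein identity and a Sobolev inequality on unit intervals give a Bessel bound for $(CA^{t_k})$ along any relatively separated set of times. In (ii), the mean value theorem produces a factor $\log A$ that is small on $E_A((1-\epsilon,1])H$, while the complementary piece decays geometrically. This works: the supremum over windows $|r-Nk|\le h$ is exactly what your unit-interval Sobolev bound controls, and both pieces are uniform in $t$, as (iii) requires.

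What each route buys: yours gives, as a by-product, a uniform upper bound for observations at arbitrary relatively separated real times. However, it uses observability essentially, since with only $\|C\|$ the sum $\sum_k\|A^{Nk-h}y\|^2$ is unbounded. It also needs the two-step choice of $\epsilon$ and then $K$. The paper's scalar estimate is shorter and holds for any bounded $C$ and positive contraction $A$.
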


\begin{proof}
For $h=0$, we have $E_t=0$ for every $t\in\mathcal T_{N,h}$.
Assume $h>0$ and let $K\in\mathbb N$ satisfy $NK>h$.
For every $t\in\mathcal T_{N,h}$, $0<\lambda<1$, and integer $k\ge K$,
$|\lambda^{t_k}-\lambda^{Nk}|\leq\lambda^{Nk-h}(1-\lambda^{2h})$.
Consequently,
\[
 \sum_{k=K}^{\infty}|\lambda^{t_k}-\lambda^{Nk}|^2
 \leq q_K(\lambda)
 :=\lambda^{2(NK-h)}
       \frac{(1-\lambda^{2h})^2}{1-\lambda^{2N}}.
\]
We extend $q_K$ continuously by zero at both endpoints.
The quotient is bounded on $[0,1)$ and tends to zero as $\lambda\uparrow1$.
For each $\delta\in(0,1)$, the factor
$\lambda^{2(NK-h)}$ tends to zero uniformly as $K\to\infty$
on $[0,1-\delta]$. Since this factor is at most one on $[0,1]$, it follows that
$\eta_K:=\sup_{0\leq\lambda\leq1}q_K(\lambda)\to0$ as $K\to\infty$.
The scalar estimate also holds at $\lambda=0,1$.
For every $x\in\mathcal H$, the spectral theorem therefore gives
\[
 \sum_{k=K}^{\infty}\|C(A^{t_k}-A^{Nk})x\|^2
 \leq\|C\|^2\eta_K\|x\|^2
 \qquad t\in\mathcal T_{N,h}.
\]
The coordinates with $0\le k<K$ are uniformly bounded as well,
which proves (i).

Assume that $C$ is compact. For $K\in\mathbb N$, define
\(
 E_t^{(K)}x
 =\bigl(\mathbf 1_{\{k<K\}}C(A^{t_k}-A^{Nk})x\bigr)_{k\in\mathbb N_0}.
\)
Each operator $C(A^{t_k}-A^{Nk})$ is compact, so $E_t^{(K)}$
is compact. For $NK>h$, the tail estimate gives
$\sup_{t\in\mathcal T_{N,h}}\|E_t-E_t^{(K)}\|
\le \|C\|\sqrt{\eta_K}\to0$ as $K\to\infty$.
Thus each $E_t$ is an operator norm limit of compact operators
and is therefore compact. This proves (ii).

Now assume that $C$ is compact and $A$ is injective.
The spectral calculus gives strong continuity of $s\mapsto A^s$
on $[0,\infty)$. Approximating $C^*$ in norm by finite rank operators
and using $\|A^s\|\le1$ gives
$\|CA^s-CA^{s_0}\|=\|(A^s-A^{s_0})C^*\|\to0$
as $s\to s_0$ in $[0,\infty)$, for every $s_0\ge0$.
A diagonal subsequence argument shows that $\mathcal T_{N,h}$
is compact for coordinatewise convergence.
Let $t^{(m)}\to t$ coordinatewise in $\mathcal T_{N,h}$.
For each fixed $K$, norm continuity of $s\mapsto CA^s$ gives
\[
 \|E_{t^{(m)}}^{(K)}-E_t^{(K)}\|
 \le
 \left(\sum_{k=0}^{K-1}
 \|C(A^{t_k^{(m)}}-A^{t_k})\|^2\right)^{1/2}
 \longrightarrow0.
\]
For $NK>h$, the uniform tail estimate yields
$\|E_{t^{(m)}}-E_t\|
\le \|E_{t^{(m)}}^{(K)}-E_t^{(K)}\|
+2\|C\|\sqrt{\eta_K}$.
Letting first $m\to\infty$ and then $K\to\infty$ gives
$\|E_{t^{(m)}}-E_t\|\to0$.
Thus $t\mapsto E_t$ is continuous in operator norm.
Since $\mathcal T_{N,h}$ is compact for coordinatewise convergence,
its image $\{E_t:t\in\mathcal T_{N,h}\}$ is compact in operator norm.
This proves (iii).
\end{proof}

\begin{thm}
\label{thm:compact-observation-jitter}
Assume that $A$ is injective and $C$ is compact. For each integer
$N\ge1$ and each $h\ge0$, there are constants
$\alpha_{N,h},\beta_{N,h}>0$ such that
every $t\in\mathcal T_{N,h}$ satisfies
\[
 \alpha_{N,h}\|x\|^2
 \leq\sum_{k\geq0}\|CA^{t_k}x\|^2
 \leq\beta_{N,h}\|x\|^2,
 \qquad x\in\mathcal H.
\]
\end{thm}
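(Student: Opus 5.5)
The plan is to write the sampled analysis operator as a perturbation of $U_N$ and to use compactness of the set of perturbations. For $t\in\mathcal T_{N,h}$, set $V_tx=(CA^{t_k}x)_{k\in\mathbb N_0}$, so that $V_t=U_N+E_t$ with $E_t$ as in Lemma~\ref{lem:compact-timing-error}.

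The upper bound is immediate. Lemma~\ref{lem:arithmetic-sampling} gives $\|U_N\|\le\sqrt{b_0}$, and Lemma~\ref{lem:compact-timing-error}(i) gives $\sup_t\|E_t\|<\infty$. Hence $\beta_{N,h}=(\sqrt{b_0}+\sup_t\|E_t\|)^2$ works.

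The lower bound has two steps.

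\emph{Step 1: each $V_t$ is bounded below.} By Lemma~\ref{lem:compact-timing-error}(ii), $E_t$ is compact. Since $U_N$ is bounded below, $V_t$ has closed range and finite dimensional kernel, so it suffices to show that $V_t$ is injective. Suppose $CA^{t_k}x=0$ for all $k$, and write $y=A^{h}x$ (a spectral-calculus identity; use $A^{t_k}=A^{t_k-Nk+h}A^{Nk-h}$ where relevant). I expect this to be the main obstacle, because injectivity with real, jittered times is not covered by the earlier lemmas.

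My first attempt is an analyticity argument. For fixed $y$ in the observation space, the function $\phi(s)=\langle CA^sx,y\rangle=\int\lambda^s\,d\langle E_A(\lambda)x,C^*y\rangle$ is the Laplace transform of a finite measure on $(0,1]$, since $A$ is injective. It is therefore analytic in $\operatorname{Re}s>0$ and bounded there. Its zeros include $t_k\approx Nk$. A Blaschke-type condition in a half-plane fails for a sequence of density $1/N$ on the real axis: the sum $\sum t_k/(1+t_k^2)$ diverges. Hence $\phi\equiv0$ unless the zeros are exceptional.

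Some care is needed. Near $0$, $t_0\in[0,h]$ may be $0$, which is harmless. Repeated times do not create multiple zeros, but there are still infinitely many distinct times of density $1/N$, because $|t_k-Nk|\le h$. Once $\phi\equiv0$ for all $s>0$, uniqueness of the Laplace transform gives that the measure $\langle E_A(\cdot)x,C^*y\rangle$ vanishes for every $y$. Then $CA^sx=0$ for all $s$, in particular for all integers, and the observability of $(A,C)$ forces $x=0$.

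\emph{Step 2: a uniform constant.} By Lemma~\ref{lem:compact-timing-error}(iii), the set $\{E_t\}$ is norm compact. The map $E\mapsto\inf_{\|x\|=1}\|(U_N+E)x\|$ is $1$-Lipschitz in operator norm, so it attains a minimum on this compact set. By Step 1 that minimum is positive, and I take it as $\sqrt{\alpha_{N,h}}$.

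One technical point in Step 2: the paper's proof of (iii) shows that $t\mapsto E_t$ is continuous for coordinatewise convergence on the compact set $\mathcal T_{N,h}$. Continuity of the lower-bound functional along that map then suffices, so no Lipschitz argument beyond the one above is needed.
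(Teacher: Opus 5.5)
Your proposal is correct and follows essentially the paper's route. You write $U_t=U_N+E_t$ with $E_t$ compact to reduce the lower bound to injectivity, and you get injectivity from the divergent Blaschke sum for the bounded analytic function $s\mapsto\langle CA^sx,y\rangle$ on $\operatorname{Re}s>0$. You then obtain the uniform constant from norm compactness of $\{E_t:t\in\mathcal T_{N,h}\}$ and the $1$-Lipschitz lower-bound functional. Where the paper uses $A^sx\to x$ as $s\downarrow0$, you get $Cx=0$ from Laplace uniqueness instead; this difference is inessential. The only fix is cosmetic: delete the unused substitution $y=A^hx$ and the factorization $A^{t_k}=A^{t_k-Nk+h}A^{Nk-h}$, which play no role and clash with your later use of $y$ as a vector in $\mathcal Y$.
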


\begin{proof}
Write $U_tx=(CA^{t_k}x)_{k\ge0}$.
Lemmas~\ref{lem:arithmetic-sampling}
and~\ref{lem:compact-timing-error} give
$U_t=U_N+E_t$, where $U_N$ is bounded below and $E_t$ is compact.
Hence $U_t^*U_t$ is a compact perturbation of the positive invertible
operator $U_N^*U_N$. It is therefore Fredholm, so it suffices to show
that $\ker U_t=\{0\}$.

Suppose that $U_tx=0$. For $y\in\mathcal Y$, the function
$F_y(z)=\langle CA^z x,y\rangle$ is bounded and analytic on
$\operatorname{Re}z>0$.
Since $A$ is injective, its spectral projection at zero is zero.
Set $\Lambda_t=\{t_k:k\ge0,\ t_k>0\}$.
Then $F_y(s)=0$ for every $s\in\Lambda_t$.
Since $t_k=Nk+O(1)$, the multiplicities of the sampling times are
uniformly bounded, and hence
$\sum_{s\in\Lambda_t}\frac{s}{1+s^2}=\infty$.
On the other hand, the Blaschke condition \cite{Garnett2007} gives
$F_y\not\equiv0 \Longrightarrow
\sum_{s\in\Lambda_t}\frac{s}{1+s^2}<\infty$.
Thus $F_y\equiv0$. This holds for every $y\in\mathcal Y$,
so $CA^s x=0$ for all $s>0$.
By strong convergence $A^s x\to x$ as $s\downarrow0$,
we also have $Cx=0$. The lower frame bound for $(A,C)$
yields $x=0$.

It follows that $U_t^*U_t$ is invertible for every
$t\in\mathcal T_{N,h}$.
By Lemma~\ref{lem:compact-timing-error}, the set
$\{U_t:t\in\mathcal T_{N,h}\}$ is compact in operator norm.
For bounded operators $V,W:\mathcal H\to
\ell^2(\mathbb N_0;\mathcal Y)$, put
$m(V)=\inf_{\|x\|=1}\|Vx\|$. Then $|m(V)-m(W)|\le\|V-W\|$.
Since $m(U_t)>0$ for every $t\in\mathcal T_{N,h}$,
compactness gives
$\alpha_{N,h}:=\min_{t\in\mathcal T_{N,h}}m(U_t)^2>0$
and
$\beta_{N,h}:=\max_{t\in\mathcal T_{N,h}}\|U_t\|^2<\infty$.
Consequently, for every $t\in\mathcal T_{N,h}$ and
$x\in\mathcal H$,
\[
 \alpha_{N,h}\|x\|^2
 \le \|U_tx\|^2
 =\sum_{k\ge0}\|CA^{t_k}x\|^2
 \le \beta_{N,h}\|x\|^2.
\]
\end{proof}

The proof uses a compact perturbation argument.
Related methods appear in
\cite[Proposition~5.4]{KrishtalMillerBDC2026} and
\cite[Theorem~2.7]{KrishtalMashreghiMillerSectorial2026}.

\begin{cor}
Suppose that \(A\) is injective and \(C\) is compact.
Fix integers \(N,M\ge1\). For every set \(T\subset[0,\infty)\)
satisfying \(1\le\#(T\cap[Nk,N(k+1)))\le M\) for all \(k\in\mathbb N_0\),
the family \((CA^t)_{t\in T}\) satisfies frame
inequalities with bounds independent of \(T\).
\end{cor}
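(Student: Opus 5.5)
The plan is to reduce this to Theorem~\ref{thm:compact-observation-jitter}. From $T$ I will extract one sequence in some $\mathcal T_{N,h}$, and I will bound the remaining points by a bounded number of further such sequences.

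\emph{Splitting $T$.} For each $k\in\mathbb N_0$, list the points of $T\cap[Nk,N(k+1))$ in increasing order as $\tau_{k,1}<\cdots<\tau_{k,m_k}$, where $1\le m_k\le M$. For $1\le i\le M$, define $t^{(i)}_k=\tau_{k,\min(i,m_k)}$. Since $t^{(i)}_k\in[Nk,N(k+1))$, we have $|t^{(i)}_k-Nk|\le N$. Hence every $t^{(i)}$ belongs to $\mathcal T_{N,N}$. Repeated entries are allowed in that class.

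\emph{Lower bound.} Every $t^{(1)}_k$ lies in $T$, and distinct $k$ give distinct points, because the intervals are disjoint. Therefore
\[
 \sum_{t\in T}\|CA^tx\|^2\ge\sum_{k\ge0}\|CA^{t^{(1)}_k}x\|^2\ge\alpha_{N,N}\|x\|^2.
\]

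\emph{Upper bound.} Every $t\in T$ equals $t^{(i)}_k$ for some $i\le M$ and some $k$. So
\[
 \sum_{t\in T}\|CA^tx\|^2\le\sum_{i=1}^M\sum_{k\ge0}\|CA^{t^{(i)}_k}x\|^2\le M\beta_{N,N}\|x\|^2.
\]
Both constants depend only on $N$, $M$ and the system, not on $T$.

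There is no real obstacle. The only points to check are that $\mathcal T_{N,h}$ allows repeated times, which it does, and that Theorem~\ref{thm:compact-observation-jitter} applies with $h=N$ under the standing hypotheses that $A$ is injective and $C$ is compact.
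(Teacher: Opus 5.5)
Your proposal is correct and follows the paper's proof step for step. The paper splits $T$ into the same $M$ sequences in $\mathcal T_{N,N}$, uses the first sequence for the lower bound $\alpha_{N,N}$ and the sum over all $M$ sequences for the upper bound $M\beta_{N,N}$; the only cosmetic difference is that it pads short intervals with the smallest point $s_{k,1}$ rather than the largest, which changes nothing.
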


\begin{proof}
For each $k\in\mathbb N_0$, write
$T\cap[Nk,N(k+1))=\{s_{k,1},\ldots,s_{k,m_k}\}$,
where $s_{k,1}<\cdots<s_{k,m_k}$ and $1\le m_k\le M$.
Set $s_{k,r}=s_{k,1}$ for $m_k<r\le M$.
For each $1\le r\le M$, we have
$0\le s_{k,r}-Nk<N$, so
$(s_{k,r})_{k\ge0}\in\mathcal T_{N,N}$.
Theorem~\ref{thm:compact-observation-jitter} therefore gives,
for every $x\in\mathcal H$,
\[
 \alpha_{N,N}\|x\|^2
 \le \sum_{k\ge0}\|CA^{s_{k,1}}x\|^2
 \le \sum_{s\in T}\|CA^s x\|^2\le\sum_{r=1}^{M}\sum_{k\ge0}\|CA^{s_{k,r}}x\|^2
 \le M\beta_{N,N}\|x\|^2.
\]
\end{proof}

\begin{remark}
Assume that $C$ is compact.
If $A$ is injective, a set $T\subset\mathbb N_0$ is
admissible with weight one whenever it meets $[Nk,N(k+1))$ for
every integer $k\ge k_0$, where $N\ge1$ is an integer and
$k_0\in\mathbb N_0$. Choose
$t_j\in T\cap[N(j+k_0),N(j+k_0+1))$ for $j\in\mathbb N_0$.
Then $0\le t_j-Nj<N(k_0+1)$, so
Theorem~\ref{thm:compact-observation-jitter} applies with $h=N(k_0+1)$.
The upper bound $b_0$ follows from $T\subset\mathbb N_0$.
Consequently, $T\setminus F$ remains admissible with weight one
for every finite set $F\subset T$.

If $A$ is not injective, a sequence $t\in\mathcal T_{N,h}$ gives
a frame if and only if $t_k=0$ for some $k\in\mathbb N_0$.
If zero is included, the analytic argument in the proof of
Theorem~\ref{thm:compact-observation-jitter}, with $0^z=0$ for
$\operatorname{Re}z>0$, gives
$\sum_{k\ge0}\|CA^{t_k}x\|^2=0
\Longrightarrow CA^nx=0$ for every $n\in\mathbb N$.
The observation at zero also gives $Cx=0$. Hence
$a_0\|x\|^2\le\sum_{n\ge0}\|CA^nx\|^2=0$, so $x=0$.
The compact perturbation argument then gives the lower frame bound.

If zero is omitted, choose $0\ne x\in\ker A$. Since $t_k>0$ for every $k\in\mathbb N_0$, we have
$A^{t_k}x=0$ for every $k\in\mathbb N_0$ and
$\sum_{k\ge0}\|CA^{t_k}x\|^2=0$.
Thus no positive lower frame bound exists.
\end{remark}

\begin{example}
On $\ell^2(\mathbb N)$, let $Ae_j=\lambda_j e_j$,
where $\lambda_j=(j+1)^{-1}$, and let
$Ce_j=\sqrt{1-\lambda_j^2}\,e_j$ for $j\ge1$.
Then $A$ is positive and injective, $C$ is bounded but not compact, and the observations at all nonnegative integers form a Parseval frame:
$\sum_{n\ge0}\|CA^n x\|^2=\|x\|^2$ for every $x\in\ell^2(\mathbb N)$.
However, the sequence $t_k=2k+1$ for $k\ge0$, which chooses one point
in every interval $[2k,2(k+1))$, satisfies
\[
 \sum_{k\geq0}\|CA^{2k+1}e_j\|^2
 =\frac{\lambda_j^2}{1+\lambda_j^2}\longrightarrow0.
\]
Even sampling at every positive integer gives no positive lower
frame bound,
since $\sum_{n\geq1}\|CA^n e_j\|^2=\lambda_j^2\to0$.
Thus, for a general noncompact observation operator, selecting one
time from each interval need not preserve the frame property.
Deleting time zero may also destroy that property.
\end{example}

We conclude with necessary counting estimates for unweighted
integer sampling of a single orbit.

\begin{prop}
\label{prop:unweighted-spectral-count}
Let $A\in\mathcal B(\mathcal H)$ be positive and injective,
and suppose that $(A^nf)_{n\in\mathbb N_0}$ is a frame
with upper frame bound $b_0$.
Let $(e_j)$ and $(s_j)$ be as in
Section~\ref{sec:positive-core}.

Suppose that $T\subset\mathbb N_0$ and $(A^nf)_{n\in T}$ is a
frame with lower frame bound $\alpha_T>0$.
Set $L=\frac12\log\frac{2b_0}{\alpha_T}$.
Then $L>0$ and, for every $j\ge1$,
\[
 N_T(L/s_j)
 \ge \frac{\alpha_T}{2b_0(1-e^{-2s_j})}
 \ge \frac{\alpha_T}{4b_0s_j}.
\]
Consequently,
$\limsup_{R\to\infty}\frac{N_T(R)}{R}
\ge \frac{\alpha_T}{4b_0L}>0$.
In particular, no such unweighted sampling set satisfies
$N_T(R)=o(R)$.
\end{prop}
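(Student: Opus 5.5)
Test the lower frame inequality on the eigenvector $e_j$, and split the sampled sum at time $L/s_j$. Since $\langle e_j,A^nf\rangle=f_je^{-s_jn}$, the lower frame bound for $(A^nf)_{n\in T}$ gives
\[
 \alpha_T\le|f_j|^2\sum_{n\in T}e^{-2s_jn}.
\]
The upper frame bound $b_0$ for the full orbit, tested on $e_j$, gives $|f_j|^2/(1-e^{-2s_j})\le b_0$, that is, $|f_j|^2\le b_0(1-e^{-2s_j})$. Applying the sampled lower bound to $e_j$ forces $\alpha_T\le\|f\|^2$-type quantities; more usefully, $T\subset\mathbb N_0$ gives $\alpha_T\le b_0$. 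Hence $2b_0/\alpha_T\ge2$, so $L\ge\frac12\log2>0$.

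Next, we bound the tail. Split $T$ into times $n\le L/s_j$ and $n>L/s_j$. For the tail, compare with the full geometric series starting at the first integer exceeding $L/s_j$:
\[
 |f_j|^2\sum_{n\in T,\ n>L/s_j}e^{-2s_jn}
 \le |f_j|^2\frac{e^{-2s_j\lceil L/s_j\rceil'}}{1-e^{-2s_j}}
 \le b_0e^{-2L}=\frac{\alpha_T}{2}.
\]
Here we used $2s_jn>2L$ for those $n$, together with the bound on $|f_j|^2$. Thus the head must carry at least $\alpha_T/2$. Each head term is at most $|f_j|^2\le b_0(1-e^{-2s_j})$, so
\[
 N_T(L/s_j)\,b_0(1-e^{-2s_j})\ge\alpha_T/2 .
\]
This is the first inequality. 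The second follows from $1-e^{-2s}\le2s$.

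For the density statement, set $R_j=L/s_j\to\infty$. Then
\[
 N_T(R_j)/R_j\ge\alpha_T/(4b_0L)
\]
along this sequence, which gives the limsup bound and rules out $N_T(R)=o(R)$.

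The only delicate point is the tail estimate. Care is needed that the strict inequality $n>L/s_j$ yields $e^{-2s_jn}\le e^{-2L}e^{-2s_j(n-n_0)}$ after summing the geometric series from the first such $n_0$. This works cleanly because $n_0s_j>L$. Everything else is routine.
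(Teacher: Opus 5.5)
Your proposal is correct and is essentially the paper's argument: you test the sampled lower bound on $e_j$, use the full-orbit upper bound to get $|f_j|^2\le b_0(1-e^{-2s_j})$, bound each of the $N_T(R)$ head terms by $|f_j|^2$ and the tail by the geometric series $b_0e^{-2s_j(\lfloor R\rfloor+1)}\le b_0e^{-2s_jR}$, and take $R=L/s_j$ so that the tail is at most $\alpha_T/2$. The paper first writes the inequality for general $R\ge0$ before specializing, and your aside about ``$\|f\|^2$-type quantities'' can be dropped, since $L>0$ comes from $\alpha_T\le b_0$, which holds because $T\subset\mathbb N_0$.
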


\begin{proof} Since $(A^nf)_{n\in\mathbb N_0}$ has upper frame bound $b_0$, we have
$\frac{|f_j|^2}{1-e^{-2s_j}}
=\sum_{n\ge0}|\langle e_j,A^nf\rangle|^2\le b_0$.
The sampled lower frame inequality therefore yields, for every
$R\ge0$,
\begin{align*}
 \alpha_T
 &\le |f_j|^2\sum_{n\in T}e^{-2s_jn}
 \le b_0(1-e^{-2s_j})N_T(R)
   +b_0(1-e^{-2s_j})
      \sum_{\substack{n\in\mathbb N_0\\n>R}}e^{-2s_jn}\\
 &=b_0(1-e^{-2s_j})N_T(R)
   +b_0e^{-2s_j(\lfloor R\rfloor+1)}
 \le b_0(1-e^{-2s_j})N_T(R)+b_0e^{-2s_jR}.
\end{align*}
Since the sampled family is a subfamily of the full orbit frame,
$\alpha_T\le b_0$, and hence $L>0$.
Taking $R=L/s_j$ and using $b_0e^{-2L}=\alpha_T/2$, we obtain
$N_T(L/s_j)\ge\frac{\alpha_T}{2b_0(1-e^{-2s_j})}$.
The second inequality follows from $1-e^{-2s_j}\le2s_j$.
Finally, $L/s_j\to\infty$, and division by $L/s_j$ gives the
asserted lower bound for the limit superior.
\end{proof}

\begin{cor}
\label{cor:unweighted-linear-count}
Let $A$, $f$, and $(s_j)_{j\ge1}$ be as in
Proposition~\ref{prop:unweighted-spectral-count}, and assume that
$Q:=\sup_{j\ge1}\frac{s_j}{s_{j+1}}<\infty$.
Then the following hold.
\begin{enumerate}[(i)]
\item Every set $T\subset\mathbb N_0$ for which
$(A^nf)_{n\in T}$ is a frame satisfies
$N_T(R)\asymp R$ as $R\to\infty$.

\item For every integer $M\ge1$, the set $T_M=M\mathbb N_0$
gives an unweighted frame and satisfies
$N_{T_M}(R)=\left\lfloor\frac{R}{M}\right\rfloor+1$
for every $R\ge0$.
\end{enumerate}
\end{cor}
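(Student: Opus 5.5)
Part (ii) is the quickest step, so I take it first. The counting formula is immediate, since $T_M\cap[0,R]=\{0,M,\ldots,M\lfloor R/M\rfloor\}$ has exactly $\lfloor R/M\rfloor+1$ elements. For the frame property, I view the single orbit as an observable system $(A,C)$ with $Cx=\langle x,f\rangle$. This $C$ has rank one, so it is compact. Lemma~\ref{lem:arithmetic-sampling} then gives a lower bound $c_M\|x\|\le\|U_Mx\|$, and the upper bound $\|U_Mx\|\le\sqrt{b_0}\|x\|$ holds because $M\mathbb N_0\subset\mathbb N_0$. Hence $(A^nf)_{n\in T_M}$ is a frame with unit weights. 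Alternatively, one can cite \cite[Theorem~2.1]{ChristensenEtAl2024}, since $A$ is positive and injective.

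For (i), the upper estimate $N_T(R)\le R+1$ is trivial because $T\subset\mathbb N_0$. The lower estimate $N_T(R)\ge cR$ for all large $R$ is the substantive part. Note that Proposition~\ref{prop:unweighted-spectral-count} yields only a limsup, so the extra hypothesis $Q<\infty$ is what must be used to upgrade it to a liminf. That proposition provides, for every $j\ge1$,
\[
 N_T(L/s_j)\ge\frac{\alpha_T}{4b_0s_j}.
\]
Given a large $R$, I choose the index $j$ with $s_{j+1}<L/R\le s_j$. Such a $j$ exists for $R>L/s_1$ because $s_j\downarrow0$. Then $L/s_j\le R$, and since $N_T$ is nondecreasing,
\[
 N_T(R)\ge N_T(L/s_j)\ge\frac{\alpha_T}{4b_0s_j}
 \ge\frac{\alpha_T}{4b_0Qs_{j+1}}
 >\frac{\alpha_T}{4b_0QL}\,R.
\]
The third inequality uses $s_j\le Qs_{j+1}$, and the last uses $s_{j+1}<L/R$. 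This gives $N_T(R)\ge cR$ with $c=\alpha_T/(4b_0QL)>0$ for all $R>L/s_1$. Combined with $N_T(R)\le R+1$, this gives $N_T(R)\asymp R$.

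The main obstacle is this interpolation step. Once it is noticed that bounded ratios $s_j/s_{j+1}$ make the test points $L/s_j$ geometrically dense, so that every large $R$ lies within a factor $Q$ of such a point, the rest is monotonicity of $N_T$. I also need to confirm that the hypotheses of Lemma~\ref{lem:arithmetic-sampling} are met, namely an observable system with $A$ positive. This is exactly the frame assumption on $(A^nf)_{n\in\mathbb N_0}$, and no injectivity is needed for that lemma.
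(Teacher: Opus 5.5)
Your proof is correct and matches the paper's. In part (i) you use the same choice of $j$ with $L/s_j\le R<L/s_{j+1}$, monotonicity of $N_T$, Proposition~\ref{prop:unweighted-spectral-count}, $s_j\le Qs_{j+1}$ and the trivial bound $N_T(R)\le\lfloor R\rfloor+1$. In part (ii) the paper applies Theorem~\ref{thm:compact-observation-jitter} with $N=M$ and $h=0$, which at $h=0$ reduces exactly to Lemma~\ref{lem:arithmetic-sampling}, so your direct appeal to that lemma is the same argument, and the compactness of $C$ you mention is not needed for it.
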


\begin{proof}
\emph{(i).}
Fix $T\subset\mathbb N_0$ such that $(A^nf)_{n\in T}$ is a frame,
let $\alpha_T>0$ be a lower frame bound, and choose $L$ as in
Proposition~\ref{prop:unweighted-spectral-count}.
For $R\ge L/s_1$, choose $j\ge1$ such that
$\frac{L}{s_j}\le R<\frac{L}{s_{j+1}}$.
Since $s_j/s_{j+1}\le Q$, monotonicity of $N_T$ and
Proposition~\ref{prop:unweighted-spectral-count} give
\[
 N_T(R)
 \ge N_T(L/s_j)
 \ge \frac{\alpha_T}{4b_0s_j}
 \ge \frac{\alpha_T}{4b_0LQ}\,R.
\]
On the other hand, $T\subset\mathbb N_0$ implies
$N_T(R)\le\lfloor R\rfloor+1$.
Thus $N_T(R)\asymp R$ as $R\to\infty$.

\emph{(ii).}
Take $Cx=\langle x,f\rangle$.
The operator $C$ is compact, so
Theorem~\ref{thm:compact-observation-jitter}, applied with
$N=M$ and $h=0$, shows that $(A^{Mk}f)_{k\in\mathbb N_0}$
is a frame.
The counting formula follows from $T_M=M\mathbb N_0$.
\end{proof}

For the single orbit systems of Section~\ref{sec:positive-core},
Theorem~\ref{thm:positive-count} provides admissible integer
designs with $N_\mu(R)=O(\log R)$.
Their supports have natural density zero and therefore do not
yield frames with unit weights, by
Proposition~\ref{prop:unweighted-spectral-count};
see also \cite[Theorem~1.3]{KrishtalMillerBDC2026}.

\section{Optimal condition numbers}
\label{sec:conditioning}

We first study attainment of the infimum of the frame condition
number for integer sampling under counting bounds and for positive
weights on a fixed Riesz basis. We then compute the minimum for
a particular positive operator orbit with geometrically spaced
sampling times.

\subsection{Attainment of optimal condition numbers}
\label{sec:counting-conditioning}

For a fixed observable system $(A,C)$, we study whether
the infimum of $\kappa(S_\mu)$ is attained among admissible
integer sampling designs supported in a prescribed set
and satisfying a fixed bound on $N_\mu(R)$.
Both the sampling support and the positive weights may vary.

For a fixed Riesz basis $(u_n)$, we study whether the infimum
of the frame condition number of $(\sqrt{a_n}u_n)$ is attained
over all positive sequences $(a_n)$ for which this family
is a frame.

\begin{thm}
\label{thm:counting-condition-attainment}
Let $(A,C)$ be an observable system with $A$ normal and $\|A\|=1$.
Fix $T_0\subset\mathbb N_0$ and a function
$F:[0,\infty)\to[0,\infty)$.
If the class of admissible sampling designs supported on $T_0$ and satisfying
$N_\mu(R)\le F(R)$ for every $R\ge0$ is nonempty, then
$\kappa(S_\mu)$ attains its minimum on this class.
\end{thm}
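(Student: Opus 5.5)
The plan is a compactness argument on the weights. Since $\kappa(S_\mu)$ is unchanged when all weights are multiplied by a positive constant, we normalize every design in the class so that $S_\mu\ge I$, that is, $\|S_\mu^{-1}\|=1$; then $\kappa(S_\mu)=\|S_\mu\|$. Put $\kappa_*=\inf\kappa(S_\mu)$ over the class, and take a minimizing sequence $\mu_m=\sum_{n\in T_0}w^{(m)}_n\delta_n$ with $w^{(m)}_n=0$ off the support, $I\le S_{\mu_m}\le\kappa_mI$ and $\kappa_m\downarrow\kappa_*$; assume $\kappa_m\le2\kappa_*$. For each $n$ with $CA^n\ne0$ we have $w^{(m)}_n\|CA^n\|^2\le\|S_{\mu_m}\|\le2\kappa_*$, so the weights are bounded for each fixed $n$. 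Times with $CA^n=0$ contribute nothing to $S_\mu$, so we set their weights to zero; this only lowers $N_{\mu_m}$. A diagonal subsequence gives $w^{(m)}_n\to w_n\in[0,\infty)$ for every $n\in T_0$. The candidate minimizer is $\mu=\sum_{w_n>0}w_n\delta_n$.

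The counting bound and the upper frame bound pass to the limit directly. If $w_n>0$, then $w^{(m)}_n>0$ for all large $m$, so each finite set $T\cap[0,R]$ is eventually contained in $\operatorname{supp}\mu_m\cap[0,R]$; hence $N_\mu(R)\le F(R)$ for every $R$, and $T\subset T_0$. For the upper bound, each finite partial sum of $\langle S_\mu x,x\rangle$ is a limit of partial sums of $\langle S_{\mu_m}x,x\rangle\le\kappa_m\|x\|^2$. This gives $S_\mu\le\kappa_*I$.

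The main obstacle is the lower bound $S_\mu\ge I$, because weight mass may escape to infinity. This is where normality and $\|A\|=1$ enter. First we prove the normal analogue of Lemma~\ref{lem:scalar-spectral-tests}: for $\lambda\in\sigma(A)$ with $0<|\lambda|<1$, the upper bound $S_{\mu_m}\le2\kappa_*I$ gives $(1-|\lambda|^2)\sum_nw^{(m)}_n|\lambda|^{2n}\le2\kappa_*/a$, uniformly in $m$. The proof uses approximate eigenvectors and the Stein identity $B=S-A^*SA$ from Lemma~\ref{lem:stein}, exactly as in the positive case, with $\|CA^tx_k\|^2\to|\lambda|^{2t}(1-|\lambda|^2)\sigma$. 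Since $\|A\|=1$ and $E_A$ vanishes on the unit circle, for every $r<1$ there is $\lambda\in\sigma(A)$ with $r<\rho:=|\lambda|<1$. Then for $x\in E_A(\{|z|\le r\})H$,
\[
\sum_{n>K}w^{(m)}_n\|CA^nx\|^2\le\|C\|^2\Bigl(\frac r\rho\Bigr)^{2K}\sum_{n}w^{(m)}_n\rho^{2n}\|x\|^2,
\]
and the right side tends to $0$ as $K\to\infty$ uniformly in $m$. The tails are therefore uniformly small on this subspace, and letting $m\to\infty$ gives $\langle S_\mu x,x\rangle\ge\|x\|^2$ there.

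Finally, the subspaces $E_A(\{|z|\le r\})H$, $r<1$, have dense union, again because $E_A$ vanishes on the unit circle. Since $S_\mu$ is bounded, the inequality $\langle S_\mu x,x\rangle\ge\|x\|^2$ extends to all of $H$ by continuity. Hence $\mu$ is admissible, lies in the class, and satisfies $\kappa(S_\mu)\le\kappa_*$, so it attains the minimum.
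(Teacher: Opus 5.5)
Your proof is correct. Its skeleton matches the paper's: normalize the lower bound to one, extract the weights by a diagonal argument, pass the upper bound and the counting bound to the limit, obtain uniform tail control on $E_A(\{|z|\le r\})H$, and conclude by density of these subspaces. The genuine difference is how you obtain the uniform tail estimate.

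The paper fixes $r<r_1<1$ and puts $P_1=E_A(\{|z|\ge r_1\})$. It notes that $CP_1\ne0$ by observability, and that $A$ is invertible on $P_1H$ with $\|(A|_{P_1H})^{-n}\|\le r_1^{-n}$. From this it gets $\|CA^n\|\ge r_1^n\|CP_1\|>0$. Combined with the same termwise bound $w_n^{(j)}\|CA^n\|^2\le\|S_{\mu_j}\|$ that you use, this single estimate does double duty:
\begin{itemize}
\item it bounds each weight by $M/(r_1^{2n}\|CP_1\|^2)$ for the diagonal extraction, and shows $CA^n\ne0$ for every $n$, so your zeroing step is harmless but never needed;
\item together with $\|CA^nx\|\le\|C\|r^n\|x\|$, it gives the geometric tail bound.
\end{itemize}

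You instead control the whole sum $\sum_n w_n^{(m)}\rho^{2n}$ through the upper half of a normal, integer-time analogue of Lemma~\ref{lem:scalar-spectral-tests}. That route needs approximate eigenvectors, the Stein identity, the lower frame bound $a$ of the full system, and the observation that $\sigma(A)$ meets $\{r<|z|<1\}$. All of these you justify correctly.

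The trade-off: the paper's route is more elementary and self-contained. Yours yields a tail constant depending only on $a$, $\kappa_*$ and $\rho$. It also reuses the spectral-test mechanism that drives the lower bound in Theorem~\ref{thm:band-count}.
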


\begin{proof}
Let $\Gamma_*$ be the infimum of the condition numbers over this class.
Choose designs $(\mu_j)_{j\in\mathbb N}$ in the class such that
$\Gamma_j:=\kappa(S_{\mu_j})\to\Gamma_*$ as $j\to\infty$.
Normalize the weights of each design so that its optimal
lower frame bound is one.
Writing $\mu_j=\sum_{n=0}^{\infty}w_n^{(j)}\delta_n$, with zero
weights outside its support, we have
\[
 I\le S_j:=S_{\mu_j}
 =\sum_{n=0}^{\infty}w_n^{(j)}(A^n)^*C^*CA^n
 \le\Gamma_j I
 \qquad j\in\mathbb N.
\]
Set $M=\sup_{j\in\mathbb N}\Gamma_j<\infty$.

Let $E_A$ denote the spectral measure of $A$. For $0<r_1<1$, set
$P_1=E_A(\{z\in\mathbb C:|z|\ge r_1\})$.
Since $\|A\|=1$, we have $P_1\ne0$. Also $CP_1\ne0$: otherwise
$CA^nP_1=CP_1A^n=0$ for every $n\in\mathbb N_0$, contradicting
observability. The restriction of $A$ to $P_1H$ is invertible,
and its inverse powers have norm at most $r_1^{-n}$ for
$n\in\mathbb N_0$. Hence
$\|CA^n\|\ge\|CA^nP_1\|\ge r_1^n\|CP_1\|>0$ for every
$n\in\mathbb N_0$. Positivity of the summands in $S_j$ gives
$0\le w_n^{(j)}\le M/\|CA^n\|^2$ for
$n\in\mathbb N_0$ and $j\in\mathbb N$.
Passing to a diagonal subsequence, we may assume that
$w_n^{(j)}\to w_n\ge0$ as $j\to\infty$ for every $n\in\mathbb N_0$.
Taking limits in finite sums gives
$0\le\sum_{n=0}^Nw_n(A^n)^*C^*CA^n\le\Gamma_*I$
for every $N\in\mathbb N_0$.
Since these partial sums are increasing and bounded above,
there exists an operator $S_*$ with $0\le S_*\le\Gamma_*I$ such that
\[
 \lim_{N\to\infty}
 \left\|\sum_{n=0}^Nw_n(A^n)^*C^*CA^n x-S_*x\right\|=0,
 \qquad x\in H.
\]

Fix $0<r<1$ and $x\in E_A(\{z\in\mathbb C:|z|\le r\})H$.
Choose $r<r_1<1$ and use the projection $P_1$ above.
For $N\in\mathbb N_0$ and $j\in\mathbb N$, normality and the
weight estimate give
\[
 \sum_{n=N+1}^{\infty}w_n^{(j)}\|CA^nx\|^2
 \le
 \frac{M\|C\|^2}{\|CP_1\|^2}\|x\|^2
 \sum_{n=N+1}^{\infty}\left(\frac r{r_1}\right)^{2n}.
\]
The right hand side tends to zero as $N\to\infty$, uniformly in
$j\in\mathbb N$, and the same bound holds for the limiting weights.
Convergence of the finite sums therefore gives
$\langle S_*x,x\rangle=\lim_{j\to\infty}\langle S_jx,x\rangle\ge\|x\|^2$.
By Lemma~\ref{lem:stein}, the spectral projection on the unit circle is
zero. The union of these spectral subspaces over $0<r<1$ is therefore
dense in $H$, so $I\le S_*\le\Gamma_*I$.

Set $T=\{n\in\mathbb N_0:w_n>0\}$ and
$\mu=\sum_{n\in T}w_n\delta_n$.
Weights outside $T_0$ remain zero, so $T\subset T_0$.
For each $R\ge0$, the finite set $T\cap[0,R]$ is contained in
$\operatorname{supp}\mu_j$ for all sufficiently large $j$.
Thus $N_\mu(R)\le N_{\mu_j}(R)\le F(R)$ for such $j$.
Since $S_\mu=S_*$, the design is admissible and
$\kappa(S_\mu)\le\Gamma_*$. The definition of $\Gamma_*$ gives equality.
\end{proof}

Taking $T_0=\mathbb N_0$ and $F(R)=\lfloor R\rfloor+1$
gives an admissible integer sampling design of minimum
condition number.

Let $\{A^n f:n\in\mathbb N_0\}$ be a frame for $H$, with $A$ positive,
and let $m_A$ be as in Section~\ref{sec:positive-core}.
Fix $K\ge0$ and $R_0\ge2$. In the following definitions, $\mu$ ranges
over admissible integer sampling designs. Set
\[
 \Gamma_K^{\mathrm{crit}}
 =\inf\bigl\{\kappa(S_\mu):
 N_\mu(R)\le m_A(R)+K\text{ for every }R\ge R_0\bigr\}.
\]
For $\Gamma\ge1$, define $c_{\Gamma,K}$ as the infimum of all $c\ge0$
such that some $\mu$ satisfies $\kappa(S_\mu)\le\Gamma$ and
$N_\mu(R)\le c\,m_A(R)+K$ for every $R\ge R_0$.

\begin{cor}
\label{cor:optimal-counting-envelope}
Fix $K\ge0$, $R_0\ge2$, and $\Gamma\ge1$.
\begin{enumerate}[(i)]
\item If the class defining $\Gamma_K^{\mathrm{crit}}$ is nonempty,
the infimum is attained, and every minimizer satisfies
$N_\mu(R)=m_A(R)+O(1)$ as $R\to\infty$.
\item If the set defining $c_{\Gamma,K}$ is nonempty,
its infimum is attained and $c_{\Gamma,K}\ge1$.
\end{enumerate}
\end{cor}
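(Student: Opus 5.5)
The plan is to reduce both parts to Theorem~\ref{thm:counting-condition-attainment} by encoding the counting constraint as a function $F$, and then to use Lemma~\ref{lem:positive-lower} for the lower-bound statements.

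For (i), take $T_0=\mathbb N_0$ and define $F(R)=m_A(R)+K$ for $R\ge R_0$ and $F(R)=+\infty$ for $R<R_0$. Since Theorem~\ref{thm:counting-condition-attainment} asks for $F$ to be finite valued, I will instead set $F(R)=\lfloor R\rfloor+1$ for $R<R_0$. This is harmless because every integer design satisfies $N_\mu(R)\le\lfloor R\rfloor+1$ automatically. The class of admissible integer designs with $N_\mu\le F$ is then exactly the class defining $\Gamma_K^{\mathrm{crit}}$. The operator $A$ is positive, hence normal, and $\|A\|=1$: indeed $H$ is infinite dimensional and single orbits give $\|A\|=1$ by Theorem~\ref{thm:finite-times} and the remark after it (or directly by Lemma~\ref{lem:stein} with $s_j\to0$). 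Theorem~\ref{thm:counting-condition-attainment} therefore yields a minimizer. For any minimizer $\mu$, Lemma~\ref{lem:positive-lower} gives $N_\mu(R)\ge m_A(R)-O_\mu(1)$, while the class constraint gives $N_\mu(R)\le m_A(R)+K$ for $R\ge R_0$. Hence $N_\mu(R)=m_A(R)+O(1)$. One minor point needs checking: Lemma~\ref{lem:positive-lower} is stated on the restricted space $(\ker A)^\perp$. An admissible design on $H$ restricts to an admissible design there, since compressing the frame operator preserves both bounds, so the lemma applies.

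For (ii), let $c_k\downarrow c_{\Gamma,K}$ with designs $\mu_k$ such that $\kappa(S_{\mu_k})\le\Gamma$ and $N_{\mu_k}(R)\le c_k m_A(R)+K$ for $R\ge R_0$. I will rerun the compactness argument from the proof of Theorem~\ref{thm:counting-condition-attainment} rather than cite it. Normalize the lower bounds to one, extract coordinatewise limits of the weights, and conclude exactly as there that the limit design $\mu$ satisfies $I\le S_\mu\le\Gamma I$, hence $\kappa(S_\mu)\le\Gamma$. For each $R\ge R_0$, the finite set $\operatorname{supp}\mu\cap[0,R]$ lies in $\operatorname{supp}\mu_k$ for all large $k$, so $N_\mu(R)\le c_k m_A(R)+K$ for all large $k$. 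Letting $k\to\infty$ gives $N_\mu(R)\le c_{\Gamma,K}m_A(R)+K$, so the infimum is attained. An alternative would be to apply the theorem with $F=c_{\Gamma,K}m_A+K$ directly, but the sequence might not lie in that class, so I will redo the argument with varying $c_k$. The only change from the theorem's proof is to replace the sequence $\Gamma_j\to\Gamma_*$ by the fixed bound $\Gamma$, which is why the weight bound $w_n^{(k)}\le\Gamma/\|CA^n\|^2$ still holds.

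Finally, to show $c_{\Gamma,K}\ge1$, apply Lemma~\ref{lem:positive-lower} to the attained design $\mu$: we have $m_A(R)-K_\mu\le N_\mu(R)\le c_{\Gamma,K}m_A(R)+K$. Since $m_A(R)\to\infty$ (there are infinitely many eigenvalues $s_j\to0$), dividing by $m_A(R)$ and letting $R\to\infty$ gives $c_{\Gamma,K}\ge1$. The main obstacle I expect is the careful bookkeeping in the limit argument of (ii), namely the uniform tail control with a fixed $\Gamma$ in place of $\sup_j\Gamma_j$. It goes through verbatim.
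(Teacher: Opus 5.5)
Your proposal is correct and follows the paper's proof essentially step for step. For (i) you apply Theorem~\ref{thm:counting-condition-attainment} with the same choice of $F$, using $\lfloor R\rfloor+1$ below $R_0$. For (ii) you rerun its compactness argument with the fixed bound $\Gamma$ and varying $c_k$, then get $c_{\Gamma,K}\ge1$ from the lower counting bound and $m_A(R)\to\infty$, exactly as the paper does. The only difference is cosmetic: you invoke Lemma~\ref{lem:positive-lower} directly, with an explicit check that admissibility passes to $(\ker A)^\perp$. The paper instead cites the lower bound in Theorem~\ref{thm:positive-count}, which rests on that same lemma.
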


\begin{proof}
The spectral description in Section~\ref{sec:positive-core} gives
$\|A\|=1$ and $m_A(R)\to\infty$ as $R\to\infty$.
For (i), apply Theorem~\ref{thm:counting-condition-attainment} with
$T_0=\mathbb N_0$, $F(R)=m_A(R)+K$ for $R\ge R_0$, and
$F(R)=\lfloor R\rfloor+1$ for $0\le R<R_0$.
For every minimizer, Theorem~\ref{thm:positive-count} gives
$N_\mu(R)\ge m_A(R)-O_\mu(1)$ as $R\to\infty$.
Together with the defining upper bound, this proves (i).

For (ii), choose $(c_j)_{j\in\mathbb N}$ in the set defining
$c_{\Gamma,K}$ with $c_j\to c_{\Gamma,K}$ as $j\to\infty$, and
corresponding admissible integer designs $(\mu_j)_{j\in\mathbb N}$.
Normalize the weights of each design so that
$I\le S_j:=S_{\mu_j}\le\Gamma I$ for every $j\in\mathbb N$.
Write $\mu_j=\sum_{n=0}^{\infty}w_n^{(j)}\delta_n$, with zero
weights outside its support. The diagonal subsequence argument
and uniform tail estimates in the proof of
Theorem~\ref{thm:counting-condition-attainment} give, after passing
to a subsequence and relabeling,
$w_n^{(j)}\to w_n\ge0$ for every $n\in\mathbb N_0$, and
$\mu:=\sum_{\substack{n\in\mathbb N_0\\w_n>0}}w_n\delta_n$
satisfies $I\le S_\mu\le\Gamma I$.
For each fixed $R\ge R_0$, the counting argument in that proof gives
$N_\mu(R)\le\liminf_{j\to\infty}N_{\mu_j}(R)\le c_{\Gamma,K}m_A(R)+K$.
Thus the infimum is attained. The lower bound in
Theorem~\ref{thm:positive-count} and $m_A(R)\to\infty$ as $R\to\infty$
then imply $c_{\Gamma,K}\ge1$.
\end{proof}

\begin{remark}
If the class defining $\Gamma_K^{\mathrm{crit}}$ is nonempty,
part~(i) of Corollary~\ref{cor:optimal-counting-envelope} shows
that it contains a design with $\kappa(S_\mu)\le\Gamma$
if and only if $\Gamma\ge\Gamma_K^{\mathrm{crit}}$.
The integer constructions in
Theorems~\ref{thm:critical-regular-spectra}
and~\ref{thm:positive-separated-critical} imply that this class
is nonempty for some $K\ge0$ under their respective assumptions.
\end{remark}

\begin{example}
\label{ex:finite-conditioning-nonattainment}
Let $H=\mathbb C^2$, $A=\operatorname{diag}(1/4,1/2)$, and
$Cx=\langle x,(1,1)\rangle$ for $x\in H$.
The observation vectors at times zero and one are linearly independent,
and $\|A\|<1$, so $(A,C)$ is observable.
Let $\mathcal D_{\mathrm r}$ and $\mathcal D_{\mathrm i}$ denote
the classes of admissible real and integer sampling designs,
respectively. Then
\[
 \inf_{\mu\in\mathcal D_{\mathrm r}}\kappa(S_\mu)
 =
 \inf_{\mu\in\mathcal D_{\mathrm i}}\kappa(S_\mu)
 =(1+\sqrt2)^2.
\]
Neither infimum is attained.

For an admissible design $\mu=\sum_{t\in T}w_t\delta_t$, set
$q_t=2^{-t}$ and $a_t=w_t4^{-t}$ for $t\in T$. Its sampled frame operator is
\[
 S_\mu=\begin{pmatrix}X&Y\\Y&Z\end{pmatrix},
\]
where $X=\sum_{t\in T}a_tq_t^2$, $Y=\sum_{t\in T}a_tq_t$, and
$Z=\sum_{t\in T}a_t$.
Set $\tau=X+Z$ and $\Delta=((Z-X)^2+4Y^2)^{1/2}$.
The eigenvalues are $(\tau\pm\Delta)/2$, with $0\le\Delta<\tau$.
An admissible design contains at least one positive sampling time,
so $0<q_t<1$ for some $t\in T$. Hence
$Z-X+2Y-\tau=2\sum_{t\in T}a_tq_t(1-q_t)>0$.
Cauchy--Schwarz gives
$\Delta\ge(Z-X+2Y)/\sqrt2>\tau/\sqrt2$, and hence
\[
 \kappa(S_\mu)=\frac{\tau+\Delta}{\tau-\Delta}
 >(1+\sqrt2)^2.
\]

For $m\in\mathbb N$, define
$\mu_m=\tfrac12\delta_0+\frac{4^m}{1+4^{-m}}\delta_m$.
Its weighted observation vectors are $u_0=(1,1)/\sqrt2$ and
$u_m=(2^{-m},1)/\sqrt{1+4^{-m}}$.
They are linearly independent unit vectors, so $\mu_m$ is admissible.
Their inner product is $d_m=(1+2^{-m})/\sqrt{2(1+4^{-m})}$,
and $S_{\mu_m}$ has eigenvalues $1\pm d_m$.
Since $d_m\to1/\sqrt2$ as $m\to\infty$, we obtain
$\kappa(S_{\mu_m})=(1+d_m)/(1-d_m)\to(1+\sqrt2)^2$,
proving both infimum formulas.
\end{example}

\begin{remark}
The conclusion of Theorem~\ref{thm:counting-condition-attainment}
remains valid for an observable system $(A,C)$ with $A$ normal,
$0<\rho=\|A\|\le1$, and $E_A(\{z\in\mathbb C:|z|=\rho\})=0$.
Indeed, using the notation of that proof, take $0<r<r_1<\rho$
and set $P_1=E_A(\{z\in\mathbb C:|z|\ge r_1\})$.
For $x\in E_A(\{z\in\mathbb C:|z|\le r\})H$, we have
\[
 \sup_{j\in\mathbb N}
 \sum_{n=N+1}^{\infty}w_n^{(j)}\|CA^nx\|^2
 \le
 \frac{M\|C\|^2}{\|CP_1\|^2}\|x\|^2
 \sum_{n=N+1}^{\infty}\left(\frac r{r_1}\right)^{2n}
 \longrightarrow0
 \qquad\text{as }N\to\infty.
\]
Moreover, the spectral assumption gives
\[
 \overline{\bigcup_{0<r<\rho}
 E_A(\{z\in\mathbb C:|z|\le r\})H}=H.
\]

For $\rho=1$, the spectral condition follows from
Lemma~\ref{lem:stein}. In Example~\ref{ex:finite-conditioning-nonattainment},
this condition fails because $\rho=1/2$ is an eigenvalue.
\end{remark}

Let $(u_n)_{n\in\mathbb N}$ be a Riesz basis for $H$, with synthesis
operator $V:\ell^2(\mathbb N)\to H$, and set $K=V^*V$.
The operator $K$ is positive, bounded, and invertible.
For a bounded real sequence $a=(a_n)_{n\in\mathbb N}$, define
$D_ae_n=a_ne_n$ for $n\in\mathbb N$, where $(e_n)_{n\in\mathbb N}$
is the standard orthonormal basis of $\ell^2(\mathbb N)$.
For $0<\alpha\le\beta<\infty$, the identity
$V^{-1}(V^*)^{-1}=K^{-1}$ gives
$\alpha I\le VD_aV^*\le\beta I
\Longleftrightarrow
\alpha K^{-1}\le D_a\le\beta K^{-1}$.
If $a_n>0$ for every $n\in\mathbb N$, these inequalities show that
$(\sqrt{a_n}u_n)_{n\in\mathbb N}$ is a frame if and only if
$\inf_{n\in\mathbb N}a_n>0$. Its frame operator is then $VD_aV^*$.

If $\inf_{n\in\mathbb N}a_n>0$ and $\Gamma\ge1$, then
$\kappa(VD_aV^*)\le\Gamma$ if and only if
$K^{-1}\le cD_a\le\Gamma K^{-1}$ for some $c>0$.
Taking $c$ to be the reciprocal of the optimal lower frame bound
normalizes that bound to one. The infimum of the condition numbers
over all positive weights that preserve the frame property is
\[
 \Gamma_*=\inf\bigl\{\Gamma\ge1:
 K^{-1}\le D_a\le\Gamma K^{-1}
 \text{ for some }a\in\ell^\infty(\mathbb N;\mathbb R)\bigr\}.
\]

\begin{prop}
\label{prop:optimal-riesz-weights}
The infimum $\Gamma_*$ is attained. The optimal weight sequences,
normalized so that the optimal lower frame bound is one, form a
convex set that is compact for coordinatewise convergence.
\end{prop}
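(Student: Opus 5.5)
We would argue by a compactness argument on the feasible set of normalized weights, in the spirit of the proof of Theorem~\ref{thm:counting-condition-attainment} but simpler, since the constraint $K^{-1}\le D_a\le\Gamma K^{-1}$ is linear in $a$. For $\Gamma\ge1$, let $\mathcal A_\Gamma$ be the set of real sequences $a$ with $K^{-1}\le D_a\le\Gamma K^{-1}$. The first step is to note that $\mathcal A_\Gamma$ is nonempty for large $\Gamma$: the weights $a_n\equiv\|K\|$ satisfy $D_a=\|K\|I\ge K^{-1}$ is false in general, so instead take $a_n\equiv\|K^{-1}\|$, which gives $K^{-1}\le D_a$, and $D_a\le\|K^{-1}\|\|K\|K^{-1}$. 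Hence $\Gamma_*\le\kappa(K)<\infty$. The second step is a coordinatewise bound. Testing the inequalities on $e_n$ gives
\[
\langle K^{-1}e_n,e_n\rangle\le a_n\le\Gamma\langle K^{-1}e_n,e_n\rangle,
\]
so $\|K\|^{-1}\le a_n\le\Gamma\|K^{-1}\|$. In particular, every feasible $a$ lies in a fixed box, is bounded, and has $\inf a_n>0$, so it is a genuine frame weight.

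The third step is closedness. Suppose $\Gamma_j\downarrow\Gamma_*$ and $a^{(j)}\in\mathcal A_{\Gamma_j}$. By the box bounds and a diagonal argument, we may pass to a coordinatewise convergent subsequence with limit $a$. For finitely supported $x\in\ell^2(\mathbb N)$, the quadratic forms satisfy $\langle D_{a^{(j)}}x,x\rangle\to\langle D_ax,x\rangle$, while the bounds $\langle K^{-1}x,x\rangle$ and $\Gamma_j\langle K^{-1}x,x\rangle$ converge. So $K^{-1}\le D_a\le\Gamma_*K^{-1}$ holds on a dense subspace, and it extends to all of $\ell^2$ because $D_a$ and $K^{-1}$ are bounded (the uniform box bound gives $\|D_a\|\le\Gamma_1\|K^{-1}\|$). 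Thus $a\in\mathcal A_{\Gamma_*}$ and the infimum is attained.

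For the final claims, we identify the normalized optimal weights with $\mathcal A_{\Gamma_*}$. If $a\in\mathcal A_{\Gamma_*}$, then $\kappa(VD_aV^*)\le\Gamma_*$, so equality holds by minimality. Moreover, the lower bound is exactly one: if $K^{-1}\le cD_a$ for some $c<1$, then $\kappa<\Gamma_*$, a contradiction. Conversely, normalized optimal weights lie in $\mathcal A_{\Gamma_*}$ by the equivalence stated before the proposition. Convexity follows because both operator inequalities are preserved under convex combinations, since $a\mapsto D_a$ is linear. Compactness for coordinatewise convergence follows from the box bound, which gives a compact product space by Tychonoff, together with the closedness argument above applied with constant $\Gamma_j=\Gamma_*$. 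The only delicate point is the passage from finitely supported vectors to all of $\ell^2$, and the uniform norm bound handles it. We do not expect a serious obstacle.
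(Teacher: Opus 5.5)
Your proposal is correct and follows essentially the same route as the paper. Both arguments use constant weights for nonemptiness, box bounds obtained by testing on $e_n$, and a diagonal subsequence whose quadratic-form limits on finitely supported vectors extend by density; the normalized optimal set is then identified with $\{a: K^{-1}\le D_a\le\Gamma_*K^{-1}\}$, which is convex by linearity and closed in a compact product (the aside about $a_n\equiv\|K\|$ is a false start you should simply delete).
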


\begin{proof}
Choose $0<\alpha_0\le\beta_0<\infty$ with
$\alpha_0I\le K\le\beta_0I$.
The constant weights $a_n=1/\alpha_0$ for $n\in\mathbb N$ show that
the set defining $\Gamma_*$ is nonempty and
$\Gamma_*\le\beta_0/\alpha_0$.
Choose $(\Gamma_j,a^{(j)})_{j\in\mathbb N}$, with
$a^{(j)}=(a_n^{(j)})_{n\in\mathbb N}$, such that
$K^{-1}\le D_{a^{(j)}}\le\Gamma_jK^{-1}$ and
$\Gamma_j\le\beta_0/\alpha_0+1$ for every $j\in\mathbb N$, and
$\Gamma_j\to\Gamma_*$ as $j\to\infty$.
Taking diagonal entries gives
$1/\beta_0\le a_n^{(j)}\le\Gamma_j/\alpha_0$ for every
$n,j\in\mathbb N$.
Passing to a diagonal subsequence, we may assume that
$a_n^{(j)}\to a_n$ as $j\to\infty$ for every $n\in\mathbb N$.
The uniform bounds imply that $a=(a_n)_{n\in\mathbb N}$ is bounded
and $a_n\ge1/\beta_0$ for every $n\in\mathbb N$.
For every finitely supported $c\in\ell^2(\mathbb N)$, taking limits gives
$\langle K^{-1}c,c\rangle\le\langle D_ac,c\rangle
\le\Gamma_*\langle K^{-1}c,c\rangle$.
By density, $K^{-1}\le D_a\le\Gamma_*K^{-1}$.
Thus $\kappa(VD_aV^*)\le\Gamma_*$, and equality follows from the
definition of $\Gamma_*$.

The inequalities $I\le VD_aV^*\le\Gamma_*I$ force the optimal frame
bounds to be $1$ and $\Gamma_*$; otherwise the condition number would
be smaller than $\Gamma_*$.
Hence the normalized optimal weight sequences are exactly those satisfying
$K^{-1}\le D_a\le\Gamma_*K^{-1}$.
These inequalities are linear in $a$, so the set is convex.
The same argument with finitely supported vectors shows that it is
closed in $\prod_{n\in\mathbb N}[1/\beta_0,\Gamma_*/\alpha_0]$.
This product is compact for coordinatewise convergence, so the set
of normalized optimal weight sequences is compact as well.
\end{proof}

For a frame as in Corollary~\ref{cor:critical-orbit-weights},
Lemma~\ref{lem:finite-deletion-weights} gives a finite set of sampling
times whose deletion leaves a Riesz basis.
Enumerate the remaining times and their reference weights as
$(t_n)_{n\in\mathbb N}$ and $(v_n)_{n\in\mathbb N}$, and set
$u_n=\sqrt{v_n}A^{t_n}f$ for $n\in\mathbb N$.
Proposition~\ref{prop:optimal-riesz-weights}, applied with
$a_n=w_n/v_n$ for $n\in\mathbb N$, then gives positive sampling
weights $(w_n)_{n\in\mathbb N}$ attaining the minimum condition
number among those preserving the frame property on this fixed set.
Finite deletion changes the counting function by at most a constant,
so the logarithmic counting coefficient is unchanged.

\begin{example}
Let $(e_1,e_2)$ be the standard orthonormal basis of $\mathbb C^2$,
and set $u_1=e_1$, $u_2=e_2$, and $u_3=e_1+e_2$.
The family $(u_i)_{i=1}^3$ is a frame of excess one.
For $a_i>0$, $i=1,2,3$, the frame operator of
$(\sqrt{a_i}u_i)_{i=1}^3$ is
\[
 S_a=\begin{pmatrix}
 a_1+a_3&a_3\\
 a_3&a_2+a_3
 \end{pmatrix}.
\]
Since $a_3>0$, the operator $S_a$ is not a scalar multiple of $I$,
so $\kappa(S_a)>1$.
For $\varepsilon>0$, the weights $a_1=a_2=1$ and $a_3=\varepsilon$
give eigenvalues $1$ and $1+2\varepsilon$.
Hence $\kappa(S_a)=1+2\varepsilon\to1$ as $\varepsilon\downarrow0$.
The infimum over strictly positive weights is therefore one and is
not attained. If zero weights are allowed, $a_1=a_2=1$ and $a_3=0$
give $S_a=I$.

The same failure occurs in an infinite dimensional $H$ after embedding
$\mathbb C^2$ in $H$ and adding an orthonormal basis of its orthogonal
complement. For the approximating weights above, assign unit weights
to the additional vectors. The resulting frame still has excess one.
Thus finite excess cannot replace the Riesz basis assumption in
Proposition~\ref{prop:optimal-riesz-weights} when every weight must
remain strictly positive.
\end{example}

\subsection{Optimal condition numbers for geometric sampling}
\label{sec:geometric-conditioning}

We now compute the minimum condition number for the positive operator
orbit defined below. The eigenvalues are $e^{-s_j}$ for $j\in\mathbb N$,
where the decay rates $(s_j)_{j\in\mathbb N}$ form a geometric
progression. The real sampling times also form a geometric progression,
with finitely many additions or deletions allowed. For the full
progression, weights equal to the sampling times give a Toeplitz
matrix for the analysis operator in the orthonormal eigenbasis of $A$.

A real sequence $(a_n)_{n\in\mathbb Z}$ is a P\'olya frequency sequence
if $\det(a_{i_r-j_s})_{r,s=1}^m\ge0$ for every $m\in\mathbb N$
and all integers $i_1<\cdots<i_m$ and $j_1<\cdots<j_m$;
see \cite{Edrei1953}.
Let $\mathbb T=\{z\in\mathbb C:|z|=1\}$.
For a continuous function $p:\mathbb T\to\mathbb C\setminus\{0\}$,
its winding number about zero is
$(\varphi(2\pi)-\varphi(0))/(2\pi)$, where
$\varphi:[0,2\pi]\to\mathbb R$ is continuous and satisfies
$p(e^{i\theta})=|p(e^{i\theta})|e^{i\varphi(\theta)}$ for
$\theta\in[0,2\pi]$; see \cite{BottcherSilbermann2006}.

\begin{lem}
\label{lem:pf-toeplitz-conditioning}
Let $(a_n)_{n\in\mathbb Z}$ be a nonzero P\'olya frequency sequence
whose Laurent series $p(z)=\sum_{n\in\mathbb Z}a_nz^n$ converges in
an open annulus containing $\mathbb T$. Then the following hold.
\begin{enumerate}[(i)]
\item The only possible zero of $p$ on $\mathbb T$ is $-1$, and
$\max_{z\in\mathbb T}|p(z)|=p(1)$ and
$\min_{z\in\mathbb T}|p(z)|=|p(-1)|$.
\item If $p(-1)\ne0$ and $p$ has winding number zero, then
$T(p)=(a_{k-j})_{k,j\in\mathbb N}$ is bounded and invertible
on $\ell^2(\mathbb N)$, with $\|T(p)\|=p(1)$ and
$\|T(p)^{-1}\|=|p(-1)|^{-1}$.
\end{enumerate}
\end{lem}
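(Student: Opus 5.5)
Part~(i) will come from Edrei's representation \cite{Edrei1953} of the generating function of a doubly infinite P\'olya frequency sequence. Since $p$ converges in an annulus around $\mathbb T$, Edrei's theorem gives
\[
 p(z)=Cz^k e^{\gamma_1 z+\gamma_2 z^{-1}}
 \prod_{\nu}\frac{(1+\alpha_\nu z)(1+\delta_\nu z^{-1})}
 {(1-\beta_\nu z)(1-\varepsilon_\nu z^{-1})},
\]
where $C>0$, $k\in\mathbb Z$, $\gamma_i\ge0$, and all $\alpha_\nu,\beta_\nu,\gamma_\nu,\delta_\nu,\varepsilon_\nu\ge0$ have convergent sums. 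Convergence on $\mathbb T$ forces $\beta_\nu<1$ and $\varepsilon_\nu<1$. On $\mathbb T$ each factor is controlled by elementary inequalities: $|e^{\gamma z}|=e^{\gamma\operatorname{Re}z}$ is maximal at $z=1$ and minimal at $z=-1$; $|1+\alpha z|$ is maximal at $1$ and minimal at $-1$, vanishing only when $\alpha=1$ and $z=-1$; and $|1-\beta z|^{-1}$ is maximal at $1$ and minimal at $-1$, never zero since $\beta<1$. The factors in $z^{-1}$ behave the same way because $\bar z=z^{-1}$ on $\mathbb T$ and the extremal points $\pm1$ are real. Since $|z^k|=1$, every factor attains its maximum modulus at $1$ and its minimum at $-1$ simultaneously, and this passes to the convergent product. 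Hence $\max_{\mathbb T}|p|=|p(1)|$ and $\min_{\mathbb T}|p|=|p(-1)|$. Also $p(1)=\sum a_n>0$ because the entries are nonnegative and not all zero, and the only possible zero on $\mathbb T$ is $-1$. An alternative route to the maximum claim is $|p(z)|\le\sum|a_n|=p(1)$, which uses only $a_n\ge0$; the minimum claim is where the factorization is genuinely needed.

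For part~(ii), boundedness of $T(p)$ with $\|T(p)\|\le\|p\|_\infty=p(1)$ is standard for continuous symbols. For the reverse inequality, I will test $T(p)$ on long normalized constant blocks supported far from the corner, exactly as in the proof using $y_N$ given earlier in the paper but with no alternating sign; this gives $\|T(p)\|\ge p(1)$. Since $p$ is continuous, nonvanishing on $\mathbb T$ and of winding number zero, the Gohberg--Krein theorem \cite{BottcherSilbermann2006} shows that $T(p)$ is invertible.

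The main obstacle is the exact equality $\|T(p)^{-1}\|=|p(-1)|^{-1}$. The lower bound $\|T(p)^{-1}\|\ge1/|p(-1)|$ follows by applying the alternating test vectors $y_N$ to $T(p)$, since $\|T(p)y_N\|\to|p(-1)|$; this is the same computation as in the earlier proposition. For the upper bound I will use the Wiener--Hopf factorization obtained from Edrei's product. Winding number zero forces $k=0$, since the only other factors that wind are the $\alpha$ and $\delta$ factors, and these do not wind when all parameters are less than $1$; the boundary case of parameters equal to $1$ is excluded because $p(-1)\ne0$. Then $p=p_-p_+$, where
\[
 p_+=C e^{\gamma_1 z}\prod\frac{1+\alpha_\nu z}{1-\beta_\nu z},
\]
and $p_-$ is the corresponding product in $z^{-1}$. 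Each of $p_\pm$ and $1/p_\pm$ extends analytically to the appropriate side of $\mathbb T$, so that $T(p)^{-1}=T(1/p_+)T(1/p_-)$. The key observation is that $1/p_+$ and $1/p_-$ themselves have Edrei form up to the sign substitution $z\mapsto-z$: for example $1/(1+\alpha z)=\sum(-\alpha)^nz^n$ and $e^{-\gamma z}$ have coefficients whose absolute values form a P\'olya frequency sequence after twisting by $(-1)^n$. Consequently $\|T(1/p_\pm)\|=\sum|c_n|=|1/p_\pm(-1)|$. Multiplying these gives $\|T(p)^{-1}\|\le1/|p(-1)|$, which completes the proof. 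I expect the bookkeeping that justifies this twisting step, together with the convergence of the infinite products, to be the delicate part.
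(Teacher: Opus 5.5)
Your route matches the paper's: Edrei's product and factor-by-factor extremal moduli on $\mathbb T$ for (i), then a Wiener--Hopf splitting $T(p)^{-1}=T(p_+^{-1})T(p_-^{-1})$ for (ii). Part (i) is fine. Part (ii) has a gap at the sentence ``winding number zero forces $k=0$.''

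Edrei's theorem only gives nonnegative, summable parameters. So finitely many numerator parameters $\alpha_\nu$ or $\delta_\nu$ may exceed $1$, and the hypothesis $p(-1)\ne0$ excludes only the value $1$. For example, $p(z)=2+z^{-1}=z^{-1}(1+2z)$ is an Edrei form with $k=-1$ and $\alpha_1=2$, yet $p$ has winding number zero. In such a representation your $p_+$ vanishes at $-1/\alpha_\nu$ inside the unit disk. Then $1/p_+$ is not analytic there, and $T(1/p_+)$ is not the inverse of $T(p_+)$ (indeed $T(1+2z)$ has index $-1$). So the formula $T(p)^{-1}=T(1/p_+)T(1/p_-)$ and your twisting identity $\sum|c_n|=|1/p_+(-1)|$ both break down.

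The missing step is the normalization the paper performs before splitting. For each of the finitely many $\alpha_\nu>1$, write $1+\alpha_\nu z=\alpha_\nu z(1+\alpha_\nu^{-1}z^{-1})$; for each $\delta_\nu>1$, write $1+\delta_\nu z^{-1}=\delta_\nu z^{-1}(1+\delta_\nu^{-1}z)$. Absorb the constants into $C$ and the powers into $z^k$. After relabeling, all four parameter families lie in $[0,1)$ and, being summable, have a common bound below $1$. Then $p_\pm$ and $p_\pm^{-1}$ are analytic and zero free on the correct sides with absolutely summable coefficients, and neither factor winds. Only at this point does winding number zero force the remaining power of $z$ to be $z^0$, and the rest of your argument goes through.

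Once this is done, the twisting bookkeeping you flagged as delicate is unnecessary. The paper simply combines $\|T(q)\|\le\max_{\mathbb T}|q|$ with the fact from (i) that $|p_+|$ and $|p_-|$ each attain their minimum on $\mathbb T$ at $-1$. This gives $\|T(p)^{-1}\|\le C^{-1}\max_{\mathbb T}|p_+^{-1}|\max_{\mathbb T}|p_-^{-1}|=|p(-1)|^{-1}$ directly. Your test-vector lower bounds are a legitimate, more hands-on substitute for the Hartman--Wintner inclusion $p(\mathbb T)\subset\sigma(T(p))$ used in the paper. Your appeal to Gohberg--Krein for invertibility is harmless but redundant once the factorization is in hand.
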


\begin{proof}
For (i), Edrei's representation \cite{Edrei1953} gives
\[
 p(z)=Cz^m e^{\gamma z+\delta z^{-1}}
 \prod_{\nu=1}^{\infty}
 \frac{(1+\alpha_\nu z)(1+\widetilde\alpha_\nu z^{-1})}
      {(1-\beta_\nu z)(1-\widetilde\beta_\nu z^{-1})},
\]
where $C>0$, $m\in\mathbb Z$, $\gamma,\delta\ge0$, and the sequences
$(\alpha_\nu)_{\nu\in\mathbb N}$,
$(\widetilde\alpha_\nu)_{\nu\in\mathbb N}$,
$(\beta_\nu)_{\nu\in\mathbb N}$, and
$(\widetilde\beta_\nu)_{\nu\in\mathbb N}$ are nonnegative and summable.
The annulus of convergence contains $\mathbb T$, so
$\beta_\nu,\widetilde\beta_\nu<1$ for every $\nu\in\mathbb N$.
For $\alpha\ge0$ and $0\le\beta<1$, the modulus of
$(1+\alpha z)/(1-\beta z)$ on $\mathbb T$ is largest at $1$
and smallest at $-1$.
For $z=e^{i\theta}\in\mathbb T$, we also have
$e^{-(\gamma+\delta)}
\le |e^{\gamma z+\delta z^{-1}}|
=e^{(\gamma+\delta)\cos\theta}
\le e^{\gamma+\delta}$.
Passing to the limit in the finite products gives
$|p(-1)|\le |p(z)|\le p(1)$ for every $z\in\mathbb T$,
which proves the two extremal formulas.
The representation also shows that a zero on $\mathbb T$
can occur only at $-1$.

For (ii), assume $p(-1)\ne0$ and that $p$ has winding number zero.
The condition $p(-1)\ne0$ excludes numerator parameters equal to one.
Summability gives
$\#\{\nu\in\mathbb N:\alpha_\nu>1\}
+\#\{\nu\in\mathbb N:\widetilde\alpha_\nu>1\}<\infty$.
For these parameters, use
\[
 1+\alpha_\nu z=\alpha_\nu z(1+\alpha_\nu^{-1}z^{-1}),
 \qquad
 1+\widetilde\alpha_\nu z^{-1}
 =\widetilde\alpha_\nu z^{-1}(1+\widetilde\alpha_\nu^{-1}z).
\]
After absorbing the constant factors into $C$, combining the powers of $z$, and relabeling the parameters, we obtain
\[
 p(z)=Cz^\ell p_-(z)p_+(z),\qquad
 p_+(z)=e^{\gamma z}\prod_{\nu=1}^{\infty}
 \frac{1+\alpha_\nu z}{1-\beta_\nu z},\qquad
 p_-(z)=e^{\delta z^{-1}}\prod_{\nu=1}^{\infty}
 \frac{1+\widetilde\alpha_\nu z^{-1}}
      {1-\widetilde\beta_\nu z^{-1}}.
\]
Here $\ell\in\mathbb Z$, and all four parameter sequences remain
summable and have terms in $[0,1)$. Their terms therefore have a
common upper bound less than one. Thus $p_+$ and $p_+^{-1}$ are
analytic in a disk of radius greater than one, whereas $p_-$ and
$p_-^{-1}$ are analytic outside a circle of radius less than one,
including infinity. Both factors have winding number zero, so
$\ell=0$.

The Laurent coefficients of $p_+$ and $p_+^{-1}$ vanish at negative
indices, and those of $p_-$ and $p_-^{-1}$ vanish at positive indices.
All four coefficient sequences are absolutely summable.
The Toeplitz product rule \cite{BottcherSilbermann2006} gives
$T(p)=C T(p_-)T(p_+)$ and
$T(p)^{-1}=C^{-1}T(p_+^{-1})T(p_-^{-1})$.
Using $\|T(q)\|\le\max_{z\in\mathbb T}|q(z)|$ for continuous $q$
on $\mathbb T$ \cite{BottcherSilbermann2006}, we obtain
\[
 \|T(p)\|\le p(1),\qquad
 \|T(p)^{-1}\|\le C^{-1}
 \max_{z\in\mathbb T}|p_+(z)^{-1}|
 \max_{z\in\mathbb T}|p_-(z)^{-1}|
 =|p(-1)|^{-1},
\]
since $|p_+|$ and $|p_-|$ both attain their minima at $-1$.

The Hartman--Wintner spectral inclusion
\cite[Theorem~2.30(a)]{BottcherSilbermann2006} gives
$p(\mathbb T)\subset\sigma(T(p))$.
Hence $\|T(p)\|\ge p(1)$, and spectral mapping for the
inverse gives $\|T(p)^{-1}\|\ge |p(-1)|^{-1}$.
These inequalities complete the proof.
\end{proof}

Fix $h>0$ and an orthonormal basis $(e_j)_{j\in\mathbb N}$ of $H$.
Set $s_j=e^{-jh}$ for $j\in\mathbb N$, and define a positive
operator $A$ and a vector $f$ by $Ae_j=e^{-s_j}e_j$ for
$j\in\mathbb N$ and $f=\sum_{j=1}^{\infty}\sqrt{s_j}e_j$.
Since $s_j/s_{j+1}=e^h>1$ for every $j\in\mathbb N$, the argument
in Section~\ref{sec:common-positive} shows that
$(A^nf)_{n\in\mathbb N_0}$ is a frame for $H$.

Let $g(u)=\exp(u/2-e^u)$ for $u\in\mathbb R$, as in
Lemma~\ref{lem:logarithmic-kernel}.
For a phase parameter $\xi\in\mathbb R$, set
$t_k=e^{\xi+kh}$ for $k\in\mathbb N$ and define
$p_{h,\xi}(z)=\sum_{n\in\mathbb Z}g(\xi+nh)z^n$.
The series converges absolutely for $|z|>e^{-h/2}$, so its annulus
of convergence contains $\mathbb T$.
\begin{thm}
\label{thm:geometric-optimal-weights}
The following hold for the operator $A$ and vector $f$ defined above.
\begin{enumerate}[(i)]
\item If $\mu=\sum_{k=k_0}^{\infty}w_k\delta_{t_k}+\nu$ is
admissible, where $k_0\in\mathbb N$, $w_k>0$ for every integer
$k\ge k_0$, and $\nu$ is a nonnegative measure with finite support
in $[0,\infty)$, then $p_{h,\xi}(-1)\ne0$ and
\[
\kappa(S_\mu)\ge
\left(\frac{p_{h,\xi}(1)}{|p_{h,\xi}(-1)|}\right)^2.
\]
\item If $p_{h,\xi}(-1)\ne0$ and $p_{h,\xi}$ has winding number
zero, then $\mu_{h,\xi}=\sum_{k=1}^{\infty}t_k\delta_{t_k}$
attains equality in (i). Its sampled family
$(\sqrt{t_k}A^{t_k}f)_{k\in\mathbb N}$ is a Riesz basis for $H$
with optimal frame bounds $|p_{h,\xi}(-1)|^2$ and $p_{h,\xi}(1)^2$.
Such phases exist for every $h>0$.
\item If $p_{h,\xi}(-1)\ne0$, equality in (i) is attained after
adding or deleting finitely many points of $\{t_k:k\in\mathbb N\}$,
with each weight equal to its sampling time.
\end{enumerate}
\end{thm}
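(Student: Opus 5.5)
The plan is to reduce everything to a Toeplitz matrix, exactly as in the proof of Theorem~\ref{thm:critical-regular-spectra}. Here $f_j=\sqrt{s_j}$, so $b_j=1$ and $x_j=jh$. By Lemma~\ref{lem:logarithmic-kernel}, the coordinates of $\sqrt{t_k}A^{t_k}f$ are
\[
g(\xi+kh-jh)=a_{k-j},\qquad a_n:=g(\xi+nh).
\]
The synthesis operator of $(\sqrt{t_k}A^{t_k}f)_{k\ge1}$ is therefore the Toeplitz operator $T(p_{h,\xi})$, up to the fixed convention for transposes. Since $g$ is totally positive (Lemma~\ref{lem:logarithmic-kernel}), $(a_n)_{n\in\mathbb Z}$ is a P\'olya frequency sequence. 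It decays exponentially, so its Laurent series converges in an annulus containing $\mathbb T$. Hence Lemma~\ref{lem:pf-toeplitz-conditioning} applies to $p=p_{h,\xi}$.

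For (i), I would use modulated test vectors, as in the proposition preceding Section~\ref{sec:finite-excess}. For $z\in\{1,-1\}$ and $N$ large, let $x_N^{z}$ have coordinates $z^jN^{-1/2}$ for $N<j\le 2N$ and zero elsewhere. Then $x_N^{z}$ is a unit vector and $x_N^{z}\rightharpoonup0$. Put $\omega_k=w_k/t_k$. Then
\[
\langle S_\mu x,x\rangle=\sum_{k\ge k_0}\omega_k\,\bigl|(T^*x)_k\bigr|^2+\langle S_\nu x,x\rangle .
\]
The term from $\nu$ tends to zero by weak convergence, since it involves only finitely many fixed vectors $A^tf$. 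The Bessel bound gives $\sup_k\omega_k<\infty$, as in that proposition, because $\|\sqrt{t_k}A^{t_k}f\|\ge g(\xi)$. Truncating $T$ to a band of width $q$ gives $(T^*x_N^{z})_k=p(\bar z)(x_N^{z})_k$ for $N+q<k\le 2N-q$, up to an error $\varepsilon_q$. All remaining coordinates contribute $O(q/N)$ after weighting, using the boundedness of $\omega$. With $m_N=N^{-1}\sum_{N<k\le2N}\omega_k$, this yields
\[
\alpha\le|p(-1)|^2m_N+o(1),\qquad \beta\ge p(1)^2m_N-o(1),
\]
where $o(1)$ means letting $N\to\infty$ and then $q\to\infty$. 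Choose a subsequence along which $m_N\to m$. The first inequality forces $p(-1)\ne0$ and $m>0$. Dividing the two inequalities then gives $\kappa(S_\mu)\ge\beta/\alpha\ge p(1)^2/|p(-1)|^2$. I expect this error bookkeeping to be the main obstacle. The key point is that the same window average $m_N$ appears in both bounds, so the unknown weights cancel.

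For (ii), Lemma~\ref{lem:pf-toeplitz-conditioning}(ii) shows that $T(p)$ is invertible with $\|T(p)\|=p(1)$ and $\|T(p)^{-1}\|=|p(-1)|^{-1}$. The family is therefore a Riesz basis with optimal frame bounds $|p(-1)|^2$ and $p(1)^2$, and equality holds in (i). For existence of suitable phases, I would argue as follows. By the Zak transform result used in Theorem~\ref{thm:critical-regular-spectra}, $p_{h,\xi}(-1)=0$ for exactly one $\xi$ modulo $h$. Away from these points, the winding number is locally constant in $\xi$. Replacing $\xi$ by $\xi+h$ multiplies $p$ by $z^{-1}$, which lowers the winding number by one. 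Between consecutive exceptional points the winding number is therefore constant, and consecutive intervals differ by exactly one. Hence some interval has winding number zero.

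For (iii), suppose the winding number is $w\ne0$. Set $\xi'=\xi+wh$ (with the sign fixed by the convention above), so that $p_{h,\xi'}=z^{\mp w}p_{h,\xi}$ has winding number zero and the same moduli at $\pm1$. The set $\{e^{\xi'+kh}:k\ge1\}$ differs from $\{t_k:k\ge1\}$ by adding or deleting $|w|$ points. Applying (ii) to $\xi'$ gives equality with weights equal to the sampling times, and the bound in (i) is the same for $\xi$ and $\xi'$.
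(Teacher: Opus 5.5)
Your proposal is correct. Parts (ii) and (iii) follow the paper: the analysis matrix is $T(p_{h,\xi})$, Lemma~\ref{lem:pf-toeplitz-conditioning}(ii) gives the Riesz basis with optimal bounds $|p_{h,\xi}(-1)|^2$ and $p_{h,\xi}(1)^2$, and (iii) uses $p_{h,\xi+qh}(z)=z^{-q}p_{h,\xi}(z)$. In (iii) you should state two small points explicitly. First, the winding number is defined because Lemma~\ref{lem:pf-toeplitz-conditioning}(i) makes $-1$ the only possible zero on $\mathbb T$. Second, for negative winding number the added points $e^{\xi+kh}$ with $k\le0$ are absorbed into $\nu$, so the bound from (i) at phase $\xi$ still applies.

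For the existence of good phases, the paper is more direct. It takes one zero-free phase $\xi_0$ from the Zak argument and shifts it by its own winding number. Your interval argument is valid, but it needs uniqueness of the exceptional phase modulo $h$ and continuity in $\xi$, which the shift alone does not require.

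The genuine difference is in (i). The paper shifts an arbitrary finitely supported $v\in\ell^2(\mathbb Z)$ far out. It then averages the frame inequalities over the shifts, along a subsequence on which the Ces\`aro means of $d_k=w_k/t_k$ converge to some $\overline d$. Dominated convergence gives $\alpha\|v\|^2\le\overline d\,\|Lv\|^2\le\beta\|v\|^2$ for the Laurent operator $L$, and the Fourier transform turns this into $\alpha\le\overline d\,|p(e^{i\theta})|^2\le\beta$ on all of $\mathbb T$. You test only the frequencies $\theta=0,\pi$, with explicit vectors on a common window $(N,2N]$. The averaging is then built into the window, and the same mean $m_N$ multiplies both $|p(-1)|^2$ and $p(1)^2$, so the unknown weights cancel. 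Your route avoids the Fourier identification and the Ces\`aro subsequence. Its cost is the band-truncation and $O(q/N)$ boundary estimates, which are already present in the proof for the excluded phase. The paper's route gives a stronger conclusion: $\overline d\,|p|^2$ is pinched on the whole circle, which by itself rules out every zero of $p$ on $\mathbb T$.
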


\begin{proof}
Write $p=p_{h,\xi}$ and $a_n=g(\xi+nh)$ for $n\in\mathbb Z$.
For every $m\in\mathbb N$ and integers
$i_1<\cdots<i_m$ and $j_1<\cdots<j_m$, total positivity of $g$
from Lemma~\ref{lem:logarithmic-kernel} gives
$\det[g((\xi+hi_r)-hj_s)]_{r,s=1}^m\ge0$.
Thus $(a_n)_{n\in\mathbb Z}$ is a P\'olya frequency sequence.

For (i), let $\alpha,\beta$ be the optimal frame bounds for $\mu$.
Define $(d_k)_{k\in\mathbb Z}$ by $d_k=w_k/t_k$ for $k\ge k_0$
and $d_k=0$ for $k<k_0$.
The upper frame bound gives
$d_k a_0^2=w_k s_k e^{-2t_ks_k}
\le \langle S_\mu e_k,e_k\rangle\le\beta$
for every integer $k\ge k_0$.
Since $a_0=g(\xi)>0$, we have
$M:=\|d\|_\infty\le\beta/a_0^2<\infty$.
Set $K=S_\nu$, which is bounded and has finite rank, and let
$L=(a_{r-j})_{r,j\in\mathbb Z}$ be the bounded Laurent operator
on $\ell^2(\mathbb Z)$.

For a finitely supported $v=(v_j)_{j\in\mathbb Z}\in\ell^2(\mathbb Z)$,
choose $n_v\in\mathbb N$ such that $n_v+j\ge1$ for every
$j\in\operatorname{supp}v$. For integers $n\ge n_v$, set
$v^{(n)}=\sum_{j\in\operatorname{supp}v}v_je_{n+j}\in H$.
Then $\|v^{(n)}\|=\|v\|$, and the frame inequalities give
\[
 \alpha\|v\|^2
 \le
 \sum_{r\in\mathbb Z}d_{n+r}|(Lv)_r|^2
 +\langle Kv^{(n)},v^{(n)}\rangle
 \le\beta\|v\|^2.
\]
Since $v^{(n)}\rightharpoonup0$ as $n\to\infty$ and $K$ has
finite rank, $\langle Kv^{(n)},v^{(n)}\rangle\to0$.

Boundedness of $d$ gives a sequence $(N_\ell)_{\ell\in\mathbb N}$
of positive integers such that $N_\ell\to\infty$ and
$N_\ell^{-1}\sum_{n=1}^{N_\ell}d_n\to\overline d$
as $\ell\to\infty$.
For every $r\in\mathbb Z$ and $N\in\mathbb N$,
\[
 \left|
 \frac1N\sum_{n=1}^{N}d_{n+r}
 -\frac1N\sum_{n=1}^{N}d_n
 \right|
 \le\frac{2|r|M}{N}.
\]
Omitting the first $n_v-1$ terms changes the average by at most
$(n_v-1)M/N$. Consequently, for every fixed $r\in\mathbb Z$,
$\frac1{N_\ell}\sum_{n=n_v}^{N_\ell}d_{n+r}\to\overline d$ and
$\frac1{N_\ell}\sum_{n=n_v}^{N_\ell}
\langle Kv^{(n)},v^{(n)}\rangle\to0$ as $\ell\to\infty$.
Since $Lv\in\ell^2(\mathbb Z)$ and the coefficient averages lie
in $[0,M]$ for all sufficiently large $\ell$, dominated convergence gives
\[
 \sum_{r\in\mathbb Z}
 \left(\frac1{N_\ell}\sum_{n=n_v}^{N_\ell}d_{n+r}\right)
 |(Lv)_r|^2
 \longrightarrow\overline d\,\|Lv\|^2.
\]
Averaging the frame inequalities over $n_v\le n\le N_\ell$
and taking limits therefore yields
$\alpha\|v\|^2\le\overline d\,\|Lv\|^2\le\beta\|v\|^2$.
Thus $\overline d>0$, and density extends these inequalities
to every $v\in\ell^2(\mathbb Z)$.

The Fourier transform identifies $L$ with multiplication by $p$;
see \cite{BottcherSilbermann2006}. Hence $\alpha\le\overline d\,|p(e^{i\theta})|^2\le\beta$
for almost every $\theta\in[0,2\pi]$.
By continuity, these inequalities hold for every
$\theta\in[0,2\pi]$.
In particular, $p(-1)\ne0$.
Lemma~\ref{lem:pf-toeplitz-conditioning}(i) now gives
\[
 \kappa(S_\mu)
 =\frac{\beta}{\alpha}
 \ge
 \frac{\overline d\,p(1)^2}{\overline d\,|p(-1)|^2}
 =
 \left(\frac{p(1)}{|p(-1)|}\right)^2.
\]
This proves (i).

For (ii), the analysis operator of
$(\sqrt{t_k}A^{t_k}f)_{k\in\mathbb N}$ has matrix entries
$U_{kj}=\sqrt{t_ks_j}\,e^{-t_ks_j}
=g(\xi+(k-j)h)=a_{k-j}$ for $k,j\in\mathbb N$,
in the orthonormal basis $(e_j)_{j\in\mathbb N}$ of $H$.
Lemma~\ref{lem:pf-toeplitz-conditioning}(ii) shows that $U=T(p)$
is bounded and invertible, with
$\|U\|=p(1)$ and $\|U^{-1}\|=|p(-1)|^{-1}$.
This gives the Riesz basis property, the optimal frame bounds,
and equality in (i).

The Zak transform argument in the proof of
Theorem~\ref{thm:critical-regular-spectra}, based on \cite{Kloos2015},
gives a phase $\xi_0\in\mathbb R$ for which $p_{h,\xi_0}$ has no
zero on $\mathbb T$.
For every $q\in\mathbb Z$, a change of index gives
$p_{h,\xi+qh}(z)=z^{-q}p_{h,\xi}(z)$ for $|z|>e^{-h/2}$.
If $m_0\in\mathbb Z$ is the winding number of $p_{h,\xi_0}$,
then $p_{h,\xi_0+m_0h}$ has winding number zero and no zero on
$\mathbb T$. This proves the existence assertion in (ii).

For (iii), Lemma~\ref{lem:pf-toeplitz-conditioning}(i) shows that
$p$ has no zero on $\mathbb T$. Let $m\in\mathbb Z$ be its winding
number and set $\widetilde\xi=\xi+mh$.
Then $p_{h,\widetilde\xi}(z)=z^{-m}p(z)$ and
$|p_{h,\widetilde\xi}(z)|=|p(z)|$ for every $z\in\mathbb T$.
Thus $p_{h,\widetilde\xi}$ has winding number zero and the same
modulus as $p$ on $\mathbb T$.
By (ii), the design $\mu_{h,\widetilde\xi}$ attains equality in (i),
with each weight equal to its sampling time.
Its support is
$\operatorname{supp}\mu_{h,\widetilde\xi}
=\{e^{\xi+kh}:k\in\mathbb Z,\ k\ge m+1\}$,
which differs from $\{t_k:k\in\mathbb N\}$ by exactly $|m|$ points.
\end{proof}

For the operator $A$ and vector $f$ defined above, let
$\Gamma_{\mathrm{geom}}(h)$ denote the infimum of $\kappa(S_\mu)$
over all admissible designs of the form in
Theorem~\ref{thm:geometric-optimal-weights}(i),
with the phase $\xi\in\mathbb R$ also allowed to vary.

\begin{cor}
\label{cor:geometric-phase-conditioning}
The following hold.
\begin{enumerate}[(i)]
\item For every $h>0$,
\[
\Gamma_{\mathrm{geom}}(h)
=\min_{\substack{0\le\xi<h\\p_{h,\xi}(-1)\ne0}}
\left(\frac{p_{h,\xi}(1)}{|p_{h,\xi}(-1)|}\right)^2.
\]
For some $\xi\in\mathbb R$, the design $\mu_{h,\xi}$ attains
this minimum.
\item As $h\downarrow0$,
$\Gamma_{\mathrm{geom}}(h)=\frac18 e^{\pi^2/h}
\left(1+O(e^{-\pi^2/h})\right)$.
\end{enumerate}
\end{cor}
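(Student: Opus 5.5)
For part (i), I will combine Theorem~\ref{thm:geometric-optimal-weights}(i) and (iii). Fix $\xi$ and an admissible design of the form in (i). Part (i) of that theorem gives $p_{h,\xi}(-1)\ne0$ and $\kappa(S_\mu)\ge(p_{h,\xi}(1)/|p_{h,\xi}(-1)|)^2$. Part (iii) shows that this lower bound is attained by a design of the same type, namely $\mu_{h,\widetilde\xi}$ with $\widetilde\xi=\xi+mh$.

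The shift identity $p_{h,\xi+qh}(z)=z^{-q}p_{h,\xi}(z)$ shows that $|p_{h,\xi}|$ on $\mathbb T$ is $h$-periodic in $\xi$. So $\xi$ may be restricted to $[0,h)$, and
\[
\Gamma_{\mathrm{geom}}(h)=\inf_{\substack{0\le\xi<h\\p_{h,\xi}(-1)\ne0}}\left(\frac{p_{h,\xi}(1)}{|p_{h,\xi}(-1)|}\right)^2 .
\]
To show the infimum is a minimum, I use continuity in $\xi$. Both $p_{h,\xi}(1)=\sum_n g(\xi+nh)$ and $p_{h,\xi}(-1)=\sum_n(-1)^ng(\xi+nh)$ are continuous and $h$-periodic, by uniform convergence coming from the exponential decay of $g$. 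Also $p_{h,\xi}(1)>0$, and by Kloos there is only one $\xi$ modulo $h$ at which $p_{h,\xi}(-1)=0$. Hence the ratio is continuous on the circle $\mathbb R/h\mathbb Z$ minus one point, and tends to $+\infty$ at that point. The minimum is therefore attained at some $\xi$. Changing $\xi$ by a multiple of $h$ to make the winding number zero, Theorem~\ref{thm:geometric-optimal-weights}(ii) shows that $\mu_{h,\xi}$ attains it.

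For part (ii), I apply Poisson summation to $\phi(u)=g(u)$, whose Fourier transform is $\widehat g(\omega)=\int g(u)e^{-iu\omega}\,du=\Gamma(1/2-i\omega)$ by the Laplace transform formula in Lemma~\ref{lem:logarithmic-kernel}. This gives
\[
\sum_n g(\xi+nh)=\frac1h\sum_k\Gamma\!\left(\tfrac12-\tfrac{2\pi ik}{h}\right)e^{2\pi ik\xi/h},
\]
and for the alternating sum the frequencies shift by $\pi/h$:
\[
\sum_n(-1)^ng(\xi+nh)=\frac1h\sum_k\Gamma\!\left(\tfrac12-\tfrac{i(2k+1)\pi}{h}\right)e^{i(2k+1)\pi\xi/h}.
\]
I then use $|\Gamma(1/2+iy)|^2=\pi/\cosh(\pi y)$. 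For $p(1)$ this gives $h\,p(1)=\sqrt\pi\,(1+O(e^{-\pi^2/h}))$, since $|\Gamma(1/2\pm2\pi i/h)|^2\approx2\pi e^{-2\pi^2/h}$. For $p(-1)$, the dominant terms are $k=0$ and $k=-1$, each of modulus $\sqrt{2\pi}\,e^{-\pi^2/(2h)}(1+O(e^{-2\pi^2/h}))$. They combine to $2\,\mathrm{Re}\bigl(\Gamma(1/2-i\pi/h)e^{i\pi\xi/h}\bigr)$, and the remaining terms are smaller by a factor $O(e^{-\pi^2/h})$.

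The main obstacle is the exact minimization over $\xi$ with a controlled relative error $O(e^{-\pi^2/h})$. Minimizing the ratio means maximizing $|p(-1)|$ while $p(1)$ varies with $\xi$ only at relative order $e^{-\pi^2/h}$. Choosing $\xi$ to align the phase gives
\[
h|p(-1)|=2\sqrt{2\pi}\,e^{-\pi^2/(2h)}\bigl(1+O(e^{-\pi^2/h})\bigr),
\]
because the next harmonics $k=1,-2$ have relative size about $e^{-\pi^2/h}$ and can only perturb the maximum at that order. The ratio squared is then
\[
\frac{\pi}{8\pi\,e^{-\pi^2/h}}\bigl(1+O(e^{-\pi^2/h})\bigr)=\frac18e^{\pi^2/h}\bigl(1+O(e^{-\pi^2/h})\bigr),
\]
as claimed. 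The care needed is in bounding the perturbations of the maximum uniformly in $\xi$. I will do this by writing $h\,p(-1)=2|\Gamma_0|\cos(\pi\xi/h-\theta_0)+O(e^{-3\pi^2/(2h)})$ and $h\,p(1)=\sqrt\pi+O(e^{-\pi^2/h})$, both with uniform error terms.
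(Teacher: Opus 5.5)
Your proposal is correct and follows essentially the same route as the paper. For (i), both arguments use parts (i)--(iii) of Theorem~\ref{thm:geometric-optimal-weights}, periodicity in $\xi$, and a winding-number shift. For (ii), both use Poisson summation with $\widehat g(\omega)=\Gamma(1/2-i\omega)$ and the identity $|\Gamma(1/2+it)|^2=\pi/\cosh(\pi t)$. Both also keep the dominant terms with remainders uniform in $\xi$ before maximizing $|p_{h,\xi}(-1)|$.

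The one small difference is in how the minimum in (i) is attained. You invoke uniqueness of the Zak zero and let the ratio blow up at the excluded phase. The paper instead maximizes the reciprocal $|p_{h,\xi}(-1)|/p_{h,\xi}(1)$, which is continuous and periodic on the whole period. That only needs the existence of one good phase, which part (ii) of the theorem supplies.
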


\begin{proof}
For (i), fix $h>0$. The series for $p_{h,\xi}(1)$ and
$p_{h,\xi}(-1)$ converge uniformly for $\xi\in[0,h]$.
Since $p_{h,\xi}(1)>0$ and
$p_{h,\xi+h}(z)=z^{-1}p_{h,\xi}(z)$, the function
$\xi\mapsto |p_{h,\xi}(-1)|/p_{h,\xi}(1)$ is continuous and
periodic with period $h$.
By Theorem~\ref{thm:geometric-optimal-weights}(ii) and periodicity,
there exists $\xi_*\in[0,h)$ such that
\[
 \frac{|p_{h,\xi_*}(-1)|}{p_{h,\xi_*}(1)}
 =
 \max_{0\le\xi\le h}
 \frac{|p_{h,\xi}(-1)|}{p_{h,\xi}(1)}
 >0.
\]
Thus $\xi_*$ minimizes the ratio in (i).
If $m\in\mathbb Z$ is the winding number of $p_{h,\xi_*}$,
then $p_{h,\xi_*+mh}$ has winding number zero and the same modulus
on $\mathbb T$, as in the proof of
Theorem~\ref{thm:geometric-optimal-weights}.
Part (ii) of that theorem gives an attaining design, and part (i)
shows that no design in the definition of
$\Gamma_{\mathrm{geom}}(h)$ has a smaller condition number.

For (ii), write $\Gamma_E$ for Euler's gamma function and define
$\widehat g(\omega)=\int_{\mathbb R}g(u)e^{-i\omega u}\,du$
for $\omega\in\mathbb R$.
Substitution of $v=e^u$ gives
$\widehat g(\omega)=\Gamma_E(1/2-i\omega)$ for $\omega\in\mathbb R$.
We also use the identity
$|\Gamma_E(1/2+it)|^2=\pi/\cosh(\pi t)$ for $t\in\mathbb R$;
see \cite[(5.4.4)]{DLMF}.
Since $g$ is smooth and $g,g',g''\in L^1(\mathbb R)$,
Poisson summation \cite[(1.8.14)]{DLMF} gives
\[
 p_{h,\xi}(e^{i\theta})
 =\frac1h\sum_{\ell\in\mathbb Z}
 \widehat g\left(\frac{2\pi\ell-\theta}{h}\right)
 e^{i(2\pi\ell-\theta)\xi/h},
 \qquad \theta,\xi\in\mathbb R.
\]
Set $Q_h=e^{-\pi^2/h}$ and
$G_h=\Gamma_E(1/2+i\pi/h)$.
The gamma function identity gives
$|\widehat g(\omega)|\le\sqrt{2\pi}\,e^{-\pi|\omega|/2}$
for every $\omega\in\mathbb R$. Hence
\[
 \sum_{\ell\ne0}
 \left|\widehat g\left(\frac{2\pi\ell}{h}\right)\right|
 \le \frac{2\sqrt{2\pi}\,Q_h}{1-Q_h},
 \qquad
 \sum_{\ell\notin\{0,1\}}
 \left|\widehat g\left(\frac{2\pi\ell-\pi}{h}\right)\right|
 \le \frac{2\sqrt{2\pi}\,Q_h^{3/2}}{1-Q_h}.
\]
Retaining the term $\ell=0$ at $\theta=0$ and the terms
$\ell=0,1$ at $\theta=\pi$, we obtain
\[
 p_{h,\xi}(1)=\frac{\sqrt\pi}{h}\left(1+O(Q_h)\right),
 \qquad
 p_{h,\xi}(-1)
 =\frac2h\operatorname{Re}\left(G_he^{-i\pi\xi/h}\right)
 +O\left(h^{-1}Q_h^{3/2}\right),
\]
as $h\downarrow0$, uniformly for $\xi\in\mathbb R$.

Let $M_h=\max_{0\le\xi\le h}|p_{h,\xi}(-1)|$.
Since $\max_{0\le\xi\le h}
|\operatorname{Re}(G_he^{-i\pi\xi/h})|=|G_h|$,
the uniform remainder estimate gives
$\left|M_h-\frac{2|G_h|}{h}\right|
=O(h^{-1}Q_h^{3/2})$.
Moreover, $|G_h|^2=\pi/\cosh(\pi^2/h)
=2\pi Q_h/(1+Q_h^2)$,
so $M_h=(2|G_h|/h)(1+O(Q_h))$ as $h\downarrow0$.
The uniform estimate for $p_{h,\xi}(1)$ and part (i) therefore yield
\[
 \Gamma_{\mathrm{geom}}(h)
 =\frac{\pi}{h^2M_h^2}\left(1+O(Q_h)\right)
 =\frac{\cosh(\pi^2/h)}4\left(1+O(Q_h)\right)
 =\frac18 e^{\pi^2/h}\left(1+O(Q_h)\right)
 \qquad\text{as }h\downarrow0.
\]
This proves (ii).
\end{proof}

\section{Sampling and conditioning for normal operators}
\label{sec:normal}

We study how spectral geometry and generating vectors affect
weighted sampling for normal operators. Under a condition on the
angular distribution of the spectrum, we construct integer
sampling designs with counting bounds in terms of the nonempty
radial bands. The sampled frame operators approximate $S$ to
any prescribed relative accuracy.
We then construct a normal operator with two generating families
for which the optimal growth orders of $N_\mu(R)$ are $\log R$
and $R$, respectively, for both real and integer sampling.
For the family requiring linear growth, we estimate the infimum
of the upper sampling density under a prescribed bound on the
frame condition number and determine its asymptotic behavior
as this bound tends to infinity.

\subsection{Spectral geometry and generating vectors}
\label{sec:normal-geometry}

Let $(A,C)$ be an observable system with $A$ normal, and let
$\mu=\sum_{t\in T}w_t\delta_t$ be a sampling design such that
$S_\mu$ is bounded. Suppose first that $A$ has an orthonormal
eigenbasis $(e_j)_{j\in\mathbb N}$ with nonzero eigenvalues
$\lambda_j=e^{-s_j+i\theta_j}$, where $s_j>0$ and
$\theta_j\in(-\pi,\pi]$.
Set $B=C^*C$ and $B_{jk}=\langle Be_k,e_j\rangle$. Then
\[
 \langle S_\mu e_k,e_j\rangle
 =B_{jk}\sum_{t\in T}w_t e^{-(s_j+s_k)t}
                  e^{it(\theta_k-\theta_j)}
 \qquad j,k\in\mathbb N.
\]
If $A$ is positive, then $\theta_j=0$ for every $j\in\mathbb N$,
so $e^{it(\theta_k-\theta_j)}=1$ for all $t\ge0$ and
$j,k\in\mathbb N$.

For a general normal operator $A$ satisfying $AS=SA$,
the Stein identity in Lemma~\ref{lem:stein} gives
$B=S-A^*SA=S(I-|A|^2)$, and hence
\[
 S^{-1/2}S_\mu S^{-1/2}
 =\sum_{t\in T}w_t(I-|A|^2)|A|^{2t}.
\]

Let $E_A$ be the spectral measure of $A$. For $j\in\mathbb N$, set
$\Omega_j=\{z\in\sigma(A)\setminus\{0\}:
2^{-j-1}<-\log|z|\le2^{-j}\}$.
As in Section~\ref{sec:multiple}, define
$K_{|A|}(R)=\#\{j\in\mathbb N:2^j\le R,\ E_A(\Omega_j)\ne0\}$
for $R\ge2$.
Only radial bands with nonzero spectral projections are counted.

\begin{thm}\label{thm:normal-angular}
Let $(A,C)$ be an observable system with $A$ normal and frame
bounds $a,b$. Suppose that $M\in\mathbb N$ and $L_0>0$ satisfy the
following condition: for every $j\in\mathbb N$, the set
$\{z/|z|:z\in\Omega_j\}$ is covered by at most $M$ arcs of
$\mathbb T$, each of length at most $L_0 2^{-j}$.
For every $0<\varepsilon<1$, there is an integer sampling design
$\mu$ such that
\[
 (1-\varepsilon)S\le S_\mu\le(1+\varepsilon)S,\qquad
 N_\mu(R)\le L\bigl(K_{|A|}(R)+1\bigr)
 \qquad R\ge2.
\]
The constant $L$ depends only on $a,b,M,L_0,\varepsilon$.
\end{thm}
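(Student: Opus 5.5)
We reduce the relative approximation $(1-\varepsilon)S\le S_\mu\le(1+\varepsilon)S$ to a statement about the normalized analysis operator. Set $F(u)x=e^{u/2}CA^{e^u}x$ as in Lemma~\ref{lem:log-localization}, now with $A$ normal. Summing the Stein identity of Lemma~\ref{lem:stein} as in the proof of Proposition~\ref{prop:reference-grid} gives a continuous reproducing formula: $S$ is expressed through $\|Cx\|^2$, $\int_0^\infty\|F(u)x\|^2\,du$, and a finite initial segment. Since $S$ is comparable to $I$ with constants $a,b$, it suffices to produce a discrete weighted design $\mu$ with
\[
 \bigl|\langle S_\mu x,x\rangle-\langle Sx,x\rangle\bigr|\le\varepsilon a\|x\|^2 .
\]
The natural candidate is the integer-time sum $\sum_{n\ge0}\|CA^nx\|^2$ itself. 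Within each radial band we group consecutive integer times into blocks and replace each block by a few weighted representatives. The relative error is then controlled in operator norm by an absolute error of order $\varepsilon a$.

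\emph{Step 1: band localization.} Repeat the proof of Lemma~\ref{lem:log-localization}(i) with $P_j=E_A(\Omega_j)$. Compressing $B=S-A^*SA$ gives
\[
 \|CP_j\|^2\le 2b\,\|I-|A|P_j\|\lesssim b\,2^{-j}.
\]
Together with $|A^t|\le e^{-t2^{-j-1}}$ on $P_jH$, this yields exponential decay in $|\log t-q_j|$ for the contributions of band $j$ at time $t$. The Schur test and the almost-orthogonality argument of Lemma~\ref{lem:log-localization}(ii)--(iii) then show two things. First, the integer times outside $\bigcup_{j\in\mathcal J}[2^{j}e^{-D},2^{j}e^{D}]$ contribute at most $\varepsilon a/4$ to the quadratic form for $D=D(a,b,\varepsilon)$. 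Second, the cross terms between distant bands are uniformly small. It therefore remains to approximate, for each nonempty band $j$, the block sum $\sum_{n\in I_j}(A^n)^*BA^n$ with $I_j=[2^{j}e^{-D},2^{j}e^{D}]\cap\mathbb N_0$, using $O(1)$ weighted integer times.

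\emph{Step 2: angular factorization inside a band.} This is where the arc hypothesis enters and where we expect the main difficulty. Fix a band $j$ and split $\Omega_j$ into its at most $M$ arcs $\Omega_{j,m}$. On each arc, pick a centre $e^{i\phi_m}$ and write
\[
 A^n=e^{in\phi_m}\,U_m^n\,|A|^n,\qquad U_m=e^{-i\phi_m}\operatorname{sgn}(A)\text{ on }E_A(\Omega_{j,m})H .
\]
The angular deviations are at most $L_02^{-j}$, so for $n\le2^{j}e^{D}$ the operator $U_m^n$ lies within $O(L_0e^D)$ of the phase; this deviation is bounded but not small. We therefore subdivide each arc into $O(L_0e^D/\varepsilon')$ sub-arcs so that $U^n$ is within $\varepsilon'$ of a scalar phase for all $n\in I_j$. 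After this step, each block is an operator-valued function of $(|A|, n)$ multiplied by finitely many characters $e^{in(\phi_m-\phi_{m'})}$, with a number of characters bounded in terms of $M,L_0,D,\varepsilon$.

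Since $|A|$ on $P_jH$ has spectrum in $[e^{-2^{-j}},e^{-2^{-j-1}}]$, the rescaled map $n\mapsto|A|^n$ is, uniformly in $j$, a smooth function of $n2^{-j}\in[e^{-D},e^{D}]$. Polynomial approximation in the variable $\rho=|A|^{2^j}$ reduces the block to a finite-dimensional moment problem. We must choose nonnegative weights $w_n$ on a bounded number $L'$ of integers in $I_j$ matching the finitely many generalized moments
\[
 \sum_{n\in I_j}\rho^{n2^{-j}}(n2^{-j})^k e^{in\psi_r}
\]
uniformly in $\rho\in[e^{-1},e^{-1/2}]$, up to error $\varepsilon'$. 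The count $L'$ depends only on $(a,b,M,L_0,\varepsilon)$. We would attack this by Carathéodory/Tchakaloff, as in Proposition~\ref{prop:finite-dimensional-exact} via \cite{BayerTeichmann2006}: positive combinations of a finite-dimensional family of vectors admit a representation using at most $\dim+1$ atoms from the same family. If exact positivity in the complex moments causes trouble, we would fall back on a stability argument. In that case we use a Riemann sum with step $\asymp\varepsilon 2^{j}$, noting that the $n$-derivative of every entry is $O(2^{-j})$ times its size, exactly as in the discretization step of Proposition~\ref{prop:reference-grid}. A mesh of relative size $\varepsilon''$ then gives $O(1/\varepsilon'')$ points per band, which is enough.

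\emph{Step 3: assembly and counting.} Summing the per-band approximations and using the Schur-type bound from Step~1 for interactions between bands gives
\[
 |\langle (S_\mu-S)x,x\rangle|\le\varepsilon a\|x\|^2 ,
\]
and hence the stated two-sided bound relative to $S$. The spectral part on $E_A(\{|z|\le e^{-1/2}\})$ corresponds to $j=0$ and is handled with finitely many initial integer times, as in Theorem~\ref{thm:finite-times}. This costs a constant number of times. Each nonempty band contributes at most $L'$ times located in $[2^je^{-D},2^je^D]$. Hence times $\le R$ arise only from bands with $2^j\le Re^D$, and
\[
 N_\mu(R)\le L'\bigl(K_{|A|}(R)+D/\log2+2\bigr)+C_0 ,
\]
which gives $L(K_{|A|}(R)+1)$ after adjusting constants.
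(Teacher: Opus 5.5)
Your overall strategy matches the paper's: use the integer-time sum, localize near the band centres $2^j$, reduce to finitely many scalar functions of $n$, and select times by Carath\'eodory via \cite{BayerTeichmann2006}. Two steps fail as written, however.

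\textbf{Step 1 needs the arc hypothesis.} The bound $\|CP_j\|^2\le2b\|I-|A|P_j\|$ is false for normal $A$, so the hypothesis cannot be postponed to Step~2. Compressing $B=S-A^*SA$ to $P_jH$ gives $S_{jj}-A_j^*S_{jj}A_j$. Since $A_j$ is not positive and does not commute with $S_{jj}$, this is not controlled by $\|I-|A_j|\|$. The ring system of Section~\ref{sec:normal-model} is a counterexample. Its block $\mathbb C^{2^j}$ lies in band $j$, yet $\|C_{2^j}\|^2=d$ for all $j$. The Fourier vector $u_{2^j,k}$ of Lemma~\ref{lem:ring-residues} already has observation energy $de^{-2k/2^j}\approx d$ at the single time $n=k\ll2^j$, so band $j$ is not localized near $2^j$. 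What is true is a per-arc estimate. Compress to $P_{j,r}H$ and write $A|_{P_{j,r}H}=e^{i\theta_{j,r}}(I+E)$ with $\|E\|\le(1+L_0)2^{-j}$; this gives $\|CP_{j,r}\|^2\le2b(1+L_0)2^{-j}$. All decay and Schur bounds then carry constants depending on $M$ and $L_0$.

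\textbf{The per-band assembly does not approximate $S$.} This is the more serious problem. For consecutive nonempty bands your windows $I_j=[2^je^{-D},2^je^D]$ overlap heavily, so $S$ is not the sum of the block sums $\sum_{n\in I_j}(A^n)^*BA^n$. Each block sum acts on every band, and times chosen for band $j$ also contribute to neighbouring bands. The cross terms $\langle CA^nP_jx,CA^nP_{j'}x\rangle$ with $|j-j'|$ bounded are of order one at the relevant times. The Schur bound of Lemma~\ref{lem:log-localization} only makes widely separated bands negligible, so it does not absorb them.

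The fix is to partition the time axis into disjoint intervals $\mathcal N_k=\{n:e^k\le n<e^{k+1}\}$. On each retained interval, let $P^{(k)}$ project onto the boundedly many arc pieces $i$ with $|q_i-k|\le W$. Expand $\sqrt n\,CA^nP^{(k)}$ jointly as $\sum_{i,l}\phi_{i,l}(n)V_{i,l}$ with $\phi_{i,l}(n)=\sqrt{n/T}\,(n/T)^le^{in\theta_i}$ and $T=e^k$. This comes from a Taylor expansion of $e^{-nZ_i}$, where $A_i^n=e^{in\theta_i}e^{-nZ_i}$ and $\|Z_i\|\le(1+L_0)2^{-j}$, so it also absorbs the residual angle and no sub-arc subdivision is needed. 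Then choose $\nu_{k,n}\ge0$, with at most $(\dim\Phi)^2$ atoms, reproducing exactly the Hermitian matrix $\sum_{n\in\mathcal N_k}n^{-1}\Phi(n)\Phi(n)^*$. Include a constant coordinate in $\Phi$ so the mass stays at most $2$. This makes every cross term between nearby bands and arcs exact, and a separate truncation estimate for $I-P^{(k)}$ handles the far bands.

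Two further points. The matching must be exact at this operator-valued level. Because $B$ does not commute with $A$, approximate matching of scalar moments ``uniformly in $\rho$'' does not by itself bound the quadratic form. Your Riemann-sum fallback would also fail. The cross terms carry factors $e^{in(\theta_i-\theta_{i'})}$ that oscillate at the integer scale, so subsampling with step $\asymp\varepsilon2^j$ aliases them. The derivative bound from Proposition~\ref{prop:reference-grid} holds only for positive $A$. Positivity is no obstacle to Carath\'eodory in any case, since it is applied in the real space of Hermitian matrices.
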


\begin{proof}
For each $j\in\mathbb N$, partition $\Omega_j$ into Borel sets
$\Omega_{j,r}$, $1\le r\le M_j$, where $M_j\in\{0,\ldots,M\}$ and
the directions of each set lie in one of the prescribed arcs.
Let $P_{j,r}=E_A(\Omega_{j,r})$, and let $\theta_{j,r}$ be the
central angle of that arc. Set
$\mathcal I=\{(j,r):j\in\mathbb N,\ 1\le r\le M_j,\ P_{j,r}\ne0\}$
and $P_0=E_A(\{z\in\mathbb C:|z|<e^{-1/2}\})$.
For $i=(j,r)\in\mathcal I$, write $P_i=P_{j,r}$,
$A_i=A|_{P_iH}$, $\theta_i=\theta_{j,r}$, and $q_i=j\log2$.
Lemma~\ref{lem:stein} gives
$P_0+\sum_{i\in\mathcal I}P_i=I$, with strong convergence.

Compressing the Stein identity to $P_iH$ gives
$\|CP_i\|^2\le2b\|e^{-i\theta_i}A_i-I\|
\le2b(1+L_0)2^{-j}$ for $i=(j,r)\in\mathcal I$.
Since $\|A_i^n\|\le e^{-n2^{-j-1}}$, there are constants
$C_0,c>0$ such that
$\|\sqrt n\,CA^nP_i\|\le C_0\sqrt b\,e^{-c|\log n-q_i|}$
for $n\in\mathbb N$ and $i\in\mathcal I$.
The same estimate holds for $P_0$ with $q_0=0$, since
$\|A^nP_0\|\le e^{-n/2}$.

For $k\in\mathbb N_0$, put
$\mathcal N_k=\{n\in\mathbb N:e^k\le n<e^{k+1}\}$, and define
$\nu=\sum_{n=1}^{\infty}n^{-1}\delta_n$.
Then $\nu(\mathcal N_k)\le2$ and
\[
 \int_{\mathbb N}\|\sqrt n\,CA^nx\|^2\,d\nu(n)
 =\sum_{n=1}^{\infty}\|CA^nx\|^2
 \qquad x\in H.
\]
Set $\mathcal Q=\{0\}\cup\{q_i:i\in\mathcal I\}$.
For $D,W\ge1$ and $K_0\in\mathbb N$, define
\[
 \mathcal I_k=\{i\in\mathcal I:|q_i-k|\le W\},
 \qquad
 P^{(k)}=\sum_{i\in\mathcal I_k}P_i
 \qquad k\in\mathbb N_0.
\]
There is a constant $C_1>0$ such that, for every positive measure
$\xi$ on $\mathbb N$ satisfying $\xi(\mathcal N_k)\le2$
for all $k\in\mathbb N_0$, we have
\[
 \sum_{\substack{k\ge0\\ \dist(k,\mathcal Q)>D}}
 \int_{\mathcal N_k}\|\sqrt n\,CA^nx\|^2\,d\xi(n)
 \le C_1^2b\,e^{-cD}\|x\|^2
\]
and
\[
 \sum_{k\ge K_0}
 \int_{\mathcal N_k}
 \|\sqrt n\,CA^n(I-P^{(k)})x\|^2\,d\xi(n)
 \le C_1^2b
 \bigl(e^{-cW/2}+e^{-cK_0/2}\bigr)^2\|x\|^2.
\]
Set $\mathcal J=\mathcal I\cup\{0\}$ and
$a_{ki}=e^{-c|k-q_i|/2}$. Since each center $j\log2$ occurs at most
$M$ times,
\[
 R_*:=\sup_{k\ge0}\sum_{i\in\mathcal J}a_{ki}
 \le1+\frac{2M}{1-2^{-c/2}},\qquad
 C_*:=\sup_{i\in\mathcal J}\sum_{k\ge0}a_{ki}
 \le\frac{2}{1-e^{-c/2}}.
\]
For $\rho\ge0$, we have
$e^{-c|k-q_i|}\le e^{-c\rho/2}a_{ki}$ whenever
$|k-q_i|\ge\rho$. Hence Cauchy--Schwarz gives
\[
 \begin{aligned}
 \sum_{k\ge0}
 \left(
 \sum_{\substack{i\in\mathcal J\\ |k-q_i|\ge\rho}}
 e^{-c|k-q_i|}\|P_ix\|
 \right)^2
 &\le
 e^{-c\rho}\sum_{k\ge0}
 \left(\sum_{i\in\mathcal J}a_{ki}\|P_ix\|\right)^2\\
 &\le
 R_*C_*e^{-c\rho}\sum_{i\in\mathcal J}\|P_ix\|^2\\
 &=R_*C_*e^{-c\rho}\|x\|^2.
 \end{aligned}
\]
For $n\in\mathcal N_k$, we have $|\log n-k|<1$ and
$\xi(\mathcal N_k)\le2$.
Apply the preceding bound with $\rho=D$ to the omitted intervals
and with $\rho=W$ to the omitted indices in $\mathcal I$.
For $P_0$, the identities $q_0=0$ and $|k-q_0|=k\ge K_0$
allow $\rho=K_0$, giving the factor $e^{-cK_0/2}$ after taking
square roots.

Keep time zero and all times in $\mathcal N_k$ for $0\le k<K_0$,
with unit weights. For later times, retain the intervals indexed by
$\mathcal K=\{k\in\mathbb N_0:k\ge K_0,\ \dist(k,\mathcal Q)\le D\}$.
There are constants $Q_1,Q_2$ such that
$\#\mathcal I_k\le Q_1$ for $k\in\mathbb N_0$ and
$\#\{k\in\mathbb N_0:i\in\mathcal I_k\}\le Q_2$ for $i\in\mathcal I$.

Fix $k\in\mathcal K$ and put $T=e^k$.
For $i=(j,r)\in\mathcal I_k$, choose a Borel argument $\vartheta_i$
on $\Omega_{j,r}$ satisfying
$|\vartheta_i(z)-\theta_i|\le L_0 2^{-j}/2$.
Spectral calculus applied to
$-\log|z|-i(\vartheta_i(z)-\theta_i)$ defines an operator $Z_i$
on $P_iH$ such that $\|Z_i\|\le(1+L_0)2^{-j}$ and
$A_i^n=e^{in\theta_i}e^{-nZ_i}$ for $n\in\mathbb N$.
Since $n$ is an integer, the latter identity remains valid even
when the arc crosses the negative real axis.
For $n\in\mathcal N_k$ and $i\in\mathcal I_k$, we have
$\|nZ_i\|\le(1+L_0)e^{W+1}$ and
$\sqrt n\,\|CP_i\|\le\sqrt{2b(1+L_0)e^{W+1}}$.
For $p\in\mathbb N_0$, define
$\phi_{i,l}(n)=\sqrt{n/T}\,(n/T)^l e^{in\theta_i}$ and
$V_{i,l}=\frac{\sqrt T}{l!}C(-TZ_i)^lP_i$
for $i\in\mathcal I_k$ and $0\le l\le p$.
Given $\eta>0$, Taylor's formula gives, for sufficiently large $p$,
\[
 \left\|
 \sqrt n\,CA^nP_ix-\sum_{l=0}^p\phi_{i,l}(n)V_{i,l}x
 \right\|
 \le\eta\sqrt b\,\|P_ix\|.
\]
The estimate is uniform in $k\in\mathcal K$, $n\in\mathcal N_k$,
$i\in\mathcal I_k$, and the angles $\theta_i$.

On each retained interval, set
$F_k(n)=\sum_{i\in\mathcal I_k}\sum_{l=0}^p\phi_{i,l}(n)V_{i,l}$.
Cauchy--Schwarz and the bounds on $\xi(\mathcal N_k)$,
$Q_1$, and $Q_2$ give
\[
 \sum_{k\in\mathcal K}\int_{\mathcal N_k}
 \|\sqrt n\,CA^nP^{(k)}x-F_k(n)x\|^2\,d\xi(n)
 \le
 2Q_1Q_2\eta^2b\sum_{i\in\mathcal I}\|P_ix\|^2
 \le2Q_1Q_2\eta^2b\|x\|^2.
\]

For each $k\in\mathcal K$, let $\Phi(n)$ be the vector consisting
of the constant coordinate one and
$(\phi_{i,l}(n))_{i\in\mathcal I_k,\,0\le l\le p}$.
Its dimension satisfies
$d_k=1+(p+1)\#\mathcal I_k\le d_0:=1+(p+1)Q_1$.
Apply \cite[Corollary~1]{BayerTeichmann2006} in the real vector
space of Hermitian $d_k\times d_k$ matrices.
There are coefficients $\nu_{k,n}\ge0$, $n\in\mathcal N_k$, with
$\#\{n\in\mathcal N_k:\nu_{k,n}>0\}\le d_k^2$, such that
$\sum_{n\in\mathcal N_k}\nu_{k,n}\Phi(n)\Phi(n)^*
=\sum_{n\in\mathcal N_k}n^{-1}\Phi(n)\Phi(n)^*$.
The constant coordinate gives
$\sum_{n\in\mathcal N_k}\nu_{k,n}=\nu(\mathcal N_k)\le2$.
Expanding the squared norms yields
$\sum_{n\in\mathcal N_k}\nu_{k,n}\|F_k(n)x\|^2
=\sum_{n\in\mathcal N_k}n^{-1}\|F_k(n)x\|^2$
for $x\in H$.
This identity holds without any restriction on $\dim Y$.

Define
\[
 \nu'
 =\sum_{k=0}^{K_0-1}\sum_{n\in\mathcal N_k}n^{-1}\delta_n
 +\sum_{k\in\mathcal K}\sum_{n\in\mathcal N_k}\nu_{k,n}\delta_n,
 \qquad
 \mu=\delta_0+\sum_{n=1}^{\infty}n\nu'(\{n\})\delta_n.
\]
Both $\nu$ and $\nu'$ have mass at most two on each $\mathcal N_k$.
Thus the truncation and Taylor estimates apply to both measures,
while the approximate energies agree on every retained interval.
These estimates also show that the resulting analysis operator is bounded.

Choose $\delta>0$ so that
$\delta(2\sqrt b+\delta)\le\varepsilon a$.
Taking $D$, then $W,K_0$ sufficiently large, followed by $\eta$
sufficiently small and $p$ sufficiently large, the triangle
inequality gives
$\left|\langle S_\mu x,x\rangle^{1/2}
-\langle Sx,x\rangle^{1/2}\right|\le\delta\|x\|$
for $x\in H$.
Since $aI\le S\le bI$, it follows that
$|\langle(S_\mu-S)x,x\rangle|
\le\delta(2\sqrt b+\delta)\|x\|^2
\le\varepsilon\langle Sx,x\rangle$.
Hence $(1-\varepsilon)S\le S_\mu\le(1+\varepsilon)S$.

Each later retained interval contains at most $d_0^2$ sampling times.
If it contains a sampling time $n\le R$, then $k\le\log R$ and
$|k-q|\le D$ for some $q\in\mathcal Q\cap[0,\log R+D]$.
Each center accounts for at most $2D+2$ interval indices, and
$\#\bigl(\mathcal Q\cap[0,\log R+D]\bigr)
\le K_{|A|}(R)+\left\lceil\frac D{\log2}\right\rceil+2$.
Including time zero and the initial positive times gives
\[
 N_\mu(R)\le\lceil e^{K_0}\rceil
 +d_0^2(2D+2)
 \left(K_{|A|}(R)+\left\lceil\frac D{\log2}\right\rceil+2\right)
 \le L\bigl(K_{|A|}(R)+1\bigr)
 \qquad R\ge2.
\]
The parameters can be chosen in terms of $a,b,M,L_0,\varepsilon$
only, which gives the stated dependence of $L$.
For fixed $A$ and spectral partition, the vectors $\Phi(n)$ and
the coefficients $\nu_{k,n}$ can be chosen independently of $C$.
\end{proof}

For positive $A$, the angular condition holds with $M=1$, so
Theorem~\ref{thm:normal-angular} gives sampling designs
approximating $S$ to arbitrary relative accuracy.
However, a fixed observable system may admit a sampling design
with $N_\mu(R)=O(\log R)$ even when the angular condition fails;
see Proposition~\ref{prop:ring-observation}.

Let $\Omega$ be a nonempty Borel subset of the open unit disk
$\mathbb D=\{z\in\mathbb C:|z|<1\}$.
For $j\in\mathbb N$, set
$\Omega_j=\{\lambda\in\Omega\setminus\{0\}:
2^{-j-1}<-\log|\lambda|\le2^{-j}\}$.
Define
$K_\Omega(R)=\#\{j\in\mathbb N:2^j\le R,\ \Omega_j\ne\varnothing\}$
for $R\ge2$.

\begin{cor}
\label{cor:uniform-spectral-designs}
Fix $\kappa\ge1$, $M\in\mathbb N$, and $L_0>0$.
Suppose that, for every $j\in\mathbb N$, the set
$\{\lambda/|\lambda|:\lambda\in\Omega_j\}$ is covered by at most
$M$ arcs of $\mathbb T$, each of length at most $L_0 2^{-j}$.
Consider all observable systems $(A,C)$ with $A$ normal,
$E_A(\mathbb C\setminus\Omega)=0$, and frame operator $S$
satisfying $aI\le S\le\kappa aI$ for some $a>0$.
Then the following hold.
\begin{enumerate}[(i)]
\item For every $0<\varepsilon<1$, there is a single integer sampling
design $\mu=\mu_{\Omega,\kappa,\varepsilon}$ such that
\[
 (1-\varepsilon)S\le S_\mu\le(1+\varepsilon)S,\qquad
 N_\mu(R)\le L\bigl(K_\Omega(R)+1\bigr)
 \qquad R\ge2
\]
for every system in this class.
The constant $L$ depends only on $\kappa,M,L_0,\varepsilon$.
\item If $K_\Omega$ is unbounded, every real or integer sampling
design $\nu$ that is admissible for every system in this class
satisfies $N_\nu(R)\ge c_\nu K_\Omega(R)-C_\nu$ for every $R\ge2$,
with constants $c_\nu>0$ and $C_\nu\ge0$.
\end{enumerate}
\end{cor}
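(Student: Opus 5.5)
For part~(i), the plan is to re-run the proof of Theorem~\ref{thm:normal-angular} with every choice made from $\Omega$, $\kappa$, $M$, $L_0$, $\varepsilon$ alone, and never from the particular system. The final remark of that proof says that for a fixed spectral partition the vectors $\Phi(n)$ and the coefficients $\nu_{k,n}$ do not depend on $C$. I would first normalize each system by replacing $C$ with $a^{-1/2}C$. This rescales $S$ and $S_\mu$ by the same factor, so both the relative inequality $(1-\varepsilon)S\le S_\mu\le(1+\varepsilon)S$ and the design are unchanged, and the frame bounds become $1$ and $\kappa$.

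Next I would fix a partition of each $\Omega_j$ into Borel pieces $\Omega_{j,r}$, $1\le r\le M_j\le M$, whose directions lie in the prescribed arcs, with central angles $\theta_{j,r}$ depending only on $\Omega$. I would then index $\mathcal I$ by all pairs $(j,r)$ with $\Omega_{j,r}\ne\varnothing$, instead of requiring $P_{j,r}\ne0$. For a given system some $P_i$ may vanish; this only adds zero terms and is harmless in every estimate of that proof.

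Each constant used there depends only on $b$ relative to $a$ (hence on $\kappa$), on $M$, and on $L_0$. This covers $C_0$, $c$, $C_1$, $R_*$, $C_*$, $Q_1$, $Q_2$, the Taylor order $p$, the choice of $\delta$ with $\delta(2\sqrt b+\delta)\le\varepsilon a$, and the parameters $D$, $W$, $K_0$, $\eta$. The centre set becomes $\mathcal Q=\{0\}\cup\{j\log2:\Omega_j\ne\varnothing\}$, which depends only on $\Omega$. The retained intervals $\mathcal K$ and the functions $\phi_{i,l}(n)=\sqrt{n/T}(n/T)^le^{in\theta_i}$ likewise depend only on $\Omega$ and the parameters. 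Hence the Carath\'eodory (Bayer--Teichmann) reweighting $\nu_{k,n}$ on each $\mathcal N_k$ is a single choice, and the resulting $\mu$ is common to the whole class.

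The approximation argument then applies verbatim to each system and gives $(1-\varepsilon)S\le S_\mu\le(1+\varepsilon)S$. The counting estimate at the end of that proof, with $K_{|A|}$ replaced by $K_\Omega$, gives $N_\mu(R)\le L(K_\Omega(R)+1)$. The step I expect to be the main obstacle is this bookkeeping: checking that no constant in the Stein compression $\|CP_i\|^2\le2b(1+L_0)2^{-j}$ or in the tail estimates secretly depends on $C$ or on which $P_i$ are nonzero.

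For part~(ii), I would build one test system that sees every nonempty band and then apply the lower bound in Theorem~\ref{thm:band-count}. For each $j$ with $\Omega_j\ne\varnothing$, pick $\lambda_j\in\Omega_j$, and let $A$ be diagonal with these eigenvalues on an orthonormal basis $(e_j)$. Choose $C=(I-|A|^2)^{1/2}$ (or $I$ on the kernel if $0\in\Omega$). The geometric series gives $S=I$, so this system lies in the class with $\kappa=1\le\kappa$, and $E_A(\mathbb C\setminus\Omega)=0$.

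Because $A$ is diagonal and $B$ commutes with $A$, the cross terms vanish and
\[
\langle S_\nu e_j,e_j\rangle=(1-|\lambda_j|^2)\sum_{t\in T}w_t|\lambda_j|^{2t}=\langle S'_\nu e_j,e_j\rangle,
\]
where $S'_\nu$ is the sampled frame operator of the positive system $(|A|,C)$. So $\nu$ is admissible for the positive system $(|A|,C)$, which has frame operator $I$. Theorem~\ref{thm:band-count} applied to this positive system yields $N_\nu(R)\ge c_\nu K_{|A|}(R)-C_\nu$. Since $K_{|A|}=K_\Omega$ by construction, this is the claimed bound. The hypothesis that $K_\Omega$ is unbounded is not actually needed here, beyond guaranteeing that the test system is infinite dimensional with $\|A\|=1$.
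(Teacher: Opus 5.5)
Your proposal is correct and follows the paper's own proof essentially step for step. In (i) you normalize by $a^{-1/2}C$ and keep the scalar functions $\phi_{i,l}$ for every nonempty piece $\Omega_{j,r}$, so that $V_{i,l}=0$ when $P_i=0$; this makes $\Phi(n)$, $\mathcal Q$, $\mathcal K$ and the coefficients $\nu_{k,n}$ independent of the system. In (ii) you test against the diagonal system $A_\Omega e_j=\lambda_je_j$, $C_\Omega=(I-A_\Omega^*A_\Omega)^{1/2}$ with $S=I$, and reduce to Theorem~\ref{thm:band-count} through the positive system $(|A_\Omega|,C_\Omega)$, which has the same full and sampled frame operators.
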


\begin{proof}
For (i), first assume $I\le S\le\kappa I$.
Set $\mathcal J_\Omega=\{j\in\mathbb N:
\Omega_j\ne\varnothing\}$.
For each $j\in\mathcal J_\Omega$, fix a Borel partition
$\Omega_j=\bigsqcup_{r=1}^{m_j}\Omega_{j,r}$ with $m_j\le M$,
such that the directions of each $\Omega_{j,r}$ lie in one of
the prescribed arcs, and fix the central angle of that arc.
Set $P_{j,r}=E_A(\Omega_{j,r})$ and
$P_0=E_A(\{z:|z|<e^{-1/2}\})$.
Since $E_A(\mathbb C\setminus\Omega)=0$, we have
$P_0+\sum_{j\in\mathcal J_\Omega}\sum_{r=1}^{m_j}P_{j,r}=I$
in the strong operator topology.
Use the construction in the proof of
Theorem~\ref{thm:normal-angular} with these projections and all
centers $j\log2$ for $j\in\mathcal J_\Omega$.

Keep the scalar functions $\phi_{i,l}$ in that construction even
when $P_i=0$; in that case, $V_{i,l}=0$.
Thus the vectors $\Phi(n)$ depend only on the fixed partition,
the integer times, and the approximation parameters.
Fix coefficients $\nu_{k,n}$ as in that construction such that
$\sum_{n\in\mathcal N_k}\nu_{k,n}\Phi(n)\Phi(n)^*
=\sum_{n\in\mathcal N_k}n^{-1}\Phi(n)\Phi(n)^*$.
The constant coordinate gives
$\sum_{n\in\mathcal N_k}\nu_{k,n}
=\sum_{n\in\mathcal N_k}n^{-1}\le2$.
The estimates for omitted spectral subspaces and Taylor remainders
are uniform over the class, since $m_j\le M$ and the same angular
width bound holds on each set.
Counting the intervals near all centers $j\log2$ with
$j\in\mathcal J_\Omega$ gives
$N_\mu(R)\le L(K_\Omega(R)+1)$ for $R\ge2$.

For general $a>0$, set $\widetilde C=a^{-1/2}C$.
The corresponding full and sampled frame operators satisfy
$\widetilde S=a^{-1}S$, $\widetilde S_\mu=a^{-1}S_\mu$, and
$I\le\widetilde S\le\kappa I$.
Hence the same times and weights give the required relative
inequalities for $C$.
If $K_\Omega$ is bounded, the counting estimate gives
$\#\operatorname{supp}\mu
=\lim_{R\to\infty}N_\mu(R)<\infty$.

For (ii), $\mathcal J_\Omega$ is infinite.
Choose $\lambda_j\in\Omega_j$ for every
$j\in\mathcal J_\Omega$.
Fix an orthonormal basis $(e_j)_{j\in\mathcal J_\Omega}$ of $H$
and define $A_\Omega e_j=\lambda_j e_j$ for
$j\in\mathcal J_\Omega$ and
$C_\Omega=(I-A_\Omega^*A_\Omega)^{1/2}$.
The operator $A_\Omega$ is normal,
$E_{A_\Omega}(\mathbb C\setminus\Omega)=0$, and
$S e_j=(1-|\lambda_j|^2)
\left(\sum_{n=0}^{\infty}|\lambda_j|^{2n}\right)e_j=e_j$
for $j\in\mathcal J_\Omega$.
Thus $S=I$, and this system belongs to the class for every
$\kappa\ge1$.

Let $\nu=\sum_{t\in T}w_t\delta_t$ be admissible for every system
in the class.
For $(A_\Omega,C_\Omega)$, its sampled frame operator satisfies
\[
 S_\nu e_j
 =(1-|\lambda_j|^2)
   \left(\sum_{t\in T}w_t|\lambda_j|^{2t}\right)e_j
 \qquad j\in\mathcal J_\Omega.
\]
The positive system $(|A_\Omega|,C_\Omega)$ has the same full
and sampled frame operators, and
$K_{|A_\Omega|}(R)=K_\Omega(R)$ for $R\ge2$.
The lower bound in Theorem~\ref{thm:band-count} proves (ii).
\end{proof}

For a prescribed $\Gamma>\kappa$, take
$\varepsilon=(\Gamma-\kappa)/(\Gamma+\kappa)\in(0,1)$.
Corollary~\ref{cor:uniform-spectral-designs} gives a common integer
sampling design $\mu$ satisfying
$\kappa(S_\mu)\le\kappa(1+\varepsilon)/(1-\varepsilon)=\Gamma$
and $N_\mu(R)\le L(K_\Omega(R)+1)$ for every $R\ge2$.

\subsection{Two generating families for the same operator}
\label{sec:normal-model}

Set $z=e^{-2}$ and $d=1-z$, and let
$\mathcal H=\bigoplus_{j=1}^{\infty}\mathbb C^{2^j}$.
For $m=2^j$ with $j\in\mathbb N$, set $\omega_m=e^{2\pi i/m}$
and define
\begin{equation}\label{eq:ring-operator}
 A_m=e^{-1/m}\operatorname{diag}(1,\omega_m,\ldots,\omega_m^{m-1}),
 \qquad A=\bigoplus_{j=1}^{\infty}A_{2^j}.
\end{equation}

For $j\in\mathbb N$, let $f_j\in\mathcal H$ have all coordinates
equal to $\sqrt{d/2^j}$ on the block $\mathbb C^{2^j}$ and zero
coordinates on the other blocks.
Write $x=(x_j)_{j\in\mathbb N}\in\mathcal H$, where
$x_j=(x_{j,a})_{a=0}^{2^j-1}$.
The observation operator $C:\mathcal H\to\ell^2(\mathbb N)$ is given by
$(Cx)_j=\langle x,f_j\rangle
=\sqrt{d/2^j}\sum_{a=0}^{2^j-1}x_{j,a}$ for $j\in\mathbb N$,
and $\|C\|=\sqrt d$.
The corresponding map $C_m:\mathbb C^m\to\mathbb C$ on each block
is $C_my=\sqrt{d/m}\sum_{a=0}^{m-1}y_a$ for
$y=(y_a)_{a=0}^{m-1}\in\mathbb C^m$.
For $t\ge0$, we have $\|C_mA_m^t\|^2=d e^{-2t/m}$.

\begin{lem}
\label{lem:ring-residues}
Let $\mu=\sum_{n=0}^{\infty}w_n\delta_n$ be an integer sampling
design.
For $m=2^j$, $j\in\mathbb N$, define $g_{m,k}\in[0,\infty]$ by
\[
 g_{m,k}=d\sum_{\ell=0}^{\infty}w_{k+\ell m}
 e^{-2(k+\ell m)/m}
 \qquad k\in\{0,\ldots,m-1\}.
\]
For $0<\alpha\le\beta<\infty$, the sampled family has frame bounds
$\alpha,\beta$ if and only if $\alpha\le g_{m,k}\le\beta$ for
every such $m,k$.
If $S_\mu$ is bounded, its block on $\mathbb C^m$ has eigenvalues
$(g_{m,k})_{k=0}^{m-1}$.
\end{lem}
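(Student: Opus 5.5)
The plan is to diagonalize each block $\mathbb C^m$ by the discrete Fourier transform and compute the sampled frame operator block by block. Since $A=\bigoplus_j A_{2^j}$ and $C$ acts blockwise with the output coordinate $(Cx)_j$ depending only on $x_j$, we get $S_\mu=\bigoplus_j S_{\mu,m}$ with $m=2^j$ and
\[
 S_{\mu,m}=\sum_{n\ge0}w_n (A_m^n)^*C_m^*C_mA_m^n .
\]
The sum is understood as a quadratic form with values in $[0,\infty]$. The rank-one operator $C_m^*C_m$ is $d\,uu^*$, where $u=m^{-1/2}(1,\ldots,1)$. Hence $C_mA_m^n$ is represented by the vector $\sqrt d\,(A_m^*)^n u$, whose coordinates are $\sqrt{d/m}\,e^{-n/m}\omega_m^{-na}$ for $a=0,\ldots,m-1$.

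Next I would pass to the Fourier basis. Let $F$ be the unitary DFT on $\mathbb C^m$ with orthonormal columns $\varphi_k=m^{-1/2}(\omega_m^{-ka})_{a}$, where $k=0,\ldots,m-1$. Then $(A_m^*)^n u=e^{-n/m}\varphi_{n\bmod m}$, because $\overline{\omega_m}^{\,na}=\omega_m^{-na}$ depends only on $n$ modulo $m$. Grouping the times $n$ by residue class modulo $m$ gives
\[
 S_{\mu,m}=\sum_{k=0}^{m-1}\Bigl(d\sum_{\ell\ge0}w_{k+\ell m}e^{-2(k+\ell m)/m}\Bigr)\varphi_k\varphi_k^*=\sum_{k=0}^{m-1}g_{m,k}\,\varphi_k\varphi_k^*.
\]
This identity holds as quadratic forms in $[0,\infty]$, by monotone convergence of the nonnegative series. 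So $S_{\mu,m}$ is diagonal in the orthonormal basis $(\varphi_k)$ with entries $g_{m,k}$, which gives the eigenvalue statement whenever $S_\mu$ is bounded.

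Finally I would read off the equivalence. The quadratic form is
\[
 \langle S_\mu x,x\rangle=\sum_j\sum_k g_{2^j,k}\,\bigl|\langle x_j,\varphi_k\rangle\bigr|^2 ,
\]
and the family $\{\varphi_k\}$, over all blocks, is an orthonormal basis of $\mathcal H$. Therefore $\alpha\|x\|^2\le\langle S_\mu x,x\rangle\le\beta\|x\|^2$ holds for all $x$ if and only if $\alpha\le g_{m,k}\le\beta$ for every $m,k$. For necessity, test against $x=\varphi_k$ in block $m$; for sufficiency, sum the bounds.

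There is no real obstacle here. The only point needing care is handling possibly infinite $g_{m,k}$ before boundedness is known, and this is why the sum is treated as a $[0,\infty]$-valued quadratic form from the start.
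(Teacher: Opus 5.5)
Your proposal is correct and is essentially the paper's proof. You diagonalize each block in the Fourier basis $m^{-1/2}(\omega_m^{-ak})_{a}$ and regroup the nonnegative sums by residue class modulo $m$ to get $\sum_n w_n\|CA^nx\|^2=\sum_j\sum_k g_{2^j,k}|\langle x_j,\varphi_k\rangle|^2$. You then test on basis vectors for necessity, apply Parseval for sufficiency, and read off the block $\sum_k g_{m,k}\varphi_k\varphi_k^*$ when $S_\mu$ is bounded, exactly as the paper does.
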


\begin{proof}
Fix $m=2^j$ with $j\in\mathbb N$.
The vectors $u_{m,k}=m^{-1/2}(\omega_m^{-ak})_{a=0}^{m-1}$,
$k\in\{0,\ldots,m-1\}$, form an orthonormal Fourier basis of
$\mathbb C^m$.
For $n\in\mathbb N_0$ with $n\equiv k\pmod m$ and
$y\in\mathbb C^m$, we have
$C_mA_m^ny=\sqrt d\,e^{-n/m}\langle y,u_{m,k}\rangle$.
In particular,
$\sum_{n=0}^{\infty}w_n|C_mA_m^nu_{m,k}|^2=g_{m,k}$.
The frame inequalities applied to these unit vectors therefore
imply $\alpha\le g_{m,k}\le\beta$.

Conversely, suppose that $\alpha\le g_{m,k}\le\beta$ for every
such $m,k$.
For $x=(x_j)_{j\in\mathbb N}\in\mathcal H$, we may rearrange
the nonnegative sums to obtain
\[
 \sum_{n=0}^{\infty}w_n\|CA^nx\|^2
 =\sum_{j=1}^{\infty}\sum_{n=0}^{\infty}
   w_n|C_{2^j}A_{2^j}^nx_j|^2
 =\sum_{j=1}^{\infty}\sum_{k=0}^{2^j-1}
   g_{2^j,k}|\langle x_j,u_{2^j,k}\rangle|^2.
\]
Parseval's identity yields the required frame bounds.
If $S_\mu$ is bounded, the same computation shows that its block
on $\mathbb C^m$ is
$\sum_{k=0}^{m-1}g_{m,k}u_{m,k}u_{m,k}^*$,
which proves the eigenvalue assertion.
\end{proof}

For unit weights $w_n=1$ for all $n\in\mathbb N_0$,
Lemma~\ref{lem:ring-residues} gives $g_{m,k}=e^{-2k/m}$.
Since $g_{m,0}=1$ and $g_{2^j,2^j-1}\to z$ as $j\to\infty$,
the optimal frame bounds are $z$ and $1$.

\begin{lem}
\label{lem:ring-real-bound}
Let $\mu$ be an admissible real or integer sampling design with frame
bounds $0<\alpha\le\beta<\infty$, and put $\Gamma=\beta/\alpha$.
For every $L>0$,
\[
 \limsup_{R\to\infty}\frac{N_\mu(R)}R
 \ge\frac{1-2e^2\Gamma(L+1)e^{-2L}}L.
\]
Moreover, $N_\mu(R)\ge c_\mu R$ for some $c_\mu>0$ and all
sufficiently large $R$.
\end{lem}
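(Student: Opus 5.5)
The plan is to test the lower frame bound on a whole block $\mathbb C^m$, $m=2^j$, rather than on single vectors. That turns admissibility into a dimension count, which is what produces the $\Gamma$-free leading term $1$ in the numerator. For real times, the spectral calculus with principal arguments gives
\[
C_mA_m^ty=\sqrt{d/m}\,e^{-t/m}\sum_a y_ae^{it\theta_a},
\]
so each sampling time $t\le R$ contributes one rank-one functional on $\mathbb C^m$. The proof is the same for integer designs, so Lemma~\ref{lem:ring-residues} is not needed here.

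\emph{Step 1 (kernel subspace).} Fix $L>0$, $m=2^j$ and $R=Lm$. If $N_\mu(R)<m$, the observations at the times in $T\cap[0,R]$ vanish on a subspace $E_m\subset\mathbb C^m$ with $\dim E_m\ge m-N_\mu(R)$.

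\emph{Step 2 (trace comparison).} Let $S_{\mu,>R}=\sum_{t>R}w_t(A^t)^*C^*CA^t$, compressed to the block. The lower frame bound, applied to an orthonormal basis of $E_m$, gives
\[
\alpha\bigl(m-N_\mu(R)\bigr)\le\operatorname{tr}\bigl(P_{E_m}S_{\mu,>R}P_{E_m}\bigr)\le\operatorname{tr}(S_{\mu,>R}|_{\mathbb C^m})=d\sum_{t>R}w_te^{-2t/m},
\]
using $\|C_mA_m^t\|^2=de^{-2t/m}$.

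\emph{Step 3 (tail bound).} I will bound this tail by a multiple of $\beta m$ using traces over other blocks. For every block $m'=2^i$, the upper frame bound gives
\[
\varphi(1/m'):=d\sum_{t\in T}w_te^{-2t/m'}=\operatorname{tr}(S_\mu|_{\mathbb C^{m'}})\le m'\beta .
\]
For $t>R$ one has $e^{-2t/m}=e^{-2t/m'}e^{-2t(1/m-1/m')}$, so the tail is at most $m'\beta e^{-2R(1/m-1/m')}$. With $m'=2m$ this gives the bound $2m\beta e^{-L}$.

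To reach the stated constant $2e^2\Gamma(L+1)e^{-2L}$, I will sharpen this by splitting $(R,\infty)$ into layers $(R+\ell m,R+(\ell+1)m]$ and bounding each layer through a suitable block. Alternatively, I will interpolate $\varphi(s)$ between dyadic values of $s$; this costs a factor like $e^2$, and summing $\sum_\ell e^{-2\ell}$-type factors yields the $(L+1)$. This bookkeeping is the main obstacle. My first attempt is to bound $\varphi(s)\le e^2\beta/s$ for all $s\in(0,1/2]$ via the nearest dyadic block, and then optimize $s$ close to $1/m$. The outcome should be
\[
\alpha\bigl(m-N_\mu(Lm)\bigr)\le 2e^2\beta(L+1)e^{-2L}m .
\]

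\emph{Step 4 (conclusion).} Dividing by $\alpha m$ gives
\[
N_\mu(Lm)\ge m\bigl(1-2e^2\Gamma(L+1)e^{-2L}\bigr).
\]
Taking $R=L2^j\to\infty$ yields the $\limsup$ inequality. For the second assertion, fix $L$ so large that the bracket is a positive number $c$. For arbitrary large $R$, choose $j$ with $L2^j\le R<L2^{j+1}$. Monotonicity of $N_\mu$ then gives $N_\mu(R)\ge c2^j\ge (c/2L)R$, so $c_\mu=c/(2L)$.
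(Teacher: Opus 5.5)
Your Steps 1, 2 and 4 match the paper: the kernel of the observations at times $\le Lm$ in the block $\mathbb C^m$, the trace inequality $\alpha(m-N_\mu(Lm))\le d\sum_{t>Lm}w_te^{-2t/m}$, and the dyadic sandwiching at the end. The crude estimate you actually prove in Step 3, $d\sum_{t>Lm}w_te^{-2t/m}\le 2\beta m e^{-L}$ via the block $2m$, is correct, and it already suffices for the second assertion $N_\mu(R)\ge c_\mu R$.

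\textbf{The gap.} That estimate does not prove the displayed inequality. It gives $N_\mu(Lm)\ge m(1-2\Gamma e^{-L})$. This does not imply the stated bound, because $2\Gamma e^{-L}>2e^2\Gamma(L+1)e^{-2L}$ exactly when $e^{L-2}>L+1$, i.e.\ for all $L$ beyond about $3.5$. That is precisely the regime where the statement is nontrivial for large $\Gamma$. The $e^{-2L}$ rate is also what the paper needs later for $\rho_{\mathbb R}(\Gamma)\sim2/\log\Gamma$; with $e^{-L}$ you would only get order $1/\log\Gamma$. You defer the sharpening to ``bookkeeping'', and the version you describe does not deliver the claimed outcome. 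Taking $s$ close to $1/m$ destroys the exponential gain. With $\varphi(s)\le e^2\beta/s$, the best choice $s=1/(2Lm)$ gives $2e^3L\beta me^{-2L}$, which exceeds $2e^2(L+1)\beta me^{-2L}$ for $L>1/(e-1)$.

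\textbf{The fix.} The comparison block must be at scale $Lm$, not $2m$. Also, for $s\le1/2$ the constant in $\varphi(s)\lesssim\beta/s$ is $2$, not $e^2$: use monotonicity of $\varphi$ and the first dyadic block above $1/s$. Concretely, for $L>1/2$ pick $m'=2^i$ with $2Lm\le m'<4Lm$. For $t>Lm$,
\[
e^{-2t/m}\le e^{-2t/m'}e^{-2L(1-m/m')}\le e^{1-2L}e^{-2t/m'},
\]
so the tail is at most $e^{1-2L}\operatorname{tr}(S_\mu|_{\mathbb C^{m'}})\le e^{1-2L}m'\beta<4eL\beta me^{-2L}\le2e^2(L+1)\beta me^{-2L}$, using $4e<2e^2$. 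For $L\le1/2$ the right-hand side of the claim is negative, so there is nothing to prove.

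With this correction your route is complete, and it differs from the paper's. The paper first bounds the mass $\mu([0,R])\le(2e^2/d)\beta(1+R)$ from block traces and then integrates by parts against $e^{-2t/m}$. The corrected single-block comparison avoids the mass function altogether and gives the slightly better factor $4eL$ in place of $2e^2(L+1)$.
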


\begin{proof}
Set $W(R)=\mu([0,R])$ for $R\ge0$.
For $m=2^j$, $j\in\mathbb N$, let $S_{\mu,m}$ denote the block
of $S_\mu$ on $\mathbb C^m$.
The upper frame inequality gives
$d\int_{[0,\infty)}e^{-2t/m}\,d\mu(t)
=\operatorname{tr}S_{\mu,m}\le\beta m$.
For $R\ge2$, choose $m=2^j$ with $R\le m<2R$;
for $0\le R<2$, use $m=2$.
Then
\[
 W(R)\le\frac{2e^2\beta}{d}
 \begin{cases}
 R,&R\ge2,\\
 1,&0\le R<2.
 \end{cases}
\]
Hence $W(R)\le K\beta(1+R)$ for every $R\ge0$, where $K=2e^2/d$.

Fix $L>0$ and $m=2^j$, $j\in\mathbb N$.
Integration by parts gives
\[
 \begin{aligned}
 d\int_{(Lm,\infty)}e^{-2t/m}\,d\mu(t)
 &=-d e^{-2L}W(Lm)
   +\frac{2d}{m}\int_{Lm}^{\infty}e^{-2t/m}W(t)\,dt\\
 &\le dK\beta e^{-2L}(1+Lm+m/2)\\
 &\le2e^2\beta m(L+1)e^{-2L},
 \end{aligned}
\]
where the last inequality uses $m\ge2$.

Let $B_m$ and $D_m$ be the contributions to $S_{\mu,m}$ from
$[0,Lm]$ and $(Lm,\infty)$, respectively, and let $P_m$ be the
orthogonal projection onto $\ker B_m$.
Then $\operatorname{rank}B_m\le N_\mu(Lm)$ and
$P_mD_mP_m=P_mS_{\mu,m}P_m\ge\alpha P_m$, so
\[
 \alpha(m-N_\mu(Lm))
 \le\alpha\operatorname{rank}P_m
 \le\operatorname{tr}(P_mD_mP_m)
 \le\operatorname{tr}D_m
 =d\int_{(Lm,\infty)}e^{-2t/m}\,d\mu(t).
\]
Consequently,
$N_\mu(Lm)\ge m\bigl(1-2e^2\Gamma(L+1)e^{-2L}\bigr)$.
Divide by $Lm$ and let $j\to\infty$, with $m=2^j$, to obtain
the density bound.

Choose $L>0$ so that $q=1-2e^2\Gamma(L+1)e^{-2L}>0$.
For $R\ge2L$, choose $m=2^j$, $j\in\mathbb N$, with
$R/(2L)<m\le R/L$.
Then $N_\mu(R)\ge N_\mu(Lm)\ge qm\ge qR/(2L)$,
which proves the last assertion with $c_\mu=q/(2L)$.
\end{proof}

\begin{prop}
\label{prop:ring-observation}
For the operator $A$ defined in \eqref{eq:ring-operator}, set
$C_0=(I-A^*A)^{1/2}:\mathcal H\to\mathcal H$.
Both $(A,C)$ and $(A,C_0)$ are observable.
\begin{enumerate}[(i)]
\item Every admissible real or integer design for $(A,C)$ satisfies
$N_\mu(R)\ge c_\mu R$ for some $c_\mu>0$ and all sufficiently
large $R$. There is an admissible integer design with
$N_\mu(R)=\lfloor R\rfloor+1$ for $R\ge0$.
\item Every admissible real or integer design for $(A,C_0)$ satisfies
$N_\mu(R)\ge c_\mu\log R-C_\mu$ for $R\ge2$, with constants
$c_\mu>0$ and $C_\mu\ge0$.
There is an admissible integer design with
$N_\mu(R)=\lfloor\log_2R\rfloor+1$ for $R\ge1$.
\end{enumerate}
\end{prop}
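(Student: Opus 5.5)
Observability of both systems comes first, and is handled blockwise. For $(A,C)$, Lemma~\ref{lem:ring-residues} with unit weights gives $g_{m,k}=e^{-2k/m}\in[z,1]$, so the full integer orbit is a frame with bounds $z,1$. For $(A,C_0)$, the operator $A$ is normal and diagonal with $|A|=\bigoplus_j e^{-2^{-j}}I_{2^j}$, so $B=C_0^*C_0=I-A^*A$ and $\sum_{n\ge0}(A^n)^*BA^n=\sum_n(1-|A|^2)|A|^{2n}=I$. Thus $S=I$.

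For (i), the lower bound is exactly the last assertion of Lemma~\ref{lem:ring-real-bound}. The upper example is the design $\mu=\sum_{n\ge0}\delta_n$. It is admissible by the unit-weight computation above, and $N_\mu(R)=\lfloor R\rfloor+1$.

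For (ii), the sampled operator is diagonal as well. It equals
\[
 S_\mu=\bigoplus_j\varphi_\mu\bigl(e^{-2^{-j}}\bigr)I_{2^j},
 \qquad \varphi_\mu(\rho)=(1-\rho^2)\sum_{t\in T}w_t\rho^{2t},
\]
as in the Remark following the multiplication example. The sampled frame operators of $(A,C_0)$ and $(|A|,C_0)$ therefore coincide, and admissibility for one is admissibility for the other.
\begin{itemize}
\item \emph{Lower bound.} The positive operator $|A|$ has a nonzero spectral projection in every band $P_j$, because $-\log e^{-2^{-j}}=2^{-j}$ lies in $(2^{-j-1},2^{-j}]$. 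Hence $K_{|A|}(R)=\lfloor\log_2R\rfloor$, and the lower bound in Theorem~\ref{thm:band-count} gives $N_\mu(R)\ge c_\mu\log R-C_\mu$.
\item \emph{Upper example.} Take $T=\{0\}\cup\{2^i:i\ge0\}$, with $w_0=1$ and $w_{2^i}=2^i$. Then $N_\mu(R)=\lfloor\log_2R\rfloor+2$, one more than claimed. To match the stated count I will drop time zero and use $T=\{2^i:i\ge0\}$. This is admissible here because $A$ is injective and every eigenvalue modulus is at least $e^{-1/2}$, so no separate $P_0$ part needs time zero. The count is then $N_\mu(R)=\lfloor\log_2R\rfloor+1$ for $R\ge1$.
\end{itemize}

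The remaining work is to bound $\varphi_\mu(\rho_j)$ above and below uniformly in $j$, where $\rho_j=e^{-2^{-j}}$. Since $1-\rho_j^2\asymp2^{-j}$, we have
\[
 \varphi_\mu(\rho_j)\asymp\sum_{i\ge0}2^{i-j}\exp\bigl(-2^{i-j+1}\bigr)=\sum_{\ell\ge-j}2^{\ell}e^{-2^{\ell+1}}.
\]
The function $u\mapsto u e^{-2u}$ on dyadic points $u=2^\ell$ is summable over $\ell\in\mathbb Z$, which gives the upper bound. The term $\ell=0$ alone gives the lower bound $e^{-2}$, uniformly in $j\ge1$.

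I expect the main obstacle to be the constants in this comparison, for two reasons. First, $1-\rho_j^2$ must be compared to $2^{-j}$ uniformly, including at small $j$. Second, the lower bound must come from the term $i=j$ for every $j\ge1$, which is exactly why the times start at $2^0=1$ rather than $2^1$. Both points are routine but should be checked explicitly.
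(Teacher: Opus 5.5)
Your proposal is correct and follows the paper's proof essentially step by step. You use Lemma~\ref{lem:ring-residues} and Lemma~\ref{lem:ring-real-bound} for (i). For the lower bound in (ii) you use the identity $(A^t)^*C_0^2A^t=(I-|A|^2)|A|^{2t}$ together with Theorem~\ref{thm:band-count} and $K_{|A|}(R)=\lfloor\log_2R\rfloor$. For the upper example you take the same design $\sum_{k\ge0}2^k\delta_{2^k}$ and verify it by the same blockwise computation.

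One small correction to your closing remark: the term $\ell=0$, that is $i=j$, is available for every $j\ge1$ even if the times started at $2^1$. So the time $1$ is needed only to obtain the exact count $\lfloor\log_2R\rfloor+1$, not for the lower frame bound.
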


\begin{proof}
For (i), sampling at all times in $\mathbb N_0$ with unit weights
gives a frame with optimal bounds $z$ and $1$, by Lemma~\ref{lem:ring-residues},
and the stated counting function. The lower bound follows from
Lemma~\ref{lem:ring-real-bound}.

For (ii), $A^nx\to0$ as $n\to\infty$ for every $x\in\mathcal H$.
Hence $\sum_{n=0}^{N}(A^n)^*C_0^2A^n
=I-(A^{N+1})^*A^{N+1}\longrightarrow I$
in the strong operator topology, so $S=I$ for $(A,C_0)$.
For every $t\ge0$,
$(A^t)^*C_0^2A^t=(I-|A|^2)|A|^{2t}$.
Thus $(A,C_0)$ and $(|A|,C_0)$ have the same sampled frame operators.
Since $K_{|A|}(R)=\lfloor\log_2R\rfloor$ for $R\ge2$,
Theorem~\ref{thm:band-count} gives the logarithmic lower bound.

Define the integer design
$\mu_0=\sum_{k=0}^{\infty}2^k\delta_{2^k}$.
For $(A,C_0)$, its sampled frame operator satisfies
\[
 \left.S_{\mu_0}\right|_{\mathbb C^m}
 =m(1-e^{-2/m})
  \left(\sum_{q=-j}^{\infty}2^qe^{-2\cdot2^q}\right)
  I_{\mathbb C^m}
 \qquad m=2^j,\quad j\in\mathbb N.
\]
The series $\sum_{q\in\mathbb Z}2^qe^{-2\cdot2^q}$ converges.
Since $2(1-e^{-1})\le m(1-e^{-2/m})\le2$ and the term with
$q=0$ equals $e^{-2}$, we obtain
\[
 2e^{-2}(1-e^{-1})I
 \le S_{\mu_0}
 \le 2\left(\sum_{q\in\mathbb Z}2^qe^{-2\cdot2^q}\right)I.
\]
Hence $\mu_0$ is admissible, and
$N_{\mu_0}(R)=\lfloor\log_2R\rfloor+1$ for $R\ge1$.
\end{proof}

Both observation operators have infinite rank. Moreover,
$\sum_{j=1}^{\infty}2^j(1-e^{-2/2^j})=\infty$.
The necessary condition in
\cite[Section~2]{CabrelliMolterSuarez2021} therefore shows that
no finite family of generating vectors gives an orbit frame
for this operator.
\subsection{Condition numbers and sampling density}
\label{sec:ring-conditioning}

We fix the pair $(A,C)$ from Section~\ref{sec:normal-model}.
For a locally finite set $T\subset[0,\infty)$, define its upper
density by
$\overline d(T)=\limsup_{R\to\infty}\#(T\cap[0,R])/R$.
For $\Gamma\ge1$, define
\[
 \rho_{\mathbb Z}(\Gamma)
 =\inf\left\{\overline d(T):
       T\subset\mathbb N_0,\ \kappa(S_\mu)\le\Gamma\right\},
\]
where the infimum is over all admissible sampling designs
$\mu=\sum_{t\in T}w_t\delta_t$ with $w_t>0$ for every $t\in T$.
Define $\rho_{\mathbb R}(\Gamma)$ in the same way with
$T\subset[0,\infty)$, and use the convention
$\inf\varnothing=+\infty$.
We call $\Gamma$ feasible for integer sampling if an admissible
integer design satisfies $\kappa(S_\mu)\le\Gamma$.

The following constants will be used in the condition number
estimates. For $J\in\{0,1\}$, define
\[
 B_J=e\sum_{k=0}^{\infty}z^{(2J+1)2^k-J},\qquad
 c_J=1-\sum_{k=0}^{\infty}
 \frac{z^{(2J+1)2^k+1}(1-z^{2^{k+1}-1})}
      {d(1-z^{2^{k+1}})}.
\]
Both series converge absolutely. Since $z<1/4$, we have
$0\le1-c_1\le1-c_0\le d^{-1}\sum_{h=1}^{\infty}z^{h+1}
=z^2/d^2<1/9$ and hence $c_1\ge c_0>8/9$.
Set $\Gamma_-=B_0+c_0d/z$ and $\Gamma_P=B_1+c_1de^3$.
Numerically, $\Gamma_-\approx6.6698199$ and
$\Gamma_P\approx17.4112141$.
Since $d>3/4$, the lower bound on $c_1$ gives
$\Gamma_P>2e^3/3>e^2$.

\begin{lem}
\label{lem:ring-scales}
Let $\mu=\sum_{n=0}^{\infty}w_n\delta_n$ be an integer design
satisfying $\Gamma^{-1}I\le S_\mu\le I$ for some $\Gamma\ge1$.
Set $v_n=dw_n$ for $n\in\mathbb N_0$ and
$M=\sup_{n\in\mathbb N_0}v_n$.
Then $M\le1$ and $c_JM+B_J/\Gamma\le1$ for $J\in\{0,1\}$.
\end{lem}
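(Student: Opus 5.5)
We will use Lemma~\ref{lem:ring-residues}, which turns the hypothesis $\Gamma^{-1}I\le S_\mu\le I$ into the scalar inequalities
\[
 \Gamma^{-1}\le g_{m,k}=\sum_{\ell\ge0}v_{k+\ell m}e^{-2(k+\ell m)/m}\le1
 \qquad\text{for all } m=2^j,\ 0\le k<m.
\]

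\emph{The bound $M\le1$.} Fix $n\in\mathbb N_0$ and take $m=2^j>n$. The class $n \bmod m$ contains $n$ as its smallest element, so $v_ne^{-2n/m}\le g_{m,n}\le1$. Letting $j\to\infty$ gives $v_n\le1$. Taking the supremum over $n$ gives $M\le1$.

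\emph{The inequality $c_JM+B_J/\Gamma\le1$.} The plan is to play one upper inequality $g_{m,n}\le1$ against infinitely many lower inequalities at finer scales. Fix $\varepsilon>0$ and choose $n$ with $v_n\ge M-\varepsilon$. Take $m=2^j$ very large compared with $n$. The residue class $n\bmod m$ contains, for each $k\ge0$, the class of $r_k=n+2^km$ modulo $m_k=2^{k+1}m$. These subclasses are pairwise disjoint and do not contain $n$; the first element of the $k$-th one is $r_k$.

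\begin{enumerate}[(a)]
\item \emph{Lower bound on $v_{r_k}$.} Apply $g_{m_k,r_k}\ge\Gamma^{-1}$ and bound every term after the first by $M$ times its exponential weight. Since $e^{-2r_k/m_k}\to e^{-1}$ and $e^{-2(r_k+\ell m_k)/m_k}\to e^{-1}z^\ell$ as $m\to\infty$, this gives
\[
 v_{r_k}e^{-1}\ \ge\ \Gamma^{-1}-M\cdot(\text{explicit geometric tail in } z)-o(1).
\]

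\item \emph{Insertion into the upper inequality.} In $g_{m,n}\le1$, keep the term $v_n e^{-2n/m}\to v_n$. Keep also the leading terms $v_{r_k}e^{-2r_k/m}\to v_{r_k}z^{2^k}$, and drop the remaining nonnegative terms. Inserting (a) yields
\[
 M-\varepsilon+\sum_{k\ge0}e z^{2^k}\bigl(\Gamma^{-1}-M\tau_k\bigr)\le1+o(1).
\]
Here $eM\sum_k z^{2^k}\tau_k$ should be exactly $(1-c_0)M$, and $e\sum_k z^{2^k}=B_0$.

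\item \emph{Limits and the case $J=1$.} Let $m\to\infty$ and then $\varepsilon\downarrow0$; this gives $c_0M+B_0/\Gamma\le1$. For $J=1$ the exponents $(2J+1)2^k-J=3\cdot2^k-1$ suggest using a different family of subclasses of $n\bmod m$, with offsets of the form $(2\cdot2^k-1)m$ or $3\cdot2^km$ relative to $n$. One would then use the second element of each subclass, or compare against $m/2$ instead of $m$. I would first test which offsets reproduce exactly the constants $B_1$ and $c_1$, and then repeat steps (a) and (b) verbatim.
\end{enumerate}

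\emph{Expected obstacle.} The main difficulty is bookkeeping rather than ideas. The tail estimate in (a) must be done carefully to reproduce exactly the factor $z^{(2J+1)2^k+1}(1-z^{2^{k+1}-1})/(d(1-z^{2^{k+1}}))$ appearing in $c_J$. That factor suggests that, when bounding the tail, only the terms of the $k$-th subclass not already used by coarser subclasses should be charged $M$. One also has to justify that the $o(1)$ errors are uniform in $k$ so the limit can be taken inside the sum over $k$. This should follow from dominated convergence, using the bound $M\le1$ and the summable majorant $z^{2^k}$.
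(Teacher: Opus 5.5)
Your proof of $M\le1$ is correct. Your splitting of the class $n \bmod m$ into $\{n\}$ and the subclasses $n+2^km \bmod 2^{k+1}m$ is the same one the paper uses, via $\ell=2^k(2p+1)$. The gap is in steps (a)--(b), and it is more than bookkeeping. Put $h=2^k$ and $a_p=v_{n+hm(2p+1)}$. The lower bound reads $\sum_{p\ge0}a_pz^p\ge e^{1+n/(hm)}/\Gamma\ge e/\Gamma$, so no $o(1)$ term arises there. Suppose, as you propose, you solve this for $a_0$ using $a_p\le M$ for $p\ge1$, and then discard the terms $a_pz^{h(2p+1)}$, $p\ge1$, from the upper inequality. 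Each subclass then contributes at least $ez^h/\Gamma-Mz^{h+1}/d$, and you end with $c_0'M+B_0/\Gamma\le1$, where $1-c_0'=d^{-1}\sum_kz^{2^k+1}$. Since $(1-z^{2h-1})/(1-z^{2h})<1$, this exceeds $1-c_0$ (roughly $0.0241$ versus $0.0216$). So your assertion that the charge is exactly $(1-c_0)M$ is false, and the argument proves a strictly weaker inequality. The remedy you suggest cannot help, because the subclasses are already disjoint. What is needed is to keep the discarded terms and pair them with the lower inequality scaled by $z^h$:
\[
 \sum_{p\ge0}a_pz^{h(2p+1)}
 =z^h\sum_{p\ge0}a_pz^p+\sum_{p\ge1}a_p\bigl(z^{h(2p+1)}-z^{h+p}\bigr)
 \ge\frac{ez^h}{\Gamma}-M\sum_{p\ge1}\bigl(z^{h+p}-z^{h(2p+1)}\bigr).
\]
The last sum equals $z^{h+1}(1-z^{2h-1})/(d(1-z^{2h}))$. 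Summing over $h=2^k$ and inserting the result into $v_n+\sum_{\ell\ge1}v_{n+\ell m}z^\ell\le e^{2n/m}$ gives $v_n+B_0/\Gamma-(1-c_0)M\le e^{2n/m}$. Letting $j\to\infty$ and taking the supremum over $n$ then finishes the case $J=0$.

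For $J=1$ you give no argument, and the directions you list (new subclasses, comparing against $m/2$) are not what works. The same subclasses suffice; you only need to scale the lower inequality by $z^{3h-1}$ instead of $z^h$. The coefficient $z^{h(2p+1)}-z^{3h-1+p}$ is then:
\begin{itemize}
\item nonnegative at $p=0$, so that term can be dropped using only $a_0\ge0$;
\item zero at $p=1$;
\item nonpositive for $p\ge2$, so those terms are charged $M$.
\end{itemize}
Each subclass yields at least $ez^{3h-1}/\Gamma-Mz^{3h+1}(1-z^{2h-1})/(d(1-z^{2h}))$, and summing over $h=2^k$ gives exactly $B_1/\Gamma-(1-c_1)M$. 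Your template in (a), which isolates the first element of each subclass, is therefore the wrong one for $J=1$: the matched element is the second, and the first enters only through its sign. The paper handles both cases at once with a single sign-split estimate, valid for $0\le a_p\le M$:
\[
 \sum_pa_p\bigl(z^{h(2p+1)}-z^{h+(2h-1)J+p}\bigr)\ge-M\sum_{p>J}\bigl(z^{h+(2h-1)J+p}-z^{h(2p+1)}\bigr).
\]
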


\begin{proof}
Fix $n\in\mathbb N_0$ and let $m=2^j>n$, where $j\in\mathbb N$.
The upper frame bound and Lemma~\ref{lem:ring-residues} give
$v_n+\sum_{\ell=1}^{\infty}v_{n+\ell m}z^\ell\le e^{2n/m}$.
Dropping the nonnegative sum and letting $j\to\infty$ gives
$v_n\le1$, so $M\le1$.

Fix $J\in\{0,1\}$ and let $h=2^k$, where $k\in\mathbb N_0$.
The lower frame bound and Lemma~\ref{lem:ring-residues} give
$g_{2hm,n+hm}\ge\Gamma^{-1}$, and hence
$\sum_{p=0}^{\infty}v_{n+hm(2p+1)}z^p
\ge e^{1+n/(hm)}/\Gamma\ge e/\Gamma$.
For a sequence $(a_p)_{p\in\mathbb N_0}$ with $0\le a_p\le M$,
the coefficient
$z^{h(2p+1)}-z^{h+(2h-1)J+p}$ is nonnegative for $p\le J$
and nonpositive for $p>J$. Hence
\[
 \sum_{p=0}^{\infty}a_p
 \bigl(z^{h(2p+1)}-z^{h+(2h-1)J+p}\bigr)
 \ge
 -M\sum_{p=J+1}^{\infty}
 \bigl(z^{h+(2h-1)J+p}-z^{h(2p+1)}\bigr)
 =
 -M\frac{z^{h+2hJ+1}(1-z^{2h-1})}
          {d(1-z^{2h})}.
\]
Apply this inequality with $a_p=v_{n+hm(2p+1)}$ for
$p\in\mathbb N_0$, and sum over $h=2^k$, $k\in\mathbb N_0$.
Every $\ell\in\mathbb N$ has a unique representation
$\ell=2^k(2p+1)$ with $k,p\in\mathbb N_0$.
All series converge absolutely, so the definitions of $B_J,c_J$
and the upper bound give
$e^{2n/m}\ge v_n+B_J/\Gamma-M(1-c_J)$.
Letting $j\to\infty$ with $n$ fixed and then taking the supremum
over $n\in\mathbb N_0$ yields $c_JM+B_J/\Gamma\le1$.
\end{proof}

\begin{prop}
\label{prop:ring-threshold}
The infimum $\Gamma_{\mathrm{crit}}$ of the condition numbers of
admissible integer designs is attained and satisfies
$\Gamma_-\le\Gamma_{\mathrm{crit}}\le e^2$.
\end{prop}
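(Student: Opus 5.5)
The claim has three parts: attainment, the upper bound $e^2$, and the lower bound $\Gamma_-$. I will handle them in that order.

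\emph{Attainment.} This follows from Theorem~\ref{thm:counting-condition-attainment} with $T_0=\mathbb N_0$ and $F(R)=\lfloor R\rfloor+1$. Every integer design satisfies this counting bound. The hypotheses hold: $A$ is normal, and $\|A\|=\sup_j e^{-1/2^j}=1$. The class of admissible integer designs is nonempty, since unit weights on $\mathbb N_0$ work by Lemma~\ref{lem:ring-residues}. Hence the minimum $\Gamma_{\mathrm{crit}}$ is attained.

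\emph{Upper bound.} The unit-weight design on $\mathbb N_0$ has, by Lemma~\ref{lem:ring-residues}, $g_{m,k}=e^{-2k/m}\in(z,1]$, with $g_{m,0}=1$ and $\inf g_{m,k}=z$. So its condition number is $1/z=e^2$, which gives $\Gamma_{\mathrm{crit}}\le e^2$.

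\emph{Lower bound.} Take a minimizer $\mu$ and normalize it so that $\Gamma^{-1}I\le S_\mu\le I$ with $\Gamma=\Gamma_{\mathrm{crit}}$. Lemma~\ref{lem:ring-scales} with $J=0$ gives $c_0M+B_0/\Gamma\le1$, where $M=\sup_n dw_n$. I then need a lower bound on $M$ in terms of $\Gamma$. The natural candidate is $M\ge d/(z\Gamma)$ or something of comparable strength; combined with the inequality above it yields $c_0d/(z\Gamma)+B_0/\Gamma\le1$, that is, $\Gamma\ge B_0+c_0d/z=\Gamma_-$.

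To get this bound on $M$, I would use the lower frame inequality on a residue class $k=m-1$ at a large scale $m$. By Lemma~\ref{lem:ring-residues},
\[
g_{m,m-1}=\sum_{\ell\ge0}v_{m-1+\ell m}e^{-2(m-1)/m}z^{\ell}\ge\Gamma^{-1}.
\]
Bounding each $v$ by $M$ gives $M e^{-2(m-1)/m}/d\ge\Gamma^{-1}$. As $m\to\infty$, $e^{-2(m-1)/m}\to z$, so $Mz/d\ge\Gamma^{-1}$, i.e.\ $M\ge d/(z\Gamma)$, which is exactly the required bound.

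The main point to check is the bookkeeping in the geometric sum: $\sum_\ell z^\ell=1/d$, and $v_n=dw_n$ already incorporates the factor $d$ from the definition of $g_{m,k}$. This step is where the exact constant could be off by a factor, so I would verify it carefully against the definition of $g_{m,k}$ in Lemma~\ref{lem:ring-residues}. If it fails, I would use the $J=1$ inequality of Lemma~\ref{lem:ring-scales} instead.
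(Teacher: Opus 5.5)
Your proposal is correct and is essentially the paper's proof. Attainment comes from Theorem~\ref{thm:counting-condition-attainment} with $T_0=\mathbb N_0$, the upper bound from unit weights, and the lower bound from $\Gamma^{-1}\le g_{m,m-1}\le Me^{-2(m-1)/m}/d$ (your bookkeeping here is right, so no fallback to $J=1$ is needed) combined with Lemma~\ref{lem:ring-scales} for $J=0$ and $c_0>0$; the only cosmetic difference is that the paper runs the lower-bound argument for an arbitrary admissible design rather than for a minimizer.
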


\begin{proof}
Let $\mu=\sum_{n=0}^{\infty}w_n\delta_n$ be an admissible
integer design and set $\Gamma=\kappa(S_\mu)$.
Normalize its weights so that $\Gamma^{-1}I\le S_\mu\le I$.
Set $M=\sup_{n\in\mathbb N_0}dw_n$.
For $m=2^j$, $j\in\mathbb N$, Lemma~\ref{lem:ring-residues}
at $k=m-1$ gives
$\Gamma^{-1}\le g_{m,m-1}\le M e^{-2(m-1)/m}/d$.
Letting $j\to\infty$ yields $M\ge d/(\Gamma z)$.
Since $c_0>0$, Lemma~\ref{lem:ring-scales} with $J=0$ gives
$\Gamma\ge\Gamma c_0M+B_0\ge B_0+c_0d/z=\Gamma_-$.
Sampling at all times in $\mathbb N_0$ with unit weights gives
optimal frame bounds $z$ and $1$, so
$\Gamma_{\mathrm{crit}}\le1/z=e^2$.

The pair $(A,C)$ is observable, and $A$ is normal with $\|A\|=1$.
Every integer design satisfies
$N_\mu(R)\le\lfloor R\rfloor+1$ for $R\ge0$.
Theorem~\ref{thm:counting-condition-attainment}, applied with
$T_0=\mathbb N_0$ and $F(R)=\lfloor R\rfloor+1$ for $R\ge0$,
therefore gives attainment over all admissible integer designs.
\end{proof}

\begin{thm}
\label{thm:ring-plateau}
For $1\le\Gamma<\Gamma_{\mathrm{crit}}$,
$\rho_{\mathbb Z}(\Gamma)=+\infty$.
Moreover, the following hold.
\begin{enumerate}[(i)]
\item For every $\Gamma\ge\Gamma_{\mathrm{crit}}$,
$\rho_{\mathbb Z}(\Gamma)
\ge 2/\log((\Gamma-B_1)/(c_1d))$.
\item If $\Gamma_{\mathrm{crit}}\le\Gamma\le\Gamma_P$, then
$\rho_{\mathbb Z}(\Gamma)=1$, and the infimum is attained.
\end{enumerate}
\end{thm}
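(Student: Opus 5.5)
The first assertion follows from the definitions. By Proposition~\ref{prop:ring-threshold}, $\Gamma_{\mathrm{crit}}$ is the minimum of $\kappa(S_\mu)$ over admissible integer designs. So for $\Gamma<\Gamma_{\mathrm{crit}}$ the set defining $\rho_{\mathbb Z}(\Gamma)$ is empty, and the convention $\inf\varnothing=+\infty$ applies.

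For (i), fix an admissible integer design with $\kappa(S_\mu)=\Gamma'\le\Gamma$ and rescale so that $\Gamma'^{-1}I\le S_\mu\le I$. Set $v_n=dw_n$ and $M=\sup_nv_n$. Lemma~\ref{lem:ring-scales} with $J=1$ gives $c_1M\le1-B_1/\Gamma'$. Since $M>0$, this forces $\Gamma'>B_1$, and therefore $\Gamma M\le\Gamma' M\cdot(\Gamma/\Gamma')\le(\Gamma-B_1)/c_1$. (The last step uses that $\Gamma\mapsto(\Gamma-B_1)/(\Gamma c_1)$ is increasing.)

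Now fix $m=2^j$ and a residue $k\in\{0,\dots,m-1\}$, and let $n_0$ be the first sampling time congruent to $k$ modulo $m$. Bounding each weight by $M$ and summing the geometric series in Lemma~\ref{lem:ring-residues} gives
\[
\Gamma^{-1}\le g_{m,k}\le \frac{M}{d}e^{-2n_0/m}.
\]
Combining this with the bound on $M$ yields $e^{2n_0/m}\le(\Gamma-B_1)/(c_1d)$, that is, $n_0\le Lm$ with $L=\tfrac12\log((\Gamma-B_1)/(c_1d))$. Distinct residues give distinct sampling times, so $N_\mu(Lm)\ge m$. Letting $R=Lm$ with $m=2^j\to\infty$ gives $\overline d(T)\ge1/L$, which is (i).

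For (ii), the upper bound $\rho_{\mathbb Z}(\Gamma)\le1$ and attainment are immediate. Proposition~\ref{prop:ring-threshold} supplies a design with $\kappa(S_\mu)=\Gamma_{\mathrm{crit}}\le\Gamma$, and every $T\subset\mathbb N_0$ has $N_T(R)\le\lfloor R\rfloor+1$, so its upper density is at most one. It then suffices to show that every admissible integer design with $\kappa\le\Gamma_P$ has upper density at least one.

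Part (i) at $\Gamma=\Gamma_P$ only gives $L\le3/2$, because $(\Gamma_P-B_1)/(c_1d)=e^3$, and hence only density $\ge2/3$. A sharper argument is needed, and it is clearly designed around the threshold $e^3$. I would argue by contradiction. Suppose a positive proportion of integers $n\in[m/2,m)$ lie outside $T$, for infinitely many $m=2^j$. For each such $n$, the residue class $n$ modulo $m$ must then be served by $n+m\le3m/2$, which is exactly the extremal case $n_0/m\approx3/2$. In that case the crude bound $g\le(M/d)e^{-2n_0/m}$ is nearly sharp only if the weights at $n+m, n+2m,\dots$ are all close to $M$ and the bound on $M$ is nearly saturated.

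I would then feed this near-saturation back into the residue inequalities at scales $2m,4m,\dots$, the same sums used in the proof of Lemma~\ref{lem:ring-scales} with $J=1$. The aim is to show that the slack there becomes strictly positive, which strictly improves $c_1M+B_1/\Gamma\le1$ and hence pushes $n_0$ below $3m/2$. Carrying out this averaging and error bookkeeping, uniformly over a positive proportion of residues, is the main obstacle.

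If it fails, the fallback is to show that for every $\epsilon>0$ all but $\epsilon m$ residues in $[m/2,m)$ must have their first sample at $n$ itself. That again yields $N_\mu(m)\ge(1-\epsilon)m$ and hence density one.
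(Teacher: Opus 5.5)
Your treatment of the first assertion and of (i) is correct and is essentially the paper's argument. The paper normalizes directly to $\Gamma^{-1}I\le S_\mu\le I$, which is legitimate because $\kappa(S_\mu)\le\Gamma$, so your monotonicity detour through $\Gamma'$ is unnecessary. You should also note that $n_0\ge m-1$ for the residue $k=m-1$ gives $L\ge1$, so the logarithm in (i) is positive.

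\textbf{The gap is in (ii).} You never prove that an admissible integer design with $\kappa(S_\mu)\le\Gamma_P$ has upper density one. The near-saturation and averaging scheme you outline is left unexecuted, and it is not needed.

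The missing observation is that the \emph{per-residue} bound from (i) already decides the matter, not merely the density bound $2/3$ derived from it.
\begin{enumerate}[(a)]
\item For $\Gamma\le\Gamma_P$ you have $L\le3/2$. So for every $m=2^j$ and every residue $k$, the least element $n_0$ of $T$ congruent to $k$ modulo $m$ satisfies $n_0\le 3m/2$.
\item Take $n\in\mathbb N\setminus T$ and choose $m=2^j$ with $m/2\le n<m$. Since $n\notin T$, the first sampling time in this residue class is at least $n+m\ge 3m/2$.
\item Together with (a), this forces $n=m/2$. You actually wrote down $n+m\le 3m/2$, but did not combine it with $n\ge m/2$.
\end{enumerate}
Hence $\mathbb N\setminus T\subseteq\{2^r:r\in\mathbb N_0\}$. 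It follows that $N_\mu(R)\ge R-O(\log R)$ and $\overline d(T)=1$ for every such design.

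Combined with your upper bound $\overline d(T)\le1$ and the minimizer from Proposition~\ref{prop:ring-threshold}, this gives $\rho_{\mathbb Z}(\Gamma)=1$ with attainment. Your ``fallback'' claim is therefore true, with at most one exceptional residue per dyadic scale rather than $\epsilon m$. As written, however, the proposal does not establish it.
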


\begin{proof}
For (i), let $\mu=\sum_{n=0}^{\infty}w_n\delta_n$ be an
admissible integer design with support $T$ and
$\kappa(S_\mu)\le\Gamma$.
Normalize its weights so that $\Gamma^{-1}I\le S_\mu\le I$.
Set $M=\sup_{n\in\mathbb N_0}dw_n$.
For $m=2^j$, $j\in\mathbb N$, and $k\in\{0,\ldots,m-1\}$,
let $\nu_{m,k}$ be the least element of $T$ congruent to $k$
modulo $m$. It exists by the lower frame bound.
Lemmas~\ref{lem:ring-residues} and \ref{lem:ring-scales} give
$e^{2\nu_{m,k}/m}\le\Gamma M/d\le(\Gamma-B_1)/(c_1d)$.
Since $\nu_{m,m-1}\ge m-1$, letting $j\to\infty$ gives
$(\Gamma-B_1)/(c_1d)\ge e^2$.
Thus $q=\tfrac12\log((\Gamma-B_1)/(c_1d))\ge1$ and
$\nu_{m,k}\le qm$ for every such $m,k$.
The $m$ residue classes therefore give $m$ distinct sampling times
in $[0,qm]$, so $N_\mu(qm)\ge m$.
Letting $j\to\infty$ yields $\overline d(T)\ge1/q$.
Taking the infimum over $\mu$ proves (i).

For (ii), suppose
$\Gamma_{\mathrm{crit}}\le\Gamma\le\Gamma_P$, so $q\le3/2$.
If $n\in\mathbb N\setminus T$, choose $m=2^j$, $j\in\mathbb N$,
such that $m/2\le n<m$.
Then $3m/2\le n+m\le\nu_{m,n}\le qm\le3m/2$.
Thus $n=m/2$, and hence
$\mathbb N\setminus T\subseteq\{2^r:r\in\mathbb N_0\}$.
It follows that $N_\mu(R)=R+O(\log R)$ as $R\to\infty$,
and therefore $\overline d(T)=1$.
The design attaining $\Gamma_{\mathrm{crit}}$ in
Proposition~\ref{prop:ring-threshold} satisfies the condition number
bound for every $\Gamma\in[\Gamma_{\mathrm{crit}},\Gamma_P]$.
It therefore attains the infimum in (ii).
\end{proof}

\begin{remark}
Let $\mu=\sum_{n=0}^{\infty}w_n\delta_n$ be an integer design
satisfying $\Gamma^{-1}I\le S_\mu\le I$ for
$1\le\Gamma\le\Gamma_P$.
Then
\[
 \frac{z(\Gamma_P-\Gamma)}{\Gamma d^2c_1}
 \le w_n\le\frac{\Gamma-B_1}{\Gamma dc_1}
 \qquad n\in\mathbb N.
\]
Put $M=\sup_{r\in\mathbb N_0}dw_r$.
For $n\in\mathbb N$, choose $m=2^j$, $j\in\mathbb N$, with
$m/2\le n<m$.
Lemma~\ref{lem:ring-residues} gives
$e/\Gamma\le e^{2n/m}/\Gamma\le dw_n+Mz/d$.
Lemma~\ref{lem:ring-scales} gives
$M\le(\Gamma-B_1)/(\Gamma c_1)$.
Substituting this estimate and using
$\Gamma_P=B_1+c_1de/z$ yields the stated inequalities.
\end{remark}

\begin{prop}
For the pair $(A,C)$ from Section~\ref{sec:normal-model},
the following hold.
\begin{enumerate}[(i)]
\item For every odd $L\in\mathbb N$, sampling on $L\mathbb N_0$
with constant positive weights gives an admissible design with upper
density $1/L$ and condition number $e^{2L}$.
For even $L\in\mathbb N$, the sampled family is incomplete.
\item We have $\rho_{\mathbb Z}(\Gamma)\sim 2/\log\Gamma$ and
$\rho_{\mathbb R}(\Gamma)\sim 2/\log\Gamma$ as $\Gamma\to\infty$.
\end{enumerate}
\end{prop}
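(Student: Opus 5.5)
For part (i), Lemma~\ref{lem:ring-residues} reduces everything to computing the residue sums $g_{m,k}$. Take $w_n=w>0$ for $n\in L\mathbb N_0$ and $w_n=0$ otherwise.

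When $L$ is odd, $L$ is invertible modulo $m=2^j$. Hence for each residue $k$ the set $\{n\in L\mathbb N_0:n\equiv k\pmod m\}$ equals $L(r_0+m\mathbb N_0)$ for a unique $r_0=r_0(m,k)\in\{0,\ldots,m-1\}$. As $k$ runs over all residues, $r_0$ runs over all of $\{0,\ldots,m-1\}$.

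Summing the geometric series gives
\[
 g_{m,k}=\frac{dw\,e^{-2Lr_0/m}}{1-e^{-2L}}.
\]
The supremum over all $m,k$ is $dw/(1-z^L)$, attained at $r_0=0$. The infimum is $dw\,e^{-2L}/(1-z^L)$: every value is at least this, and it is approached as $r_0=m-1$ and $m\to\infty$. Lemma~\ref{lem:ring-residues} then shows that these are the optimal frame bounds. So the design is admissible with condition number $e^{2L}$, and $\#(L\mathbb N_0\cap[0,R])=\lfloor R/L\rfloor+1$ gives upper density $1/L$.

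When $L$ is even, no element of $L\mathbb N_0$ is odd, so $g_{m,1}=0$ for every $m$. The unit vector $u_{m,1}$ is then annihilated by every sampled observation, and the sampled family is incomplete.

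For part (ii), the integer upper bound comes from (i). Given large $\Gamma$, let $L$ be the largest odd integer with $e^{2L}\le\Gamma$. Then $L\ge\tfrac12\log\Gamma-2$, so
\[
 \rho_{\mathbb Z}(\Gamma)\le\frac1L=\frac{2}{\log\Gamma}\,(1+o(1)).
\]
Every integer design is a real design, so $\rho_{\mathbb R}\le\rho_{\mathbb Z}$.

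The integer lower bound is Theorem~\ref{thm:ring-plateau}(i), since
\[
 \frac{2}{\log\bigl((\Gamma-B_1)/(c_1d)\bigr)}\sim\frac{2}{\log\Gamma}.
\]
That theorem only covers integer sampling, so it remains to bound $\rho_{\mathbb R}$ from below.

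For this I will use Lemma~\ref{lem:ring-real-bound}, which holds for any admissible real design with $\kappa(S_\mu)\le\Gamma$. The task is to choose $L$ well. Take
\[
 L=\tfrac12\log\Gamma+\log\log\Gamma .
\]
Then $\Gamma e^{-2L}=(\log\Gamma)^{-2}$, so $2e^2\Gamma(L+1)e^{-2L}=O\bigl((\log\Gamma)^{-1}\bigr)\to0$. The lemma then gives
\[
 \overline d(T)\ge\frac{1-o(1)}{L}=\frac{2}{\log\Gamma}\,(1-o(1))
\]
uniformly over all such designs, which yields $\rho_{\mathbb R}(\Gamma)\ge\frac{2}{\log\Gamma}(1-o(1))$.

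Putting the pieces together,
\[
 \frac{2}{\log\Gamma}(1-o(1))\le\rho_{\mathbb R}(\Gamma)\le\rho_{\mathbb Z}(\Gamma)\le\frac{2}{\log\Gamma}(1+o(1)).
\]
The only delicate step is this choice of $L$ in the real lower bound. It has to make $\Gamma(L+1)e^{-2L}$ tend to zero while keeping $L=\tfrac12\log\Gamma\,(1+o(1))$. The extra $\log\log\Gamma$ term does exactly this.
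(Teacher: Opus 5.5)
Your proof is correct and matches the paper's argument: you use invertibility of odd $L$ modulo $2^j$ to compute the residue sums in (i), take the largest odd $L\le\tfrac12\log\Gamma$ for the upper bound, and apply Lemma~\ref{lem:ring-real-bound} with $L=\tfrac12\log\Gamma+\log\log\Gamma$ for the lower bound. The only difference is your appeal to Theorem~\ref{thm:ring-plateau}(i) for the integer lower bound, which is harmless but unnecessary, since $\rho_{\mathbb Z}(\Gamma)\ge\rho_{\mathbb R}(\Gamma)$ together with the real lower bound already gives it.
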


\begin{proof}
For (i), suppose $L$ is odd and let $m=2^j$, $j\in\mathbb N$.
Since $\gcd(L,m)=1$, multiplication by $L$ permutes the residue
classes modulo $m$.
For $a\in\{0,\ldots,m-1\}$, let $k_a$ be the residue of $La$
modulo $m$.
With unit weights, Lemma~\ref{lem:ring-residues} gives
\[
 g_{m,k_a}
 =d\sum_{\ell=0}^{\infty}e^{-2L(a+\ell m)/m}
 =\frac{de^{-2La/m}}{1-e^{-2L}}.
\]
Taking the infimum and supremum over all blocks gives
$\inf_{m,k}g_{m,k}=de^{-2L}/(1-e^{-2L})$ and
$\sup_{m,k}g_{m,k}=d/(1-e^{-2L})$.
Thus the design is admissible with
$\kappa(S_\mu)=e^{2L}$.
Changing the common positive weight does not change this ratio.
The counting formula $N_\mu(R)=\lfloor R/L\rfloor+1$ for $R\ge0$
gives upper density $1/L$.
If $L$ is even, then $C_2A_2^{Ln}u_{2,1}=0$ for every
$n\in\mathbb N_0$, where $u_{2,1}=2^{-1/2}(1,-1)$.
Hence the sampled family is incomplete.

For (ii), let $\Gamma\ge e^2$ and choose the largest odd integer
$L\le\tfrac12\log\Gamma$.
Then $0\le\tfrac12\log\Gamma-L<2$, so part (i) gives
\[
 \rho_{\mathbb R}(\Gamma)
 \le\rho_{\mathbb Z}(\Gamma)
 \le\frac1L
 \sim\frac{2}{\log\Gamma}
 \qquad \Gamma\to\infty.
\]
Set $L_\Gamma=\tfrac12\log\Gamma+\log\log\Gamma$.
Then $\Gamma e^{-2L_\Gamma}=(\log\Gamma)^{-2}$ and
$L_\Gamma\sim\tfrac12\log\Gamma$ as $\Gamma\to\infty$.
Applying Lemma~\ref{lem:ring-real-bound} with $L=L_\Gamma$
and taking the infimum over all admissible real designs with
condition number at most $\Gamma$ gives
\[
 \rho_{\mathbb R}(\Gamma)
 \ge\frac{1-2e^2(L_\Gamma+1)/(\log\Gamma)^2}{L_\Gamma}
 \sim\frac{2}{\log\Gamma}
 \qquad \Gamma\to\infty.
\]
Together with the upper bound, this proves (ii).
\end{proof}

Letting $L\to\infty$ through odd positive integers shows that,
without a condition number bound, the infimum of the upper sampling
density is zero for both real and integer designs.
This infimum is not attained: by Lemma~\ref{lem:ring-real-bound},
every admissible design has strictly positive upper density.

We next consider an explicit sampling design with nonperiodic support.
Let $(\tau(n))_{n\in\mathbb N_0}$ be the Thue--Morse sequence,
where $\tau(n)\in\{0,1\}$ is the parity of the number of ones in
the binary expansion of $n$ \cite{AlloucheShallit2003}.
Set $T=\{n\in\mathbb N_0:\tau(n)=0\}$,
$P(z)=\prod_{q=0}^{\infty}(1-z^{2^q})$, and
$F_\pm=(d^{-1}\pm P(z))/2$.

\begin{prop}
For the Thue--Morse sampling set $T$, the following hold.
\begin{enumerate}[(i)]
\item Sampling on $T$ with unit weights gives an admissible design
with upper density $1/2$ and condition number
\[
 \Gamma_{\mathrm{TM}}=\frac{F_+}{zF_-}
 =\frac{1+dP(z)}{z(1-dP(z))}
 \approx48.1062486.
\]
In particular, $\rho_{\mathbb Z}(\Gamma_{\mathrm{TM}})\le1/2$.
\item The infimum of the condition numbers of admissible designs
with support exactly $T$ is attained.
\end{enumerate}
\end{prop}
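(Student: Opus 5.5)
The plan for (i) is to compute the residue quantities of Lemma~\ref{lem:ring-residues} in closed form. For (ii), I would rerun the compactness argument of Theorem~\ref{thm:counting-condition-attainment}; the only new point there is keeping every weight on $T$ strictly positive in the limit.

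For (i), take $w_n=\mathbf 1_T(n)$, and fix $m=2^j$ and $k\in\{0,\ldots,m-1\}$. For $\ell\in\mathbb N_0$, the binary expansion of $k+\ell m$ is that of $\ell$ followed by the $j$ digits of $k$. Hence $\tau(k+\ell m)\equiv\tau(k)+\tau(\ell)\pmod 2$, and $k+\ell m\in T$ exactly when $\tau(\ell)=\tau(k)$. Since $e^{-2(k+\ell m)/m}=e^{-2k/m}z^\ell$, this gives
\[
 g_{m,k}=d\,e^{-2k/m}\Phi_{\tau(k)},\qquad
 \Phi_\epsilon=\sum_{\ell\in\mathbb N_0:\,\tau(\ell)=\epsilon}z^\ell .
\]
Two identities determine $\Phi_0,\Phi_1$:
\[
 \Phi_0+\Phi_1=\sum_{\ell\ge0}z^\ell=d^{-1},\qquad
 \Phi_0-\Phi_1=\sum_{\ell\ge0}(-1)^{\tau(\ell)}z^\ell=P(z),
\]
where the second is the standard product formula for the Thue--Morse generating function. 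So $\Phi_0=F_+$ and $\Phi_1=F_-$. Since $0<P(z)<1$ and $d<1$, we have $F_->0$ and $F_+>F_-$.
\begin{enumerate}[(a)]
\item The supremum of $g_{m,k}$ over all $m,k$ is $dF_+$, attained at $k=0$.
\item Every value satisfies $e^{-2k/m}\ge z$, so $g_{m,k}\ge dzF_-$. Taking $k=m-1$ with $\tau(m-1)=j$ odd, or $k=m-2$ when $j$ is even, shows this bound is approached as $j\to\infty$. Hence $dzF_-$ is the infimum, and it is positive.
\end{enumerate}
Lemma~\ref{lem:ring-residues} then gives admissibility with optimal frame bounds $dzF_-$ and $dF_+$. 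Therefore $\kappa=F_+/(zF_-)=(1+dP(z))/(z(1-dP(z)))$; the numerical value is a direct evaluation. Each pair $\{2n,2n+1\}$ contains exactly one element of $T$, so $N_T(R)=R/2+O(1)$ and the upper density is $1/2$. The bound $\rho_{\mathbb Z}(\Gamma_{\mathrm{TM}})\le1/2$ follows immediately.

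For (ii), take a minimizing sequence $\mu_j$ with support exactly $T$, normalized so that $\Gamma_j^{-1}I\le S_{\mu_j}\le I$ with $\Gamma_j\downarrow\Gamma_*\le\Gamma_{\mathrm{TM}}$. Lemma~\ref{lem:ring-scales} gives $dw_n^{(j)}\le1$. A diagonal subsequence then yields pointwise limits $w_n\ge0$, and the uniform tail estimates from the proof of Theorem~\ref{thm:counting-condition-attainment} give $\Gamma_*^{-1}I\le S_\mu\le I$ for the limit. In the residue language this is simpler: each $g_{m,k}$ is a series with uniformly dominated terms, because $dw_n\le1$ and the factors $z^\ell$ are summable. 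So the inequalities $\Gamma_j^{-1}\le g^{(j)}_{m,k}\le1$ pass directly to the limit.

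The main obstacle is showing that $w_n>0$ for every $n\in T$, so that the support stays exactly $T$ rather than shrinking. My first attempt is a contradiction argument: assume $w_{n_0}=0$ for some $n_0\in T$. Using Lemma~\ref{lem:ring-residues} at scales $m=2^j$ with $m/2\le n_0<m$, the class of $n_0$ modulo $m$ must then be covered by later times $n_0+\ell m\in T$, weighted by $z^\ell$ with $\ell\ge1$ and $\tau(\ell)$ fixed. I would combine the resulting lower-bound requirement $e^{2n_0/m}/\Gamma_*\le\sum_{\ell\ge1,\,\tau(\ell)=\tau(n_0)}dw_{n_0+\ell m}z^\ell$ with the nested-scale inequalities of Lemma~\ref{lem:ring-scales}, adapted to the Thue--Morse restriction on $\ell$. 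The goal is an inequality incompatible with $\Gamma_*\le\Gamma_{\mathrm{TM}}$.

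If that comparison is too weak, the fallback is to work inside the class directly. Pick any $\mu_1$ with support $T$ and set $\mu_t=(1-t)\mu+t\mu_1$. Convexity of the feasible weight sets in the normalized inequalities gives $\kappa(S_{\mu_t})\le(1-t)\Gamma_*+t\,\kappa(S_{\mu_1})$ after renormalization, and $\mu_t$ has support exactly $T$ for $t>0$. This alone does not reach $\Gamma_*$, so it must be paired with a first-order argument: show that adding mass at a point $n_0$ with $w_{n_0}=0$ does not increase the extreme eigenvalues $g_{m,k}$ that realize the condition number. This is plausible because, by (i), those extremes are controlled by classes with $k$ near $0$ or near $m-1$, while a single added time affects only one class per scale. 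I expect this step to carry essentially all the difficulty.
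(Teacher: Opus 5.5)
Your part (i) is correct and follows the paper's computation. The paper approaches the infimum with $k=m-1$ along odd $j$; your alternative $k=m-2$ for even $j$ works equally well. Your residue-level limiting argument in (ii) is also fine.

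The gap is the step you flag yourself: you never show that the limit weights are positive on all of $T$. Your first attempt stops at the inequality $e^{2n_0/m}/\Gamma_*\le\sum_{\ell\ge1,\,\tau(\ell)=\tau(n_0)}dw_{n_0+\ell m}z^\ell$ without deriving a contradiction. The fallback is also unproved, and its heuristic is doubtful. Adding mass at $n_0$ never lowers any $g_{m,k}$. It does raise the class of $n_0$ at the finitely many scales $m\le n_0$, and that class may realize the supremum there.

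No nested-scale refinement is needed. The crude bound $dw_n\le1$ already suffices, because the first admissible later time in the class is far out.

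Let $\mu$ be any design supported in $T$, normalized so that $\Gamma^{-1}I\le S_\mu\le I$ with $\Gamma\le\Gamma_{\mathrm{TM}}$, and put $v_n=dw_n\le1$ (Lemma~\ref{lem:ring-scales}). Fix $n\in T$ and any $m=2^j>n$. Since $\tau(n)=0$, the parity identity gives $n+\ell m\in T$ exactly when $\ell\in T$. The least positive element of $T$ is $3$. Lemma~\ref{lem:ring-residues} then yields
\[
 \Gamma_{\mathrm{TM}}^{-1}\le\Gamma^{-1}\le g_{m,n}
 =z^{n/m}\Bigl(v_n+\sum_{\ell\in T,\ \ell\ge3}v_{n+\ell m}z^\ell\Bigr)
 \le v_n+F_+-1\le v_n+\frac{z^3}{d}.
\]
Because $1\notin T$ we have $F_-\ge z$, and $F_+\le d^{-1}$, so $\Gamma_{\mathrm{TM}}^{-1}=zF_-/F_+\ge dz^2$. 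Hence $v_n\ge dz^2-z^3/d>0$, since $d^2>z$.

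This bound is uniform in $n$ and in the design. It therefore holds along a minimizing sequence with $\Gamma_j\le\Gamma_{\mathrm{TM}}$ and survives the limit; your first attempt was one line short of it.

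The paper uses the same bound but organizes the argument slightly differently. It applies Theorem~\ref{thm:counting-condition-attainment} with $T_0=T$ to get a minimizer among designs supported in subsets of $T$. The bound then shows that this minimizer has support exactly $T$.
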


\begin{proof}
For (i), the identities $\tau(2n)=\tau(n)$ and
$\tau(2n+1)=1-\tau(n)$ for $n\in\mathbb N_0$ show that exactly
one of $2n,2n+1$ belongs to $T$. Hence $\overline d(T)=1/2$.
Expanding the finite products and passing to the limit gives
\[
 \sum_{n=0}^{\infty}(-1)^{\tau(n)}z^n=P(z),\qquad
 \sum_{n\in T}z^n=F_+,\qquad
 \sum_{n\in\mathbb N_0\setminus T}z^n=F_-.
\]
All three series converge absolutely.
Since $\sum_{q=0}^{\infty}z^{2^q}<z/d<1$ and
$\prod_{q=0}^{N-1}(1-z^{2^q})\ge1-\sum_{q=0}^{N-1}z^{2^q}$
for every $N\in\mathbb N$, we have $P(z)>0$.
Also, $1\notin T$ gives $F_-\ge z$, so $F_+>F_->0$.

For $m=2^j$, $j\in\mathbb N$, $k\in\{0,\ldots,m-1\}$, and
$\ell\in\mathbb N_0$, the binary digits of $k$ and $\ell m$
do not overlap. Thus
$\tau(k+\ell m)\equiv\tau(k)+\tau(\ell)\pmod{2}$.
Lemma~\ref{lem:ring-residues} gives
\[
 g_{m,k}=\begin{cases}
 d z^{k/m}F_+,&\tau(k)=0,\\
 d z^{k/m}F_-,&\tau(k)=1.
 \end{cases}
\]
The supremum over all blocks is $dF_+$, attained at $k=0$.
The infimum is $dzF_-$, approached with $k=m-1$ as $j\to\infty$
through odd positive integers, for which $\tau(m-1)=1$.
Their ratio is $\Gamma_{\mathrm{TM}}$.
Since this design has upper density $1/2$, the definition of
$\rho_{\mathbb Z}$ gives
$\rho_{\mathbb Z}(\Gamma_{\mathrm{TM}})\le1/2$, proving (i).

For (ii), apply Theorem~\ref{thm:counting-condition-attainment} with
$T_0=T$ and $F(R)=\#(T\cap[0,R])$ for $R\ge0$.
The design in (i) shows that this class is nonempty, so there is
an optimal design $\mu$ supported on a subset of $T$, with condition
number at most $\Gamma_{\mathrm{TM}}$.
Normalize its weights so that
$\Gamma_{\mathrm{TM}}^{-1}I\le S_\mu\le I$.
Write $w_n=0$ at omitted times and set $v_n=dw_n$ for
$n\in\mathbb N_0$.
Lemma~\ref{lem:ring-scales} gives $0\le v_n\le1$ for every
$n\in\mathbb N_0$.
For $n\in T$, choose $m=2^j>n$ with $j\in\mathbb N$.
The parity identity above and Lemma~\ref{lem:ring-residues} give
\[
 \Gamma_{\mathrm{TM}}^{-1}
 \le z^{n/m}\left(v_n+
       \sum_{\ell\in T\setminus\{0\}}v_{n+\ell m}z^\ell\right)
 \le v_n+F_+-1.
\]
The least positive element of $T$ is $3$, so $F_+-1\le z^3/d$.
Together with $F_-\ge z$ and $F_+\le d^{-1}$, this gives
$v_n\ge\Gamma_{\mathrm{TM}}^{-1}-(F_+-1)
\ge dz^2-z^3/d>0$ for $n\in T$,
where the last inequality follows from $z<1/4$.
The support of $\mu$ is therefore exactly $T$, which proves (ii).
\end{proof}

\section{Common sampling supports}
\label{sec:questions}

The corresponding lower bounds for a common sampling measure with
uniform sampled frame bounds were proved in
\cite[Corollaries~3.7 and~4.3, Theorem~6.2]{WuUniversalWeighted2026}.
Here a common set of available sampling times is fixed, while
the selected subset, weights, and sampled frame bounds may depend
on the orbit frame.

\begin{cor}
Let $T\subset[0,\infty)$ be locally finite, and put
$N_T(R)=\#(T\cap[0,R])$ for $R\ge0$.
\begin{enumerate}[(i)]
\item Fix $\kappa>1$. Suppose that every frame generated by a single
orbit of a positive operator, with frame condition number at most
$\kappa$, admits an admissible sampling design supported on $T$.
Then
\[
 \liminf_{R\to\infty}\frac{N_T(R)}{\log R}
 \ge\frac{1}{h_\kappa},\qquad
 h_\kappa=2\log\left(1+\frac{4(\kappa+1)}{\kappa-1}\right).
\]
\item If every frame generated by a single orbit of a positive operator
admits an admissible sampling design supported on $T$, then
$N_T(R)/\log R\to\infty$ as $R\to\infty$.
\item Fix $\kappa>1$. Suppose that every frame generated by finitely
or countably many orbits of a normal operator, with frame condition
number at most $\kappa$, admits an admissible sampling design supported
on $T$. Then $N_T(R)\ge cR-C$ for all $R\ge0$, with constants
$c>0$ and $C\ge0$.
\end{enumerate}
\end{cor}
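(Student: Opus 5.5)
The plan is to reduce each part to a single well-chosen orbit frame. The reduction rests on one observation: if an admissible design $\mu$ is supported on $T$, then $N_T(R)\ge N_\mu(R)$ for every $R$. So any lower bound proved earlier for admissible designs of one fixed system, even with constants depending on $\mu$, transfers to $N_T$. In (i) and (ii) I would apply Lemma~\ref{lem:positive-lower}, which gives $N_\mu(R)\ge m_A(R)-K_\mu$, to positive diagonal orbits with geometric spectrum. In (iii) I would rerun the trace argument of Lemma~\ref{lem:ring-real-bound} for a modified ring system whose frame operator is the identity.

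For (i), fix $h>0$ and set $s_j=e^{-jh}$ and $Ae_j=e^{-s_j}e_j$. I would take $f=\sum_j\sqrt{1-e^{-2s_j}}\,e_j$, so that the matrix entries of $S$ from the proof of Lemma~\ref{lem:positive-grid} become $S_{jj}=1$ and
\[
 S_{ij}=\frac{\sqrt{(1-e^{-2s_i})(1-e^{-2s_j})}}{1-e^{-(s_i+s_j)}}.
\]
The key elementary estimate I expect to need is $S_{ij}\le 2\sqrt{s_is_j}/(s_i+s_j)\cdot(1+\text{small})$, which equals $1/\cosh(h(i-j)/2)\le 2e^{-h|i-j|/2}$. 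This step needs care, because the crude comparison $1-e^{-u}$ versus $u$ must be done so that the constant stays $2$. A Schur test then bounds the off-diagonal part by
\[
 \varepsilon(h)=\sum_{\ell\ne0}2e^{-h|\ell|/2}=\frac{4}{e^{h/2}-1}.
\]
Hence $\kappa(S)\le(1+\varepsilon)/(1-\varepsilon)$, and $\varepsilon(h)\le(\kappa-1)/(\kappa+1)$ holds exactly when $e^{h/2}\ge1+4(\kappa+1)/(\kappa-1)$, that is, when $h\ge h_\kappa$. In particular this orbit is a frame with condition number at most $\kappa$ for $h=h_\kappa$. Since $x_j=jh$, we have $m_A(R)=\lfloor\log R/h\rfloor$. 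The hypothesis supplies a design supported on $T$, and Lemma~\ref{lem:positive-lower} then gives $\liminf_{R\to\infty}N_T(R)/\log R\ge1/h_\kappa$. I expect this constant-tracking to be the main obstacle. If the factor $2$ cannot be secured uniformly, the fallback is to prove the bound with any explicit $h(\kappa)$, and then check whether the stated $h_\kappa$ follows after restricting to indices $j\ge j_0$, where $s_j$ is small.

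For (ii), the same geometric orbits are frames for every $h>0$, by the Carleson characterization recalled in Section~\ref{sec:common-positive}. Applying the argument of (i) for each $h$ gives $\liminf_{R\to\infty}N_T(R)/\log R\ge1/h$. Letting $h\downarrow0$ yields $N_T(R)/\log R\to\infty$.

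For (iii), I would modify the operator of Section~\ref{sec:normal-model}. Keep the blocks $\mathbb C^m$, $m=2^j$, and the rotations $A_m=e^{-1/m}\operatorname{diag}(\omega_m^a)$. In place of the constant vector, use on each block a vector whose Fourier coefficients with respect to $(u_{m,k})$ are $c_{m,k}$, with
\[
 |c_{m,k}|^2=\Bigl(\sum_{\ell\ge0}e^{-2(k+\ell m)/m}\Bigr)^{-1}.
\]
Lemma~\ref{lem:ring-residues} adapts directly, with $g_{m,k}$ multiplied by $|c_{m,k}|^2$, and gives $S=I$. So this system has condition number $1\le\kappa$ and countably many generating vectors. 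Let $\mu$ be an admissible design supported on $T$ with bounds $\alpha,\beta$. Since $\sum_k|c_{m,k}|^2\asymp m$, the trace bound on block $m$ gives $\mu([0,R])\le K\beta(1+R)$. Then the integration-by-parts tail estimate and the rank argument from the proof of Lemma~\ref{lem:ring-real-bound} give
\[
 N_\mu(Lm)\ge m\bigl(1-C\Gamma(L+1)e^{-2L}\bigr),\qquad \Gamma=\beta/\alpha.
\]
Fixing $L$ large and interpolating between dyadic $m$ yields $N_T(R)\ge N_\mu(R)\ge cR-C$.
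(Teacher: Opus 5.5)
Parts (i) and (ii) are sound, and they take a more self-contained route than the paper. The paper simply imports the geometric model, the bound $\liminf N_\mu(R)/\log R\ge 1/h$, and the condition-number estimate from \cite[Proposition~3.5, Theorem~3.6, Corollary~3.7]{WuUniversalWeighted2026}. You instead bound $\kappa(S)$ by a Schur test and get the count from Lemma~\ref{lem:positive-lower}.

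Your constant tracking needs one correction. The function $u\mapsto(1-e^{-u})/u=\int_0^1e^{-ut}\,dt$ is log-convex, so in fact $S_{ij}\ge 1/\cosh(h(i-j)/2)$. The entrywise bound $S_{ij}\le 2e^{-h|i-j|/2}$ therefore fails, by a factor up to $1+O(s_1^2)$, for some far-apart pairs. The Schur row sums are nevertheless at most $4/(e^{h/2}-1)$. The reason is that $1/\cosh x=2e^{-x}/(1+e^{-2x})$ leaves a relative slack of order $e^{-h}=s_1$ in the nearest-neighbour terms, and this dominates the $O(s_1^2)$ excess because $h_\kappa>2\log5$.

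Part (iii) has a genuine gap: the system you build does not have $S=I$, and it is not even Bessel. Lemma~\ref{lem:ring-residues} depends on the generating vector $\sqrt d\,u_{m,0}$ being a single Fourier mode, so that the time-$n$ observation sees only $u_{m,n\bmod m}$. But $A_m^*u_{m,k}=e^{-1/m}u_{m,k+1}$ (indices mod $m$). Hence for $f_m=\sum_k c_{m,k}u_{m,k}$ you get $(A_m^*)^nf_m=e^{-n/m}\sum_k c_{m,k}u_{m,k+n}$, which mixes all modes, and the block eigenvalues are not $|c_{m,k}|^2g_{m,k}$.

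Concretely, the $n=0$ term alone gives $\|S\|\ge\|f_m\|^2=\sum_k|c_{m,k}|^2\asymp m$, which is unbounded in $m$. Moreover $S_m=I$ is impossible with one vector per block: Lemma~\ref{lem:stein} would force the rank-one operator $f_mf_m^*$ to equal $I-A_m^*A_m=(1-e^{-2/m})I_m$.

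No other choice of one generating vector per block rescues this operator. Since $A_m^m=e^{-1}I_m$ and $e^{1/m}A_m$ is unitary, $S_m=(1-e^{-2})^{-1}\sum_{r=0}^{m-1}(A_m^*)^rf_mf_m^*A_m^r=V_mV_m^*$. Here $V_m$ is square, and its columns have squared norms $e^{-2r/m}\|f_m\|^2/(1-e^{-2})$. Comparing diagonal entries of $V_m^*V_m$ gives $\kappa(S)\ge\sup_m e^{2(m-1)/m}=e^2$. So for $1<\kappa<e^2$ the hypothesis of (iii) says nothing about any such system built on this $A$. For $\kappa\ge e^2$, the original pair together with Lemma~\ref{lem:ring-real-bound} would already suffice.

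The missing idea is to change the operator rather than the generators, as the paper does. Replace $e^{-1/m}$ by $e^{-\varepsilon/m}$ with $\varepsilon=\tfrac12\log\kappa$, and $d$ by $1-e^{-2\varepsilon}$, keeping the single-mode generating vectors. The block eigenvalues become $e^{-2\varepsilon k/m}$, so the condition number is exactly $\kappa$. Your trace, integration-by-parts and rank argument then goes through with tail factor $(L+1+1/(2\varepsilon))e^{-2\varepsilon L}$, choosing $L$ depending on $\varepsilon$.
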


\begin{proof}
For (i) and (ii), fix $h>0$ and consider the positive orbit frame
from \cite[Proposition~3.5]{WuUniversalWeighted2026}.
If this frame admits an admissible design $\mu_h$ supported on $T$,
then $N_{\mu_h}(R)\le N_T(R)$ for every $R\ge0$.
Together with \cite[Theorem~3.6]{WuUniversalWeighted2026}, this gives
\[
 \liminf_{R\to\infty}\frac{N_T(R)}{\log R}
 \ge\liminf_{R\to\infty}\frac{N_{\mu_h}(R)}{\log R}
 \ge\frac1h.
\]
For $h=h_\kappa$, the frame condition number is at most $\kappa$,
as shown in the proof of
\cite[Corollary~3.7]{WuUniversalWeighted2026}.
This proves (i).
Under the assumption in (ii), the same counting bound holds
for every $h>0$. Letting $h\downarrow0$ proves (ii).

For (iii), set $\varepsilon=\tfrac12\log\kappa>0$ and
$d_\varepsilon=1-e^{-2\varepsilon}$.
Modify the pair in Section~\ref{sec:normal-model} by replacing
$e^{-1/m}$ with $e^{-\varepsilon/m}$ and $d$ with $d_\varepsilon$.
For $m=2^j$, $j\in\mathbb N$, its frame operator has block
eigenvalues $e^{-2\varepsilon k/m}$ for $k\in\{0,\ldots,m-1\}$.
The optimal frame bounds are $e^{-2\varepsilon}$ and $1$,
so the frame condition number is $\kappa$.

By assumption, this pair has an admissible design $\mu$ supported
on $T$. Let $0<\alpha\le\beta<\infty$ be its sampled frame bounds,
and put $W(R)=\mu([0,R])$ for $R\ge0$.
Let $S_{\mu,m}$ denote the block of its sampled frame operator
on $\mathbb C^m$.
Taking the trace gives
\[
 d_\varepsilon
 \int_{[0,\infty)}e^{-2\varepsilon t/m}\,d\mu(t)
 =\operatorname{tr}S_{\mu,m}
 \le\beta m.
\]
For $R\ge2$, choose $m=2^j\in[R,2R)$; for $0\le R<2$, use $m=2$.
It follows that
$W(R)\le 2e^{2\varepsilon}\beta(1+R)/d_\varepsilon$
for every $R\ge0$.
Integration by parts, as in Lemma~\ref{lem:ring-real-bound}, gives
\[
 d_\varepsilon\int_{(Lm,\infty)}e^{-2\varepsilon t/m}\,d\mu(t)
 \le2e^{2\varepsilon}\beta m
 \left(L+1+\frac{1}{2\varepsilon}\right)e^{-2\varepsilon L}
 \qquad L>0.
\]
Choose $L>0$, independent of $m$, so that this bound is at most
$\alpha m/2$.

Write $S_{\mu,m}=B_m+D_m$, where $B_m$ and $D_m$ are the
contributions from times in $[0,Lm]$ and $(Lm,\infty)$,
respectively.
Then $\operatorname{rank}B_m\le N_\mu(Lm)$ and
$\operatorname{tr}D_m\le\alpha m/2$.
Let $P_m$ be the orthogonal projection onto $\ker B_m$.
Since $P_mD_mP_m=P_mS_{\mu,m}P_m\ge\alpha P_m$, we obtain
\[
 \alpha\bigl(m-N_\mu(Lm)\bigr)
 \le\alpha\operatorname{rank}P_m
 \le\operatorname{tr}(P_mD_mP_m)
 \le\operatorname{tr}D_m
 \le\frac{\alpha m}{2}.
\]
Thus $N_\mu(Lm)\ge m/2$.
For $R\ge2L$, choose a power of two $m$ with
$R/(2L)<m\le R/L$. Then
\[
 N_T(R)\ge N_\mu(R)\ge N_\mu(Lm)
 \ge\frac m2\ge\frac{R}{4L}.
\]
Consequently, $N_T(R)\ge R/(4L)-1/2$ for every $R\ge0$,
which proves (iii).
\end{proof}

The linear counting order in (iii) is optimal. The common support
$T=\mathbb N_0$ with unit weights satisfies
$N_T(R)=\lfloor R\rfloor+1$ for $R\ge0$ and preserves the frame
operator of every frame in the class.

Part (ii) requires a common support for every frame generated by a
single orbit of a positive operator, over all finite frame condition
numbers. Thus (ii) does not imply
$N_T(R)/\log R\to\infty$ as $R\to\infty$ for a class with a fixed
bound $\kappa$ on the frame condition number.
\section*{Acknowledgments}
The author is grateful to Ilya Krishtal for his helpful comments and suggestions,
which improved the presentation of the paper and clarified its connections
with the existing literature.

\end{document}